 \newtheorem{thm}{Theorem}[section]
 \newtheorem{cor}[thm]{Corollary}
 \newtheorem{lemma}[thm]{Lemma}
 \newtheorem{prop}[thm]{Proposition}
 \theoremstyle{definition}
 \theoremstyle{remark}
 \newtheorem{rem}[thm]{Remark}
 \newtheorem{rems}[thm]{Remarks}
\newtheorem{example}[thm]{Example}
\newtheorem{remark}[thm]{Remark}
\numberwithin{equation}{section}
\def\R{\mathbb{R}}
\def\C{\mathbb{C}}
\def\N{\mathbb{N}}
\def\a{\alpha}
\def\b{\beta}
\def\ep{\varepsilon}
\def\l{\lambda}
\def\Re#1{\operatorname{Re}#1}
\def\Im#1{\operatorname{Im}#1}
\def\Ker{\operatorname{Ker}}
\def\Bes{\mathcal{B}}
\def\Bov{\mathcal{E}}
\def\Bq{{\Bes_0}}
\def\fti{\mathcal{F}^{-1}}
\def\LT{\mathcal{LM}}
\def\HP{\mathrm{HP}}
\def\lt{\mathcal{L}}
\def\ssp{\mathcal{G}}
\def\RR{\mathrm{R}}
\def\Kop{K^\vartriangle}
\def\Qop{Q^\vartriangle}
\begin{document}

\title[Besov functional calculus]{The theory of Besov functional calculus: developments and applications to semigroups}

\author{Charles Batty}
\address{St. John's College\\
University of Oxford\\
Oxford OX1 3JP, UK
}

\email{charles.batty@sjc.ox.ac.uk}

%----------Author 2

\author{Alexander Gomilko}
\address{Faculty of Mathematics and Computer Science\\
Nicolas Copernicus University\\
Chopin Street 12/18\\
87-100 Toru\'n, Poland
}

\email{alex@gomilko.com}

  %%optional:  \curaddr{current address}%%
   %%optional:  \urladdr{website address}%

%----------Author 3
\author{Yuri Tomilov}
\address{
Institute of Mathematics\\
Polish Academy of Sciences\\
\'Sniadeckich 8\\
00-956 Warsaw, Poland
}

\email{ytomilov@impan.pl}

\begin{abstract}
We extend and deepen the theory of functional calculus
for semigroup generators, based on the algebra $\Bes$ of analytic Besov functions, which we initiated in a previous paper. 
In particular, we show that our construction of the calculus
is optimal in several natural senses. Moreover, we clarify the structure of $\mathcal B$
and identify several important subspaces in practical terms.
This leads to new spectral mapping theorems for operator semigroups
and to wide generalisations of a number of basic results from semigroup theory.
\end{abstract}

\subjclass[2020]{Primary 47A60, Secondary 30H25 46E15 47B12 47D03}

\keywords
{Besov algebra, functional calculus, semigroup generator, sectorial operator}

\thanks{This work was partially supported financially by a Leverhulme Trust Visiting Research Professorship and an NCN grant UMO-2017/27/B/ST1/00078, and inspirationally by the ambience of the Lamb \& Flag, Oxford.}

\date\today

\maketitle

\section{Introduction and preliminaries}

The theory of functional calculi is an indispensable building block of operator theory.  Its application to unbounded operators, in particular generators of $C_0$-semigroups, paved its way into various areas of analysis ranging from the theory of partial differential equations to ergodic theory.

Developing work by Peller \cite{Pel}, White \cite{Whi}, Vitse \cite{V1}, Haase \cite{Haase} and many others,
we constructed in \cite{BGT} a new functional calculus for the generators of bounded semigroups
on Hilbert and Banach spaces. The calculus is based on a Banach algebra $\mathcal B$, sometimes known as the analytic Besov algebra, which is canonically isomorphic to the classical holomorphic Besov space $B^0_{\infty,1}(\mathbb C_+)$.   The functional caclulus was called the $\mathcal B$-calculus in \cite{BGT}, and it allowed us to find a unified framework
for a number of operator-norm estimates in the literature, and to obtain several completely new ones.
A discussion of the $\mathcal B$-calculus from a historical perspective can be found in \cite{BGT}.
Here we just recall the very basic steps of its construction and clarify the ideas around them.
For a thorough and detailed treatment of various issues around the $\mathcal B$-calculus one may consult \cite{BGT}.

Let $A$ be a closed operator on a Banach space $X$, with dense domain $D(A)$.  
We assume that the spectrum $\sigma(A)$ is contained in $\overline\C_+:=\{z\in\C : \Re z \ge 0\}$ and that
\begin{equation} \label{8.1}
\sup_{\alpha >0} \alpha \int_{\mathbb R} |\langle (\alpha +i\beta + A)^{-2}x, x^* \rangle| \, d\beta <\infty
\end{equation}
for all $x \in X$ and $x^* \in X^*$.   By the Closed Graph Theorem, there is a constant $c$ such that
\begin{equation} \label{8.2}
 \frac{2}{\pi}\alpha \int_{\mathbb R} |\langle (\alpha +i\beta + A)^{-2}x, x^* \rangle| \, d\beta \le c \|x\|\,\|x^*\|
\end{equation}
for all $\a>0$, $x \in X$ and $x^* \in X^*$.    
We let $\gamma_A$ be the smallest value of $c$ such that (\ref{8.2}) holds.
At first glance, \eqref{8.2} might look artificial, but it is easy to show that
it is satisfied if either $-A$ generates a bounded $C_0$-semigroup and $X$ is a Hilbert space or 
$-A$ generates a bounded holomorphic semigroup on a Banach space $X$.   Moreover, as we showed in \cite{BGT}, \eqref{8.2} leads in a natural way to the choice of $\mathcal B$ as a function algebra
operating on those classes of semigroup generators.

Indeed, \eqref{8.2} says that the weak resolvents $g_{x,x^*}: z \mapsto \langle (z+A)^{-1}x, x^* \rangle$
of $A$ all belong to the Banach space $\mathcal E$ given by
\begin{equation} \label{defE}
\mathcal E:= \left\{g \in \operatorname{Hol}(\mathbb C_+): \|g\|_{\Bov_0} :=\sup_{\a >0} \a\int_{\mathbb R} |g'(\a+i\b)|\, d\b < \infty \right\}
\end{equation}
with the norm $\|\cdot\|_{\mathcal E}:=\|g\|_{\Bov_0} + |\lim_{\Re z \to \infty} g(z)|$.
(See \cite[Section 2.5]{BGT} for comments on $\mathcal E$, in particular concerning the existence of the limit).

Hoping to define $f(A)$ in a correct way and having in mind the Cauchy integral formula (and the classical Riesz-Dunford calculus), 
it is natural to try to pair
$g_{x,x^*}$ with functions from the dual space of $\mathcal E.$
This appeared to be a proper approach, but there was a number of difficulties to overcome while realising it.
Observing that by Green's formula one has
\begin{equation} \label{Green0}
\int_{0}^{\infty} \a \int_\R g'(\a-i\b) f'(\a+i\b)\, d\b\,d\a = \frac{1}{4} \int_\R  g(-iy)f(iy) \, dy
\end{equation}
for $f$ and $g$, holomorphic in $\overline {\mathbb C}_+$ and decaying sufficiently fast, together with their first derivatives in $\overline{\mathbb C}_+$,
we introduced (formally) a partial duality $\langle\cdot, \cdot \rangle_{\Bes}$ given by
\begin{equation}\label{dual}
\langle g, f \rangle_{\Bes}: = \int_{0}^{\infty} \a \int_\R g'(\a-i\b) f'(\a+i\b)\, d\b\,d\a.
\end{equation}
As  in \cite[Section 2.2]{BGT}, let $\mathcal B$ be the algebra  defined
by
\begin{equation}  \label{bdef0}
\mathcal B:= \left\{f \in \operatorname{Hol}\, (\mathbb C_+): \|f\|_\Bq:=\int_{0}^{\infty} \sup_{\b \in \R }|f'(\a+i\b)| \, d\a <\infty \right \}.
\end{equation}
One can easily show that every $f \in \mathcal B$ belongs to $H^\infty(\mathbb C_+),$ and $\mathcal B$ is a Banach algebra with the norm 
\begin{equation}\label{b-norm}
\|f\|_{\Bes}:=\|f\|_\infty+ \|f\|_\Bq.
\end{equation} 
Then  $\mathcal E$ can be paired  with $\mathcal B$ via $\langle g, f \rangle_{\Bes}$, in the sense that $\langle g, f \rangle_{\Bes}$ induces $\mathcal B \hookrightarrow \mathcal E^*$ (and $\mathcal E \hookrightarrow \mathcal B^*$) contractively.   However, the spaces $\mathcal B$ and $\mathcal E$  are neither the dual nor the predual of each other with respect to $\langle g, f \rangle_{\Bes}$.    It is instructive to remark here that the choices of $\mathcal B$ and of $\langle g, f \rangle_{\Bes}$ were equally important in \cite{BGT}, and a different duality would be of no use for our purposes.

Setting $g = r_z$ where $r_z(\l) = (z+\l)^{-1}$ for $z \in \overline{\C}_+$ and choosing good enough $f \in \mathcal B$
in \eqref{Green0}, 
one observes that the duality \eqref{dual} recovers the classical Cauchy integral formula:
\begin{equation}\label{Green}
\frac{2}{\pi} \langle r_z,f \rangle_\Bes = \frac{1}{2\pi} \int_\R \frac{f(iy)}{z-iy} \, dy = \frac{1}{2\pi i} \int_{i\R}\frac{f(\l)}{\l-z} \, d\l = f(z), \quad z \in \C_+.
\end{equation}
Cauchy's formula on the right-hand side of \eqref{Green} imposes stringent assumptions on $f$ and cannot hold for all $f \in \mathcal B$.  However, the left-hand side of \eqref{Green} requires less from $f$, and it suggests the following
reproducing formula for $\mathcal B$-functions $f$ proved in \cite[Proposition 2.20]{BGT}:
\begin{align}\label{bhp}
f(z)&= f(\infty) + \frac{2}{\pi} \langle r_z,f \rangle_\Bes \\
 &= f(\infty) - \frac{2}{\pi}  \int_0^\infty \int_{\R} \frac{\alpha f'(\alpha +i\beta)}{(z+\alpha -i\beta)^{2}} \, d\beta\,d\alpha, \qquad  z \in \overline{\C}_+. \notag 
\end{align}
The formula was of major importance in \cite{BGT}, and it will be crucial for us in this paper as well.

The algebra $\mathcal B$ is quite large. It properly contains the Hille-Phillips algebra $\mathcal {LM}$ of Laplace transforms of bounded measures on $[0,\infty)$.   The Hille-Phillips calculus is given by
\[
(\lt\mu)(A)x = \int_0^\infty T(t)x \, d\mu(t), \qquad x \in X,
\]
where $(T(t))_{t\ge0}$ is the bounded $C_0$-semigroup generated by $-A$.

To study $\mathcal B$ efficiently it is convenient to resort to its (non-closed) subalgebra
\begin{align}  \label{ssp}
\ssp &:= \left\{f \in H^\infty(\C_+) : \text{$\operatorname{supp} (\fti f^b)$ is a compact subset of $(0,\infty)$}\right \},
\end{align}
where $f^b$ denotes the boundary value of $f$ and $\mathcal F$ stands for the (distributional) Fourier transform.
Note that $\ssp$ consists of entire functions of exponential type, and moreover its norm-closure in $\Bes$ is 
\[\overline{\ssp} = \left\{ f \in \Bes:  f(\infty)=0 \right\}=:\mathcal B_0.\]
Thus $\ssp$ plays a similar role to that of the polynomials in the study of spaces of holomorphic functions on the unit disc.
Given a mere definition \eqref{bdef0} of $\mathcal B$, the density of $\ssp$ in $\mathcal B_0$ is a rather non-trivial fact, and the proof given in \cite[Proposition 2.10]{BGT} relies on fine properties of (holomorphic) semigroups and (isometric) groups of shifts associated to $\mathcal B$.

The formula \eqref{bhp} suggests the following definition of the functional calculus for $A$ based on the algebra $\mathcal B$.   For $f \in \Bes$ and $A$ as above,  set 
\begin{align}  \label{fcdef}
\lefteqn{\langle f(A)x, x^* \rangle} \quad\\
&=  f(\infty) \langle x, x^* \rangle - \frac{2}{\pi}  \int_0^\infty  \alpha \int_{\R}   \langle (\alpha -i\beta +A)^{-2}x, x^* \rangle  {f'(\alpha +i\beta)} \, d\beta\,d\alpha  \nonumber \\
&=  f(\infty) \langle x, x^* \rangle + \frac{2}{\pi} \langle g_{x,x^*}, f \rangle_\Bes,  \nonumber
\end{align}
for all $x \in X$ and $x^* \in X^*$. Then one infers that  $f(A)$ is a bounded linear mapping from $X$ to $X^{**}$, and that the linear mapping
\[
\Phi_A : \Bes \to \mathcal L(X,X^{**}), \qquad f \mapsto f(A),
\]
is bounded.
Such a definition leads to the following fundamental theorem  on the existence and basic properties of the $\mathcal B$-calculus.  The theorem was one of the main results in \cite{BGT}.

\begin{thm}\label{homomor_intro}
Let $-A$ be the generator of either a bounded $C_0$-semigroup on a Hilbert space $X$
or of a (sectorially) bounded holomorphic $C_0$-semigroup on a Banach space $X$.  Then the following hold.
\begin{enumerate}[\rm a)]
\item
The formula \eqref{fcdef} defines a bounded algebra homomorphism
\[
\Phi_A: \Bes \to  L (X),\qquad \Phi_A (f):=f(A),
\]
and
\begin{equation}\label{b_estim}
\|f(A)\|\le \gamma_A \|f\|_{\mathcal B}.
\end{equation}

\item The $\Bes$-calculus defined in {\rm  a)} (strictly) extends the Hille-Phillips (HP-) calculus, and it is compatible with
the holomorphic functional calculi for sectorial and half-plane type operators.

\item The spectral inclusion (spectral mapping, in the case of bounded holomorphic semigroups) theorem 
and a Convergence Lemma hold for $\Phi_A$.
\end{enumerate}
\end{thm}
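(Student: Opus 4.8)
The plan is to establish the three parts of Theorem~\ref{homomor_intro} in sequence, leaning heavily on the reproducing formula \eqref{bhp} and the density of $\ssp$ in $\mathcal B_0$. For part~a), boundedness and the estimate \eqref{b_estim} are essentially immediate from the definition \eqref{fcdef}: one pairs the weak resolvent $g_{x,x^*}$, which lies in $\mathcal E$ with $\|g_{x,x^*}\|_{\Bov_0}\le \tfrac{\pi}{2}\gamma_A\|x\|\,\|x^*\|$ by \eqref{8.2}, against $f\in\mathcal B$ using the contractive pairing $\mathcal B\hookrightarrow\mathcal E^*$, and adds the $f(\infty)$ term; this gives $\|f(A)\|_{\mathcal L(X,X^{**})}\le\gamma_A\|f\|_{\mathcal B}$. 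The substantive point is that $f(A)$ actually maps $X$ into $X$ (not merely $X^{**}$) and that $\Phi_A$ is multiplicative. For the range statement I would first verify it on the dense subalgebra $\ssp$, where $f$ is nice enough that the double integral in \eqref{fcdef} can be manipulated via Fubini and the resolvent identity to identify $f(A)$ with an operator built from genuine $X$-valued resolvent integrals (or, for $f\in\mathcal{LM}$, with the Hille--Phillips operator $\int_0^\infty T(t)\,d\mu(t)$); then one passes to general $f\in\mathcal B_0$ by density and the uniform bound \eqref{b_estim}, noting $\mathcal L(X,X)$ is closed in $\mathcal L(X,X^{**})$, and finally adds back the multiple of the identity for $f(\infty)\neq0$. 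Multiplicativity $\Phi_A(fg)=\Phi_A(f)\Phi_A(g)$ is again checked first on $\ssp$, where both sides can be computed explicitly (using that products of exponential-type functions stay in $\ssp$ and the Cauchy-type formula \eqref{Green}), and then extended by bilinear continuity using the Banach algebra norm \eqref{b-norm}.

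For part~b), compatibility with the Hille--Phillips calculus follows once $\mathcal{LM}\subset\mathcal B$ is known (with contractive inclusion) and one has checked $\Phi_A(\mathcal L\mu)=\int_0^\infty T(t)\,d\mu(t)$ on a norm-dense or weak$^*$-dense subset of $\mathcal{LM}$ — e.g.\ for $\mu$ with compactly supported smooth density, where Laplace-transform identities let one recognise the right-hand side of \eqref{fcdef} as the Bochner integral against the semigroup — and then extending. Strictness of the extension is witnessed by exhibiting a single $f\in\mathcal B\setminus\mathcal{LM}$; a standard choice is a function whose boundary values fail the absolute-continuity/bounded-variation constraints forced by membership in $\mathcal{LM}$, for instance $f(z)=(1+z)^{-1}\log(1+z)^{-1}$ or a lacunary-type example. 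Compatibility with the sectorial and half-plane holomorphic calculi is obtained by checking agreement on the resolvent functions $r_z=(z+\cdot)^{-1}$, $\Re z>0$ (which lie in $\mathcal B$ and for which \eqref{fcdef} together with \eqref{bhp} yields $\Phi_A(r_z)=(z+A)^{-1}$ by a residue/Green's-formula computation), and then invoking the uniqueness of the holomorphic calculus as the unique bounded homomorphism consistent with resolvents on the relevant Fréchet algebra.

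For part~c), the spectral inclusion theorem $\sigma(f(A))\supseteq f(\sigma(A))$ (closure taken as needed) follows from multiplicativity: if $\lambda\in\sigma(A)$ then one approximates the singular behaviour of $(z+A)^{-1}$ near $-\lambda$ and uses that $f(A)-f(\lambda)$ factors through a function vanishing at $\lambda$, combined with the identity $\Phi_A(r_z)=(z+A)^{-1}$ to transport non-invertibility. The reverse inclusion in the holomorphic-semigroup case uses the extra decay of the resolvent on sectors to build, for $\mu\notin f(\sigma(A))$, an explicit $\mathcal B$-function inverting $f(\cdot)-\mu$ near $\sigma(A)$, i.e.\ a parametrix argument feeding back through $\Phi_A$. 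The Convergence Lemma — if $f_n\to f$ boundedly in $\mathcal B$ in an appropriate sense (uniform $\mathcal B$-bounds plus pointwise or compact convergence of $f_n'$), then $f_n(A)\to f(A)$ strongly — is proved by splitting the integral in \eqref{fcdef} into a region $\alpha\in(0,R)$ where dominated convergence applies and a tail $\alpha>R$ controlled uniformly by the $\|\cdot\|_{\Bq}$-bound. The main obstacle throughout is the first one: showing that the a priori $X^{**}$-valued operator $f(A)$ takes values in $X$ and that $\Phi_A$ is genuinely multiplicative, since \eqref{fcdef} is defined only weakly and the density argument requires careful interchange of limits with the double integral; this is exactly where the nontrivial density of $\ssp$ in $\mathcal B_0$ and the structural properties of the shift semigroups on $\mathcal B$ from \cite{BGT} do the real work.
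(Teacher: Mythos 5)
You should note at the outset that the present paper does not prove Theorem~\ref{homomor_intro} at all: it is quoted from \cite{BGT}, and the introduction only sketches the route taken there, namely that \eqref{fcdef} is first identified with the Hille--Phillips calculus on $\LT$ and then extended to all of $\Bes$ using the \emph{$\mathcal E$-weak} density of $\LT$ in $\Bes$ together with approximate-unit arguments --- precisely because $\LT$ is not norm-dense in $\Bes$. Your architecture is genuinely different: verify everything on $\ssp$ and extend by norm density of $\ssp$ in $\Bq$. That skeleton is not unreasonable (the uniqueness proof of Theorem~\ref{unique} exploits exactly this norm density), but the step you dismiss as routine is the real obstruction, and as written it fails. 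For a bounded $C_0$-semigroup on a Hilbert space the double integral in \eqref{fcdef} is in general \emph{not} absolutely convergent as an $X$-valued integral, even for $f=e_1\in\ssp$: Plancherel (see \eqref{plan}) only gives $\beta\mapsto\|(\alpha-i\beta+A)^{-2}x\|$ in $L^2(\R)$, not $L^1(\R)$, while $|f'(\alpha+i\beta)|$ does not decay in $\beta$; so ``Fubini and the resolvent identity'' cannot identify $f(A)$ with a genuine Bochner resolvent integral. Moreover $\ssp\not\subset\LT$ (a bounded boundary function with compactly supported spectrum need not be a Fourier--Stieltjes transform), so the Hille--Phillips identification is not available on $\ssp$ either. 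Showing that the a priori $X^{**}$-valued operator lands in $X$ and that $\Phi_A$ is multiplicative is exactly the hard content of \cite{BGT}, handled there by Parseval-type weak arguments on $\LT$ and $\mathcal E$-weak approximation rather than by norm density; your direct Fubini argument is fine only in the holomorphic-semigroup half, where $(\cdot+A)^{-2}$ is integrable along vertical lines in operator norm. (A small further point in a): your pairing estimate yields $\|f(A)\|\le|f(\infty)|+\gamma_A\|f\|_\Bq$, and to reach \eqref{b_estim} with the algebra norm \eqref{b-norm} you still need $\gamma_A\ge1$, which requires a short separate argument.)

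In parts b) and c) there are further concrete gaps. Compatibility with the HP-calculus cannot be obtained by checking it on measures with smooth compactly supported densities ``and then extending'': such measures are not dense in $M(\R_+)$ in the total variation norm (any Dirac mass is at distance $1$ from $L^1(\R_+)$), and weak$^*$ density would require a weak continuity property of both calculi that you do not establish; in \cite{BGT} the identity is proved directly for every $\mu\in M(\R_+)$. Your proposed witness of strictness is wrong: $-\log(1+z)/(1+z)=\mathcal{L}\bigl(e^{-t}(\log t+\gamma)\bigr)\in \mathcal{L}L^1\subset\LT$, so it does not lie in $\Bes\setminus\LT$. Your sketch of the Convergence Lemma also cannot work as stated: on the tail $\alpha>R$ the uniform $\Bes$-bound only gives $\sup_\beta|f_n'(\alpha+i\beta)|\le C/(2\alpha)$, which is not integrable, and uniform smallness of $\int_R^\infty\sup_\beta|f_n'|\,d\alpha$ is not among the hypotheses; the actual argument (compare Lemma~\ref{conlem} and Corollary~\ref{sotc} here, and \cite[Theorem 4.13]{BGT}) multiplies by a regularizer such as $(1+z)^{-1}$, proves norm convergence of $f_n(A)(1+A)^{-1}$ using Vitali's theorem together with the hypothesis \eqref{gz} near $\alpha=0$, and then uses the uniform bound and density of $D(A)$. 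Similarly, the spectral inclusion/mapping statements need more than the factorisation and ``parametrix'' remarks you give, since the auxiliary functions involved must be shown to lie in $\Bes$; these too are proved in \cite{BGT}, to which the paper defers for the whole theorem.
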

In the case of holomorphic semigroups the estimate \eqref{b_estim} can be given a different, more explicit form (see \cite[Corollary 4.8]{BGT}).
The improvement is accomplished via a trick employing the integrability of $(\cdot+A)^{-2}$ along vertical lines in the uniform operator topology.

We underline that in contrast to other calculi considered in the literature, the $\mathcal B$-calculus is a Banach algebra
homomorphism. Thus it is the calculus in a sense adopted in classical operator theory, and this is a rare case when one deals with unbounded
operators.

The algebra $\mathcal B$ can also be defined in a standard way via an appropriate Littlewood-Paley decomposition
of its boundary values on $i\mathbb R$, see  \cite[Appendix]{BGT} for more details.
The main advantage of \eqref{b_estim} with $\|\cdot\|_{\mathcal B}$ defined by \eqref{b-norm} is that as a rule we are given 
a function $f \in \mathcal B$
 and not a Littlewood-Paley decomposition of $f$.  So it is very convenient to estimate the norm $\|f\|_{\mathcal B}$ and thus $\|f(A)\|$ in terms of the comparatively simple bound
for  $f'$ in the right half-plane. A number of concrete examples in \cite[Section 5]{BGT}, including functions such as $(z-1)^n(z+1)^{-n}$ (where $n \in \mathbb N$), $z^2(z+1)^{-2}e^{-1/z}$, $(\log (z+2))^{-2}$, and also those in the present paper,
justify this point. At the same time, the estimates for $\|f(A)\|$ by means of Littlewood-Paley decompositions of $f$ are rather involved, and they often rely on the Fourier multiplier theory even for comparatively simple $f$.

The route from \eqref{bhp} to Theorem \ref{homomor_intro} is not straightforward, and a formal insertion of $A$ instead of $z$ in \eqref{bhp} requires several steps of independent interest. Roughly, one extends $\Phi_A$ from $\mathcal {LM}$
where it coincides with the Hille-Phillips calculus, to the whole of $\mathcal B$  using the weak topology
induced on $\mathcal B$ by \eqref{dual} and applying several approximate unit arguments.
An essential point on the way is that $\mathcal {LM}$ appears to be dense in $\mathcal B$ in the $\mathcal E$-weak topology
(see \cite[Lemmas 2.13 and 2.19]{BGT}).

Once a general structure of the $\mathcal B$-calculus is established and its value has become clear,
it is natural to clarify its particular aspects and to reveal the relations between them.   This is the aim of the present paper.
The first natural question we address here is whether Theorem \ref{homomor_intro} is a really optimal result (for the corresponding classes of semigroup generators). In Section \ref{necuni}, we complement Theorem \ref{homomor_intro} by proving
a) the necessity of the resolvent condition \eqref{8.1} for the existence of $\Phi_A$;
b) the uniqueness of $\Phi_A$; and 
c) the (pointwise) multiplier algebra of $\mathcal B_0$ is $\Bes$, so $\Phi_A$ cannot be extended beyond $\Bes$ by multiplier methods.
 
As we mentioned above, the reproducing formula \eqref{bhp} was a crucial matter in \cite{BGT}, and it was obtained in \cite{BGT} by means of the $\mathcal E$-weak topology and an involved approximation procedure. 
However, in the literature, one gets similar formulas 
 by extending a ``standard'' duality
on $i\mathbb R$ into the right half-plane by Green's formula, as in \eqref{Green}. We establish a new version of Green's formula  for  $f \in H^1(\mathbb C_+)$ and $g \in \mathcal B_0$, and we show in Theorem \ref{rep} that the formula implies   \eqref{dual}.
Moreover, we obtain several other reproducing formulas. This clarifies \eqref{dual} in our context, and 
enlightens the construction of the $\mathcal B$-calculus.

So far, the only practical way to construct $\mathcal B$-functions was to resort to $\mathcal{LM} \subset \mathcal B$.
Thus, the advantages of a larger algebra $\mathcal B$ were hardly feasible.   In Proposition \ref{L1} and Remark \ref{L1L}, we give a recipe for constructing $\mathcal B$-functions and show, in particular, that any pair of functions $g_1 \in L^1(\mathbb R_+)$ and $g_2 \in L^\infty(\mathbb R)$ gives rise to a function $f \in \Bes$ given by
\[
f(z) = -\frac{2}{\pi} \int_0^\infty  \int_\R \frac{\a g_1(\a) g_2(\b)}{(z+\a-i\b)^2} \,d\b\,d\a, \qquad z \in \C_+.
\]
This approach also yields several useful statements on approximation of $\mathcal B$-functions by means of their truncations, as in Propositions \ref{Pr26} and \ref{Pr28}.  In turn, those results allow us to identify in Theorem \ref{CCCA} the closures of $\mathcal {LM}$ and of its important subspaces in the $\mathcal B$-norm.   By combining our function-theoretic considerations with the $\mathcal B$-calculus, we prove a new spectral mapping theorem (Theorem \ref{rajchman}) for the  HP (and then $\mathcal B$)-calculus, involving singular measures.  Such a spectral mapping theorem is the first result of this kind in the literature (to our knowledge).   It is instructive to recall that the well-known failure of the spectral mapping theorem for $C_0$-semigroups is a failure within the HP-calculus for delta-measures.

Since the extension of the HP-calculus to $\mathcal B$ is not isometric, it is essential
to understand the gap between the two calculi in several situations of interest.   The gap can be substantial, of polynomial growth, for simple families of functions, see Section \ref{compnorm} for more details.  Hence the $\mathcal B$-calculus offers an essential improvement over the HP-calculus, at least as far as the norm-estimates are concerned. 

Finally in this paper, we extend several classical results from semigroup theory on Hilbert spaces into the setting of the $\mathcal B$-calculus.   With some exaggeration, the theory of $C_0$-semigroups can be considered as the study 
of properties of exponential functions within  appropriate functional calculi.
We make a major step towards justifying this point of view. Recall that the semigroup generator $-A$ is (initially) defined as the right-hand side derivative of the exponential function $[0,\infty) \ni t\mapsto e^{-tA}$ on its natural domain.
We prove in Theorem \ref{Gen2} that if $-A$ generates a bounded $C_0$-semigroup on a Hilbert space, then $-A$  can be identified in precisely the same way by replacing $(e^{-tA})_{t \ge 0}$  with any one-parameter family
$(f(tA))_{t \ge 0}$,  where $f \in \mathcal B$ is such that  $f' \in \Bes$ and $f'(0)=-1$.
Moreover, the famous norm-continuity and exponential stability resolvent criteria for Hilbert space semigroups
can be extended to a similar general form. Our general form of the norm-continuity criterion  given in Theorem \ref{Norm1}
seems to be especially revealing.   We also obtain a version for functions in $\Bes$ of the complex inversion formula for $C_0$-semigroups.  The functional calculus ideology in the study of semigroup properties is clearly seen in those results and also in the treatment of spectral mapping properties in Section \ref{SMT}. We expect it will be useful in other similar instances.

We note here that more advanced functional calculi for generators of holomorphic semigroups have been constructed very recently in \cite{BGT21}.

\subsection*{Notation}
Throughout the paper, we shall use the following notation:
\begin{enumerate}[\phantom{X}]
\item $\R_+ :=[0,\infty)$,
\item $\C_+ := \{z \in\C: \Re z>0\}$, $\overline{\C}_+ = \{z \in\C: \Re z\ge0\}$,
\item $\Sigma_\theta := \{z\in\C: z \ne 0, |\arg z|<\theta\}$ for $\theta \in (0,\pi)$, 
\item $\mathrm{R}_a := \{z\in\C: \Re z > a\}$.
\item
\end{enumerate}
For $f : \C_+ \to \C$ and $s \in \R$, we write
\begin{gather*}
f(\infty) = \lim_{\Re z \to \infty} f(z),  \quad \|f\|_\infty = \sup_{z \in \C_+} |f(z)|, \\
 f^b(s) =  \lim_{t\to0+} f(t+is),
\end{gather*}
whenever these  exist in $\C$.

\noindent
For $a \in \overline{\C}_+$, we define functions on $\C_+$ by
\[
e_a(z) = e^{-az}, \; r_a(z) = (z+a)^{-1}.
\]

\noindent
We use the following notation for spaces of functions or measures, and transforms, on $\R$ or $\R_+$:
\begin{enumerate}[\phantom{X}]
\item $\mathcal{S}(\R)$ denotes the Schwartz space on $\R$,
\item  $\operatorname{Hol}(\Omega)$ denotes the space of holomorphic functions on an open subset $\Omega$ of $\C_+$,
\item $H^\infty(\C_+)$ and $H^1(\C_+)$ are the standard Hardy spaces on the (right) half-plane.
\item $M(\R_+)$ denotes the Banach algebra of all bounded Borel measures on $\R_+$ under convolution.   We identify $L^1(\R_+)$ with a subalgebra of $M(\R_+)$ in the usual way.  We write $\lt\mu$ for the Laplace transform of $\mu \in M(\R_+)$.
\end{enumerate}

\noindent
For normed vector spaces $X$ and $Y$, $L(X,Y)$ denotes the space of all bounded linear operators from $X$ to $Y$, and $L(X) = L(X,X)$.   The term ``operator on $X$'' will be used to indicate a linear operator $A : D(A)\to X$ on $X$, where the domain $D(A)$ is a linear subspace of $X$.   In this paper $D(A)$ will normally be dense in $X$ and $A$ will be closed.   The spectrum and resolvent set of $A$ will be denoted by $\sigma(A)$ and $\rho(A)$, respectively.    Note that $A$ is automatically closed if $X$ is a Banach space and $\rho(A)$ is non-empty.

\subsection*{Properties of $\Bes$}

Here we recall from \cite[Section 2]{BGT} a few basic facts about the analytic Besov algebra $\Bes$ and its norms defined in \eqref{bdef0} and \eqref{b-norm}. 

Every function $f \in \Bes$ is bounded and uniformly continuous on $\C_+$, and 
\[
f(\infty) := \lim_{\Re z \to \infty} f(z)
\]
exists.  Moreover, $f$ extends to a uniformly continuous function (also denoted by $f$) on $\overline{\C}_+$, and the boundary function $f^b$ satisfies
\[
f^b(s) = f(is) :=  \lim_{z\in\C_+, z\to is} f(z), \qquad s\in\R.
\]

The pair $(\Bes, \|\cdot\|_\Bes)$ is a Banach algebra containing the constant functions.   We shall also consider 
\[
\Bq := \{f\in\Bes: f(\infty)=0\},
\]
which is a closed ideal of $\Bes$.

For $f\in\Bes$, 
\begin{equation} \label{eqnm}
\|f\|_\infty \le |f(\infty)| + \|f\|_\Bq.
\end{equation}
Hence  $|f(\infty)| + \|f\|_\Bq$ is an equivalent norm on $\Bes$, but it is not an algebra norm.

Since $\Bes \subset H^\infty(\C_+)$, we have standard  Cauchy formulas for $f\in \Bes$:
\[
f(z) = \frac{f(\infty)}{2} - \frac{1}{2\pi} \int_\R \frac{f^b(s)}{is-z} \,ds,   \quad  f'(z) = - \frac{1}{2\pi} \int_\R \frac{f^b(s)}{(is-z)^2} \,ds,
\]
where the first integral is a principal value integral, and the second integral provides the estimate
 \begin{equation} \label{f'est}
 |f'(z)| \le \frac{\|f\|_\infty}{2 \Re z}, \qquad z \in \C_+.
 \end{equation}
 Thus $f'(\infty)=0$ for all $f \in \Bes$.  We also have the Poisson reproducing formula:
\begin{equation} \label{poisson}
 f(x+iy) =  \frac{1}{\pi} \int_\R P(x,y-s) f^b(s) \, ds, \qquad P(x,y) = \frac{x}{\pi(x^2+y^2)}.
 \end{equation}

We recall from \cite[Lemmas 2.6 and 4.6]{BGT} some properties of shifts and rescalings on  $\Bes$.

\begin{lemma} \label{shifts01}
Let
\[
  (T_\Bes(a)f)(z):=f(z+a), \qquad  f \in \Bes, \; a\in \overline\C_+, \; z\in \C_+.
	\]
\begin{enumerate}[\rm1.]
\item   For each $f \in \Bes$,
\[
\|T_\Bes(a)f\|_\Bes \le \|f\|_\Bes, \quad \lim_{a\in\overline\C_+, a\to0}\,\|T_\Bes(a)f-f\|_\Bes=0.
\]
\item The family $(T_\Bes(a))_{a\in\C_+}$ is a holomorphic 
$C_0$-semigroup of contractions on $\Bes$.
\item  Let $-A_\Bes$ be the generator of the $C_0$-semigroup $(T_\Bes(t))_{t\ge0}$ on $\Bes$.  Then
\[
D(A_\Bes) = \{ f \in \Bes: f' \in \Bes\},  \quad A_\Bes f = -f'.
\]
\item The generator of the $C_0$-group $(T_\Bes(-is))_{s\in\R}$ is $iA_\Bes$.
\item $\sigma(A_\Bes) = \R_+$, and the range of $A_B$ is dense in $\Bq$.
\item The family $(tA_\Bes T_\Bes(t))_{t>0}$ is uniformly bounded in $L(\Bq)$, and it converges in the strong operator topology to $0$ as $t\to0+$ and as $t\to\infty$.
\item \label{012} Let $f\in\Bes$ and $(S_\Bes(b)f)(z) = f(bz)$, $b>0$.  Then $S_\Bes(b)f \in \Bes$ and $\|S_\Bes(b)f\|_\Bes = \|f\|_\Bes$.
\end{enumerate}
\end{lemma}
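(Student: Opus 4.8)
The plan is to handle first the norm identities in parts~1 (first inequality), 2 (contractivity and the semigroup law) and~7, which are elementary changes of variable; then the strong continuity in part~1, which is the only genuinely delicate point; and finally parts~2--6, which follow formally once $(T_\Bes(t))_{t\ge0}$ is known to be a $C_0$-contraction semigroup, aided by one quantitative estimate for $f'$ and $f''$. For the norm identities: since $z\mapsto z+a$ maps $\C_+$ into $\C_+$ for $a\in\overline{\C}_+$, and $z\mapsto bz$ maps $\C_+$ onto $\C_+$ for $b>0$, we get at once $\|T_\Bes(a)f\|_\infty\le\|f\|_\infty$ and $\|S_\Bes(b)f\|_\infty=\|f\|_\infty$. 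Using $(T_\Bes(a)f)'=f'(\cdot+a)$ and writing $a=u+iv$, the supremum over $\beta\in\R$ absorbs the shift $v$ and the substitution $\alpha\mapsto\alpha+u$ gives $\|T_\Bes(a)f\|_\Bq=\int_u^\infty\sup_\beta|f'(\sigma+i\beta)|\,d\sigma\le\|f\|_\Bq$; likewise $(S_\Bes(b)f)'(z)=bf'(bz)$, and after $\alpha\mapsto\alpha/b$, $\beta\mapsto\beta/b$ the factor $b$ cancels the Jacobian, so $\|S_\Bes(b)f\|_\Bq=\|f\|_\Bq$. The semigroup law is immediate, and the same computation shows each $T_\Bes(-is)$, $s\in\R$, is an isometry of $\Bes$.

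\emph{Strong continuity --- the crux.} By contractivity and $T_\Bes(u+iv)=T_\Bes(iv)T_\Bes(u)$ it suffices to prove $\|T_\Bes(a)f-f\|_\Bes\to0$ as $a\to0$ in $\overline{\C}_+$. The $\|\cdot\|_\infty$-part is the uniform continuity of $f$ on $\overline{\C}_+$, valid for all $f\in\Bes$. For the $\Bq$-part one must show that $\int_0^\infty\sup_\beta|f'(\alpha+a+i\beta)-f'(\alpha+i\beta)|\,d\alpha\to0$; fix $\delta>0$ and split the integral at $\delta$. On $[0,\delta]$ the integrand is at most $\sup_\beta|f'(\alpha+\Re a+i\beta)|+\sup_\beta|f'(\alpha+i\beta)|$, whose integral over $[0,\delta]$ is $\le2\int_0^{2\delta}\sup_\beta|f'(\sigma+i\beta)|\,d\sigma$ once $|a|\le\delta$, and this is small with $\delta$ because $\sigma\mapsto\sup_\beta|f'(\sigma+i\beta)|$ lies in $L^1(0,\infty)$ by the definition of $\Bes$. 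On $[\delta,\infty)$ one writes $f'(\alpha+a+i\beta)-f'(\alpha+i\beta)=a\int_0^1f''(\alpha+ta+i\beta)\,dt$ and inserts the Cauchy estimate $|f''(\sigma+i\beta)|\le2\|f\|_\infty/\sigma^2$ (obtained by applying $|g'(w)|\le\|g\|_\infty/(2\Re w)$ to $g=f'$ on a disc of radius $\sigma/2$), which yields a bound $\le2|a|\,\|f\|_\infty/\delta$; letting $\delta\to0$ and then $a\to0$ finishes the proof. This is the main obstacle: near the boundary $\Re z=0$ there is no quantitative control on $f'$ beyond the bare $L^1$-integrability that defines $\Bes$, so the estimate has to be split into a boundary layer handled by that integrability and an interior region handled by the Cauchy bound for $f''$. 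Granting this, $(T_\Bes(t))_{t\ge0}$ is a $C_0$-contraction semigroup and $(T_\Bes(-is))_{s\in\R}$ a $C_0$-group of isometries, so $A_\Bes$ and the group generator are well defined.

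\emph{Generator, group, holomorphy.} If $f\in\Bes$ with $f'\in\Bes$, then $\tfrac1t\bigl(T_\Bes(t)f-f\bigr)=\tfrac1t\int_0^tT_\Bes(s)f'\,ds$, a $\Bes$-valued Bochner integral thanks to the continuity of $s\mapsto T_\Bes(s)f'$ just established, and this converges to $f'$ in $\Bes$ as $t\to0+$; hence $f\in D(A_\Bes)$ with $A_\Bes f=-f'$. Conversely, norm convergence of $\tfrac1t(T_\Bes(t)f-f)$ in $\Bes$ forces pointwise convergence, whose limit is $f'$, so $f'\in\Bes$; this proves part~3, and the identical computation for $T_\Bes(-is)$ gives part~4. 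For part~2, note that for $t>0$ the bounds $|f'(w)|\le\|f\|_\infty/(2\Re w)$ and $|f''(\sigma+i\beta)|\le2\|f\|_\infty/\sigma^2$ yield $f'(\cdot+t)\in\Bes$ with $\|f'(\cdot+t)\|_\Bes\le\tfrac{5}{2t}\|f\|_\infty$; hence $T_\Bes(t)\Bes\subset D(A_\Bes)$ and $\sup_{t>0}\|tA_\Bes T_\Bes(t)\|_{L(\Bes)}\le\tfrac52$, so by the standard characterisation the contraction semigroup is bounded holomorphic, consistent with its explicit bounded extension $(T_\Bes(a))_{a\in\C_+}$.

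\emph{Spectrum and asymptotics.} For each $\theta\in(-\tfrac\pi2,\tfrac\pi2)$ the family $t\mapsto T_\Bes(te^{i\theta})$ is a bounded $C_0$-semigroup with generator $-e^{i\theta}A_\Bes$, so $\sigma(A_\Bes)\subset\bigcap_\theta e^{-i\theta}\overline{\C}_+=\R_+$; conversely $e_\lambda\in\Bes$ with $A_\Bes e_\lambda=-e_\lambda'=\lambda e_\lambda$ for every $\lambda>0$, and $\sigma(A_\Bes)$ is closed, so $\sigma(A_\Bes)=\R_+$. The range of $A_\Bes$ is dense in $\Bq$ because $\ssp\subset D(A_\Bes)$, $A_\Bes\ssp\subset\ssp$ and $A_\Bes$ maps $\ssp$ onto $\ssp$ --- given $g\in\ssp$ one solves $-f'=g$ by dividing the Fourier transform of $g^b$ by $-is$, which is legitimate since that support is bounded away from $0$ --- while $\overline{\ssp}=\Bq$. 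Finally, for part~6 the estimate from part~2 restricts to $\Bq$ (note that $A_\Bes T_\Bes(t)f=-f'(\cdot+t)$ vanishes at $\infty$), giving the uniform boundedness; as $t\to0+$, $tA_\Bes T_\Bes(t)f=tT_\Bes(t)A_\Bes f\to0$ on the dense set $D(A_\Bes)$, hence everywhere; and as $t\to\infty$ one writes $tA_\Bes T_\Bes(t)f=\bigl(tA_\Bes T_\Bes(t/2)\bigr)T_\Bes(t/2)f$ and combines $\|tA_\Bes T_\Bes(t/2)\|\le5$ with $\|T_\Bes(s)f\|_\Bes\to0$ as $s\to\infty$ for $f\in\Bq$, since both $\|f(\cdot+s)\|_\infty$ and the tail $\int_s^\infty\sup_\beta|f'(\sigma+i\beta)|\,d\sigma$ tend to $0$.
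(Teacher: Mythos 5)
Your proposal is sound, and it is necessarily a different route from the paper's, because the paper does not reprove these facts: it cites \cite{BGT} (Lemmas 2.6 and 4.6, and the proof of Proposition 2.10 for the density of the range) and only argues part 6 briefly. Your self-contained argument is correct in substance: the change-of-variable norm identities for parts 1, 2 and 7; the two-zone estimate for strong continuity (boundary layer controlled by the $L^1$ definition of $\|\cdot\|_\Bq$, interior controlled by a Cauchy bound $|f''(\sigma+i\beta)|\le C\|f\|_\infty\sigma^{-2}$, the exact constant being immaterial); the Bochner-integral identification of $D(A_\Bes)$ and of the group generator; the eigenfunctions $e_\lambda$ plus the rotated semigroups for $\sigma(A_\Bes)=\R_+$; and $A_\Bes\ssp=\ssp$ together with $\overline{\ssp}=\Bq$ (a fact the paper also takes from \cite{BGT}) for the density of the range. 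For part 6 at $t\to\infty$ you use $\|T_\Bes(t/2)f\|_\Bes\le 2\int_{t/2}^\infty\sup_\beta|f'(\sigma+i\beta)|\,d\sigma\to0$ for $f\in\Bq$, whereas the paper uses boundedness of $(t^2A_\Bes^2T_\Bes(2t))_{t>0}$ and density of the range of $A_\Bes$; your variant is a little more economical since it does not invoke part 5 there.

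Two points need tightening. First, in the strong-continuity estimate the limits must be taken in the order: for fixed $\delta$ let $a\to0$ (killing the term $2|a|\,\|f\|_\infty/\delta$), and only then $\delta\to0$; as written (``letting $\delta\to0$ and then $a\to0$'') the interior bound degenerates. Second, the criterion $\sup_{t>0}t\|A_\Bes T_\Bes(t)\|<\infty$ only yields holomorphy in some sector of half-angle strictly less than $\pi/2$, while part 2 asserts that the explicitly given family $(T_\Bes(a))_{a\in\C_+}$ is holomorphic on the whole half-plane. To close this, either compute the difference quotient at $a_0\in\C_+$ directly: for $|h|<\tfrac12\Re a_0$,
\begin{equation*}
T_\Bes(a_0+h)f-T_\Bes(a_0)f \;=\; h\int_0^1 T_\Bes\bigl(\tfrac{a_0}{2}+th\bigr)\,g\,dt, \qquad g:=f'\bigl(\cdot+\tfrac{a_0}{2}\bigr)\in\Bes,
\end{equation*}
a $\Bes$-valued Bochner integral which by part 1 (and contractivity) converges, after division by $h$, to $f'(\cdot+a_0)$ in $\Bes$ as $h\to0$; or invoke the standard fact that a locally bounded vector-valued map which is holomorphic against a separating family of bounded functionals (here the point evaluations, bounded since $\|\cdot\|_\infty\le\|\cdot\|_\Bes$) is holomorphic. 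With that one-line supplement your argument delivers the full statement.
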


\begin{proof}  Most of these statements are proved in \cite[Lemmas 2.6 and 4.6]{BGT}.   The density of the range of $A_\Bes$ is 
shown in the proof of \cite[Proposition 2.10]{BGT}.  In the sixth part, the boundedness is a standard fact about bounded holomorphic semigroups, the convergence to $0$ as $t\to0+$ follows from the boundedness and the density of the domain of $A_\Bes$, and the convergence as $t\to\infty$ follows from the boundedness of $(t^2A_\Bes^2 T_\Bes(2t))_{t>0}$ and the density of the range of $A_\Bes$.
\end{proof}

\section{Reproducing formulas} \label{sectB}

In this section we give further results relating to the reproducing formula \eqref{bhp}, including an alternative proof  of \eqref{bhp} and some variants.  We begin by recalling from \cite[Section 2]{BGT} some basic facts about the analytic Besov algebra $\Bes$ and its norms defined in \eqref{bdef0} and \eqref{b-norm}. 

Let $\Bov$ be the space of holomorphic functions on $\C_+$ defined in \eqref{defE}, and let $\|\cdot\|_{\Bov_0}$ and $\|\cdot\|_\Bov$ be as defined there.   Note the partial duality $\langle g,f \rangle_\Bes$ defined in \eqref{dual}, and the reproducing formula defined in \eqref{bhp}, for $g \in \Bov$ and $f \in \Bes$.

 In \cite[Proposition 2.16]{BGT} we showed that $H^1(\C_+) \subset \Bov$.   Recall that if $g \in H^1(\mathbb C_+)$ then the boundary function $g^b$ exists a.e.\ on $\mathbb R$ and 
 \begin{equation} \label{H1sc}
 \lim_{a \to 0+}\|g^b-g(a+i\cdot)\|_{L^1(\mathbb R)}=0.
 \end{equation}
We will need the following simple properties of functions from $H^1(\C_+)$ and $\mathcal B$, firstly to provide a simpler form of \eqref{dual} in Proposition \ref{P1} and then to give a more direct proof of \eqref{bhp} in Theorem \ref{rep}.

\begin{prop}\label{P1}

\begin{enumerate}[\rm1.]

\item Let $g\in H^1(\mathbb C_{+})$, $\omega>0$. Then
\begin{equation}\label{A1}
\lim_{|z|\to\infty, z \in \RR_\omega}\,\bigl(|g(z)|+|g'(z)| \bigr)=0.
\end{equation}
\item Let $f\in \mathcal B$. Then
\begin{equation}\label{B1}
\lim_{x\to 0}\, x\sup_{y\in \mathbb R}\,|f'(x+iy)|=0,\qquad
\lim_{x\to\infty }\, x\sup_{y\in \mathbb R}\,|f'(x+iy)|=0.
\end{equation}
\end{enumerate}
\end{prop}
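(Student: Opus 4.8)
The plan is to prove each part separately, exploiting only elementary Hardy-space and Cauchy-formula estimates that are already available in the excerpt.

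For part 1, I would start from the standard fact that every $g \in H^1(\C_+)$ has $\|g(a+i\cdot)\|_{L^1(\R)}$ uniformly bounded in $a>0$, and in fact $g(a+i\cdot)$ converges in $L^1(\R)$ as $a\to0+$ by \eqref{H1sc}. The decay of $|g(z)|$ on a shifted half-plane $\RR_\omega$ is classical: writing the Poisson (or Cauchy) representation of $g$ at a point $z = x+iy$ with $x \ge \omega$ and using that the Poisson kernel $P(x,y-s)$ is bounded by $\frac{1}{\pi x} \le \frac{1}{\pi\omega}$, one gets $|g(z)| \le \frac{1}{\pi\omega}\|g^b\|_{L^1}$ — but this only gives boundedness, not decay. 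To get decay as $|z|\to\infty$, I would instead split the Cauchy integral $g(z) = \frac{1}{2\pi i}\int_{i\R} \frac{g^b(s)}{is-z}\,ds$ into the region $|s| \le R$ and $|s| > R$; on the first piece $|is - z| \ge |y|-R$ if $|y|$ is large, or $\ge x \ge \omega$ always, and as $|z|\to\infty$ within $\RR_\omega$ either $x\to\infty$ (making $\frac{1}{|is-z|}$ small uniformly for $|s|\le R$) or $|y|\to\infty$ (same conclusion); on the tail piece the contribution is at most $\frac{1}{\omega}\int_{|s|>R}|g^b(s)|\,ds$, which is small for large $R$. A standard $\varepsilon/2$ argument then finishes the claim for $|g(z)|$, and the same argument applied to $g'(z) = \frac{1}{2\pi i}\int_{i\R}\frac{g^b(s)}{(is-z)^2}\,ds$ (with the square in the denominator only helping) handles $|g'(z)|$.

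For part 2, the second limit is essentially immediate: from the bound $|f'(x+iy)| \le \frac{\|f\|_\infty}{2x}$ in \eqref{f'est} we get $x\sup_y|f'(x+iy)| \le \frac{1}{2}\|f\|_\infty$, which is bounded but not obviously decaying — so more care is needed. Instead I would use that the defining integral $\int_0^\infty \sup_\beta|f'(\alpha+i\beta)|\,d\alpha < \infty$ (definition \eqref{bdef0}) forces the integrand $\varphi(\alpha) := \sup_\beta|f'(\alpha+i\beta)|$ to be integrable on $(0,\infty)$; combined with the fact that $\alpha\varphi(\alpha)$ is bounded (by \eqref{f'est}) one cannot quite conclude pointwise decay directly, so the key observation is a monotonicity/subharmonicity argument: for fixed $\beta$, $\alpha \mapsto \log|f'(\alpha+i\beta)|$ behaves like a subharmonic function, and more usefully, applying the Cauchy estimate on a disc of radius $x/2$ centred at $x+iy$ gives $|f'(x+iy)| \le \frac{2}{x}\sup_{|w-(x+iy)|\le x/2}|f(w) - f(\infty)|$... but that still needs decay of $f-f(\infty)$. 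The cleaner route is: write $f'(x+iy)$ using the Cauchy formula for $f'$ on the vertical line $\Re w = x/2$, namely $f'(x+iy) = \frac{1}{\pi}\int_\R \frac{(x/2)\,f'(x/2 + it)}{((x/2)+i(y-t))^2}\cdot(\text{something})$ — more precisely differentiate the Poisson formula or use that $f' \in \Bov_0$ with $\|f\|_{\Bov_0} = \sup_\alpha \alpha\int_\R|f''|$... Let me instead use: by the reproducing-type estimate, for $0 < a < x$, $x|f'(x+iy)| \le x \cdot \frac{1}{\pi}\int_\R \frac{a}{a^2 + (y-t)^2}|f'(a+it)|\,dt \le \frac{x}{\pi a}\cdot a\sup_t|f'(a+it)| \cdot \frac{\pi}{?}$ — this is getting circular. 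The honest approach: split $\int_0^\infty \varphi = \int_0^\delta \varphi + \int_\delta^\infty \varphi$; by Cauchy's estimate $\varphi(x) \le \frac{C}{x}$, and $\varphi$ decreasing-in-average means that for the limit at $0$ we use $x\varphi(x) \le x \cdot \frac{1}{x-\delta}\int_\delta^x \varphi \cdot (\ldots)$ — no.

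Actually, the correct and simple argument is this. For the limit as $x\to0$: given $\varepsilon>0$, choose $\delta$ with $\int_0^\delta \varphi(\alpha)\,d\alpha < \varepsilon$. For $0 < x < \delta/2$, apply the Cauchy integral representation of $f'$ using the boundary of a vertical strip, or more simply use that $f'' $ is controlled: $f'(x+iy) = f'(\delta + iy) - \int_x^\delta f''(\alpha+iy)\,d\alpha$... and then $x|f'(x+iy)| \le x|f'(\delta+iy)| + x\int_x^\delta |f''(\alpha+iy)|\,d\alpha$. Hmm, but I don't have $f'' \in \Bes$.

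Let me just record the plan at the right level of generality: for part 2 I would reduce to $f \in \Bq$ (subtracting the constant $f(\infty)$ does not change $f'$), and then use the density of $\ssp$ in $\Bq$ together with the fact that functions in $\ssp$, being entire of exponential type and in $H^\infty$, visibly satisfy \eqref{B1}; since the quantity $N(f) := \sup_x x\sup_y|f'(x+iy)|$ — no wait, I need $N$ continuous in the $\Bes$-norm and the limits to pass to the limit, which requires uniformity. The clean statement: the map $f \mapsto (\text{the two limit quantities})$ — one shows $x\sup_y|f'(x+iy)| \le \int_{x/2}^{3x/2}\varphi(\alpha)\,d\alpha \cdot C$ via a Cauchy estimate on a disc of radius $x/2$ (giving $|f'(x+iy)| \le \frac{C}{x}\sup_{B}|f' - 0|$? no).

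\textbf{The main obstacle}, as the above flailing shows, is precisely part 2: deducing the \emph{pointwise-in-}$y$, \emph{uniform} decay $x\sup_y|f'(x+iy)| \to 0$ from the mere \emph{integrability} $\int_0^\infty \sup_y|f'(\alpha+iy)|\,d\alpha < \infty$. Integrability of a function on $(0,\infty)$ does not imply it is $o(1/\alpha)$ at the endpoints, so one genuinely needs extra regularity of $\alpha\mapsto f'(\alpha+iy)$, which comes from holomorphy. The right tool is a Cauchy estimate on a disc: for $z = x+iy$, $f'(z) = \frac{1}{2\pi i}\oint_{|w-z|=x/2}\frac{f'(w)}{w-z}\,dw$ is not quite it; rather $f''(z) = \frac{1}{2\pi i}\oint \frac{f'(w)}{(w-z)^2}\,dw$ gives $|f''(z)| \le \frac{2}{x}\sup_{|w-z|=x/2}|f'(w)| \le \frac{2}{x}\cdot\frac{2\|f\|_\infty}{x}$, i.e. $|f''(z)| \le \frac{C\|f\|_\infty}{x^2}$; then $x|f'(x+iy)| \le x|f'(x_0+iy)| + x\int_x^{x_0}|f''(\alpha+iy)|\,d\alpha$, and for the limit at $\infty$ choose $x_0 = 2x$ so the integral term is $\le x\cdot x\cdot \frac{C\|f\|_\infty}{x^2} = C\|f\|_\infty$ (bounded, useless) — so instead integrate $f''$ from $x$ to $\infty$: $f'(x+iy) = -\int_x^\infty f''(\alpha+iy)\,d\alpha$ (valid since $f'(\infty)=0$), hence $x|f'(x+iy)| \le x\int_x^\infty \sup_y|f''(\alpha+iy)|\,d\alpha$, and $\sup_y|f''(\alpha+iy)| \le \frac{C}{\alpha}\sup_y|f'(\alpha/2+iy)| = \frac{C}{\alpha}\varphi(\alpha/2)$, giving $x|f'(x+iy)| \le Cx\int_x^\infty \frac{\varphi(\alpha/2)}{\alpha}\,d\alpha \le C\int_{x/2}^\infty \varphi(\beta)\,d\beta \to 0$ as $x\to\infty$ by dominated convergence (tail of an integrable function). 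For the limit at $0$: $x|f'(x+iy)| = x|f'(x_0+iy) - \int_x^{x_0}f''(\alpha+iy)\,d\alpha| \le x|f'(x_0+iy)| + x\int_x^{x_0}\frac{C}{\alpha}\varphi(\alpha/2)\,d\alpha$; the first term $\to 0$ as $x\to 0$ for fixed $x_0$; the second is $\le Cx_0 \cdot \frac{1}{x}\cdot x \int \ldots$ — bound $\frac{x}{\alpha} \le 1$ for $\alpha \ge x$, giving $\le C\int_{x/2}^{x_0/2}\varphi(\beta)\,d\beta \le C\int_0^{x_0/2}\varphi$, which is small for small $x_0$. So: fix $\varepsilon$, pick $x_0$ with $C\int_0^{x_0/2}\varphi < \varepsilon/2$, then pick $x$ small with $x|f'(x_0+iy)| < \varepsilon/2$ uniformly in $y$ (using \eqref{f'est}: $x|f'(x_0+iy)| \le x\cdot\frac{\|f\|_\infty}{2x_0} \to 0$). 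Done. I would present the argument in this order: (i) Cauchy estimate $\sup_y|f''(\alpha+iy)| \le C\varphi(\alpha/2)/\alpha$; (ii) the representation $f'(x+iy) = f'(x_0+iy) - \int_x^{x_0}f''(\alpha+iy)\,d\alpha$; (iii) the two $\varepsilon$-arguments above.
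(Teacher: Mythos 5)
Your part~1 is essentially the paper's argument: the paper writes $g$ and $g'$ via the Cauchy integrals $g(z)=\frac{1}{2\pi}\int_\R \frac{g^b(s)}{z-is}\,ds$, $g'(z)=-\frac{1}{2\pi}\int_\R \frac{g^b(s)}{(z-is)^2}\,ds$ and invokes dominated convergence (the kernels are bounded by $1/\omega$, resp.\ $1/\omega^2$, on $\RR_\omega$ and tend to $0$ pointwise as $|z|\to\infty$); your $R$-splitting with an $\varepsilon/2$ argument is just a hand-made version of the same thing, so no comment is needed there.

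For part~2 your final argument (Cauchy estimate for $f''$, then $f'(x+iy)=f'(x_0+iy)-\int_x^{x_0}f''(\alpha+iy)\,d\alpha$ resp.\ $f'(x+iy)=-\int_x^\infty f''(\alpha+iy)\,d\alpha$, then absolute continuity of $\int\varphi$) is correct, but it is a longer route than the paper's, and it silently uses the very fact on which the paper's proof turns. The Cauchy estimate on the disc of radius $\alpha/2$ centred at $\alpha+iy$ only gives $|f''(\alpha+iy)|\le \frac{2}{\alpha}\sup_{|w-(\alpha+iy)|=\alpha/2}|f'(w)|$, and the points $w$ on that circle have real parts running through $[\alpha/2,3\alpha/2]$; to replace this sup by $\varphi(\alpha/2)=\sup_y|f'(\alpha/2+iy)|$ you need that $x\mapsto\sup_y|f'(x+iy)|$ is decreasing, i.e.\ the maximum principle (equivalently the Poisson representation of the bounded function $f'$ on the half-plane $\RR_{\alpha/2}$). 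That monotonicity is precisely the paper's observation, and once you have it the proof collapses to one line: since $\varphi$ is decreasing and $\varphi\in L^1(0,\infty)$ by the definition of $\Bes$, one has $x\varphi(x)\le 2\int_{x/2}^x\varphi(t)\,dt\to0$ as $x\to0$ and as $x\to\infty$. So your detour through $f''$ is sound but redundant; if you keep it, state explicitly why the sup over the circle is bounded by $\varphi(\alpha/2)$. Also, only your last paragraph is a proof --- the preceding exploratory material (including several abandoned and incorrect estimates) should be deleted, as you yourself correctly diagnose that mere integrability of $\varphi$ cannot give $x\varphi(x)\to0$ without an additional regularity input.
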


\begin{proof}
The property \eqref{A1}  follows from the integral representations
\[
g(z)=\frac{1}{2\pi}\int_\R \frac{g^b(s)}{z-is} \,ds,\qquad
g'(z)=-\frac{1}{2\pi}\int_\R \frac{g^b(s)}{(z-is)^2} \,ds,\quad z\in \mathbb C_{+},
\]
and the dominated convergence theorem.

To prove \eqref{B1}, it suffices to note that the function $x  \mapsto \sup_{y\in \mathbb R}\,|f'(x+iy)|$ is decreasing on $(0,\infty)$, by the maximum principle, so that
\[
x\sup_{y\in \mathbb R}\,|f'(x+iy)|\le 2\int_{x/2}^x
\sup_{y\in \mathbb R}\,|f'(t+iy)|\,dt\to 0
\]
as $x\to 0$ or $x\to\infty$.
\end{proof}

For $0<a<b<\infty$ and $0<c\le \infty$, let
\[
R(a,b;c):=\{(x,y):\,x\in [a,b],\,|y|\le c\}.
\]
If $F\in C^2(R[a,b;c])$ where $c$ is finite, then
by Green's formula,
\begin{align}  \label{green}
\lefteqn{\int_{R(a,b;c)} x \Delta F(x,y)\,dx \, dy} \\
&= \null -\int_{R(a,b;c)} \frac{\partial F}{\partial x}(x,y)\,dx \, dy
-a\int_{-c}^c \frac{\partial F}{\partial x}(a,y)\,dy + b\int_{-c}^c \frac{\partial F}{\partial x}(b,y)\,dy  \notag\\
&\null \hskip40pt
+\int_a^b x \frac{\partial F}{\partial y}(x,c)\,dx-\int_a^b x \frac{\partial F}{\partial y}(x,-c)\,dx \notag\\
&= \int_{-c}^c F(a,y)\,dy-\int_{-c}^c F(b,y)\,dy
-a\int_{-c}^c  \frac{\partial F}{\partial x}(a,y)\,dy \notag\\
&\null \hskip40pt + b\int_{-c}^c  \frac{\partial F}{\partial x}(b,y)\,dy
+\int_a^b x \frac{\partial F}{\partial y}(x,c)\,dx-\int_a^b x \frac{\partial F}{\partial y}(x,-c)\,dx. \notag
\end{align}

The following proposition partially generalises \cite[Lemma 17]{Taib} which
proved the result for harmonic functions  $g$ and $f$ on $\mathbb C_+$ defined by the Poisson integrals of $G \in L^p(\mathbb R)$ and $F \in L^{p'}(\mathbb R)$,  where $1<p<\infty, \,1/p +1/p'=1$.
It was remarked in \cite[p.456]{Taib} that the lemma can be extended to the case when $G \in L^1(\mathbb R)$ and either $F \in C_0(\mathbb R)$, or $F$ is bounded and uniformly continuous and $\{t\in\R: |F(t)|>\ep\}$ has finite Lebesgue measure for every $\ep >0$.
   Our statement goes beyond these assumptions, as some functions in $\mathcal B$, such as $e_a, \, a \in \overline{\C}_+$, satisfy neither of the conditions from \cite{Taib}.    Other, less general, versions of Taibleson's result can be found in \cite{FS}.

\begin{prop}\label{Dd}
Let $g\in H^1(\mathbb C_{+})$ and $f\in \mathcal B_0$.   Then
\begin{equation}\label{22}
\langle g,f\rangle_{\mathcal B}=\frac{1}{4}\int_\R g^b(-y)f^b(y)\,dy.
\end{equation}
\end{prop}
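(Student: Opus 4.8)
The plan is to start from the finite-rectangle Green's formula \eqref{green} applied to $F(x,y) = g(x-iy)f(x+iy)$, which is the product of a holomorphic function of $x+iy$ and a holomorphic function of $x-iy$; a short computation gives $\Delta F(x,y) = 4\,g'(x-iy)f'(x+iy)$, so the left-hand side of \eqref{green} becomes $4\int_{R(a,b;c)} x\,g'(x-iy)f'(x+iy)\,dx\,dy$. The goal is then to let $a\to0+$, $b\to\infty$ and $c\to\infty$ in the resulting identity and show that the boundary terms at $x=a$, $x=b$ and $y=\pm c$ all vanish, while the term $\int_{-c}^c F(a,y)\,dy - \int_{-c}^c F(b,y)\,dy$ converges to $-\int_\R g^b(-y)f^b(y)\,dy$, which after dividing by $4$ yields \eqref{22}. (Note $F(a,y) = g(a-iy)f(a+iy)$, and as $a\to0+$ this tends to $g^b(-y)f^b(y)$.)

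The order of limits I would take is: first $c\to\infty$ for fixed $a,b$, then $b\to\infty$, then $a\to0+$. For fixed $a,b$ the function $F$ is bounded and $C^2$ on the closed strip $a\le x\le b$ (since $g,f$ and their derivatives are bounded there), so letting $c\to\infty$ is governed by decay in $y$. The decay of $g(x\mp iy)$ and $g'(x\mp iy)$ as $|y|\to\infty$ uniformly on $x\in[a,b]$ is exactly the content of Proposition~\ref{P1}(1) (with $\omega = a$), while $f, f'$ are bounded; this kills the two $y=\pm c$ boundary integrals $\int_a^b x\,\partial F/\partial y(x,\pm c)\,dx$ in the limit, and makes the $x=a$ and $x=b$ integrals converge to integrals over all of $\R$. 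The term $a\int_\R \partial F/\partial x(a,y)\,dy$ needs care: $\partial F/\partial x = g'(x-iy)f(x+iy) + g(x-iy)f'(x+iy)$, and since $g^b \in L^1(\R)$ one controls $\int_\R |g(a-iy)|\,dy$ and $\int_\R|g'(a-iy)|\,dy$ uniformly in $a\in(0,b]$ (using the Cauchy integral representations for $g$ and $g'$ and Fubini, or directly that $\|g(a+i\cdot)\|_{L^1}$ is bounded), while $f$ is bounded and $a|f'(a+iy)|\le \|f\|_\infty/2$ by \eqref{f'est}; thus $a\int_\R \partial F/\partial x(a,y)\,dy \to 0$ as $a\to0+$. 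Similarly $b\int_\R \partial F/\partial x(b,y)\,dy \to 0$ as $b\to\infty$, now using Proposition~\ref{P1}(1) to make $g(b-iy), g'(b-iy)$ small and $|f(b+iy)|$ bounded, together with $b|f'(b+iy)|\le\|f\|_\infty/2$; one should integrate in $y$ over a fixed compact set plus a tail, using $\int_\R|g(b-iy)|\,dy$ bounded for the tail and pointwise smallness on the compact part. The remaining area term $-\int_{R(a,b;\infty)}\partial F/\partial x\,dx\,dy$, which is the first term on the second line of \eqref{green}, equals $-\int_\R\big(F(b,y)-F(a,y)\big)\,dy$ and is absorbed into the telescoping boundary difference, so in fact after the $c\to\infty$ step the identity already reads $4\langle g,f\rangle_{R(a,b;\infty)} = \int_\R F(a,y)\,dy - \int_\R F(b,y)\,dy - a\int_\R\partial_xF(a,y)\,dy + b\int_\R\partial_xF(b,y)\,dy$; then $b\to\infty$ (the term $\int_\R F(b,y)\,dy\to0$ and $b\int_\R\partial_xF(b,y)\,dy\to0$) and $a\to0+$ finish it, the dominated convergence theorem handling $\int_\R F(a,y)\,dy\to\int_\R g^b(-y)f^b(y)\,dy$ via \eqref{H1sc} for the $g$-factor and uniform continuity/boundedness of $f$ on $\overline\C_+$ for the $f$-factor. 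One also checks the area integral $\int_0^\infty x\int_\R|g'(x-iy)f'(x+iy)|\,dx\,dy<\infty$ so that $\langle g,f\rangle_\Bes$ is well defined and equals the limit of its truncations; this follows since $g\in\Bov$ ($H^1\subset\Bov$) and $f\in\Bes\subset\Bov$, i.e.\ both $\|g\|_{\Bov_0}$ and $\|f\|_{\Bov_0}$ are finite, by a Cauchy–Schwarz-type splitting of the double integral, exactly as in \cite{BGT}.

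The main obstacle I expect is the boundary term $b\int_\R \partial F/\partial x(b,y)\,dy$ as $b\to\infty$: unlike the $y=\pm c$ terms, one cannot simply invoke boundedness, and unlike the $a\to0$ term one does not have the $a$-prefactor shrinking. The resolution is that Proposition~\ref{P1}(1) gives $|g(b-iy)|+|g'(b-iy)|\to0$ uniformly for $y$ in compact sets and in fact $\sup_{y\in\R}(|g(b-iy)|+|g'(b-iy)|)\to0$ is not quite what is stated, so I would split $\int_\R = \int_{|y|\le N} + \int_{|y|>N}$: on $|y|>N$ use $\int_\R|g(b-iy)|\,dy\le C$ (from $g\in H^1$) times $b\sup_y|f'|\le\|f\|_\infty/2$ plus $\|f\|_\infty\int_{|y|>N}|g'(b-iy)|\,dy$ — here I additionally use that $b\,|g'(b-iy)|$ is controlled, or better, that $\int_\R b|g'(b-iy)|\,dy$ is bounded since this is $\le\|g\|_{\Bov_0}$ by definition of $\Bov$; on $|y|\le N$ use pointwise decay from Proposition~\ref{P1}(1). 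Choosing $N$ large first, then $b$ large, makes the term arbitrarily small. A parallel split handles $a\int_\R\partial_xF(a,y)\,dy$ as $a\to0$, though there the $a$-prefactor does most of the work. Once these estimates are in place the computation is routine bookkeeping.
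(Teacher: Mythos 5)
Your overall strategy is the same as the paper's (Green's formula on the rectangles $R(a,b;c)$, the limits taken in the order $c\to\infty$, then $b\to\infty$, then $a\to0+$, Proposition~\ref{P1}(1) for the horizontal sides, and \eqref{H1sc} plus uniform continuity of $f$ for the final passage to the boundary), but there is a genuine gap in your treatment of the vertical-side terms $a\int_\R \partial_x F(a,y)\,dy$ and $b\int_\R\partial_x F(b,y)\,dy$, where $\partial_xF=g'(x-iy)f(x+iy)+g(x-iy)f'(x+iy)$. The $g\,f'$ part is harmless: it is bounded by $\|g\|_{H^1}\cdot x\sup_y|f'(x+iy)|$, which tends to $0$ as $x\to0+$ and $x\to\infty$ by Proposition~\ref{P1}(2) (note that your substitute bound $x\sup_y|f'(x+iy)|\le\|f\|_\infty/2$ from \eqref{f'est} only gives boundedness, not smallness). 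The problem is the $g'\,f$ part. Your claim that $\int_\R|g'(a-iy)|\,dy$ is bounded uniformly in $a\in(0,b]$ is false for general $g\in H^1(\C_+)$: the Cauchy representation gives only $\int_\R|g'(a+iy)|\,dy\le \|g\|_{H^1}/(2a)$, and this rate is attained (lacunary-type examples transferred from the disc), so $a\int_\R|g'(a-iy)|\,|f(a+iy)|\,dy$ need not tend to $0$ — take e.g.\ $f=r_1$, for which $|f|$ is bounded below near the relevant part of the boundary — and no estimate by absolute values can close this step. Similarly, at $x=b$ your compact/tail split combined with $\int_\R b|g'(b-iy)|\,dy\le\|g\|_{\Bov_0}$ produces quantities that are merely bounded, not vanishing.

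The missing idea, which is exactly how the paper proceeds, is a preliminary integration by parts in $y$ on $[-c,c]$: since $\tfrac{\partial}{\partial y}\bigl[g(r-iy)f(r+iy)\bigr]=-ig'(r-iy)f(r+iy)+ig(r-iy)f'(r+iy)$, one has
\begin{equation*}
\int_{-c}^c g'(r-iy)f(r+iy)\,dy \;=\; i\bigl[g(r-ic)f(r+ic)-g(r+ic)f(r-ic)\bigr]+\int_{-c}^c g(r-iy)f'(r+iy)\,dy ,
\end{equation*}
for $r=a,b$. This converts the problematic $g'f$ contributions into $gf'$ contributions plus corner terms; the corner terms (weighted by $a$ or $b$) vanish as $c\to\infty$ by Proposition~\ref{P1}(1), and the remaining $gf'$ terms are then killed by Proposition~\ref{P1}(2) as above. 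With this cancellation step inserted, the rest of your bookkeeping (the vanishing of the horizontal-side terms, $\int_\R|g(b-iy)f(b+iy)|\,dy\to0$ using $f(\infty)=0$, the absolute convergence of the area integral from $g\in\Bov$ and $f\in\Bes$, and the final limit $\int_\R g(a-iy)f(a+iy)\,dy\to\int_\R g^b(-y)f^b(y)\,dy$) matches the paper's proof.
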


\begin{proof}
We apply Green's formula \eqref{green} to
\[
F(x,y):=g(x-iy)f(x+iy), \qquad x+iy \in \mathbb C_+.
\]
Note that
\begin{align*}
\Delta F(x,y)&=4g'(x-iy)f'(x+iy),
\end{align*}
and
\begin{equation}\label{Note}
\int_0^\infty x\int_\R |g'(x-iy)f'(x+iy)|\,dy \, dx<\infty
\end{equation}
since $g \in \mathcal E$ and $f \in \Bes$.  Using \eqref{green} and
\begin{multline*}
\left|\int_{-c}^c  g'(r-iy)f(r+iy)\,dy\right|  \\
\le |g(r-ic)f(r+ic)| + |g(r+ic)f(r-ic)| +\int_{-c}^c  |g(r-iy)f'(r+iy)|\,dy
\end{multline*}
for $r=a$ and $r=b$,
we obtain
\begin{align}
\lefteqn{\left|4\int_{R[a,b;c]} x g'(x-iy)f'(x+iy)\,dx dy-\int_{-c}^c g(a-iy)f(a+iy)\,dy
\right|} \label{C1}\\
&\le
\int_{-c}^c |g(b-iy)f(b+iy)|\,dy + 2a\int_{-c}^c  |g(a-iy)f'(a+iy)|\,dy \notag \\
&\null\hskip20pt  + 2b\int_{-c}^c  |g(b-iy)f'(b+iy)|\,dy \, +a|g(a-ic)f(a+ic)|\notag \\
&\null\hskip20pt +a|g(a+ic)f(a-ic)|+b|g(b-ic)f(b+ic)|+ b|g(b+ic)f(b-ic)|\notag \\
&\null\hskip20pt
+ \int_a^b x \left(|g'(x-ic)f(x+ic)|+|g(x-ic)f'(x+ic)|\right)\,dx \notag \\
&\null\hskip20pt + \int_a^b x \left(|g'(x+ic)f(x-ic)|+|g(x+ic)f'(x-ic)|]\right)\,dx. \notag
 \end{align}

Now let $c\to\infty$ in (\ref{C1}).  By our assumptions,
 \begin{equation}\label{gf}
 f\in H^\infty(\mathbb C_{+}), \; f' \in  H^{\infty}(\RR_a)\quad \text{and} \quad g \in H^1(\mathbb C_+),\; g'\in H^1(\RR_a).
 \end{equation}
From this and Proposition \ref{P1}(1), it follows that
\[
\lim_{c\to\infty}\,\int_a^b x \left(|g'(x\pm ic)f(x\mp ic)|+|g(x\pm ic)f'(x\mp ic)|\right)\,dx=0,
\]
and
\[
\lim_{c\to\infty}\,
|g(r-ic)f(r+ic)| = \lim_{c\to\infty}|g(r+ic)f(r-ic)|=0,\quad r=a,b.
\]
Using \eqref{gf} again,  and also (\ref{Note}), we conclude that
\begin{align}
\lefteqn{\left|4\int_{R[a,b;\infty]} x g'(x-iy)f'(x+iy)\,dx dy-\int_\R g(a-iy)f(a+iy)\,dy
\right|} \label{C3}\\
& \le
\int_\R |g(b-iy)f(b+iy)|\,dy
+2a\int_\R |g(a-iy)f'(a+iy)|\,dy\notag \\
&\null\hskip30pt + 2b\int_\R  |g(b-iy)f'(b+iy)|\,dy.\notag
\end{align}
Next, since $f(\infty)=0$ and $g\in H^1(\mathbb C_{+})$,
\[
\lim_{b\to\infty}\,\int_\R |g(b-iy)f(b+iy)|\,dy=0.
\]
Moreover, in view of Proposition \ref{P1}(2),
\[
\lim_{a\to 0+}\,a\int_\R |g(a-iy)f'(a+iy)|\,dy\le
\|g\|_{H^1}\lim_{a\to 0+}\,a\sup_{y\in \mathbb R}\,|f'(a+iy)|=0,
\]
and
\[
\lim_{b\to \infty}\,b\int_\R |g(b-iy)f'(b+iy)|\le
\|g\|_{H^1}\lim_{b\to \infty}\,b\sup_{y\in \mathbb R}\,|f'(b+iy)|=0.
\]
The assertion (\ref{22}) follows, on letting $a\to 0+$ and $b\to\infty$  in (\ref{C3}) and using Lemma \ref{shifts01}(1) and \eqref{H1sc}.  
\end{proof}

Using the duality formula \eqref{22}, we now give an alternative proof
of the reproducing formula \eqref{bhp} for $\mathcal B$ from \cite[Proposition 2.20]{BGT}.

\begin{thm}\label{rep}
Let $f\in \mathcal B$. Then the reproducing formula holds:
\begin{equation*} 
f(z)=f(\infty)-\frac{2}{\pi}
\int_0^\infty \alpha\int_\R \frac{f'(\alpha+i\beta)}{(z+\alpha-i\beta)^2}\,d\beta \,d\alpha,\qquad z\in \overline{\mathbb C}_{+}.
\end{equation*}
\end{thm}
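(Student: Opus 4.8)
The plan is to deduce the reproducing formula from the duality formula \eqref{22} applied to the pair $g = r_z$ and $f$, after first reducing to the case $f \in \mathcal B_0$. Since both sides of the asserted identity are unchanged if one adds a constant to $f$ (the left side changes by that constant, and so does $f(\infty)$, while $f'$ is unaffected), it suffices to treat $f \in \mathcal B_0$, i.e. $f(\infty) = 0$; then the claim becomes $f(z) = \frac{2}{\pi}\langle r_z, f\rangle_{\mathcal B}$ for $z \in \overline{\mathbb C}_+$.

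First I would record that $r_z \in H^1(\mathbb C_+)$ for every $z \in \mathbb C_+$: indeed $r_z(\lambda) = (\lambda + z)^{-1}$ decays like $|\lambda|^{-1}$ and is integrable on vertical lines $\Re\lambda = a$ uniformly for $a$ in compact subsets of $(0,\infty)$, with the boundary function $r_z^b(s) = (is + z)^{-1} \in L^1(\mathbb R)$ since $\Re z > 0$. Hence $r_z \in H^1(\mathbb C_+)$, and the boundary value of $\lambda \mapsto r_z(-\lambda)$ evaluated at $is$ is $r_z(-is) = (z - is)^{-1}$. Applying Proposition \ref{Dd} with $g = r_z$ then gives
\[
\langle r_z, f \rangle_{\mathcal B} = \frac{1}{4}\int_\R \frac{f^b(y)}{z - iy}\,dy.
\]
Now I would invoke the standard Cauchy/Poisson machinery for $f \in \mathcal B \subset H^\infty(\mathbb C_+)$ recalled in the ``Properties of $\mathcal B$'' subsection: the Poisson reproducing formula \eqref{poisson}, combined with the conjugate Poisson kernel, yields for $f$ with $f(\infty) = 0$ the representation $f(z) = \frac{1}{2\pi}\int_\R \frac{f^b(y)}{i y - z}(-1)\,dy$, i.e. $f(z) = \frac{1}{2\pi}\int_\R \frac{f^b(y)}{z - iy}\,dy$ for $z \in \mathbb C_+$. (Concretely, $\frac{1}{\pi}\,\frac{x}{x^2 + (y-s)^2} + \frac{i}{\pi}\,\frac{y-s}{x^2+(y-s)^2} = \frac{1}{\pi}\,\frac{1}{x - i(y-s)}$, so $P(x, y-s) + iQ(x,y-s) = \frac{1}{\pi}(x + iy - is)^{-1}$; pairing with $f^b(s)$ and using that the conjugate-Poisson contribution reconstructs the harmonic conjugate of $f$ with the correct normalization because $f$ is holomorphic and $f(\infty)=0$ gives the claimed formula.) Comparing the two displays gives $f(z) = \frac{2}{\pi}\langle r_z, f\rangle_{\mathcal B}$ for $z \in \mathbb C_+$, which is exactly \eqref{bhp} written out via \eqref{dual}.

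It remains to extend the identity from $z \in \mathbb C_+$ to $z \in \overline{\mathbb C}_+$, i.e. to boundary points $z = i\tau$. For this I would use continuity: the left side $f(z)$ extends continuously to $\overline{\mathbb C}_+$ since every $\mathcal B$-function does (as recalled before Lemma \ref{shifts01}), and on the right side one checks that $z \mapsto \int_0^\infty \alpha \int_\R \frac{f'(\alpha+i\beta)}{(z+\alpha-i\beta)^2}\,d\beta\,d\alpha$ is continuous on $\overline{\mathbb C}_+$ by dominated convergence: for $z \in \overline{\mathbb C}_+$ one has $|z + \alpha - i\beta|^2 \ge \alpha^2 + (\beta - \Im z)^2 \ge$ (a bound that, together with the integrability \eqref{Note}-type estimate $\int_0^\infty \alpha \sup_\beta |f'(\alpha+i\beta)|\,d\alpha = \|f\|_{\mathcal B_0} < \infty$ and the decay \eqref{f'est}) furnishes a uniform integrable majorant near any fixed boundary point. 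Hence both sides are continuous on $\overline{\mathbb C}_+$ and agree on $\mathbb C_+$, so they agree on $\overline{\mathbb C}_+$.

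I expect the main obstacle to be the honest justification of the half-plane Hardy-space identity $f(z) = \frac{1}{2\pi}\int_\R \frac{f^b(y)}{z-iy}\,dy$ for all $f \in \mathcal B_0$ — strictly speaking this is the statement that the $H^\infty$ function $f$ with $f(\infty) = 0$ is reproduced by the \emph{Cauchy} (not merely Poisson) kernel, which needs the absence of a singular inner part at infinity; here it is guaranteed because $f$ is bounded, uniformly continuous up to the boundary, and vanishes at $\infty$, but one should either cite \cite{BGT} for this or reduce to it via the Poisson formula \eqref{poisson} plus the observation that $f^b \in L^\infty \cap (\text{vanishing at }\infty\text{ in the averaged sense})$ makes the conjugate-Poisson integral converge to the Hilbert transform of $f^b$, which equals $-if^b$ modulo constants by holomorphy. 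The Green's-formula input is already done for us in Proposition \ref{Dd}, so the remaining work is this classical complex-analysis bookkeeping together with the routine dominated-convergence continuity argument at the boundary.
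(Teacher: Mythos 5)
There is a genuine gap at the very first step: $r_z$ does \emph{not} belong to $H^1(\mathbb C_+)$, so Proposition \ref{Dd} cannot be applied with $g=r_z$. On a vertical line $\Re\lambda=a$ one has $|r_z(a+i\beta)|\sim|\beta|^{-1}$ as $|\beta|\to\infty$, so $\int_\R|r_z(a+i\beta)|\,d\beta=\infty$, and likewise the boundary function $s\mapsto(is+z)^{-1}$ is not in $L^1(\R)$. Your claim that decay like $|\lambda|^{-1}$ gives integrability on vertical lines is exactly where the argument breaks. This is also why the identity \eqref{Green} in the introduction is stated only for ``good enough'' $f$: with $g=r_z$ and a general $f\in\mathcal B_0$ the boundary integral $\int_\R f^b(y)(z-iy)^{-1}\,dy$ is only a principal value (the paper's Cauchy formula for $\Bes$ is a principal-value formula), so both the hypothesis of Proposition \ref{Dd} and the absolute convergence needed to run your comparison fail. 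The subsequent conjugate-Poisson discussion inherits the same problem, since $f^b$ is merely bounded.

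The paper's proof avoids this precisely by taking $g=r_z^2$, which \emph{is} in $H^1(\mathbb C_+)$ (decay $|\lambda|^{-2}$, integrable boundary values). Proposition \ref{Dd} then gives
\begin{equation*}
-2\int_0^\infty \alpha\int_\R \frac{f'(\alpha+i\beta)}{(z+\alpha-i\beta)^3}\,d\beta\,d\alpha
=\frac14\int_\R\frac{f^b(\beta)}{(z-i\beta)^2}\,d\beta=-\frac{\pi}{2}f'(z),
\end{equation*}
where the middle integral is absolutely convergent and is evaluated by the (absolutely convergent) Cauchy formula for $f'$. One then integrates in $z$ along horizontal rays to $\infty$, using $f(\infty)=0$, to recover $f(z)$ on $\C_+$, and finally passes to $\overline{\C}_+$ by the dominated-convergence continuity argument you describe (that last step of yours is fine). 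If you want to salvage your route, you must either prove a version of the Green's-formula duality valid for $g=r_z\in\Bov\setminus H^1$ paired with bounded boundary data --- essentially the approximation work done in \cite[Proposition 2.20]{BGT} --- or switch to $r_z^2$ and the derivative identity as the paper does.
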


\begin{proof}
We may assume that $f(\infty)=0$.  For $z \in \mathbb C_{+}$,  $r_z^2\in H^1(\mathbb C_{+})$ and by Proposition \ref{Dd} and Cauchy's theorem,
\[
-2\int_0^\infty \alpha\int_\R \frac{f'(\alpha+i\beta)}{(z+\alpha-i\beta)^3}\,d\beta \, d\alpha
=\frac{1}{4}\int_\R \frac{f^b(\beta)}{(z-i\beta)^2}\,d\b =-\frac{\pi }{2}f'(z),
\]
for every $z\in \mathbb C_{+}$.  Integrating both sides of the above equality with respect to $z$ along horizontal lines to $\infty$, we obtain
\[
-\int_0^\infty \alpha\int_\R \frac{f'(\alpha+i\beta)}{(z+\alpha-i\beta)^2}\,d\beta \, d\alpha=\frac{\pi}{2}f(z).  
\]
By the dominated convergence theorem, the reproducing formula gives a continuous function of $z$ on $\overline{\C}_+$, so the formula holds on $\overline{\C}_+$.
\end{proof}

The following representations are variants of \eqref{bhp}.

\begin{prop} \label{brim}
If $f \in \mathcal B_0$, then
\begin{align*}
f(z)&=-\frac{4}{\pi}\int_0^\infty \alpha\int_\R \frac{{\rm Re}\,f'(\alpha+i\beta)}{(z+\alpha-i\beta)^2}\,d\beta\,d\alpha\\
&=-\frac{4i}{\pi}\int_0^\infty \alpha\int_\R \frac{{\rm Im}\,f'(\alpha+i\beta)}{(z+\alpha-i\beta)^2}\,d\beta\,d\alpha,\qquad z\in \C_{+}.
\end{align*}
\end{prop}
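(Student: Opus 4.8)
The plan is to obtain both formulas from the reproducing formula of Theorem~\ref{rep}, by showing that replacing $f'$ with its complex conjugate $\overline{f'}$ in that formula produces $0$.

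Since $f\in\mathcal B_0$ (so $f(\infty)=0$), Theorem~\ref{rep} reads $f(z)=-\tfrac2\pi\,\mathcal I(z)$ for $z\in\C_+$, where
\[
\mathcal I(z):=\int_0^\infty\alpha\int_\R\frac{f'(\alpha+i\beta)}{(z+\alpha-i\beta)^2}\,d\beta\,d\alpha .
\]
Since $\int_\R|z+\alpha-i\beta|^{-2}\,d\beta=\pi/(\Re z+\alpha)$ and $\alpha/(\Re z+\alpha)\le1$, this double integral converges absolutely, with modulus at most $\pi\|f\|_\Bq$; by the same estimate so does
\[
J(z):=\int_0^\infty\alpha\int_\R\frac{\overline{f'(\alpha+i\beta)}}{(z+\alpha-i\beta)^2}\,d\beta\,d\alpha .
\]
Writing $\Re f'=\tfrac12(f'+\overline{f'})$ and $\Im f'=\tfrac1{2i}(f'-\overline{f'})$ and using linearity, both asserted identities follow at once from $f(z)=-\tfrac2\pi\mathcal I(z)$, provided $J(z)=0$ for every $z\in\C_+$.

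To prove $J(z)=0$, I would fix $\alpha>0$ and evaluate the inner integral by contour integration. Set $\varphi(w):=\overline{f'(\bar w)}\,(z+w)^{-2}$. By Schwarz reflection, $w\mapsto\overline{f'(\bar w)}$ is holomorphic on $\C_+$ (this half-plane being symmetric about $\R$), and $\Re(-z)=-\Re z<0$, so $\varphi$ is holomorphic on $\C_+$; moreover \eqref{f'est} gives $|\varphi(w)|\le\|f\|_\infty(2\,\Re w\,|z+w|^2)^{-1}$. Since $\varphi(\alpha-i\beta)=\overline{f'(\alpha+i\beta)}\,(z+\alpha-i\beta)^{-2}$, the inner integral equals $\pm i$ times the integral of $\varphi$ over the vertical line $\Re w=\alpha$. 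Applying Cauchy's theorem on the rectangles $[\alpha,R]\times[-M,M]$ and letting $M\to\infty$ (the two horizontal sides are $O(M^{-2})$ by the bound on $\varphi$) and then $R\to\infty$ (the side $\Re w=R$ contributes at most $\pi\|f\|_\infty(2R(\Re z+R))^{-1}\to0$), we obtain $\int_{\Re w=\alpha}\varphi(w)\,dw=0$. Hence the inner integral vanishes for every $\alpha>0$, so $J(z)=0$, and the two formulas follow.

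The step requiring the most care is the contour argument for $J(z)=0$. In general $f'$ does not decay along vertical lines (for $a>0$, $|e_a'(\alpha+i\beta)|=ae^{-a\alpha}$ is independent of $\beta$), so the vanishing of the horizontal and far-right sides of the rectangle has to be extracted entirely from the kernel $(z+w)^{-2}$ together with the uniform bound \eqref{f'est} for $f'$; the preliminary absolute-convergence estimate is what legitimises Fubini and the passage to the one-dimensional contour integral.
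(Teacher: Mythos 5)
Your proposal is correct and follows essentially the same route as the paper: both start from the reproducing formula of Theorem~\ref{rep} and reduce everything to showing that the analogous double integral with $\overline{f'(\alpha+i\beta)}$ in place of $f'(\alpha+i\beta)$ vanishes, the paper invoking Cauchy's theorem for the inner integral (formula \eqref{BBAc}) exactly as you do with your explicit rectangle contours, and then both conclude via $2\operatorname{Re}f'=f'+\overline{f'}$ and $2i\operatorname{Im}f'=f'-\overline{f'}$. Your write-up merely makes the contour estimates and the holomorphy of $w\mapsto\overline{f'(\overline w)}$ explicit, which the paper leaves implicit.
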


\begin{proof}
By the reproducing formula \eqref{bhp} for $\Bes$,
\begin{equation}\label{BB2}
f(z) = - \frac{2}{\pi}\int_0^\infty \alpha\int_\R \frac{f'(\alpha+i\beta)}{(z+\alpha-i\beta)^2} \,d\b\,d\alpha,
\end{equation}
for every $z=x+iy\in \C_{+}$.  
Moreover, by Cauchy's theorem,
\[
\int_\R \frac{f'(\a+i\b)}{\overline{z} +\a+i\b}  \,d\b =0,  \quad \a>0.
\]
Hence
\begin{equation} \label{BBAc}
0 =- \frac{2}{\pi}\int_0^\infty \alpha\int_\R \frac{\overline{f'(\alpha+i\beta)}}{(z+\alpha-i\beta)^2} \,d\b\,d\alpha.
\end{equation}
Since
\[
f' + \overline{f}' = 2 \Re f', \qquad f' - \overline{f}' = 2i \Im f',
\]
the formulas follow.
\end{proof}

Moreover, we can derive a reproducing formula for functions from $\mathcal B$ using their second derivatives instead of the first derivatives.

\begin{prop}\label{R3}
Let $f\in \mathcal B_0$, and $z = x+iy \in \overline{\C}_+$. Then
\begin{equation}\label{LL1}
f(z)=\frac{4}{\pi}\int_0^\infty \alpha (x+\alpha) \int_\R
\frac{f''(\alpha+i\beta)}{(x+\alpha)^2+(y-\beta)^2}\,d\beta \,d\alpha.
\end{equation}
\end{prop}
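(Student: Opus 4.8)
The plan is to recognise the inner $\beta$-integral in \eqref{LL1} as a Poisson integral and thereby reduce \eqref{LL1} to a one-dimensional identity obtained from a single integration by parts.

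Fix $\alpha>0$. Since $f\in\Bes\subset H^\infty(\C_+)$, Cauchy's estimates show that $f''$ is bounded and holomorphic on $\RR_\alpha$, continuous up to the boundary line $\partial\RR_\alpha$; hence it coincides there with the Poisson integral of its boundary values (their difference is a bounded harmonic function vanishing on $\partial\RR_\alpha$, which extends by odd reflection to a bounded harmonic function on $\C$, and so is $0$). For $z=x+iy\in\overline{\C}_+$ the point $z+2\alpha=(x+2\alpha)+iy$ lies in $\RR_\alpha$ at distance $x+\alpha$ from $\partial\RR_\alpha$, so the Poisson formula \eqref{poisson}, transplanted from $\C_+$ to $\RR_\alpha$, gives
\[
\int_\R\frac{(x+\alpha)\,f''(\alpha+i\beta)}{(x+\alpha)^2+(y-\beta)^2}\,d\beta=\pi\,f''(z+2\alpha),\qquad \alpha>0.
\]
Substituting this into the right-hand side of \eqref{LL1} and putting $v=2\alpha$, that right-hand side becomes
\[
\frac{4}{\pi}\int_0^\infty\alpha\cdot\pi f''(z+2\alpha)\,d\alpha=4\int_0^\infty\alpha f''(z+2\alpha)\,d\alpha=\int_0^\infty v\,f''(z+v)\,dv.
\]

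It remains to prove that $\int_0^\infty v\,f''(z+v)\,dv=f(z)$ for every $z\in\overline{\C}_+$. Integrating by parts along the horizontal ray from $z$, for $0<\varepsilon<M$,
\[
\int_\varepsilon^M v\,f''(z+v)\,dv=\bigl[v\,f'(z+v)\bigr]_\varepsilon^M-\int_\varepsilon^M f'(z+v)\,dv
=M f'(z+M)-\varepsilon f'(z+\varepsilon)-f(z+M)+f(z+\varepsilon).
\]
Letting $\varepsilon\to0+$ and $M\to\infty$, every term on the right-hand side tends to $0$ except $f(z+\varepsilon)\to f(z)$: indeed $f(\infty)=0$ because $f\in\Bes_0$, $f$ is uniformly continuous on $\overline{\C}_+$, and by Proposition \ref{P1}(2) the quantity $t\sup_{\beta}|f'(t+i\beta)|$ tends to $0$ both as $t\to0+$ and as $t\to\infty$, so $\varepsilon f'(z+\varepsilon)\to0$ and $M f'(z+M)\to0$. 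This yields \eqref{LL1}.

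Two points deserve a word. First, absolute convergence: Cauchy's estimate for $f''$ over the circle $|\zeta-w|=\alpha/2$ with $w\in\partial\RR_\alpha$, together with the fact (used in the proof of Proposition \ref{P1}) that $t\mapsto\sup_\beta|f'(t+i\beta)|$ is decreasing, gives $\sup_\beta|f''(\alpha+i\beta)|\le\tfrac2\alpha\sup_\beta|f'(\tfrac\alpha2+i\beta)|$, whence $\int_0^\infty\alpha\sup_\beta|f''(\alpha+i\beta)|\,d\alpha\le 4\|f\|_\Bq<\infty$; since also $\int_\R(x+\alpha)\bigl((x+\alpha)^2+(y-\beta)^2\bigr)^{-1}\,d\beta=\pi$, all the integrals above converge absolutely, uniformly for $x\ge0$. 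Second, the boundary case $x=0$ causes no trouble: the Poisson step is unaffected since $z+2\alpha$ still has real part $x+2\alpha>\alpha$, and in the integration by parts $\varepsilon f'(z+\varepsilon)\to0$ still holds by Proposition \ref{P1}(2), while $f(z+\varepsilon)\to f^b(y)=f(z)$ by the uniform continuity of $f$ on $\overline{\C}_+$. Thus the only real difficulty is spotting the Poisson structure of the kernel; the rest is routine. Alternatively, \eqref{LL1} follows by integrating by parts in $\beta$ in both the reproducing formula \eqref{bhp} and the vanishing identity \eqref{BBAc} from the proof of Proposition \ref{brim}, then adding the resulting formulas and using $\tfrac1{z+\alpha-i\beta}+\tfrac1{\overline z+\alpha+i\beta}=\tfrac{2(x+\alpha)}{(x+\alpha)^2+(y-\beta)^2}$; here the subtlety is that, after the integration by parts, the individual $\beta$-integrals are only conditionally convergent, so one works with $\int_{-T}^T$ and passes to the limit, the sum (the combination actually appearing in \eqref{LL1}) being absolutely convergent.
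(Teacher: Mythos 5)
Your proof is correct, and it takes a genuinely different route from the paper. The paper does not compute the right-hand side of \eqref{LL1} directly: it starts from the reproducing formula \eqref{BB2} together with the conjugate vanishing identity \eqref{BBAc}, so that the kernel becomes $\Re (z+\alpha-i\beta)^{-2}$, observes that this equals $-\pi\,\partial_\alpha P(x+\alpha,y-\beta)$ for the Poisson kernel $P$, and integrates by parts in $\alpha$; the term coming from differentiating $\alpha f'$ in $\alpha$ is then evaluated by the Poisson formula \eqref{poisson} applied to $f'$ on vertical lines, producing $-4\int_0^\infty f'(x+2\alpha+iy)\,d\alpha = 2f(z)$, and \eqref{LL1} drops out of the identity $f(z)=2f(z)-(\text{RHS of \eqref{LL1}})$. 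You instead work on the right-hand side itself: the inner $\beta$-integral is recognised as the Poisson integral of the bounded holomorphic function $f''$ on the shifted half-plane $\RR_\alpha$, collapsing it to $\pi f''(z+2\alpha)$, and the remaining one-dimensional integral $\int_0^\infty v f''(z+v)\,dv$ is evaluated by a single integration by parts using $f(\infty)=0$ and Proposition \ref{P1}(2). What your argument buys is independence from \eqref{bhp} (so it is an alternative verification rather than a corollary of the reproducing formula), an explicit absolute-convergence bound via $\sup_\beta|f''(\alpha+i\beta)|\le 2\alpha^{-1}\sup_\beta|f'(\alpha/2+i\beta)|$, hence $\int_0^\infty\alpha\sup_\beta|f''(\alpha+i\beta)|\,d\alpha\le 4\|f\|_{\Bq}$, and a clean treatment of the boundary case $x=0$, which the paper's proof (stated for $z\in\C_+$ via Proposition \ref{brim}) leaves implicit; what the paper's route buys is that it stays entirely within the already established identities \eqref{BB2} and \eqref{BBAc}, needing the Poisson formula only for $f'$ on $\C_+$ itself. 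Your closing alternative sketch (integration by parts in $\beta$ in \eqref{bhp} and \eqref{BBAc} and adding) is essentially a variant of the paper's method with the roles of $\alpha$ and $\beta$ interchanged, and your caution about the merely conditional convergence of the separate $\beta$-integrals there is well placed.
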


\begin{proof}
Using \eqref{BB2} and \eqref{BBAc}, we see that
\begin{align*}
{f(z)}
&= -\frac{2}{\pi}\int_0^\infty \alpha\int_\R \frac{f'(\alpha+i\beta)\,d\beta}{(z+\alpha-i\beta)^2}\,d\alpha - \frac{2}{\pi}\int_0^\infty \alpha\int_\R \frac{f'(\alpha+i\beta)\,d\beta}{(\overline{z}+\alpha+i\beta)^2}\,d\alpha \\
&=-\frac{4}{\pi}\int_0^\infty \alpha\int_\R \left({\rm Re}\,\frac{1}{(z+\alpha-i\beta)^2}\right) f'(\alpha+i\beta)\,d\beta\,d\alpha.
\end{align*}
Now
\begin{align*}
\Re\frac{1}{(z+\alpha-i\beta)^2}&=\frac{(x+\alpha)^2-(y-\beta)^2}{((x+\alpha)^2+(y-\beta)^2)^2}
= -\pi \frac{\partial}{\partial \alpha} (P(x+\alpha,y-\beta)),
\end{align*}
where
$P$ is the Poisson kernel.
Hence,
\begin{align*}
f(z) &=
- 4\int_\R \int_0^\infty P(x+\alpha,y+\beta) \left(f'(\alpha+i\beta)+\alpha f''(\alpha+i\beta)]\right)\,d\alpha\,d\beta \\
&= -4 \int_0^\infty \int_\R
P(x+\alpha,y+\beta)f'(\alpha+i\beta)\,d\beta \,d\alpha  \notag\\\
&\null\hskip40pt -\frac{4}{\pi}\int_0^\infty \alpha (x+\alpha) \int_\R
\frac{f''(\alpha+i\beta)}{(x+\alpha)^2+(y-\beta)^2} \,d\beta\,d\alpha\notag \\
&= - 4\int_0^\infty f'(x+2\alpha+iy)\,d\alpha \nonumber \\
& \null\hskip40pt -\frac{4}{\pi}\int_0^\infty \alpha (x+\alpha) \int_\R
\frac{f''(\alpha+i\beta)}{(x+\alpha)^2+(y-\beta)^2}\,d\beta \,d\alpha\notag \\
& =2f(z)-\frac{4}{\pi}\int_0^\infty \alpha (x+\alpha) \int_\R
\frac{f''(\alpha+i\beta)}{(x+\alpha)^2+(y-\beta)^2} \,d\beta\,d\alpha\notag,
\end{align*}
using \eqref{poisson} in the penultimate equality.  This yields \eqref{LL1}.
\end{proof}

\section{Constructing and approximating Besov functions} \label{approx}

In this section we first show how all functions in $\Bes$ can be constructed from a class $\mathcal{W}$ of measurable functions on $\C_+$ satisfying a condition similar to \eqref{bdef0}.     Later in this section we use this procedure to approximate functions in $\Bes$ in various ways, by lifting cut-off operators on $\mathcal W$ to the space $\Bes$.  

For $\varphi \in L^p(\R), \, 1 \le p \le \infty$, and $\a>0$, we define
\begin{equation}\label{ksigma}
G_{\alpha,\varphi}(z):=\int_\R \frac{\varphi(\beta)}{(z+\alpha-i\beta)^2}\,d\beta,\qquad z\in \C_{+}.
\end{equation}

Let $\mathcal W$ be the space of all (equivalence classes of) measurable functions $g:\mathbb C_{+}\to \mathbb C$ such that
\begin{equation}\label{A}
\|g\|_{\mathcal W}:=\int_0^\infty \underset{\beta\in \mathbb R}{\mathrm{ess\,sup}}\,|g(\alpha+i\beta)|\,d\alpha<\infty,
\end{equation}
with the norm given by (\ref{A}).    Then $\mathcal W$ is a Banach space, but this fact is not needed in this paper.   Clearly,
\begin{equation}\label{wb0}
\mathcal{W} \cap \operatorname{Hol}(\C_+) = \{ f' : f \in \Bes\}, \qquad \|f'\|_{\mathcal W} = \|f\|_\Bq.
\end{equation}

 For $g \in \mathcal{W}$, let
 \begin{align*}
(Q g)(z):&= - \frac{2}{\pi}\int_0^\infty \alpha\int_\R \frac{g(\alpha+i\beta)}{(z+\alpha-i\beta)^2}\,d\beta\,d\alpha,\\
&= - \frac{2}{\pi} \int_0^\infty \alpha G_{\a, \varphi_{\a,g}}(z) \,d\a, \qquad z\in \mathbb C_{+},
\end{align*}
where $\varphi_{\a,g}(\b) = g(\a+i\b)$.   

If $f \in \Bes$, the reproducing formula for $f$ given in \cite[Proposition 2.20]{BGT}, \eqref{bhp} and Theorem \ref{rep} can now be written as
\begin{equation} \label{repq}
f = f(\infty) + Q(f').
\end{equation} 
The following result shows that $Q$ maps the whole of $\mathcal{W}$ into $\Bq$.

\begin{prop}\label{L1}
{\rm1.}  Let $\varphi \in L^1(\R)$, and $\a>0$.  Then  $G_{\a,\varphi} \in \Bq$ and
\begin{equation} \label{B09}
G_{\a,\varphi} = \int_\R \varphi(\b) r_{\a-i\b}^2 \, d\b,
\end{equation}
where the right-hand side exists as a $\Bes$-valued Bochner integral.

\noindent
{\rm2.}  Let $\varphi\in L^\infty(\R)$, and $\alpha>0$.   Then $G_{\alpha, \varphi} \in \Bq$ and
\begin{equation}\label{B11}
\|G_{\alpha,\varphi}\|_{\Bq}\le \frac{4}{\alpha}\|\varphi\|_{L^\infty}, \qquad \|G_{\alpha,\varphi}\|_{\Bes}\le \left(\frac{4+\pi}{\alpha}\right) \|\varphi\|_{L^\infty}.
\end{equation}

\noindent
{\rm3.}  The operator $ Q \in L (\mathcal W, \mathcal B_0)$ and
\begin{equation}\label{BC}
\|Qg \|_{{\mathcal B_0}}\le \frac{8}{\pi}\|g\|_{\mathcal{W}}, \qquad
\|Qg \|_{{\mathcal B}}\le 2\left(1+\frac{4}{\pi}\right) \|g\|_{\mathcal{W}}.
\end{equation}
Moreover $Q$ maps $\mathcal{W} \cap \operatorname{Hol}(\C_+)$ onto $\Bq$.
\end{prop}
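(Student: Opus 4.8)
The proposition reduces to two elementary integrals, $\int_{\R}(a^{2}+u^{2})^{-1}\,du=\pi/a$ and $\int_{\R}(a^{2}+u^{2})^{-3/2}\,du=2/a^{2}$ for $a>0$, together with the reproducing formula \eqref{repq}; the three assertions are nested, with (2) feeding into (3) and the surjectivity in (3) being an immediate consequence of \eqref{repq}. For (1), the plan is first to record the bound $\|r_{\a-i\b}^{2}\|_{\Bes}\le 2/\a^{2}$ \emph{uniformly in} $\b\in\R$: since $|z+\a-i\b|\ge\Re z+\a$ we get $\|r_{\a-i\b}^{2}\|_{\infty}\le\a^{-2}$, while $\sup_{t\in\R}|(r_{\a-i\b}^{2})'(\a'+it)|=2(\a'+\a)^{-3}$ integrates over $\a'\in(0,\infty)$ to $\a^{-2}$, so $\|r_{\a-i\b}^{2}\|_{\Bq}\le\a^{-2}$ (alternatively this follows from the rescaling statement in Lemma~\ref{shifts01} applied to $r_{1}^{2}$). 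Next, the identity $r_{\a-i\b}^{2}=T_\Bes\bigl(-i(\b-\b_{0})\bigr)r_{\a-i\b_{0}}^{2}$ with the strong continuity in Lemma~\ref{shifts01}(1) shows $\b\mapsto r_{\a-i\b}^{2}$ is norm-continuous from $\R$ into $\Bes$; hence for $\varphi\in L^{1}(\R)$ the integrand $\b\mapsto\varphi(\b)r_{\a-i\b}^{2}$ is strongly measurable with $\int_{\R}\|\varphi(\b)r_{\a-i\b}^{2}\|_{\Bes}\,d\b\le 2\a^{-2}\|\varphi\|_{L^{1}}<\infty$, so the Bochner integral in \eqref{B09} exists in $\Bes$. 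Finally, point evaluation at $z$ is bounded on $\Bes$, so it commutes with the Bochner integral and identifies it with $G_{\a,\varphi}(z)$ from \eqref{ksigma}; as each $r_{\a-i\b}^{2}$ lies in the closed subspace $\Bq$, so does $G_{\a,\varphi}$.

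For (2), I would argue directly from \eqref{ksigma}, now with $\varphi\in L^{\infty}(\R)$. The function $G_{\a,\varphi}$ is holomorphic on $\RR_{-\a}\supset\C_{+}$ by differentiation under the integral sign. Differentiating and using the $3/2$-power integral gives $|G_{\a,\varphi}'(x+iy)|\le 2\|\varphi\|_{L^{\infty}}\int_{\R}\bigl((x+\a)^{2}+(y-\b)^{2}\bigr)^{-3/2}d\b=4\|\varphi\|_{L^{\infty}}(x+\a)^{-2}$, independently of $y$, so integrating over $x>0$ yields $\|G_{\a,\varphi}\|_{\Bq}\le 4\|\varphi\|_{L^{\infty}}/\a$. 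The cruder estimate $|G_{\a,\varphi}(x+iy)|\le\|\varphi\|_{L^{\infty}}\int_{\R}\bigl((x+\a)^{2}+(y-\b)^{2}\bigr)^{-1}d\b=\pi\|\varphi\|_{L^{\infty}}/(x+\a)$ gives both $\|G_{\a,\varphi}\|_{\infty}\le\pi\|\varphi\|_{L^{\infty}}/\a$ and $G_{\a,\varphi}(\infty)=0$, so $G_{\a,\varphi}\in\Bq$; adding the two bounds gives the claimed estimate on $\|G_{\a,\varphi}\|_{\Bes}$.

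For (3), write $Qg=-\tfrac{2}{\pi}\int_{0}^{\infty}\a\,G_{\a,\varphi_{\a,g}}\,d\a$ with $\varphi_{\a,g}(\b)=g(\a+i\b)$, so that $\|g\|_{\mathcal W}=\int_{0}^{\infty}\|\varphi_{\a,g}\|_{L^{\infty}(\R)}\,d\a$. Unwinding the definitions of $\|\cdot\|_{\infty}$ and $\|\cdot\|_{\Bq}$ and applying Tonelli's theorem gives $\|Qg\|_{\infty}\le\tfrac{2}{\pi}\int_{0}^{\infty}\a\|G_{\a,\varphi_{\a,g}}\|_{\infty}\,d\a$ and $\|Qg\|_{\Bq}\le\tfrac{2}{\pi}\int_{0}^{\infty}\a\|G_{\a,\varphi_{\a,g}}\|_{\Bq}\,d\a$; substituting the bounds from (2) yields $\|Qg\|_{\infty}\le 2\|g\|_{\mathcal W}$ and $\|Qg\|_{\Bq}\le\tfrac{8}{\pi}\|g\|_{\mathcal W}$, whence $\|Qg\|_{\Bes}\le 2(1+4/\pi)\|g\|_{\mathcal W}$. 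That $(Qg)(\infty)=0$, and hence $Qg\in\Bq=\mathcal B_{0}$ and $Q\in L(\mathcal W,\mathcal B_{0})$, follows by dominated convergence, the integrand $\a\,G_{\a,\varphi_{\a,g}}(x+iy)$ being dominated by $\pi\|\varphi_{\a,g}\|_{L^{\infty}}\in L^{1}(0,\infty)$ and tending to $0$ as $\Re z\to\infty$. Surjectivity onto $\Bq$ is then immediate: for $f\in\Bq$ one has $f'\in\mathcal W\cap\operatorname{Hol}(\C_{+})$ with $\|f'\|_{\mathcal W}=\|f\|_{\Bq}$ by \eqref{wb0}, and $Q(f')=f-f(\infty)=f$ by \eqref{repq}.

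There is no deep obstacle; the proof is essentially bookkeeping. The points needing attention are: (i) the uniformity in $\b$ of the bound on $\|r_{\a-i\b}^{2}\|_{\Bes}$, which is exactly what makes the Bochner integral in (1) converge; (ii) justifying the interchanges of integration order and of differentiation with integration, which rests on checking absolute convergence of the iterated integrals --- this is why I would carry out the estimates in (3) via Tonelli's theorem rather than as an $\Bes$-valued Bochner integral, so as to sidestep verifying strong measurability of $\a\mapsto G_{\a,\varphi_{\a,g}}$ while obtaining the same bounds; and (iii) keeping track that $G_{\a,\varphi}$ and $Qg$ are holomorphic on half-planes strictly larger than $\C_{+}$, which makes differentiation under the integral sign routine.
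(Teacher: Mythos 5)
Your proposal is correct and follows essentially the same route as the paper: part (1) via norm-continuity and boundedness of $\b\mapsto r_{\a-i\b}^2$ coming from the isometric vertical shift group of Lemma \ref{shifts01} plus identification by point evaluations, part (2) via the same kernel estimates $\int_\R((x+\a)^2+u^2)^{-3/2}du$ and $\int_\R((x+\a)^2+u^2)^{-1}du$, and part (3) by integrating the bounds of (2) in $\a$ (Tonelli) and deducing surjectivity from \eqref{repq} and \eqref{wb0}. The only differences are cosmetic (explicit uniform bound $\|r_{\a-i\b}^2\|_\Bes\le 2/\a^2$, dominated rather than monotone convergence for $(Qg)(\infty)=0$).
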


\begin{proof}  
1.  First recall that $r_{\lambda} \in \Bq$ for each $\lambda \in \mathbb C_+$ and  from Lemma \ref{shifts01} the translation group $(T_\Bes(-i\b))_{\b\in\R}$ is strongly continuous on $\mathcal B$.   Hence the function $\mathbb R \ni \beta \mapsto r_{\alpha -i\beta}^{2} \in \Bq$ is continuous and bounded for each $\alpha >0$.   So the right-hand side of \eqref{B09} exists as a $\Bq$-valued Bochner integral.  Since the point evaluations are continuous in the $\mathcal B_0$-norm, the value of the right-hand side at $z \in \C_+$ is
\[
\int_\R \varphi(\b) r^2_{\a-i\b}(z) \, d\b = G_{\a,\varphi}(z).
\]

\noindent
2.  Let $z=x+iy$ and note that $G_{\alpha,\varphi}(\infty)=0$.  Since
\[
G'_{\alpha,\varphi}(z):=-2\int_\R \frac{\varphi(\beta)}{(z+\alpha-i\beta)^3}\,d\beta,\qquad z\in \C_{+},
\]
we have
\begin{align*}
|G'_{\alpha,\varphi}(x+iy)| &\le 2\|\varphi\|_{L^\infty}\int_\R \frac{d\beta}{((x+\alpha)^2+(\beta-y)^2)^{3/2}}\\
&\le \frac{2\|\varphi\|_{L^\infty}}{(x+\alpha)^2}\int_\R \frac{d\tau}{(1+\tau^2)^{3/2}} = \frac{4\|\varphi\|_{L^\infty}}{(x+\alpha)^2},
\end{align*}
so
\begin{align*}
\|G_{\alpha,\varphi}\|_{\Bq}
\le 4\|\varphi\|_{L^\infty}\int_0^\infty \frac{dx}{(x+\alpha)^2}
=\frac{4}{\alpha}\|\varphi\|_{L^\infty}. 
\end{align*}
Moreover,
\[
\|G_{\alpha,\varphi}\|_{L^\infty} \le \int_\R \frac{\|\varphi\|_\infty}{\a^2+\b^2} \, d\b = \frac{\pi}{\a} \|\varphi\|_{L^\infty}.
\]

\noindent
3.  It is clear that $Q$ is linear.  

Let $g \in \mathcal{W}$.  If $x>0$, then
\begin{align*}
|(Qg)(x+iy)| &\le\frac{2}{\pi}
\int_0^\infty \alpha \sup_{\beta\in \mathbb R}\,|g(\alpha+i\beta)|\,
\int_\R \frac{d\beta}{(x+\alpha)^2+(y-\beta)^2}\,d\alpha\\
&=2\int_0^\infty \frac{\alpha}{x+\alpha} \sup_{\beta\in \mathbb R}\,|g(\alpha+i\beta)|\,d\alpha\le 2\|g\|_{\mathcal W}.
\end{align*}
Hence
\begin{equation}\label{R1}
\|Qg\|_\infty\le 2\|g\|_{\mathcal W},
\end{equation}
and $(Qg)(\infty) = 0$ by the monotone convergence theorem.  Moreover,
\[
(Qg)'(z) = - \frac{2}{\pi} \int_0^\infty \a G'_{\a,\varphi_{\a,g}}(z) \,d\a, \qquad z \in \C_+,
\]
and
\[
\sup_{y\in\R} \left|(Qg)'(x+iy)\right|  \le \frac{2}{\pi} \int_0^\infty \a \sup_{y\in\R} \big| G'_{\a,\varphi_{\a,g}}(x+iy)\big| \,d\a.
\]
From this and \eqref{B11}, it follows that
\begin{align} \label{R2}
\|Qg\|_\Bq 
&\le \frac{2}{\pi} \int_0^\infty \a \, \int_0^\infty  \sup_{y\in\R} \big| G'_{\a,\varphi_{\a,g}}(x+iy)\big| dx \, d\a \\
&\le \frac{2}{\pi} \int_0^\infty \a\, \frac{4}{\a} \sup_{\b\in\R} |g(\a+i\b)|\,d\a = \frac{8}{\pi} \|g\|_\mathcal{W}.  \notag
\end{align}
By combining  (\ref{R1}) and (\ref{R2}) we obtain  (\ref{BC}).

Finally, for $f \in \Bq$, $f' \in \mathcal{W} \cap \operatorname{Hol}(\C_+)$ and $f = Q(f')$ by \eqref{repq}.
\end{proof}

\begin{rems} \label{L1L}
a) If $g(x+iy)= g_1(x) g_2(y)$, where $g_1 \in L^1(\mathbb R_+)$ and $g_2 \in L^\infty(\mathbb R)$,  then Proposition \ref{L1} implies that $Qg \in \mathcal B_0$.

\noindent b) Proposition \ref{L1} shows that $\Bq$ is isomorphic as a Banach space to the quotient $\mathcal{W}/\Ker Q$.  The argument in the proof of Proposition \ref{brim} shows that $\Ker Q$ contains $\overline{f}'$ for every $f \in \Bes$.    More generally, if $g_1 \in L^\infty(\R_+)$ and $g(x+iy) =g_1(x) \overline{f'(x+iy)}$, then $g \in \Ker Q$.    
\end{rems}

For $g \in \mathcal{W}$, we may approximate $g$ in various ways, and then we can lift those mappings to operators on $\Bes$ which approximate the identity map (modulo constant functions).    First, we consider cut-off operators $K_m, \, m\ge2$, on $\mathcal{W}$ defined by
\[
(K_mg)(z) := \begin{cases}  g(z),  &1/m \le \Re z \le m,  \\ 0, &\text{otherwise}.  \end{cases}
\]
Clearly $K_m$ is a contraction on $\mathcal{W}$, and $\lim_{m\to\infty} \|g-K_m g\|_{\mathcal{W}}=0$.   Then we consider the operator $\Kop_m \in L(\Bes,\Bq)$ defined by $\Kop_m f = Q(K_m f')$, so that
 \begin{align}\label{Def11}
(\Kop_m f)(z):&=-\frac{2}{\pi}\int_{1/m}^m \alpha\int_\R\frac{f'(\alpha+i\beta)}{(z+\alpha-i\beta)^2}\,d\beta\,d\alpha, \qquad z \in \C_+, \\
\|\Kop_m f\|_{\Bq} &\le \frac{8}{\pi} \|f\|_{\Bq},
\end{align}
using \eqref{wb0} and \eqref{BC}.   We give some alternative estimates.

\begin{prop}\label{Pr26}
For every $f\in \mathcal{B}$, and every $m\ge2$, the following hold:
\begin{equation}\label{Def12}
(\Kop_m\,f)(z)=\int_{2/m}^{2m} t f''(t+z)\,dt,
\end{equation}
\begin{equation}\label{Hest}
\|\Kop_mf\|_{\mathcal{B}_0}\le \frac{8\|f\|_\infty}{\pi}\log m,
\end{equation}
and
\begin{equation}\label{26A}
\|f-\Kop_mf\|_{\Bq}\le  \frac{8}{\pi}R_m(f),
\end{equation}
where
\begin{equation}\label{rmf}
{R}_m(f):=
\left\{\int_0^{1/m}+\int_m^\infty\right\} \sup_{\beta\in \R}\,|f'(\alpha+i\beta)|\,d\alpha \to 0,\qquad m\to\infty.
\end{equation}
\end{prop}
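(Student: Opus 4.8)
The plan is to prove the three assertions in turn. Only the identity \eqref{Def12} calls for a genuine computation; \eqref{Hest} and \eqref{26A} then follow from the material of this section together with the Cauchy bound \eqref{f'est}.

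\emph{Step 1.} To prove \eqref{Def12}, fix $z\in\C_+$ and $\alpha>0$. The inner integral in \eqref{Def11}, namely $\int_\R f'(\alpha+i\beta)(z+\alpha-i\beta)^{-2}\,d\beta$, converges absolutely by \eqref{f'est}. The map $\beta\mapsto f'(\alpha+i\beta)$ extends to a bounded holomorphic function of the complex variable $\beta$ on the half-plane $\{\Im\beta<\alpha\}$, hence on the closed lower half-plane, where $|f'(\alpha+i\beta)|\le\|f\|_\infty/(2(\alpha-\Im\beta))$; together with the $O(|\beta|^{-2})$ decay of $(z+\alpha-i\beta)^{-2}$, this lets us evaluate the integral by closing the contour in the lower half-plane. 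There the integrand has a single, double, pole at $\beta_0:=-i(z+\alpha)$, which lies in the lower half-plane since $\Im\beta_0=-(\Re z+\alpha)<0$. Writing $z+\alpha-i\beta=-i(\beta-\beta_0)$, so that $(z+\alpha-i\beta)^2=-(\beta-\beta_0)^2$, and using $\frac{d}{d\beta}f'(\alpha+i\beta)=if''(\alpha+i\beta)$ together with $\alpha+i\beta_0=z+2\alpha$, the residue theorem (clockwise orientation) gives
\[
\int_\R\frac{f'(\alpha+i\beta)}{(z+\alpha-i\beta)^2}\,d\beta=-2\pi f''(z+2\alpha).
\]
Substituting this into \eqref{Def11} and changing variables $t=2\alpha$ yields \eqref{Def12}. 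I expect the justification of this contour deformation — the holomorphic extension, and the decay that makes the semicircular arc vanish — to be the one delicate point; the rest is bookkeeping.

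\emph{Step 2.} To prove \eqref{Hest}, differentiate \eqref{Def11} under the integral sign:
\[
(\Kop_m f)'(z)=\frac{4}{\pi}\int_{1/m}^m\alpha\int_\R\frac{f'(\alpha+i\beta)}{(z+\alpha-i\beta)^3}\,d\beta\,d\alpha.
\]
Using \eqref{f'est} in the form $\sup_{\beta\in\R}|f'(\alpha+i\beta)|\le\|f\|_\infty/(2\alpha)$, together with $\int_\R((x+\alpha)^2+\tau^2)^{-3/2}\,d\tau=2(x+\alpha)^{-2}$ and $\int_0^\infty(x+\alpha)^{-2}\,dx=\alpha^{-1}$, we obtain
\begin{align*}
\|\Kop_m f\|_\Bq&\le\frac{8}{\pi}\int_{1/m}^m\sup_{\beta\in\R}|f'(\alpha+i\beta)|\,d\alpha\\
&\le\frac{4\|f\|_\infty}{\pi}\int_{1/m}^m\frac{d\alpha}{\alpha}=\frac{8\|f\|_\infty}{\pi}\log m.
\end{align*}
Since $\Kop_m f=Q(K_mf')$ lies in $\Bq$, this is \eqref{Hest}; replacing $\int_{1/m}^m$ by $\int_0^\infty$ in the first inequality recovers the estimate recorded just after \eqref{Def11}.

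\emph{Step 3.} For \eqref{26A}, combine \eqref{repq}, which gives $f=f(\infty)+Q(f')$, with $\Kop_m f=Q(K_mf')$ and the linearity of $Q$ to get $f-\Kop_m f=f(\infty)+Q\big((I-K_m)f'\big)$. The function $(I-K_m)f'$ agrees with $f'$ on $\{\Re z<1/m\}\cup\{\Re z>m\}$ and vanishes on $\{1/m\le\Re z\le m\}$, so by \eqref{wb0} it lies in $\mathcal W$ with $\|(I-K_m)f'\|_{\mathcal W}=R_m(f)$. As adding the constant $f(\infty)$ does not affect the $\Bq$-seminorm, \eqref{BC} yields
\[
\|f-\Kop_m f\|_\Bq=\|Q((I-K_m)f')\|_\Bq\le\frac{8}{\pi}\,R_m(f).
\]
Finally, $R_m(f)=\|f\|_\Bq-\int_{1/m}^m\sup_{\beta\in\R}|f'(\alpha+i\beta)|\,d\alpha\to0$ as $m\to\infty$, by monotone convergence together with $\|f\|_\Bq<\infty$; this is the limit in \eqref{rmf} and completes the proof.
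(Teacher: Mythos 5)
Your proposal is correct and follows essentially the same route as the paper: the identity \eqref{Def12} via Cauchy's theorem (your residue computation in the $\beta$-variable is the same contour argument, with the justification of the arc estimate made explicit), the bound \eqref{Hest} by differentiating under the integral and using \eqref{f'est} with the standard kernel integrals, and \eqref{26A} from \eqref{repq} and the $Q$-estimate \eqref{BC} applied to $(I-K_m)f'$. The only nitpick is the phrase ``bounded holomorphic on $\{\Im\beta<\alpha\}$'' (boundedness only holds on $\{\Im\beta\le 0\}$, which is all you use), and it does not affect the argument.
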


\begin{proof}
 Let $z=x+iy\in \C_{+}$.    By Cauchy's theorem,
\[
\int_\R \frac{f'(\alpha+i\beta)}{(z+\alpha-i\beta)^2}\,d\beta = \int_\R \frac{f'(\a+i\b)}{(2\a+z-(\a+i\b))^2} \,d\b =-2\pi f''(2\alpha+z),
\]
so
\[
(\Kop_m\,f)(z)=4\int_{1/m}^m \alpha f''(2\alpha+z)\,d\alpha=\int_{2/m}^{2m} t f''(t+z)\,dt.
\]

Using \eqref{f'est},
we have
\begin{align*}
|(\Kop_mf)'(z))| &\le
\frac{4}{\pi}\int_{1/m}^m\,\alpha \int_\R \frac{|f'(\alpha+i\beta)|}{|z+\alpha-i\beta|^3}\,d\beta\,d\alpha\\
&\le \frac{2\|f\|_\infty}{\pi}\int_{1/m}^m\, \int_\R \frac{d\beta}{|z+\alpha-i\beta|^3}\,d\alpha\\
&=
\frac{4\|f\|_\infty}{\pi}\int_{1/m}^m\,  \frac{d\alpha}{(x+\alpha)^2},
\end{align*}
so that
\[
\|\Kop_mf\|_{\mathcal{B}_0} \le \frac{4\|f\|_\infty}{\pi} \int_0^\infty \int_{1/m}^m \frac{d\a\,dx}{(x+\a)^2} 
= \frac{4\|f\|_\infty}{\pi}\int_{1/m}^m\,  \frac{d\alpha}{\alpha}=
\frac{8\|f\|_\infty}{\pi}\log m.
\]

From \eqref{repq} and Proposition \ref{L1}, we have
\[
\|f-\Kop_mf\|_\Bq = \|Q(f' - K_mf')\|_\Bq \le \frac{8}{\pi} \|f'-K_mf'\|_{\mathcal{W}} = \frac{8}{\pi} R_m(f). \qedhere
\]
\end{proof}

\begin{rem}
Using integration by parts, \eqref{Def12} may be written as 
\[
(\Kop_m f)(z) = f(a+z) - f(b+z) - af'(a+z) + bf'(b+z),
\]
where $a= 2/m, \, b=2m$.  In the notation of Lemma \ref{shifts01}, this gives
\begin{equation} \label{bhs}
\Kop_m  = T_\Bes(a)  - T_\Bes(b)  + a A_B T_\Bes(a) - bA_\Bes T_\Bes(b).
\end{equation}
As $a\to0+$ and $b\to\infty$, $T_\Bes(b)$ converges in the strong operator topology to the identity on $\Bq$ from the definition of $\|\cdot\|_\Bq$, and the other three terms converge to $0$, from Lemma \ref{shifts01}.     Thus $K_m^\vartriangle$ converges to the identity.   This is also a consequence of \eqref{26A} and \eqref{rmf}. 

In addition, it follows from \eqref{bhs} that any closed subspace of $\Bes$ which is invariant under horizontal shifts $T_\Bes(a), \, a\ge0$, is invariant under $\Kop_m$.
\end{rem}

Next we consider cut-off operators  $V_n, \, n\ge1$,  on $\mathcal{W}$ defined by
\[
(V_n g)(z) := \begin{cases}  g(z),  &|\Im z| \le n,  \\ 0, &\text{otherwise}.  \end{cases}
\]
Clearly $V_n$ is a contraction on $\mathcal{W}$.   We also consider the operators
 $QV_n : \mathcal{W} \to \Bq$, given by
 \begin{equation*}\label{Def11L}
(QV_n g)(z):=-\frac{2}{\pi}\int_0^\infty \alpha\int_{-n}^n\frac{g(\alpha+i\beta)}{(z+\alpha-i\beta)^2}\,d\beta\,d\alpha, \qquad z \in \C_+.
\end{equation*}

\begin{prop}\label{Pr27}
For every $g\in \mathcal{W}$ and $z \in \C_+$,
\begin{equation} \label{hstrip}
|(QV_n g) (z) - (Qg)(z)| \le \frac{8}{\pi} S_n(g),  \qquad n > 2|\Im z|, 
\end{equation}
where  
\[
S_n(g) := \int_0^\infty \frac{\alpha}{\alpha+n/2}\sup_{\beta\in \R}\,|g(\alpha+i\beta)|
\,d\alpha \to0,  \quad n \to \infty.
\]
\end{prop}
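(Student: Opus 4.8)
The plan is to realise $(QV_ng)(z)-(Qg)(z)$ as $-Q$ applied to the portion of $g$ supported on the half-strips $|\Im w|>n$, and then to exploit that the evaluation point $z=x+iy$ sits well inside the strip $|\Im w|\le n/2$, so that the Poisson-type kernel $|z+\alpha-i\beta|^{-2}$ is small on the region $|\beta|>n$. Since the double integral defining $Qg(z)$ is absolutely convergent — the bound $\int_0^\infty\alpha\sup_{\beta}|g(\alpha+i\beta)|\,\tfrac{\pi}{x+\alpha}\,d\alpha\le\pi\|g\|_{\mathcal W}<\infty$ from the proof of Proposition~\ref{L1} is available — Fubini permits the splitting $\int_\R=\int_{-n}^n+\int_{|\beta|>n}$, which gives
\[
(QV_ng)(z)-(Qg)(z)=\frac2\pi\int_0^\infty\alpha\int_{|\beta|>n}\frac{g(\alpha+i\beta)}{(z+\alpha-i\beta)^2}\,d\beta\,d\alpha,
\]
and hence $|(QV_ng)(z)-(Qg)(z)|\le\frac2\pi\int_0^\infty\alpha\,\bigl(\sup_{\beta\in\R}|g(\alpha+i\beta)|\bigr)\,I(\alpha)\,d\alpha$, where $I(\alpha):=\int_{|\beta|>n}\bigl((x+\alpha)^2+(y-\beta)^2\bigr)^{-1}\,d\beta$.

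The one step needing an actual idea is the kernel estimate $I(\alpha)\le\frac{4}{\alpha+n/2}$, which uses the hypothesis $n>2|y|$. First, for $|\beta|>n$ and $|y|<n/2$ the reverse triangle inequality gives $|y-\beta|\ge|\beta|-|y|>|\beta|-n/2>0$; replacing $(y-\beta)^2$ by $(|\beta|-n/2)^2$ and using the symmetry in $\beta$,
\[
I(\alpha)\le 2\int_n^\infty\frac{d\beta}{(x+\alpha)^2+(\beta-n/2)^2}=2\int_{n/2}^\infty\frac{du}{(x+\alpha)^2+u^2}=\frac{2}{x+\alpha}\arctan\frac{2(x+\alpha)}{n}.
\]
Now I would apply the elementary inequality $\arctan t\le\frac{2t}{1+t}$ for $t\ge0$ — valid because $\frac{2t}{1+t}-\arctan t$ vanishes at $0$ and has derivative $\frac{2}{(1+t)^2}-\frac1{1+t^2}=\frac{(t-1)^2}{(1+t)^2(1+t^2)}\ge0$ — with $t=2(x+\alpha)/n$. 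This yields $\arctan\frac{2(x+\alpha)}{n}\le\frac{4(x+\alpha)}{n+2(x+\alpha)}$, whence $I(\alpha)\le\frac{8}{n+2(x+\alpha)}\le\frac{8}{n+2\alpha}=\frac{4}{\alpha+n/2}$.

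Combining the two displays gives
\[
|(QV_ng)(z)-(Qg)(z)|\le\frac2\pi\int_0^\infty\frac{4\alpha}{\alpha+n/2}\sup_{\beta\in\R}|g(\alpha+i\beta)|\,d\alpha=\frac8\pi S_n(g),
\]
which is \eqref{hstrip}. Finally, $S_n(g)\to0$: the integrand $\frac{\alpha}{\alpha+n/2}\sup_\beta|g(\alpha+i\beta)|$ decreases to $0$ pointwise as $n\to\infty$ and is dominated by $\sup_\beta|g(\alpha+i\beta)|\in L^1(0,\infty)$ by the very definition of $\|g\|_{\mathcal W}$, so monotone (or dominated) convergence applies. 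I expect essentially all of the difficulty to be concentrated in the kernel bound, and in particular in locating the clean $\arctan$ inequality that reproduces the exact weight $\alpha/(\alpha+n/2)$ appearing in $S_n$; the Fubini bookkeeping and the final limit are routine.
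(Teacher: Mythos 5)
Your proof is correct and follows essentially the same route as the paper: split the $\beta$-integral at $|\beta|=n$, bound $|g|$ by its supremum on each vertical line, estimate the tail kernel integral $\int_{|\beta|>n}|z+\alpha-i\beta|^{-2}\,d\beta$ by $4/(\alpha+n/2)$ using $n>2|\Im z|$, and conclude $S_n(g)\to0$ by monotone convergence. The only inessential difference is in the last elementary step: you evaluate the tail integral as an $\arctan$ and apply $\arctan t\le 2t/(1+t)$, whereas the paper reaches the same bound $4/(\alpha+n/2)$ via $\alpha^2+\beta^2\ge\tfrac12(\alpha+\beta)^2$.
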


\begin{proof}
Let $z = x+iy \in \C_+$.   Then 
\begin{align*} 
\big |(Qg)(z)-(QV_n g)(z) \big| &\le \frac{2}{\pi}\int_0^\infty \alpha \int_{|\beta|>n}  \frac{|g(\alpha+i\beta)|}{|z+\alpha-i\beta|^2}\,d\beta\, d\alpha \\
&\le \frac{2}{\pi} \int_0^\infty \alpha\sup_{s\in \R}\,|g(\a+is)|
\int_{|\beta|>n} \frac{d\beta}{|z+\alpha-i\beta|^2}\,d\alpha.
\end{align*}
If $n > 2|y|$, we have
\begin{align*}
\int_{|\beta|>n}\frac{d\beta}{|z+\alpha-i\beta|^2}
&\le 2\int_n^\infty \frac{d\beta}{(x+\alpha)^2+(\beta-|y|)^2}\\
&\le 2\int_{n/2}^\infty \frac{d\beta}{\alpha^2+\beta^2}\le
4\int_{n/2}^\infty \frac{d\beta}{(\alpha+\beta)^2}=\frac{4}{\alpha+n/2}.
\end{align*}
It follows that \eqref{hstrip} holds.  By the monotone convergence theorem, $S_n(g) \to 0, \, n\to\infty$.
 \end{proof}

\section{Comparison of the algebras $\LT$ and $\Bes$}  \label{compspace}

The Hille-Phillips algebra $\mathcal{LM}$ is well understood and important in semigroup theory.  It is a subalgebra of $\Bes$ and a Banach algebra in the norm 
\[
\|\lt\mu\|_\HP := \|\mu\|_{M(\R_+)},
\]
Then $\|\lt\mu\|_\Bes \le 2\|\lt\mu\|_\HP$, but the norms are not equivalent (see Section \ref{compnorm}).  Thus estimates in terms of  $\|\cdot\|_\Bes$ are generally sharper than estimates in terms of $\|\cdot\|_\HP$, as well as being applicable to a larger class of functions.   On the other hand, $\mathcal{LM}$ is neither closed nor dense in $\mathcal B$ \cite[Lemma 2.14]{BGT}, and this leads to substantial complications in the theory of the $\mathcal B$-calculus.   Consequently it may be of value to identify the situations when a function from $\mathcal B$ can be approximated by functions from $\mathcal{LM}$ in a weaker sense, and to identify the closures of ${\mathcal LM}$ and its subspaces in $\mathcal B$, where practical.   Several results in these directions are given in this section, and they are potentially useful for applications of the $\mathcal B$-calculus to semigroup theory; see Theorem \ref{rajchman} for example.  

\subsection{Density of $\LT$ in other topologies}

In \cite[Lemma 2.14]{BGT} we showed that $\LT$ is dense in $\Bes$ in the topology of uniform convergence on compact subsets of $\C_+$.   We now show density in a slightly stronger sense,  using the approximating operators $K_n$ and $V_n$ from Propositions \ref{Pr26} and \ref{Pr27}.

\begin{prop} \label{qlm}
Let $g \in \mathcal{W}$ and assume that $g$ is supported by $[a,\infty)\times [-b,b]$, where $a>0$ and $b>0$.  Then $Qg \in \LT$ and
\[
\|Qg\|_\HP \le \frac{4b}{\pi a} \|g\|_{\mathcal{W}}.
\]
\end{prop}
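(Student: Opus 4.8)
The plan is to exploit the fact that $Qg$ is built from the kernels $r_{\alpha-i\beta}^2$ and to recognize each $r_{\alpha-i\beta}^2$ as a Laplace transform. Recall that for $\lambda \in \C_+$ one has $r_\lambda^2(z) = (z+\lambda)^{-2} = \lt(\mu_\lambda)(z)$ where $d\mu_\lambda(t) = t e^{-\lambda t}\,dt$, so $\|r_\lambda^2\|_\HP = \|\mu_\lambda\|_{M(\R_+)} = \int_0^\infty t e^{-(\Re\lambda) t}\,dt = (\Re\lambda)^{-2}$. Since $\lt$ is an isometric isomorphism from $M(\R_+)$ onto $\LT$, and $M(\R_+)$ is a dual space (the dual of $C_0(\R_+)$), Bochner integrals of $M(\R_+)$-valued functions that are Bochner integrable stay in $M(\R_+)$, and $\lt$ carries them to the corresponding Bochner integral in $\LT$. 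So the strategy is: write $Qg$ as an $\LT$-valued (equivalently $M(\R_+)$-valued) Bochner integral over the support rectangle, check absolute integrability of the $\HP$-norm, and read off the bound.

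The key steps, in order. First, fix $\alpha \in [a,\infty)$ and set $\varphi_{\alpha,g}(\beta) = g(\alpha+i\beta)\mathbf{1}_{[-b,b]}(\beta)$; since $\varphi_{\alpha,g} \in L^1(\R)$ with $\|\varphi_{\alpha,g}\|_{L^1} \le 2b\,\mathrm{ess\,sup}_\beta|g(\alpha+i\beta)|$, Proposition \ref{L1}(1) gives $G_{\alpha,\varphi_{\alpha,g}} = \int_\R \varphi_{\alpha,g}(\beta)\, r_{\alpha-i\beta}^2\,d\beta$ as a $\Bes$-valued Bochner integral. Second, observe that each $r_{\alpha-i\beta}^2 \in \LT$ with $\|r_{\alpha-i\beta}^2\|_\HP = \alpha^{-2} \le a^{-2}$, and the map $\beta \mapsto r_{\alpha-i\beta}^2$ is continuous into $\LT$ (translation of the measure $te^{-\alpha t}\,dt$ by the character $e^{-i\beta t}$ depends norm-continuously on $\beta$ — indeed $\|r_{\alpha-i\beta}^2 - r_{\alpha-i\beta'}^2\|_\HP = \int_0^\infty t e^{-\alpha t}|e^{-i\beta t} - e^{-i\beta' t}|\,dt \to 0$ as $\beta'\to\beta$ by dominated convergence). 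Hence the integral $\int_{-b}^b \varphi_{\alpha,g}(\beta)\,r_{\alpha-i\beta}^2\,d\beta$ also converges as an $\LT$-valued Bochner integral, and since point evaluations are continuous on $\LT$ (they are, being continuous on $\Bes \supset \LT$), this $\LT$-valued integral must equal $G_{\alpha,\varphi_{\alpha,g}}$. Thus $G_{\alpha,\varphi_{\alpha,g}} \in \LT$ with
\[
\|G_{\alpha,\varphi_{\alpha,g}}\|_\HP \le \int_{-b}^b |g(\alpha+i\beta)|\,\|r_{\alpha-i\beta}^2\|_\HP\,d\beta \le \frac{2b}{\alpha^2}\,\mathrm{ess\,sup}_\beta|g(\alpha+i\beta)|.
\]
Third, integrate in $\alpha$: $Qg = -\tfrac{2}{\pi}\int_a^\infty \alpha\, G_{\alpha,\varphi_{\alpha,g}}\,d\alpha$, and the $\LT$-valued integrand has $\HP$-norm bounded by $\tfrac{2}{\pi}\cdot\tfrac{2b}{\alpha}\,\mathrm{ess\,sup}_\beta|g(\alpha+i\beta)|$, which is integrable over $[a,\infty)$ (here I use $\alpha \ge a$ so $\alpha/\alpha^2 = 1/\alpha \le 1/a$, and then $\int_a^\infty \tfrac{1}{\alpha}\,\mathrm{ess\,sup}_\beta|g|\,d\alpha$ is dominated by $\tfrac{1}{a}\int_0^\infty \mathrm{ess\,sup}_\beta|g(\alpha+i\beta)|\,d\alpha = \tfrac{1}{a}\|g\|_{\mathcal W}$). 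Again continuity of the integrand into $\LT$ (or a simple Fubini/density argument) plus continuity of point evaluations identifies this $\LT$-valued Bochner integral with $Qg$. Therefore $Qg \in \LT$ and
\[
\|Qg\|_\HP \le \frac{2}{\pi}\int_a^\infty \frac{2b}{\alpha}\,\mathrm{ess\,sup}_\beta|g(\alpha+i\beta)|\,d\alpha \le \frac{4b}{\pi a}\,\|g\|_{\mathcal W},
\]
as claimed.

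The main obstacle is the bookkeeping that justifies pushing the Bochner integrals through from $\Bes$ to $\LT$: one needs that the integrand is not just $\Bes$-Bochner-integrable (which Proposition \ref{L1} provides) but genuinely $\LT$-Bochner-integrable, and that the two resulting vectors coincide. The cleanest way is the measurability-plus-norm-integrability criterion together with the observation that $\LT \hookrightarrow \Bes$ continuously, so an $\LT$-Bochner integral, once known to exist, is mapped by this inclusion to the corresponding $\Bes$-Bochner integral, which is $Qg$; hence the $\LT$-integral equals $Qg$ as an element of $\Bes$, and in particular $Qg \in \LT$. The continuity of $\beta \mapsto r_{\alpha-i\beta}^2$ and $\alpha \mapsto \alpha G_{\alpha,\varphi_{\alpha,g}}$ into $\LT$ supplies strong measurability, and the displayed bounds supply norm-integrability; everything else is routine.
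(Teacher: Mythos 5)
Your argument is correct and arrives at exactly the paper's bound, but it is packaged differently. Both proofs hinge on the same identity $(z+\alpha-i\beta)^{-2}=\int_0^\infty t e^{-(\alpha-i\beta)t}e^{-zt}\,dt$; the paper simply substitutes it into the definition of $Qg$, applies scalar Fubini, and reads off $Qg=\lt h$ with the explicit density $h(t)= - \tfrac{2t}{\pi}\int_a^\infty \alpha e^{-\alpha t}\int_{-b}^b g(\alpha+i\beta)e^{i\beta t}\,d\beta\,d\alpha$, after which the estimate is a direct computation of $\|h\|_{L^1}$. You instead work at the level of vector-valued integration: each $r_{\alpha-i\beta}^2$ lies in $\LT$ with $\|r_{\alpha-i\beta}^2\|_\HP=\alpha^{-2}$, the rectangle of integration gives an $\LT$-valued Bochner integral, and the continuous embedding $\LT\hookrightarrow\Bes$ identifies it with $Qg$. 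What the paper's route buys is concreteness and slightly more information (it exhibits $Qg\in\lt L^1$, not just $\LT$, with an explicit representing function, and needs nothing beyond scalar Fubini); what yours buys is that it reuses Proposition \ref{L1}(1) and the algebra structure of $M(\R_+)$ without ever writing $h$ down, and the same scheme would apply with $M(\R_+)$ replaced by any Banach space of symbols containing the kernels with an integrable norm bound. Two small points to tidy: the remark that $M(\R_+)$ is a dual space is not needed (a Bochner integral of an $X$-valued Bochner-integrable function lies in $X$ for any Banach space $X$), and your claimed continuity of $\alpha\mapsto\alpha G_{\alpha,\varphi_{\alpha,g}}$ in $\LT$ is not available since $g$ is merely measurable; but strong measurability is all you need, and it follows from joint measurability of $(\alpha,\beta)\mapsto g(\alpha+i\beta)\,r_{\alpha-i\beta}^2$ (scalar measurable function times jointly norm-continuous $\LT$-valued kernel) together with the vector-valued Fubini theorem, which is exactly the ``Fubini'' escape route you mention in passing, so the gap is only cosmetic.
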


\begin{proof}
Since
\begin{equation*}\label{RRA}
\frac{1}{(z+\alpha-i\beta)^2}=\int_0^\infty t e^{-(z+\alpha-i\beta)t}\,dt,\qquad z\in \C_{+},
\end{equation*}
 we have  $Qg = \lt h$, where
\[
h(t)= - \frac{2t}{\pi}\int_{a}^\infty \alpha e^{-\alpha t}\int_{-b}^b g(\alpha+i\beta)e^{i\beta t}\,d\beta\, d\alpha.
\]
Thus $Qg \in \LT$ and
\begin{align*}
\|Qg\|_\HP = \int_0^\infty |h(t)|\,dt
&\le \frac{2}{\pi}
\int_0^\infty t\int_a^\infty \alpha e^{-\alpha t}\int_{-b}^b |g(\alpha+i\beta)|\,d\beta\, d\alpha\,dt  \\
&\le \frac{4b}{\pi} \int_0^\infty t\int_{a}^\infty \alpha e^{-\alpha t}\sup_{s\in \R}\,|g(\alpha+is)|\, d\alpha\,dt
\\
&= \frac{4b}{\pi} \int_{a}^\infty \alpha^{-1}\sup_{s\in \R}\,|g(\alpha+is)|\, d\alpha \le \frac{4b}{\pi a} \|g\|_{\mathcal{W}}. \qedhere
\end{align*}
\end{proof}

For $f \in \Bes$ and $n\ge2$, define $\Qop_nf = Q V_n K_n f'$, so that
 \begin{equation*}\label{Def11M}
(\Qop_n f)(z):=-\frac{2}{\pi}\int_{1/n}^n \alpha\int_{-n}^n\frac{f'(\alpha+i\beta)}{(z+\alpha-i\beta)^2}\,d\beta\,d\alpha, \qquad z \in \C_+.
\end{equation*}

\begin{prop}\label{Pr28}
For every $f\in \mathcal{B}_0$, 
\begin{equation*}\label{HolA}
\Qop_n f\in \mathcal{LM},\qquad n\in \N,
\end{equation*}
and
\[
\lim_{n\to\infty}|(\Qop_n f) (z) - f(z)|=0,
\]
uniformly for all $z\in \overline{\C}_{+}$ with $|\Im z|\le c$,  for any $c>0$.
\end{prop}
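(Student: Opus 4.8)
The plan is to combine the two approximation results already established—Proposition \ref{Pr26} (for the horizontal cut-off $K_n$) and Proposition \ref{Pr27} (for the vertical cut-off $V_n$)—together with Proposition \ref{qlm} to get membership in $\mathcal{LM}$. First I would observe that $\Qop_n f = Q(V_n K_n f')$, and the function $g_n := V_n K_n f'$ lies in $\mathcal W$ and is supported on $[1/n, n] \times [-n,n]$. Hence Proposition \ref{qlm} applies directly and gives $\Qop_n f \in \mathcal{LM}$ with $\|\Qop_n f\|_{\mathrm{HP}} \le \frac{4n^2}{\pi} \|f'\|_{\mathcal W} = \frac{4n^2}{\pi}\|f\|_{\Bq}$; this settles the first assertion. (Here I am using \eqref{wb0}.)

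For the convergence, I would use the triangle inequality to split
\[
|(\Qop_n f)(z) - f(z)| \le |(QV_n K_n f')(z) - (Q K_n f')(z)| + |(\Kop_n f)(z) - f(z)|,
\]
recalling that $QK_n f' = \Kop_n f$ by definition. The second term is controlled uniformly on $\C_+$: by \eqref{26A}, \eqref{eqnm} and the fact that $(\Kop_n f - f)(\infty)=0$, we have $\|\Kop_n f - f\|_\infty \le \|f - \Kop_n f\|_{\Bq} \le \frac{8}{\pi} R_n(f) \to 0$ as $n\to\infty$, with $R_n(f)$ as in \eqref{rmf}. For the first term, apply Proposition \ref{Pr27} with $g = K_n f' \in \mathcal W$: whenever $n > 2|\Im z|$,
\[
|(QV_n K_n f')(z) - (Q K_n f')(z)| \le \frac{8}{\pi} S_n(K_n f') \le \frac{8}{\pi} S_n(f'),
\]
where the last inequality holds since $|K_n f'| \le |f'|$ pointwise and the integrand defining $S_n$ is monotone in $|g|$; moreover $S_n(f') \to 0$ as $n\to\infty$ by monotone convergence, using that $f' \in \mathcal W$. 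Note both bounds are independent of $z$ (subject only to $|\Im z| \le c$ and $n > 2c$), which gives the asserted uniformity.

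The main point requiring a little care is the uniformity region: Proposition \ref{Pr27} only controls the vertical-truncation error for $n > 2|\Im z|$, so the convergence is genuinely uniform only on horizontal strips $|\Im z| \le c$, matching the statement. Extending to $z \in \overline{\C}_+$ with $\Re z = 0$ is then a matter of continuity of all functions involved up to the boundary (Proposition \ref{L1} gives $\Qop_n f, \Kop_n f \in \Bq$, and elements of $\Bes$ extend continuously to $\overline{\C}_+$), so the estimates pass to the closure. Everything else is a routine assembly of the quoted propositions, and no new estimate is needed.
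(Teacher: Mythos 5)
Your proposal is correct and follows essentially the same route as the paper: membership in $\mathcal{LM}$ via Proposition \ref{qlm} applied to $V_nK_nf'$ supported in $[1/n,n]\times[-n,n]$, and the same splitting of $\Qop_n f - f$ into the vertical-truncation error controlled by Proposition \ref{Pr27} (with $S_n(K_nf')\le S_n(f')$) and the horizontal-truncation error controlled by \eqref{eqnm} and \eqref{26A}. Your extra remarks (the explicit HP-norm bound and the continuity argument up to the boundary) are harmless additions to what is the paper's own argument.
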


\begin{proof}
Since $f' \in \mathcal{W}$ and $V_n K_n f'$ is supported by $[1/n,n] \times [-n,n]$, it is immediate from Proposition \ref{qlm} that $\Qop_n f \in \LT$.  By Proposition \ref{Pr27}, 
\[
\sup_{|\Im z|\le c} |(QK_n f')(z) - (\Qop_nf)(z)| \le \frac{8}{\pi} S_n(K_n f') \le \frac{8}{\pi} S_n(f'),  \quad n>2c.
\]
Since $QK_n f' - f \in \Bq$,  it follows from \eqref{eqnm} and \eqref{26A} that
\[
\|QK_n f' - f\|_\infty \le \|QK_n f' - f\|_\Bq  \le \frac{8}{\pi} R_n(f).
\]
Hence,
\[
\sup_{|\Im z|\le c} |f(z) - (\Qop_nf')(z)| \le \frac{8}{\pi} S_n(f') + \frac{8}{\pi} R_n(f), \qquad n>2c.
\]
The conclusion follows.
\end{proof}

\subsection{Closures of subspaces of $\LT$ in $\Bes$}

Let $L^1$, $M_d$ and $M_s$ stand for the subspaces of absolutely continuous, discrete, and singular non-atomic measures in $M(\mathbb R_+)$, respectively.  For each of these spaces $\mathcal N$, we let
\[
\mathcal{LN} = \{\mathcal{L}\mu : \mu \in \mathcal{N}\}.
\]
We show in this section that the closure of $\mathcal{LN}$ in $\Bes$ for the $\Bes$-norm is the same as the closure in the $H^\infty$-norm.   Note first that in each case $\mathcal{LN}$ is invariant under horisontal shifts, since if $f = \lt\mu$ then $T_\Bes(a)f = \lt\tilde\mu_a$, where $d\tilde\mu_a(t) = e^{-at}d\mu(t), a \ge 0$. We call a subspace $Y$ of $\mathcal B_0$ shift-invariant if
$T_\mathcal B(a)(Y)\subset Y$ for every $a \ge 0.$

For a subspace $Y$ of $\Bq$, let $\overline{Y}^\Bq$ denote the closure of $Y$ in $\Bq$, and $\overline{Y}^{H^\infty}$ denote the closure of $Y$ in $H^\infty(\mathbb C_+)$.   Then $\Bq \cap \overline{Y}^{H^\infty}$ is the closure of $Y$ in $\Bq$ with respect to the $H^\infty$-norm.

\begin{thm}\label{CCCAG}
Let $Y$ be a shift-invariant subspace of $\mathcal{B}_0$.   Then
\begin{equation} \label{GenD}
 \overline{Y}^{\mathcal{B}_0}= \Bq \cap \overline{Y}^{H^\infty}.
\end{equation}

Let $Z$ be a shift-invariant subspace of $\Bes$ containing the constant functions.  Then 
\[
 \overline{Z}^{\mathcal{B}}= \Bes \cap \overline{Z}^{H^\infty}.
\]
\end{thm}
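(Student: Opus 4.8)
The plan is to prove the two halves of Theorem~\ref{CCCAG} in sequence, with the first (the $\Bq$-statement) doing all the work and the second (the $\Bes$-statement) being an easy reduction to the first. The two inclusions in \eqref{GenD} are quite different in character: the inclusion $\overline{Y}^{\mathcal B_0} \subset \Bq \cap \overline{Y}^{H^\infty}$ is immediate because $\|\cdot\|_{H^\infty} = \|\cdot\|_\infty \le \|\cdot\|_\Bq$ on $\Bq$ by \eqref{eqnm}, so $\Bq$-convergence forces $H^\infty$-convergence, and $\Bq$ is a closed subspace of $\Bes$. The substance is the reverse inclusion: given $f \in \Bq$ that is an $H^\infty$-limit of a sequence $(f_k) \subset Y$, I must produce elements of $Y$ converging to $f$ in the stronger $\Bq$-norm.

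The key device is the family of operators $\Kop_m$ from Proposition~\ref{Pr26}. First I would observe that $\Kop_m$ maps $Y$ into $Y$: by the remark following Proposition~\ref{Pr26}, $\Kop_m = T_\Bes(a) - T_\Bes(b) + aA_\Bes T_\Bes(a) - bA_\Bes T_\Bes(b)$ with $a = 2/m$, $b = 2m$, and this is a norm-limit (in $L(\Bq)$, by the strong-operator analysis there, or more simply via \eqref{Def12}) of finite combinations of shifts $T_\Bes(t)$ applied to difference quotients, hence carries any shift-invariant \emph{closed} subspace into itself; since $Y$ need not be closed I instead use formula \eqref{Def12}, $(\Kop_m g)(z) = \int_{2/m}^{2m} t g''(t+z)\,dt$, and note that $g \mapsto T_\Bes(t)g$ preserves $Y$ and that $\Kop_m g$ is a Bochner integral (in $\Bq$) of $\frac{d^2}{dt^2}T_\Bes(t)g = t (T_\Bes(t)g)''$-type terms lying in $\overline{Y}^{\Bq}$ --- so in any case $\Kop_m Y \subset \overline{Y}^{\Bq}$, which is all I need. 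The point of introducing $\Kop_m$ is the estimate \eqref{Hest}: $\|\Kop_m h\|_{\Bq} \le \frac{8}{\pi}\|h\|_\infty \log m$. Now given $f \in \Bq \cap \overline{Y}^{H^\infty}$, write $f = (f - \Kop_m f) + \Kop_m f$. By \eqref{26A}, $\|f - \Kop_m f\|_{\Bq} \le \frac{8}{\pi} R_m(f) \to 0$ as $m \to \infty$, using $f \in \Bq$. So it suffices to approximate each $\Kop_m f$ in $\Bq$-norm by elements of $\overline{Y}^{\Bq}$. Choose $f_k \in Y$ with $\|f_k - f\|_\infty \to 0$; then $\Kop_m f_k \in \overline{Y}^{\Bq}$ and, by linearity and \eqref{Hest} applied to $h = f_k - f$,
\[
\|\Kop_m f_k - \Kop_m f\|_{\Bq} \le \frac{8}{\pi}\|f_k - f\|_\infty \log m \to 0 \quad (k \to \infty),
\]
for each fixed $m$. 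Hence $\Kop_m f \in \overline{Y}^{\Bq}$ for every $m$, and letting $m \to \infty$ gives $f \in \overline{Y}^{\Bq}$, proving \eqref{GenD}.

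For the second statement, let $Z$ be shift-invariant and contain the constants, and set $Y := Z \cap \Bq$. The containment $\overline{Z}^{\Bes} \subset \Bes \cap \overline{Z}^{H^\infty}$ is again immediate from \eqref{eqnm}. Conversely, take $f \in \Bes \cap \overline{Z}^{H^\infty}$ and $g_k \in Z$ with $\|g_k - f\|_\infty \to 0$; then $g_k(\infty) \to f(\infty)$, so $g_k - g_k(\infty) \in Z \cap \Bq = Y$ (using that $Z$ contains constants) and $g_k - g_k(\infty) \to f - f(\infty)$ in $H^\infty$. Thus $f - f(\infty) \in \Bq \cap \overline{Y}^{H^\infty}$. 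I would check that $Y$ is shift-invariant: for $h \in Y$ and $a \ge 0$, $T_\Bes(a)h \in Z$ by shift-invariance of $Z$, and $(T_\Bes(a)h)(\infty) = h(\infty) = 0$, so $T_\Bes(a)h \in Y$. By the first part, $f - f(\infty) \in \overline{Y}^{\Bq} \subset \overline{Z}^{\Bes}$; adding back the constant $f(\infty) \in Z$ (hence the constant function $f(\infty)\cdot\mathbf 1$ is a fixed element of $Z$) gives $f \in \overline{Z}^{\Bes}$.

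\textbf{Main obstacle.} The only genuinely delicate point is verifying that the operators $\Kop_m$ carry $Y$ (or its $\Bq$-closure) back into $\overline{Y}^{\Bq}$ --- i.e.\ that the Bochner-integral representation \eqref{Def12} legitimately expresses $\Kop_m f_k$ as a $\Bq$-limit of finite $Y$-linear combinations. This hinges on continuity of $t \mapsto T_\Bes(t)h$ and boundedness of $t \mapsto tA_\Bes^2 T_\Bes(2t)h$ in $\Bq$ (Lemma~\ref{shifts01}, parts 1 and 6) so that the Riemann sums of the integrand in \eqref{Def12}, which are finite combinations of shifts of second difference quotients of $h \in Y$ and hence lie in $Y$, converge in $\Bq$-norm. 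Everything else is bookkeeping with the estimates \eqref{eqnm}, \eqref{Hest}, \eqref{26A} and \eqref{rmf} already in hand.
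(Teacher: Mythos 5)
Your proposal is correct and follows essentially the same route as the paper: the hard inclusion is handled by the operators $\Kop_m$, combining \eqref{26A}/\eqref{rmf} for $\|f-\Kop_m f\|_{\Bq}$ with \eqref{Hest} applied to $f_k-f$, together with the observation (via \eqref{bhs}, equivalently \eqref{Def12}) that shift-invariance forces $\Kop_m$ to map $Y$ into $\overline{Y}^{\Bq}$; the $Z$-statement is then reduced to the $\Bq$-case with $Y = Z\cap\Bq$, exactly as in the paper. The only cosmetic difference is that you split the limit into $k\to\infty$ then $m\to\infty$, whereas the paper diagonalises with $\tilde f_n = \Kop_n f_n$, and the paper shortcuts your Bochner-integral discussion by noting that $\overline{Y}^{\Bq}$ is closed and shift-invariant, hence $\Kop_m$-invariant.
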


\begin{proof}
Let $Y \subset \Bq$ be shift-invariant.  Then $\overline{Y}^\Bq$ is closed in $\Bq$ and shift-invariant, so it is invariant under $\Kop_n$ by \eqref{bhs}.   Fix $f\in \Bq \cap \overline{Y}^{H^\infty}$.   For every $n\in \N$ there exists 
$f_n\in Y$ such that
$
\|f_n-f\|_{\infty}\le 1/n.
$
Set
\[
\tilde{f}_n:=\Kop_n f_n \in \overline{Y}^\Bq. 
\]
By (\ref{Hest}) and (\ref{26A}), we infer that
\[
\|f-\tilde{f}_n\|_{\mathcal{B}_0}\le \|f-\Kop_nf\|_{\mathcal{B}_0}+\|\Kop_n(f-f_n)\|_{\mathcal{B}_0}
\le \frac{8}{\pi}{R}_n(f)+ \frac{8}{\pi n}\log n.
\]
Hence
\[
\lim_{n\to\infty}\,\|f-\tilde{f}_n\|_{\mathcal{B}_0}=0,
\]
and so $f \in \overline{Y}^\Bq$.  This proves \eqref{GenD}.

The statement about $Z$ follows from \eqref{GenD} for $Y = \Bq \cap Z$.
\end{proof}

Let $\mathcal R_0$ be the space of rational functions which vanish at infinity and have no poles in $\overline{\C}_+$.   Let $\mathcal {J}_0$ be the space of entire functions of exponential type which belong to $H^\infty(\mathbb C_+) \cap C_0(\overline{\C}_+)$.

\begin{thm}\label{CCCA}
Let $Y$ be any of the spaces $\LT$, $\mathcal{L}L^1$, $\LT_d$, $\LT_s$, $\mathcal{R}_0$ and $\mathcal{J}_0$.   Then the closure of $Y$ in $\Bes$ with respect to the $\Bes$-norm coincides with the closure of $Y$  in $\Bes$ with respect to the $H^\infty$-norm. 

If $Y$ is any of the spaces $\mathcal{L} L^1$, $\mathcal{R}_0$ and $\mathcal{J}_0$, then the closure of $Y$ in $\Bes$ is $\Bes \cap C_0(\overline{\C}_+)$.
\end{thm}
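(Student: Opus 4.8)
The plan is to derive the first assertion from Theorem~\ref{CCCAG} and to establish the second assertion by means of the operators $\Qop_n$ introduced before Proposition~\ref{Pr28}.

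For the first assertion, the only real task is to recognise each of the six spaces as a shift-invariant subspace of $\Bes$ to which Theorem~\ref{CCCAG} applies (the containments $\mathcal{R}_0,\mathcal{J}_0\subset\Bes$ being understood, e.g.\ via partial fractions and the fact that $\Bes$ is an algebra containing every $r_a$ with $\Re a>0$ for $\mathcal{R}_0$). The spaces $\mathcal{L}L^1$, $\LT_s$, $\mathcal{R}_0$ and $\mathcal{J}_0$ are contained in $\Bq$, since a Laplace transform of an absolutely continuous or of a non-atomic singular measure vanishes at infinity, as does any rational function vanishing at infinity or any element of $C_0(\overline{\C}_+)$; on the other hand $\LT$ and $\LT_d$ contain the constant function $1=\lt\delta_0$. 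Shift-invariance holds in every case: for $f=\lt\mu$ one has $T_\Bes(a)f=\lt\nu_a$ with $d\nu_a(t)=e^{-at}\,d\mu(t)$, and $\nu_a$ belongs to the same class ($M(\R_+)$, $L^1$, $M_d$ or $M_s$) as $\mu$; a horizontal shift moves the poles of a rational function further into the open left half-plane; and a horizontal shift of an entire function of exponential type keeps the type along with boundedness on $\C_+$ and decay at infinity. Theorem~\ref{CCCAG} --- its first form for the four subspaces of $\Bq$, its second form for $\LT$ and $\LT_d$ --- then gives $\overline{Y}^{\Bes}=\Bes\cap\overline{Y}^{H^\infty}$, which is exactly the assertion that the $\Bes$-closure of $Y$ coincides with its closure in the $H^\infty$-norm.

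For the second assertion, fix $Y\in\{\mathcal{L}L^1,\mathcal{R}_0,\mathcal{J}_0\}$. One inclusion is painless: $Y\subset\Bes\cap C_0(\overline{\C}_+)$ --- for $\mathcal{L}L^1$ by the Riemann--Lebesgue lemma (uniformly over $\Re z\ge0$) together with dominated convergence as $\Re z\to\infty$, for $\mathcal{R}_0$ trivially, and for $\mathcal{J}_0$ by definition --- while $\Bes\cap C_0(\overline{\C}_+)$ is $\Bes$-closed because convergence in $\Bes$ forces uniform convergence on $\overline{\C}_+$; hence $\overline{Y}^{\Bes}\subset\Bes\cap C_0(\overline{\C}_+)$. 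The substance is the reverse inclusion, and its core is the case $Y=\mathcal{L}L^1$. Let $f\in\Bes\cap C_0(\overline{\C}_+)$, so $f\in\Bq$; since $V_nK_nf'$ has compact support, Proposition~\ref{qlm} (and its proof) shows $\Qop_n f=Q(V_nK_nf')\in\mathcal{L}L^1$. I claim $\Qop_n f\to f$ in $\Bes$. By \eqref{repq}, $f=Q(f')$, and as $Q\colon\mathcal{W}\to\Bes$ is bounded by \eqref{BC}, it suffices to prove $\|V_nK_nf'-f'\|_{\mathcal{W}}\to0$. Now $\|K_nf'-f'\|_{\mathcal{W}}=R_n(f)\to0$ by \eqref{rmf}, while $\|V_nK_nf'-K_nf'\|_{\mathcal{W}}=\int_{1/n}^{n}\sup_{|\beta|>n}|f'(\alpha+i\beta)|\,d\alpha$. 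For each fixed $\alpha>0$, the Cauchy integral estimate over the disc of radius $\alpha/2$ about $\alpha+i\beta$ gives $\sup_{|\beta|>n}|f'(\alpha+i\beta)|\le\frac{2}{\alpha}\sup\{|f(w)|:w\in\overline{\C}_+,\ |w|\ge n-\alpha/2\}$, which tends to $0$ as $n\to\infty$ because $f\in C_0(\overline{\C}_+)$; since this quantity is dominated by $\sup_{\beta}|f'(\alpha+i\beta)|\in L^1(d\alpha)$, dominated convergence proves the claim. Thus $f\in\overline{\mathcal{L}L^1}^{\Bes}$, and so $\overline{\mathcal{L}L^1}^{\Bes}=\Bes\cap C_0(\overline{\C}_+)$.

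To finish, I would treat $\mathcal{R}_0$ and $\mathcal{J}_0$ by reducing to the case just done, i.e.\ by showing $\mathcal{L}L^1\subset\overline{\mathcal{R}_0}^{\Bes}$ and $\mathcal{L}L^1\subset\overline{\mathcal{J}_0}^{\Bes}$, using the bound $\|\lt\mu\|_\Bes\le2\|\mu\|_{M(\R_+)}$. For $\mathcal{R}_0$: the functions $t^ke^{-t}$ ($k\ge0$) span an $L^1(\R_+)$-dense subspace (if $g\in L^\infty(\R_+)$ annihilates all of them, the finite measure $e^{-t}g(t)\,dt$ has all moments zero, hence vanishes), so every $\lt\mu$ with $\mu\in L^1(\R_+)$ is an $L^1$-limit, and therefore a $\Bes$-limit, of linear combinations of $(z+1)^{-k-1}\in\mathcal{R}_0$. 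For $\mathcal{J}_0$: since $\mu\mathbf{1}_{[0,T]}\to\mu$ in $L^1(\R_+)$ we get $\lt(\mu\mathbf{1}_{[0,T]})\to\lt\mu$ in $\Bes$, and each $\lt(\mu\mathbf{1}_{[0,T]})$ is entire of exponential type $\le T$, lies in $H^\infty(\C_+)$ (being in $\Bes$) and in $C_0(\overline{\C}_+)$, so it belongs to $\mathcal{J}_0$. Combining with the easy inclusion, $\overline{\mathcal{R}_0}^{\Bes}=\overline{\mathcal{J}_0}^{\Bes}=\Bes\cap C_0(\overline{\C}_+)$. I expect the crux of the whole proof to be the norm-convergence $\Qop_n f\to f$, specifically the estimate of $\|V_nK_nf'-K_nf'\|_{\mathcal{W}}$: the vertical truncation $V_n$ is harmless here only because the hypothesis $f\in C_0(\overline{\C}_+)$ forces $f'(\alpha+i\beta)\to0$ as $|\beta|\to\infty$ for each fixed $\alpha$, and it is the absence of such decay that makes $\LT$, $\LT_d$ and $\LT_s$ satisfy only the weaker first assertion.
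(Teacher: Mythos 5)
Your proposal is correct, and for the first statement it is essentially the paper's argument (check shift-invariance, note that each space either lies in $\Bq$ or contains the constants, apply Theorem~\ref{CCCAG}). For the second statement, however, you take a genuinely different route. The paper quotes the $H^\infty$-approximation theorem $\overline{\mathcal R_0}^{H^\infty}=H^\infty(\C_+)\cap C_0(\overline{\C}_+)$ (Haase, Appendix F, Proposition F.3), identifies the $H^\infty$-closures of $\mathcal{L}L^1$ and $\mathcal{J}_0$ with it by $L^1$-density arguments, and then simply intersects with $\Bes$ using the first statement. You instead prove directly that every $f\in\Bes\cap C_0(\overline{\C}_+)$ satisfies $\Qop_nf\to f$ in the $\Bes$-norm — upgrading the locally uniform convergence of Proposition~\ref{Pr28} under the extra decay hypothesis, via $f=Q(f')$, boundedness of $Q:\mathcal W\to\Bes$, the identity $\|K_nf'-f'\|_{\mathcal W}=R_n(f)$, and a dominated-convergence estimate of $\|V_nK_nf'-K_nf'\|_{\mathcal W}$ based on the Cauchy estimate on discs of radius $\alpha/2$ — noting from the proof of Proposition~\ref{qlm} that $\Qop_nf\in\mathcal{L}L^1$; you then pass from $\mathcal{L}L^1$ to $\mathcal R_0$ and $\mathcal J_0$ by $L^1$-density and $\|\lt h\|_\Bes\le 2\|h\|_{L^1}$. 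Your route is self-contained (it uses no external $H^\infty$ result, and your proof of the second statement does not even invoke Theorem~\ref{CCCAG}) and it gives a constructive approximation scheme, whereas the paper's argument is shorter given the quoted result. One step deserves tightening: the assertion that a finite measure on $\R_+$ with all moments zero must vanish is false in general (the Stieltjes moment problem need not be determinate); it is valid here because $e^{-t}g(t)\,dt$ has exponentially decaying density, so its Laplace transform is analytic on $\{\Re s>-1\}$ and vanishing moments kill all its Taylor coefficients at $0$ — alternatively, use the density in $L^1(\R_+)$ of the span of $\{e_a:a>0\}$, as the paper does.
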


\begin{proof}
The first statement follows from Theorem \ref{CCCAG} since each space $Y$ is shift-invariant and either contains constants or is contained in $\Bq$.

It is shown in \cite[Appendix F, Proposition F.3]{HaaseB} that
\[
\overline {\mathcal R_0}^{H^\infty} 
= H^\infty(\mathbb C_+)\cap C_0(\overline{{\mathbb C}}_+),   
\]
Since the exponential functions $e_a$ for $a \in \C_+$ span a dense subset of $L^1$ and their Laplace transforms generate the algebra $\mathcal{R}_0$, so
\[ 
\overline {\mathcal{L}L^1}^{H^\infty} = \overline {\mathcal R_0}^{H^\infty}.
\]
Since functions of compact support are dense in $L^1$ and their Laplace transforms are in $\mathcal{J}_0$,
\[
\lt L^1 \subset \overline{\mathcal{J}_0}^{H^\infty} \subset  H^\infty(\mathbb C_+)\cap C_0(\overline{{\mathbb C}}_+),
\]
and then
\[
\overline{\mathcal{J}_0}^{H^\infty} = H^\infty(\mathbb C_+)\cap C_0(\overline{{\mathbb C}}_+).
\]
Now Theorem \ref{CCCA} implies all of the claims.
\end{proof}

\begin{rem}
The results in Theorem \ref{CCCAG} also hold when $Y$ is a subspace of $\Bq$ which is $\Kop_m$-invariant for some arbitrarily large $m\ge2$.

The spaces  $\LT$, $\mathcal{L}L^1$, $\LT_d$ and $\LT_s$ are all invariant under $\Kop_m$ for all $m\ge2$, as $\Kop_m\lt\mu    =  \lt \mu_m$, where
\begin{equation*} \label{M26}
\mu_m(dt):=t^2\left(\int_{2/m}^{2m} s  e^{-s t}ds\right)\,d\mu(t)=\left(\int_{2t/m}^{2mt} s  e^{-s}ds\right)\,d\mu(t).
\end{equation*}
\end{rem}

\section{Comparison of norms on $\LT$} \label{compnorm}

Recall that the $\mathcal B$-calculus coincides with the ${\rm HP}$-calculus on $\mathcal {LM}$.
Since the ${\rm HP}$-calculus plays a crucial  role in various applications of semigroup theory, 
for example in norm-estimates for functions of semigroup generators, it is important
to understand the advantages of the ${\mathcal B}$-calculus over the ${\rm HP}$-calculus arising from the different norms. To this aim, we provide precise asymptotics for $\|\varphi_s\|_{\rm HP}/\|\varphi_s\|_{\mathcal B}$
with respect to $s \to \infty$, where  $s$ is a real or integer parameter,
for several families $(\varphi_s) \subset \mathcal {LM}$.
Thus, the gap between the norm-estimates within the two calculi becomes visible.
The families $(\varphi_s)$ considered below are related to notable problems in semigroup theory.
A discussion of operator-theoretic aspects and motivations for their study
can be found in \cite{BGT}.

\subsection{\bf Cayley transforms} \label{Cayley1}

Let
\[
f_n(z)=\left(\frac{z-1}{z+1}\right)^n,\qquad z\in \C_{+},\quad n\in \N.
\]
In \cite[Lemma 3.7]{BGT} we obtained the following upper bound
\begin{equation}\label{upper}
\|f_n\|_{\mathcal{B}}\le 3+2\log(2n),\qquad n\in \N,
\end{equation}
by estimating a precise formula.   We now obtain the following lower bound.

\begin{lemma}\label{L111}
One has
\[
\|f_n\|_{\mathcal{B}}\ge 1+e^{-1}\log n,\qquad n\in \N.
\]
\end{lemma}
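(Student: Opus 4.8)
The plan is to work directly from the definition $\|f_n\|_{\mathcal{B}} = \|f_n\|_\infty + \|f_n\|_{\mathcal{B}_0}$, where $\|f_n\|_{\mathcal{B}_0} = \int_0^\infty \sup_{\beta \in \mathbb{R}} |f_n'(\alpha+i\beta)|\,d\alpha$. First I would note $\|f_n\|_\infty = 1$, since $(z-1)/(z+1)$ maps $\mathbb{C}_+$ into the unit disc. Then the task reduces to showing $\|f_n\|_{\mathcal{B}_0} \geq e^{-1}\log n$. Computing $f_n'(z) = 2n\,(z-1)^{n-1}(z+1)^{-n-1}$, I would lower-bound the inner supremum over $\beta$ at each fixed $\alpha > 0$ by evaluating at a single well-chosen point $\beta$. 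The natural choice is $\beta = 0$ (a point on the real axis), giving $|f_n'(\alpha)| = 2n\,|\alpha-1|^{n-1}(\alpha+1)^{-n-1}$, so that
\[
\|f_n\|_{\mathcal{B}_0} \geq 2n \int_0^\infty \frac{|\alpha-1|^{n-1}}{(\alpha+1)^{n+1}}\,d\alpha.
\]

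Next I would estimate this integral from below. Restricting to $\alpha \in [0,1]$ (where $|\alpha - 1| = 1-\alpha$) and substituting, say, $\alpha = 1 - t$ or comparing with a Beta-type integral, one gets $2n\int_0^1 (1-\alpha)^{n-1}(1+\alpha)^{-n-1}\,d\alpha$. A clean way forward is the substitution $u = \frac{1-\alpha}{1+\alpha}$, which maps $[0,1]$ onto $[0,1]$ with $\alpha = \frac{1-u}{1+u}$, $1+\alpha = \frac{2}{1+u}$, $1-\alpha = \frac{2u}{1+u}$, and $d\alpha = \frac{-2}{(1+u)^2}\,du$; this turns the integrand into a constant multiple of $u^{n-1}$ times elementary factors, and the integral evaluates more or less explicitly — one finds the contribution behaves like $\int_0^1 u^{n-1}(1+u)\,du \cdot(\text{const})$, which is $\Theta(1/n)$ but with the right constant the product with $2n$ does not yet produce a $\log n$. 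So the single-point bound at $\beta=0$ alone gives only a constant; I would instead keep the full region $\alpha \in (0,\infty)$ and, more importantly, reconsider where the logarithm comes from.

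The logarithm must come from the range $\alpha \in [1/n, 1]$ roughly, where $|f_n'(\alpha+i\beta)|$ is comparable to $1/\alpha$ for a good choice of $\beta$ depending on $\alpha$. Concretely, for small $\alpha$ the modulus $\left|\frac{z-1}{z+1}\right|$ at $z = \alpha + i\beta$ can be made close to $1$, and the derivative factor $2n/|z^2-1|$ is of order $n$ near $z = 1$ but of order $1/\alpha$-type behaviour appears when $|z\mp 1|\sim \alpha$. The sharper route: take $\beta$ so that $z = \alpha + i\beta$ lies near where $|(z-1)/(z+1)|^{n-1}$ is bounded below by $e^{-1}$, i.e. choose $|z-1| \asymp$ const while exploiting that $(1 - c\alpha)^{n} \geq e^{-1}$ when $\alpha \lesssim 1/n$. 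For $\alpha \in (0, 1/n)$ one shows $\sup_\beta |f_n'(\alpha + i\beta)| \geq e^{-1}/\alpha$ (taking $\beta$ near $\pm 1$ so that the pole/zero structure gives the $1/\alpha$ blow-up while the $n$-th power stays $\geq e^{-1}$), and then
\[
\|f_n\|_{\mathcal{B}_0} \geq \int_{1/n^{?}}^{?} \frac{e^{-1}}{\alpha}\,d\alpha = e^{-1}\log n.
\]
The main obstacle is making this choice of $\beta = \beta(\alpha)$ precise and verifying the two competing estimates simultaneously: that $|f_n'|$ really is $\gtrsim 1/\alpha$ there, and that the $(n-1)$-th power factor does not decay faster than $e^{-1}$ over the relevant $\alpha$-range — this is exactly where the constant $e^{-1}$ enters, via an inequality of the form $(1-\alpha)^{n-1} \geq e^{-1}$ or $\big(1 - O(\alpha)\big)^{n} \geq e^{-1}$ for $\alpha$ up to order $1/n$, combined with integrating $d\alpha/\alpha$ over $[\text{const}/n, \text{const}]$ to harvest the $\log n$. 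Once the pointwise lower bound $\sup_\beta|f_n'(\alpha+i\beta)| \geq e^{-1}\alpha^{-1}\mathbf{1}_{[1/n,1]}(\alpha)$ (or similar) is established, the conclusion $\|f_n\|_{\mathcal{B}} = 1 + \|f_n\|_{\mathcal{B}_0} \geq 1 + e^{-1}\log n$ is immediate.
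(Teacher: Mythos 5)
Your reduction of the problem to $\|f_n\|_{\mathcal{B}_0}\ge e^{-1}\log n$ (via $\|f_n\|_\infty=1$) is correct, and the mechanism you aim for — choosing $\beta=\beta(\alpha)$ so that $\sup_\beta|f_n'(\alpha+i\beta)|\gtrsim \alpha^{-1}$ and harvesting $\log n$ from $\int d\alpha/\alpha$ — is indeed the computation underlying this bound. But the decisive step is exactly what you leave unproven, and the concrete claims you do make are wrong as stated. For $\alpha\in(0,1/n)$ the asserted bound $\sup_\beta|f_n'(\alpha+i\beta)|\ge e^{-1}/\alpha$ is false: since $|f_n'(z)|=2n\,|z-1|^{n-1}|z+1|^{-(n+1)}\le 2n\,|z+1|^{-2}\le 2n$, the supremum over $\beta$ stays bounded while $e^{-1}/\alpha\to\infty$ (and if that bound did hold near $\alpha=0$, the integral would diverge and $f_n$ would not lie in $\mathcal{B}$). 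The optimal $\beta$ is also not ``near $\pm1$'': writing $t=|z+1|^2$, one has $|f_n'|=2n\,(t-4\alpha)^{(n-1)/2}t^{-(n+1)/2}$, maximized at $t=2\alpha(n+1)$, i.e.\ $\beta(\alpha)^2=2\alpha(n+1)-(\alpha+1)^2$ (so $|\beta|\asymp\sqrt{\alpha n}$), which is admissible precisely for $\alpha\in[a_n,b_n]$, $a_n=n-\sqrt{n^2-1}$, $b_n=n+\sqrt{n^2-1}$, and there $\sup_\beta|f_n'(\alpha+i\beta)|=n(n-1)^{(n-1)/2}(n+1)^{-(n+1)/2}\alpha^{-1}$.

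Moreover, your fallback formulation $\sup_\beta|f_n'(\alpha+i\beta)|\ge e^{-1}\alpha^{-1}$ on $[1/n,1]$ leaves no slack, because $\int_{1/n}^1\alpha^{-1}d\alpha=\log n$ exactly; you would then need the inequality $\frac{n}{n+1}\bigl(\frac{n-1}{n+1}\bigr)^{(n-1)/2}\ge e^{-1}$, whose margin tends to $0$ (like $1/(6n^2)$) — it happens to be true for $n\ge2$, but nothing in your sketch establishes it, and small $n$ is not addressed. The robust repair is to integrate the exact supremum over the whole interval $[a_n,b_n]$, which gives $\|f_n\|_{\mathcal{B}_0}\ge \frac{2n}{n+1}\bigl(\frac{n-1}{n+1}\bigr)^{(n-1)/2}\log b_n\ge \bigl(\frac{n-1}{n+1}\bigr)^{(n-1)/2}\log n$, and then to use the much cruder estimate $\bigl(\frac{n-1}{n+1}\bigr)^{(n-1)/2}\ge e^{-1}$ for $n\ge3$, checking $n=1,2$ directly. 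This is in substance what the paper does: it quotes the closed formula for $\|f_n\|_{\mathcal{B}}$ from the proof of \cite[Lemma 3.7]{BGT} (which encodes precisely the optimization over $\beta$ above, including the interval $[a_n,b_n]$) and then performs these elementary estimates. So your route could be completed, but as written it has a genuine gap at its central estimate and two incorrect intermediate claims.
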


\begin{proof}
Near the end of the proof of \cite[Lemma 3.7]{BGT} we showed that
\[
\|f_n\|_{\mathcal{B}}=3+n\frac{(n-1)^{(n-1)/2}}{(n+1)^{(n+1)/2}}\log\frac{b_n}{a_n}-\left(\frac{1-a_n}{1+a_n}\right)^n
-\left(\frac{b_n-1}{b_n+1}\right)^n,
\]
where
\[
b_n=n+\sqrt{n^2-1},\qquad a_n=\frac{1}{b_n}=n-\sqrt{n^2-1}.
\]
Hence
\begin{align}\label{n=2}
\|f_n\|_{\mathcal{B}}&\ge 1+2n\frac{(n-1)^{(n-1)/2}}{(n+1)^{(n+1)/2}}\log b_n
\ge 1+2n\frac{(n-1)^{(n-1)/2}}{(n+1)^{(n+1)/2}}\log n\\
&\ge  1+\left(\frac{n-1}{n+1}\right)^{(n-1)/2}\log n.\notag
\end{align}
It is easily seen that 
\[
(1-1/x)^x\ge e^{-(1+1/x)},\qquad x\ge 2,
\]
so we also have 
\[
\left(\frac{n-1}{n+1}\right)^{(n-1)/2}=\left[\left(1-\frac{2}{n+1}\right)^{(n+1)/2}\right]^{\frac{n-1}{n+1}}
\ge e^{-(n+3)(n-1)/(n+1)^2}\ge e^{-1},
\]
for $n\ge3$.  Hence,
\begin{equation*}\label{lower}
\|f_n\|_{\mathcal{B}}\ge 1+e^{-1}\log n,\qquad n\ge 3.
\end{equation*}
Moreover, $\|f_1\|_\Bes = 3$, and $\|f_2\|_\Bes> 1 + e^{-1}\log 2$ from \eqref{n=2}. 
\end{proof}

Next we obtain two-sided estimates of $f_n$ in the Hille-Phillips norm.

\begin{lemma}\label{lemma_hp}
There are constants $c_0,c_1>0$ such that
\begin{equation*}
c_0 n^{1/2} \le \|f_n\|_{{\rm HP}} \le c_1 n^{1/2}, \qquad n \in \mathbb N.
\end{equation*}
\end{lemma}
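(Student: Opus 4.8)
The plan is to compute $\|f_n\|_{\HP}$ directly by finding the function $h_n \in L^1(\R_+)$ with $\lt h_n = f_n$ (noting $f_n$ has no atomic part since $f_n(\infty) = 1$ must be split off: write $f_n = 1 + g_n$ where $g_n = f_n - 1$ has a nice Laplace representation). The algebraic identity
\[
\frac{z-1}{z+1} = 1 - \frac{2}{z+1}
\]
gives, via the binomial theorem,
\[
f_n(z) = \sum_{k=0}^n \binom{n}{k} \frac{(-2)^k}{(z+1)^k} = 1 + \sum_{k=1}^n \binom{n}{k} (-2)^k r_1(z)^k.
\]
Since $r_1(z)^k = \lt(\tfrac{t^{k-1}}{(k-1)!} e^{-t})(z)$, we obtain $f_n = 1 + \lt h_n$ with
\[
h_n(t) = e^{-t} \sum_{k=1}^n \binom{n}{k} (-2)^k \frac{t^{k-1}}{(k-1)!} = -2 e^{-t} L_{n-1}^{(1)}(2t) \cdot \frac{?}{}
\]
— more precisely this sum is (up to normalisation) a Laguerre polynomial evaluated at $2t$. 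Concretely, using $\sum_{k=1}^n \binom{n}{k} \frac{(-x)^{k-1}}{(k-1)!} \cdot \frac{1}{?}$, one identifies $h_n(t) = -2 e^{-t} L_{n-1}^{(1)}(2t)$, where $L_{n-1}^{(1)}$ is the associated Laguerre polynomial. Then $\|f_n\|_{\HP} = \|\mu_n\|_{M(\R_+)} = 1 + \|h_n\|_{L^1(\R_+)}$ (the $1$ from the point mass at $0$), so everything reduces to estimating $\int_0^\infty e^{-t} |L_{n-1}^{(1)}(2t)| \, dt$, equivalently (after substituting $t \mapsto t/2$) a constant times $\int_0^\infty e^{-t/2} |L_{n-1}^{(1)}(t)| \, dt$.

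Next I would invoke the classical asymptotics for $L^1$-norms of Laguerre polynomials against exponential weights. It is a well-known fact (going back to work related to Cesàro and Riesz means for Laguerre expansions; see e.g. Markett, or Askey--Wainger type estimates) that for the weight $e^{-\gamma t}$ with $0 < \gamma < 1$, one has
\[
\int_0^\infty e^{-\gamma t} |L_m^{(\alpha)}(t)| \, dt \asymp m^{\alpha/2 - 1/4} \cdot (\text{a factor depending on how close }\gamma\text{ is to }1),
\]
and in our case $\gamma = 1/2$ is safely bounded away from $1$, $\alpha = 1$, $m = n-1$, so the integral behaves like $m^{1/4}$. Wait — that would give $n^{1/4}$, not $n^{1/2}$. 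Let me reconsider: the correct normalisation must account for the $(-2)^k$ factors amplifying things. Actually the cleaner route is to use the \emph{generating function} $\sum_{m\ge0} L_m^{(1)}(t) w^m = (1-w)^{-2} \exp(-tw/(1-w))$ and the contour-integral (Perron-type) asymptotics for $L_{n-1}^{(1)}(2t)$, which on the relevant range $t \asymp n$ has oscillatory behaviour with amplitude $\sim n^{-1/4} t^{-1/4} e^{t}$ times a bounded oscillation — the $e^t$ combining with the $e^{-t}$ weight gives an $O(1)$ amplitude on a set of length $\asymp n$ in $t$, but the oscillation frequency scales so that each "bump" contributes $\asymp n^{-1/2}$ to the $L^1$ norm and there are $\asymp n$ of them, netting $n^{1/2}$. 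So the strategy: split $\int_0^\infty = \int_0^{cn} + \int_{cn}^\infty$; on the tail the exponential decay dominates and the contribution is $o(n^{1/2})$ (indeed exponentially small, using the known exponential majorants for Laguerre polynomials outside the oscillatory region); on the bulk, substitute the Plancherel--Rotach / Perron asymptotic formula for $L_{n-1}^{(1)}$ and integrate the resulting $|\cos(\text{phase})|$-type expression against a slowly varying amplitude.

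For the two-sided bound in the lemma, the upper bound $\|f_n\|_{\HP} \le c_1 n^{1/2}$ follows from the $L^1$-norm asymptotic (or even a cruder bound: Cauchy--Schwarz against the orthogonality weight, $\int_0^\infty e^{-t}|L_{n-1}^{(1)}(2t)|\,dt \le (\int e^{-2t}(t\text{-weight stuff}))^{1/2}(\int e^{\cdots} (L_{n-1}^{(1)}(2t))^2 \cdots)^{1/2}$, where the $L^2$ integral with the natural weight $t e^{-t}$ is computable exactly as $\binom{n}{1}\cdot$const $= n$, giving $\sqrt n$ after a Cauchy--Schwarz that is lossless up to constants precisely because the weight mismatch $e^{-t}$ versus $t e^{-t}$ is mild). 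The lower bound $\|f_n\|_{\HP} \ge c_0 n^{1/2}$ is the genuinely substantive direction and cannot come from $L^2$ considerations alone; it requires showing the Laguerre polynomial genuinely oscillates — this is where I would use the Plancherel--Rotach asymptotics to produce an explicit interval $t \in [an, bn]$ on which $e^{-t}|L_{n-1}^{(1)}(2t)|$ exhibits $\gtrsim n$ sign changes with amplitude $\gtrsim n^{-1/2}$ between consecutive ones, and then bound $\int |h_n| \ge \sum (\text{areas of individual humps}) \gtrsim n \cdot n^{-1} \cdot \sqrt{n}$... I would need to pin down the exponents carefully. \textbf{Main obstacle:} making the lower bound rigorous — specifically, controlling the Laguerre asymptotics uniformly enough on the oscillatory range to extract the precise $n^{1/2}$ growth rather than merely $n^{1/2-\epsilon}$. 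An alternative that sidesteps delicate asymptotics: use the known \emph{exact} value or sharp two-sided estimate for $\|L_m^{(1)}\|_{L^1(e^{-\gamma t}\,dt)}$ from the literature on Laguerre means (Stempak, Thangavelu, or Markett's work on the $L^1$ multiplier problem), cite it, and just verify the parameters match; this reduces the proof to a change of variables and a citation. That is the route I would actually take to keep the argument short.
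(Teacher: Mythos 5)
Your final route --- the binomial expansion identifying $h_n(t)=-2e^{-t}L_{n-1}^{(1)}(2t)$, hence $\|f_n\|_{\HP}=1+\int_0^\infty e^{-t/2}|L_{n-1}^{(1)}(t)|\,dt$, and then citing the sharp two-sided estimate for this weighted $L^1$-norm of Laguerre polynomials --- is exactly the paper's proof, which quotes the representation from Gomilko's survey and invokes Askey--Wainger for $c_0 n^{1/2}\le \int_0^\infty e^{-t/2}|L_{n-1}^{(1)}(t)|\,dt\le c_1' n^{1/2}$. The discarded asides are inaccurate but immaterial: the $m^{\alpha/2-1/4}$ asymptotic you first quoted has the wrong normalisation (as you noticed), and the Cauchy--Schwarz argument against the weight $te^{-t}$ is not lossless --- it costs at least a $\sqrt{\log n}$ factor after the necessary truncation --- so the citation route you settle on is indeed the one that works.
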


\begin{proof}
As in \cite[Section 9]{Sasha},
\[
f_n(z)=1-2\int_0^\infty e^{-zt} e^{-t}L_{n-1}^{(1)}(2t)\,dt,
\]
where $L_n^{(1)}(\cdot)$ are the first order Laguerre polynomials.   Moreover
\[
c_0n^{1/2}\le \int_0^\infty  e^{-t/2}|L_{n-1}^{(1)}(t)|\,dt \le c_1'n^{1/2},\qquad n\in \N,
\]
for some constants $0<c_0<c_1'<\infty$ (see \cite{Askey} for details of this).   Thus
\[
\|f_n\|_{\text{HP}}=1+2\int_0^\infty  e^{-t}|L_{n-1}^{(1)}(2t)|\,dt=
1+\int_0^\infty  e^{-t/2}|L_{n-1}^{(1)}(t)|\,dt
\]
and
\[
1+c_0n^{1/2}\le \|f_n\|_{{\rm HP}}\le 1+c_1' n^{1/2},\qquad n\in \N. \qedhere
\]
\end{proof}

Combining \eqref{upper}, Lemma \ref{L111} and Lemma \ref{lemma_hp}, we get the following asymptotic relation.

\begin{cor}
One has
\[
 \frac{\|f_n\|_{{\rm HP}}}{\|f_n\|_{\mathcal{B}}}\asymp \frac{n^{1/2}}{\log n}\,, \quad n\to\infty.
\]
\end{cor}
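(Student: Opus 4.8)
The final statement is the corollary asserting $\|f_n\|_{\mathrm{HP}}/\|f_n\|_{\mathcal B} \asymp n^{1/2}/\log n$ as $n \to \infty$. The plan is simply to combine the three preceding estimates, so the ``proof'' is a short assembly argument; the genuine work has already been done in \eqref{upper}, Lemma \ref{L111}, and Lemma \ref{lemma_hp}.

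\emph{Plan.} First I would record the two-sided bound on the $\mathcal B$-norm: the upper bound \eqref{upper} gives $\|f_n\|_{\mathcal B} \le 3 + 2\log(2n)$, and Lemma \ref{L111} gives $\|f_n\|_{\mathcal B} \ge 1 + e^{-1}\log n$. Together these say $\|f_n\|_{\mathcal B} \asymp \log n$ as $n \to \infty$, since $3 + 2\log(2n) = 3 + 2\log 2 + 2\log n$ and $1 + e^{-1}\log n$ are each comparable to $\log n$ for large $n$ (both numerator and denominator of the ratio $\|f_n\|_{\mathcal B}/\log n$ stay between two positive constants once $n$ is large enough, say $n \ge 2$). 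Next I would invoke Lemma \ref{lemma_hp}, which directly states $c_0 n^{1/2} \le \|f_n\|_{\mathrm{HP}} \le c_1 n^{1/2}$, i.e.\ $\|f_n\|_{\mathrm{HP}} \asymp n^{1/2}$.

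Then the conclusion is immediate: dividing, one gets
\[
\frac{\|f_n\|_{\mathrm{HP}}}{\|f_n\|_{\mathcal B}} \asymp \frac{n^{1/2}}{\log n}, \qquad n \to \infty,
\]
because the quotient of two quantities each pinched between positive constant multiples of $n^{1/2}$ and of $\log n$ respectively is itself pinched between positive constant multiples of $n^{1/2}/\log n$. Concretely, from $c_0 n^{1/2} \le \|f_n\|_{\mathrm{HP}} \le c_1 n^{1/2}$ and $e^{-1}\log n \le \|f_n\|_{\mathcal B} \le 3\log n$ (valid for $n$ large), one reads off
\[
\frac{c_0}{3}\,\frac{n^{1/2}}{\log n} \le \frac{\|f_n\|_{\mathrm{HP}}}{\|f_n\|_{\mathcal B}} \le c_1 e\,\frac{n^{1/2}}{\log n}.
\]

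\emph{Expected obstacle.} There is essentially no obstacle here — this is a bookkeeping step. The only minor care needed is to make the $\asymp$ for $\|f_n\|_{\mathcal B} \asymp \log n$ precise: one must note that $\log n$ dominates the additive constants in both the upper and lower bounds, which is why the asymptotic equivalence (rather than just $O$ and $\Omega$) holds, and one should be explicit that the claimed relation is for $n \to \infty$ so that small values of $n$ (where, e.g., $\log n$ could vanish) are irrelevant. All the analytic content — the Laguerre polynomial asymptotics behind Lemma \ref{lemma_hp} and the precise formula for $\|f_n\|_{\mathcal B}$ behind Lemma \ref{L111} — is already in place, so the corollary follows at once.
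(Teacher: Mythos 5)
Your argument is correct and is exactly the paper's route: the corollary is obtained by combining the upper bound \eqref{upper}, the lower bound of Lemma \ref{L111}, and the two-sided Hille--Phillips estimate of Lemma \ref{lemma_hp}, so that $\|f_n\|_{\mathcal B}\asymp \log n$ and $\|f_n\|_{\mathrm{HP}}\asymp n^{1/2}$ give the ratio at once. Your extra care about absorbing the additive constants for large $n$ is exactly the (minor) point that makes the $\asymp$ statement rigorous.
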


\subsection{The exponent of a reciprocal}

Let 
\[
g_t(z)=e^{-t/(z+1)},\quad z\in \C_{+},\qquad t>0.
\]
By \cite[Lemma 3.4]{BGT}, 
\begin{equation} \label{gtb}
\|g_t\|_{\mathcal{B}} = \begin{cases} 2 - e^{-t},  &t \in (0,1],\\ 2 - e^{-1} + e^{-1} \log t, &t>1.  \end{cases}
\end{equation}

\begin{lemma}\label{hp2}
There are constants $C_1,C_2>0$ such that
\[
1+ C_1 t^{1/4} \le \|g_t\|_{{\rm HP}} \le 1+ C_2 t^{1/4}, \qquad t \ge 0.
\]
\end{lemma}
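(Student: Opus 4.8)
The plan is to compute $\|g_t\|_{\mathrm{HP}}$ exactly as a Bessel integral and then estimate it. Expanding $g_t(z)=e^{-t/(z+1)}=\sum_{k\ge0}\frac{(-t)^k}{k!}(z+1)^{-k}$ and using $\mathcal L\delta_0=1$ together with $(z+1)^{-k}=\mathcal L\bigl[s\mapsto \tfrac{s^{k-1}e^{-s}}{(k-1)!}\bigr](z)$ for $k\ge1$, I would write $g_t=\mathcal L\mu_t$ with $\mu_t=\delta_0+h_t(s)\,ds$, where
\[
h_t(s)=e^{-s}\sum_{k\ge1}\frac{(-t)^k s^{k-1}}{k!\,(k-1)!}=-\sqrt{t}\,e^{-s}\,\frac{J_1\bigl(2\sqrt{ts}\bigr)}{\sqrt{s}},
\]
the last identity coming from the power series $J_1(x)=\sum_{j\ge0}\frac{(-1)^j}{j!\,(j+1)!}(x/2)^{2j+1}$. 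Since $\mathcal L$ is injective on $M(\R_+)$, this is the unique such representation, so $\|g_t\|_{\mathrm{HP}}=1+\|h_t\|_{L^1(\R_+)}$, and the substitutions $s=\sigma^2$ and then $v=2\sqrt{t}\,\sigma$ give the clean formula
\[
\|g_t\|_{\mathrm{HP}}=1+\int_0^\infty e^{-v^2/(4t)}\,|J_1(v)|\,dv=:1+I(t),\qquad t>0,
\]
with $I(0)=0$. Everything is thus reduced to showing $I(t)\asymp t^{1/4}$.

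For the upper bound I would use $|J_1(v)|\le v/2$ near the origin (the case $n=1$ of $|J_n(x)|\le |x/2|^n/n!$) together with the classical large-argument estimate $|J_1(v)|\le Cv^{-1/2}$, which combine to $|J_1(v)|\le C(1+v)^{-1/2}$ for all $v\ge0$. Substituting $v=\sqrt{t}\,w$ and using $(1+\sqrt{t}\,w)^{-1/2}\le t^{-1/4}w^{-1/2}$,
\[
I(t)\le C\sqrt{t}\int_0^\infty e^{-w^2/4}(1+\sqrt{t}\,w)^{-1/2}\,dw\le C\,t^{1/4}\int_0^\infty e^{-w^2/4}\,w^{-1/2}\,dw=C_2\,t^{1/4},
\]
valid for every $t>0$; this gives $\|g_t\|_{\mathrm{HP}}\le 1+C_2t^{1/4}$.

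For the lower bound I would discard the Gaussian outside $[0,\sqrt{t}\,]$, where $e^{-v^2/(4t)}\ge e^{-1/4}$, so that $I(t)\ge e^{-1/4}\int_0^{\sqrt{t}}|J_1(v)|\,dv$, and then prove $\int_0^R|J_1(v)|\,dv\ge c\,R^{1/2}$ for large $R$. For the latter I would insert the asymptotic $J_1(v)=\sqrt{2/(\pi v)}\cos\bigl(v-\tfrac{3\pi}{4}\bigr)+O(v^{-3/2})$, use $|\cos\theta|\ge\cos^2\theta=\tfrac12(1+\cos 2\theta)$, and note that $\int^R v^{-1/2}\,dv\sim 2R^{1/2}$ is the main term while the oscillatory integral $\int^R v^{-1/2}\cos(2v-\tfrac{3\pi}{2})\,dv$ and the remainder $\int^\infty O(v^{-3/2})\,dv$ remain bounded. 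Taking $R=\sqrt{t}$ then yields $I(t)\ge C_1 t^{1/4}$ for all sufficiently large $t$; a bounded range of $t$ bounded away from $0$ is covered by positivity and continuity of $I$, while for $t\to0+$ one checks directly that $I(t)\asymp t$ (from $J_1(v)\asymp v$ on $[0,1]$ and $I(t)\le t$), cf.\ also \eqref{gtb}.

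The delicate point is the lower estimate $\int_0^R|J_1(v)|\,dv\gtrsim R^{1/2}$: any crude pointwise majorant of $|J_1|$ furnishes only an \emph{upper} bound of this order, so one must genuinely exploit the oscillation of $J_1$ — through its large-argument asymptotics and the elementary device $|\cos|\ge\cos^2$ — to be certain the $L^1$-mass grows like $R^{1/2}$ rather than more slowly. Granted that, the remaining steps are routine changes of variables and elementary integral estimates.
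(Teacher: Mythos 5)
Your route is essentially the paper's: the same Laplace--Bessel representation $g_t=\mathcal{L}\bigl(\delta_0+h_t\bigr)$ with $h_t(s)=-\sqrt{t}\,e^{-s}J_1(2\sqrt{ts})/\sqrt{s}$, the identity $\|g_t\|_{\rm HP}=1+\|h_t\|_{L^1}$, an upper bound from $|J_1(v)|\lesssim v^{-1/2}$, and a lower bound obtained by discarding the exponential weight on a bounded range and reducing to $\int_0^R|J_1(v)|\,dv\gtrsim R^{1/2}$. The only real differences are that you make two cited ingredients self-contained: you rederive the kernel $h_t$ from the power series of $J_1$ (the paper quotes this formula from deLaubenfels/Gomilko), and you prove the $L^1$-growth of $|J_1|$ from the large-argument asymptotics via $|\cos\theta|\ge\cos^2\theta$ (the paper cites Andrews--Askey--Roy, Section 4.8). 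Both substitutes are correct, and your change of variables giving $\|h_t\|_{L^1}=\int_0^\infty e^{-v^2/(4t)}|J_1(v)|\,dv$ is a clean way to organise the same computation.

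The one point that does not close is your treatment of small $t$. Your own (correct) observation that $I(t)\asymp t$ as $t\to0+$ does not ``cover'' that regime --- it shows that the stated lower bound $1+C_1t^{1/4}\le\|g_t\|_{\rm HP}$ cannot hold near $0$ for any fixed $C_1>0$, since $t=o(t^{1/4})$. So your argument, as written, proves the lower bound only for $t$ bounded away from $0$ (together with the large-$t$ asymptotics), and the sentence claiming the case $t\to0+$ is checked directly should be deleted or the statement adjusted (e.g.\ restrict to $t\ge1$, or replace $t^{1/4}$ by $\min(t,t^{1/4})$ in the lower bound). Note that the paper's proof has the same latent issue: the inequality $\int_0^x|J_1(s)|\,ds\ge c_0\sqrt{x}$ it invokes fails for small $x>0$, since $J_1(s)\sim s/2$ there. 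This is harmless for the way the lemma is used (Corollary \ref{two} only concerns $t\to\infty$), but your write-up should not pretend the uniform-in-$t$ lower bound has been established.
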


\begin{proof}
First, recall from \cite{deL} or \cite{Sasha} that 
\[
g_t(z)=1-\sqrt{t}\int_0^\infty e^{-zs}e^{-s}\frac{J_1(2\sqrt{ts})}{\sqrt{s}}\,ds,\qquad z\in \C_{+},
\]
where $J_1$ is the first-order Bessel function of the first kind.  By \cite[Section 4.8]{SpecialF}, we have
\[
|J_1(x)|\le \frac{c_1}{\sqrt{x}},\qquad x>0,
\]
and hence
\begin{align*}
\|g_t\|_{{\rm HP}} &=1+\sqrt{t}
\int_0^\infty e^{-s}\frac{|J_1(2\sqrt{ts})|}{\sqrt{s}}\,ds \\
&\le 1+\frac{c_1}{\sqrt{2}}t^{1/4}\int_0^\infty \frac{e^{-s}}{s^{3/4}}\,ds
= 1+\frac{c_1}{\sqrt{2}}\Gamma(1/4)t^{1/4}.
\end{align*}
On the other hand,
by \cite[Section 4.8]{SpecialF}, we have
\[
\int_0^t |J_1(s)|\,ds \ge c_0 \sqrt{t},\qquad t>0,
\]
and then
\begin{align*}
\sqrt{t}\int_0^\infty e^{-s}\frac{|J_1(2\sqrt{ts})|}{\sqrt{s}}\,ds&=
\int_0^\infty e^{-\tau/t}\frac{|J_1(2\sqrt{\tau})|}{\sqrt{\tau}}\,d\tau\
\ge e^{-1}\int_0^t\frac{|J_1(2\sqrt{\tau})|}{\sqrt{\tau}}\,d\tau\\
&= e^{-1}\int_0^{2\sqrt{t}}
|J_1(s)|\,ds\ge \sqrt2e^{-1}c_0 t^{1/4}, \qquad t>0.
\end{align*}
This completes the proof of the lemma.
\end{proof}

By combining \eqref{gtb} and Lemma \ref{hp2}, we obtain the next asymptotic relation.
\begin{cor}\label{two}
One has
\[
 \frac{\|g_t\|_{{\rm HP}}}{\|g_t\|_{\mathcal{B}}}\asymp\frac{t^{1/4}}{\log(t+1)},\qquad t\to \infty.
\]
\end{cor}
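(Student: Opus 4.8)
The plan is to combine the precise $\mathcal B$-norm formula \eqref{gtb} with the two-sided Hille--Phillips estimate of Lemma \ref{hp2}. Since both bounds in Lemma \ref{hp2} are of the form $1 + C t^{1/4}$ with $C>0$, we have $\|g_t\|_{\rm HP} \asymp 1 + t^{1/4}$ as $t\to\infty$, and hence $\|g_t\|_{\rm HP} \asymp t^{1/4}$ for $t$ large (the additive $1$ is absorbed since $t^{1/4}\to\infty$). Similarly, from \eqref{gtb} we read off that for $t>1$, $\|g_t\|_{\mathcal B} = 2 - e^{-1} + e^{-1}\log t$, so $\|g_t\|_{\mathcal B} \asymp \log t$ as $t\to\infty$; again the bounded additive term is harmless against the divergent $\log t$.

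Dividing, one obtains
\[
\frac{\|g_t\|_{\rm HP}}{\|g_t\|_{\mathcal B}} \asymp \frac{t^{1/4}}{\log t}, \qquad t\to\infty.
\]
Finally I would observe that for the purposes of the stated asymptotic one may replace $\log t$ by $\log(t+1)$, since these are comparable for $t\to\infty$ (their ratio tends to $1$); this is purely cosmetic and serves only to make the right-hand side well-defined and continuous for all $t\ge 0$, matching the normalisation in Lemma \ref{hp2}. Assembling these observations gives the claimed relation.

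This argument is essentially bookkeeping: all the analytic content has already been done in \eqref{gtb} (which is quoted from \cite{BGT}) and in Lemma \ref{hp2}. There is no real obstacle; the only point requiring the slightest care is making sure the bounded additive constants genuinely drop out of the $\asymp$ relation, which they do because both the numerator's and the denominator's leading terms diverge. So the proof is just: quote \eqref{gtb}, quote Lemma \ref{hp2}, take the quotient, and simplify.
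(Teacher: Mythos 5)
Your argument is correct and is exactly the paper's proof: the corollary is stated there as an immediate consequence of combining \eqref{gtb} with Lemma \ref{hp2}, just as you do. The bookkeeping about absorbing the bounded additive constants and replacing $\log t$ by $\log(t+1)$ is fine and is all that is needed.
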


\subsection{Regularised exponent}

We consider a family of functions studied in \cite[Section III]{deL}.   Let 
\[
\varphi_t(z)=\frac{z}{z+1}e^{-t/z},\qquad z\in \C_{+}, \quad t >0.
\]

\begin{lemma}\label{HZLex}
One has
\[
e^{-1}\log(1+t/4) \le \|\varphi_t\|_\Bes\le   3+2e^{-1/2}\log(t+\sqrt{t^2+1}),\qquad t>0.
\]
\end{lemma}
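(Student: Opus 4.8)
The plan is to estimate $\|\varphi_t\|_\Bes = \|\varphi_t\|_\infty + \|\varphi_t\|_\Bq$ directly from the definition \eqref{bdef0}, using the explicit form of $\varphi_t'$. First I would compute
\[
\varphi_t'(z) = \frac{d}{dz}\left(\frac{z}{z+1}e^{-t/z}\right) = \left(\frac{1}{(z+1)^2} + \frac{t}{z(z+1)}\right)e^{-t/z},
\]
so that, writing $z = \alpha + i\beta$ and using $|e^{-t/z}| = e^{-t\alpha/|z|^2} \le 1$ on $\C_+$,
\[
|\varphi_t'(\alpha+i\beta)| \le \left(\frac{1}{|\alpha+i\beta+1|^2} + \frac{t}{|\alpha+i\beta|\,|\alpha+i\beta+1|}\right)e^{-t\alpha/(\alpha^2+\beta^2)}.
\]
For the upper bound I would split $\int_0^\infty \sup_\beta |\varphi_t'(\alpha+i\beta)|\,d\alpha$ into the two pieces coming from the two terms. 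The first term contributes a bounded amount ($\int_0^\infty \sup_\beta |\alpha+1+i\beta|^{-2}\,d\alpha = \int_0^\infty (\alpha+1)^{-2}\,d\alpha = 1$, essentially). The second, genuinely $t$-dependent term is the crux: I expect $\sup_\beta \frac{t}{|z|\,|z+1|}e^{-t\alpha/|z|^2}$ to behave, roughly, like $\min(1, t/\alpha)$ up to constants — for $\alpha \gtrsim t$ the exponential and the $t/|z|^2$ factors tame it, while for small $\alpha$ the supremum over $\beta$ is bounded by a constant (the $e^{-t\alpha/|z|^2}$ factor, optimised over $\beta$, beats the $t/|z|$ blow-up). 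Integrating $\min(1,t/\alpha)$ over $(0,\infty)$ diverges, so more care is needed at $\alpha \to \infty$: there one uses $t/(|z|\,|z+1|) \le t/\alpha^2$ together with $e^{-t\alpha/|z|^2}$, and in fact the bound $\sup_\beta \le C/\alpha$ up to $\alpha \sim t$ combined with faster decay beyond gives $\int_0^\infty \lesssim \log t$. Matching the stated constant $2e^{-1/2}\log(t+\sqrt{t^2+1})$ will require computing the relevant supremum over $\beta$ exactly (setting $u = \beta/\alpha$ or $u = \beta$ and differentiating), which is where the precise $e^{-1/2}$ and the factor $2$ come from.

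For the lower bound $\|\varphi_t\|_\Bes \ge e^{-1}\log(1+t/4)$, it suffices to lower-bound $\|\varphi_t\|_\Bq$ (or even just $\sup_z$-type quantities), and for that I would restrict the integral $\int_0^\infty \sup_\beta|\varphi_t'(\alpha+i\beta)|\,d\alpha$ to a convenient sub-range of $\alpha$ and a convenient choice of $\beta = \beta(\alpha)$. The natural choice is $\beta = 0$ (or $\beta$ small), where $\varphi_t'(\alpha) = \bigl(\tfrac{1}{(\alpha+1)^2} + \tfrac{t}{\alpha(\alpha+1)}\bigr)e^{-t/\alpha}$; restricting to $\alpha \in [t, \infty)$ or so makes $e^{-t/\alpha} \ge e^{-1}$, and then $\int_t^{?} \frac{t}{\alpha(\alpha+1)}\,d\alpha$ produces a logarithm. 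Choosing the upper limit and the lower cutoff carefully (something like $\alpha$ from $t/4$ upward, or a two-sided interval) should yield exactly $e^{-1}\log(1+t/4)$; alternatively one lower-bounds $\sup_\beta$ at each fixed $\alpha$ by evaluating at a $\beta$ chosen to keep the exponential $\ge e^{-1}$ while $|z|\,|z+1| \lesssim \alpha^2$, and integrates. A small check is needed for small $t$ (say $t \le 4$), where $\log(1+t/4) \le \log 2 < 1 \le \|\varphi_t\|_\infty$ trivially since $\varphi_t$ is not identically zero, or one notes $\|\varphi_t\|_\infty \ge |\varphi_t(\alpha)|$ for a suitable $\alpha$.

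The main obstacle I anticipate is the supremum over $\beta$ of the $t$-dependent term $\frac{t}{|\alpha+i\beta|\,|\alpha+i\beta+1|}\,e^{-t\alpha/(\alpha^2+\beta^2)}$: the exponential decays in $|\beta|$ on the scale $|\beta| \sim \sqrt{t\alpha}$ (for small $\alpha$) but the rational prefactor is largest at $\beta = 0$, so the optimising $\beta$ — and hence the exact value of the supremum — depends on the regime of $\alpha$ relative to $t$. Getting clean, closed-form bounds (rather than just orders of magnitude) that integrate to exactly the stated constants is the delicate part; I would handle it by substituting $\beta = \alpha s$, reducing $\sup_\beta$ to a one-variable optimisation in $s$ with $\alpha, t$ as parameters, solving (or bounding) that optimisation, and then performing the outer $\alpha$-integral, which after the substitution becomes an elementary integral yielding $\log(t + \sqrt{t^2+1})$ via the antiderivative $\operatorname{arcsinh}$. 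The two bounded contributions ($\|\varphi_t\|_\infty \le 1$ plus the first-term integral $\le 1$, then slack) account for the additive constant $3$.
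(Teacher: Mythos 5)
Your overall framework for the upper bound is the same as the paper's (write $\varphi_t' = \frac{e^{-t/z}}{(z+1)^2} + \frac{t e^{-t/z}}{z(z+1)}$, note $\|\varphi_t\|_\infty = 1$ and that the first piece contributes at most $1$ to $\|\varphi_t\|_{\mathcal{B}_0}$, then handle the $t$-dependent piece by optimising over $\beta$ and integrating to an $\operatorname{arcsinh}$), but your quantitative claims about $\sup_\beta$ are wrong and the decisive computation is left undone. The supremum is \emph{not} bounded by a constant for small $\alpha$: taking $\beta \approx \sqrt{t\alpha}$ makes the exponential $\approx e^{-1}$ while the prefactor is $\approx t/\sqrt{t\alpha} = \sqrt{t/\alpha}$, so the supremum blows up like $\sqrt{t/\alpha}$ as $\alpha\to0$ (integrable, but not $O(1)$); it is also not $\asymp \min(1,t/\alpha)$, and your fallback bound $C/\alpha$ on $(0,t)$ is not even integrable at $0$. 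What actually closes the argument --- exactly the step you defer --- is the one-variable maximisation over $\tau=\alpha^2+\beta^2$ of $e^{-t\alpha/\tau}/\sqrt{\tau(\tau+1)}$, which the paper carries out to get $\sup_\beta \bigl|t e^{-t/z}/(z(z+1))\bigr| \le e^{-1/2}\sqrt{t}/\sqrt{\alpha(t\alpha+1)}$ for $\alpha\in(0,t)$, together with the crude bound $t/\alpha^2$ on $(t,\infty)$; only with that closed form does $\int_0^t$ evaluate to $2e^{-1/2}\log\bigl(t+\sqrt{t^2+1}\bigr)$ and the additive constant come out as $3$.

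The more serious gap is the lower bound: your primary route (evaluate at $\beta=0$ and restrict to $\alpha\gtrsim t$ so that $e^{-t/\alpha}\ge e^{-1}$) cannot produce a logarithm. On $[t,\infty)$ one has $\frac{t}{\alpha(\alpha+1)}\le t/\alpha^2$, so $\int_t^\infty \frac{t}{\alpha(\alpha+1)}\,d\alpha = t\log(1+1/t)\le 1$; in fact the substitution $\alpha=t/u$ gives $\int_0^\infty \frac{t}{\alpha(\alpha+1)}e^{-t/\alpha}\,d\alpha = \int_0^\infty \frac{t}{t+u}e^{-u}\,du \le 1$, so the whole $\beta=0$ ray contributes $O(1)$ uniformly in $t$, and no choice of cutoffs (including $\alpha\ge t/4$) yields $\log(1+t/4)$. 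The logarithmic mass of $\|\varphi_t\|_{\mathcal{B}_0}$ comes from intermediate $\alpha$, roughly $1\le\alpha\le t$, where one must leave the real axis: the paper evaluates at $\beta=\sqrt{t\alpha}$, for which $|e^{-t/z}| = e^{-t/(\alpha+t)}\ge e^{-1}$ for \emph{every} $\alpha>0$, bounds $|\varphi_t'(\alpha+i\sqrt{t\alpha})| \ge e^{-1}t/((1+\alpha)^2+\alpha t)$, and integrates $\int_1^\infty \frac{t\,d\alpha}{\alpha(4\alpha+t)} = \log(1+t/4)$. Your parenthetical ``alternative'' gestures at this, but as written your main lower-bound argument fails, so the proposal as it stands does not prove the lemma.
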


\begin{proof}
Note first that $\|\varphi_t\|_\infty = 1$ and
\[
\varphi_t'(z) = \psi_{1,t}(z)+\psi_{2,t}(z),\qquad \psi_{1,t}(z)=\frac{e^{-t/z}}{(z+1)^2},
\quad \psi_{2,t}(z)=\frac{te^{-t/z}}{z(z+1)}.
\]
Moreover,
\[
\int_0^\infty \sup_{y\in \R}\,|\psi_{1,t}(x+iy)|\,dx\le \int_0^\infty \frac{dx}{(x+1)^2}=1.
\]
Using the  estimate $|\psi_{2,t}(x+iy)|\le t/x^2$, we obtain
\[
\int_t^\infty \sup_{y\in \R}\,|\psi_{2,t}(x+iy)|\,dx\le t\int_t^\infty \frac{dx}{x^2}=1.
\]
Next, 
\[
|\psi_{2,t}(x+iy)| =\frac{te^{-tx/(x^2+y^2)}}{\sqrt{(x^2+y^2)((x+1)^2+y^2)}}\le t  \rho_a(x^2+y^2),
\]
where
\[
a:=tx>0,\qquad  \rho_a(\tau):=\frac{e^{-a/\tau}}{\sqrt{\tau(\tau+1)}}, \quad \tau>0.
\]
We have
\begin{align*}
\rho_a'(\tau)&=\left(\frac{a}{\tau^{5/2}(\tau+1)^{1/2}}-\frac{1}{2\tau^{3/2}(\tau+1)^{1/2}}-
\frac{1}{2\tau^{1/2}(\tau+1)^{3/2}}\right) e^{-a/\tau}\\
&=\frac{2a(\tau+1)-\tau(\tau+1)-\tau^2}{2\tau^{5/2}(\tau+1)^{3/2}}e^{-a/\tau}
= -\frac{2\tau^2-\tau(2a-1)-2a}{\tau^{5/2}(\tau+1)^{3/2}} e^{-a/\tau}.
\end{align*}
Let $\tau_0=\tau_0(a)$ be the positive zero of the equation
$2\tau^2-\tau(2a-1)-2a=0$, so that
\begin{equation*}\label{zeroB}
\tau_0=\frac{(2a-1)+\sqrt{(2a-1)^2+16a}}{4}\in [a,2a].
\end{equation*}
Then
\begin{equation*}\label{zeroA}
\sup_{\tau>0}\,\rho_a(\tau)=\rho_a(\tau_0)
\le  \frac{e^{-1/2}}{\sqrt{a(a+1)}}.
\end{equation*}
Hence
\[
|\psi_{2,t}(x+iy)|\le  t \sup_{\tau>0}\,\rho_a(\tau)
\leq \frac{ e^{-1/2}\sqrt{t}}{\sqrt{x(tx+1)}}.
\]
Therefore,
\begin{align*}
\int_0^t \sup_{y\in \R}\,|\psi_{2,t}(x+iy)|\,dx &\le  e^{-1/2}\sqrt{t}\int_0^t
\frac{dx}{\sqrt{x(tx+1)}}\\
&= e^{-1/2}\int_0^{t^2}\frac{dx}{\sqrt{x(x+1)}}
=2e^{-1/2}\log\big(t+\sqrt{t^2+1}\big).
\end{align*}
Combining the estimates above, it follows that 
\[
\|\varphi_t\|_\Bes\le 3+2e^{-1/2}\log(t+\sqrt{t^2+1}),\qquad t> 0.
\]

To get the lower bound for $\|\varphi_t\|_{\mathcal B}$ it suffices to note that  
\[
|\varphi_t'(x+i\sqrt{tx})|=\frac{|(x+i\sqrt{tx})(1+t)+t|}{|x+i\sqrt{tx}|(1+x)^2+xt}e^{-t/(x+t)}
\ge \frac{e^{-1}t}{(1+x)^2+xt},
\]
and
\[
\int_0^\infty \frac{t}{(1+x)^2+xt} \,dx \ge  \int_1^\infty \frac{t}{x(4x+t)}\,dx=\log(1+t/4).  \qedhere
\]
\end{proof}

\begin{lemma}\label{hp3}  
There are constants $C_1,C_2>0$ such that
\[
C_1(1+t)\le \|\varphi_t\|_{\mathrm{HP}} \le C_2(1+t), \qquad t \ge 0. 
\]
\end{lemma}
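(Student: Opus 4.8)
The plan is to realise $\varphi_t$ as the Laplace transform of an explicit signed measure on $\R_+$ built from Bessel functions, and then to estimate its total variation from above and below. Starting from the classical identity $\int_0^\infty e^{-zs}J_0(2\sqrt{ts})\,ds=z^{-1}e^{-t/z}$ ($z\in\C_+$), together with $z\mathcal L[g]=\mathcal L[g']+g(0)$ applied to $g(s)=J_0(2\sqrt{ts})$, one gets $e^{-t/z}=1-\mathcal L[\hat k_t](z)$ with $\hat k_t(s)=\sqrt{t/s}\,J_1(2\sqrt{ts})$ and $\hat k_t(0^+)=t$. Writing $\tfrac z{z+1}=1-\tfrac1{z+1}$ with $\tfrac1{z+1}=\mathcal L[e^{-\cdot}]$ and pulling the factor $z$ through $\tfrac1{z+1}$ (which costs the boundary term $\hat k_t(0^+)=t$), and using the recurrence identity $\hat k_t'(s)=-t\,J_2(2\sqrt{ts})/s$, one arrives at $\varphi_t=\mathcal L(\nu_t)$ with
\[
\nu_t=\delta_0-\Bigl[(t+1)e^{-s}-t\int_0^s e^{-(s-v)}\frac{J_2(2\sqrt{tv})}{v}\,dv\Bigr]\,ds .
\]
Writing $e_1(s)=e^{-s}$ and $j_t(v)=J_2(2\sqrt{tv})/v$, this gives $\|\varphi_t\|_{\mathrm{HP}}=1+\bigl\|(t+1)e^{-s}-t\,(e_1*j_t)(s)\bigr\|_{L^1(\R_+)}$. (Such a representation is essentially the one used in \cite{deL}, \cite{Sasha}; it can also be obtained by expanding $e^{-t/z}$ in powers of $1/z$ and resumming.)

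For the upper bound I would use the triangle inequality: $\|(t+1)e^{-\cdot}\|_1=t+1$, and after the substitution $w=2\sqrt{tv}$ one has $\|j_t\|_1=\int_0^\infty|J_2(w)|/w\,dw=:c_*<\infty$ (since $J_2(w)/w=O(w)$ near $0$ and $O(w^{-3/2})$ at $\infty$), so $\|t\,(e_1*j_t)\|_1\le t\,\|e_1\|_1\,c_*=c_* t$. In particular the density is integrable, so $\varphi_t\in\mathcal{LM}$, and $\|\varphi_t\|_{\mathrm{HP}}\le C_2(1+t)$.

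The lower bound is the substantial part. Both $(t+1)e^{-s}$ and $t(e_1*j_t)(s)$ are of size $\asymp t$ in $L^1$, and $(e_1*j_t)(s)\to e^{-s}$ pointwise as $t\to\infty$, so one must rule out full cancellation. The idea is to split $j_t$ into a core part $v\le c/t$, on which $J_2(2\sqrt{tv})/v$ is comparable to $t$, and an oscillatory tail $v\ge c/t$; after $w=2\sqrt{tv}$ the tail part of the density is estimated by $|J_2(w)|\le c_1 w^{-1/2}$ together with a non-stationary-phase bound for the convolution against $e^{-\cdot}$, which shows it is of strictly lower order in $t$. Hence on a fixed $s$-interval bounded away from $0$ and $\infty$ the density is dominated by a term of size $\asymp t$, giving $\|\varphi_t\|_{\mathrm{HP}}\ge C_1(1+t)$. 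The range of small $t$ is handled separately, since $\|\varphi_0\|_{\mathrm{HP}}=\|\tfrac{z}{z+1}\|_{\mathrm{HP}}=2$ and $t\mapsto\|\varphi_t\|_{\mathrm{HP}}$ is continuous and strictly positive, so the bounds hold on $[0,1]$ after adjusting constants.

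The main obstacle is precisely the quantitative control of the oscillatory integral $\int_0^s e^{-(s-v)}J_2(2\sqrt{tv})v^{-1}\,dv$ — equivalently, of the total variation of $e_1*j_t$ — uniformly in $t$: it must be sharp enough to separate the genuinely linear-in-$t$ part of the density from the remainder. This is the analogue in the present setting of the Bessel estimates $|J_1(x)|\le c_1 x^{-1/2}$ and $\int_0^X|J_1|\asymp X^{1/2}$ used in Lemmas \ref{lemma_hp} and \ref{hp2}, but the extra convolution with $e^{-\cdot}$ and the explicit factor $t$ make the bookkeeping more delicate.
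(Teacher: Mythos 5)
Your integral representation and the upper bound are correct, and in fact identical to the paper's: since $G(s)=J_1(2\sqrt{s})/\sqrt{s}$ satisfies $G'(s)=-J_2(2\sqrt{s})/s$, your density $(t+1)e^{-s}-t(e_1*j_t)(s)$ is exactly the kernel $K_t$ used there. The gap is in the lower bound, and it is fatal to the strategy you describe. For fixed $s>0$ the substitution $w=2\sqrt{tv}$ gives
\[
t(e_1*j_t)(s)=2t\int_0^{2\sqrt{ts}}e^{-(s-w^2/(4t))}\,\frac{J_2(w)}{w}\,dw = te^{-s}\bigl(1+o(1)\bigr),\qquad t\to\infty,
\]
because $\int_0^\infty J_2(w)w^{-1}\,dw=\tfrac12$. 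Your core $v\le c/t$ (i.e.\ $w\le 2\sqrt{c}$) contributes only about $\tfrac{c}{2}\,te^{-s}$ of this, while the tail $w\ge 2\sqrt{c}$ contributes $2te^{-s}\int_{2\sqrt{c}}^\infty J_2(w)w^{-1}\,dw$, which is of the \emph{same} order $t$: the mass of the convolution is created at $w\asymp 1$, i.e.\ $v\asymp 1/t$, where the phase $2\sqrt{tv}$ varies only by $O(1)$, so there is no non-stationary-phase gain, and $\int_{2\sqrt{c}}^\infty J_2(w)w^{-1}\,dw$ is a constant independent of $t$. This is precisely the cancellation recorded in your own remark that $(e_1*j_t)(s)\to e^{-s}$: the two terms of size $\asymp t$ cancel pointwise, and with $k_t(v)=\sqrt{t/v}\,J_1(2\sqrt{tv})$ the density equals $e^{-s}+k_t(s)-(e_1*k_t)(s)=O(t^{1/4})$ uniformly on compact $s$-intervals away from $0$. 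Hence no pointwise-domination argument on a fixed interval can produce total variation of order $t$; it cannot even reach $t^{1/2}$.

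The paper's lower bound takes a different, global route: it writes $K_t=(1+t)e_1+e_1*k_t'$ (with $k_t'(s)=t^2G'(ts)$), applies the reverse triangle inequality in $L^1(\R_+)$, and feeds in the quantitative input $\|G'\|_{L^1}\ge\sqrt{1+4/\pi^2}>1$, obtained by evaluating $t\int_0^\infty e^{-zs}G'(ts)\,ds=-1+(1-e^{-t/z})z/t$ at the boundary point $z=it/\pi$, where $e^{-t/z}=-1$. Be aware, however, that the cancellation you noticed is the crux even on that route: a lower bound requires control of $\|e_1*k_t'\|_{L^1}$, not of $e_1*|k_t'|$ (whose $L^1$-norm is $t\|G'\|_{L^1}$), and the two differ substantially because $k_t'$ oscillates; indeed, splitting $(0,\infty)$ at $s\asymp 1/t$ and $s\asymp t$ and using $\int_0^X|J_1|\le C\sqrt{X}$ and $|J_2(w)|\le Cw^{-1/2}$ yields $\|K_t\|_{L^1}\le C\sqrt{t}$ for $t\ge1$, which is incompatible with a linear lower bound. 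So the asserted growth of order $t$ (and the step in the paper's own computation where the modulus is moved inside $\int_0^\tau e^{s}G'(ts)\,ds$) should be treated with caution; in any case, your tail estimate is not a viable way to obtain it, and any repair must confront this cancellation head-on rather than discard the oscillatory part as lower order.
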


\begin{proof}
Let 
\[
G(s):=\frac{J_1(2\sqrt{s})}{\sqrt{s}},\qquad s>0,
\]
where $J_1$ is the first-order Bessel function of the first kind.   Then, as in \cite[p.17]{deL} and \cite[Sections 3 and 4]{Gom}, 
\[
|G(s)|\le \frac{c}{1+s^{3/4}},\qquad G(0)=1,\quad G'\in L^1(\mathbb R_+),
\]
and 
\[
e^{-t/z}=1-t\int_0^\infty e^{-zs} G(ts)\,ds,\qquad z\in \C_{+}.
\]
Hence
\begin{equation}\label{AL}
ze^{-t/z}-z= -zt\int_0^\infty G(ts) e^{-zs} \,ds
=-t-t^2\int_0^\infty e^{-zs}G'(ts)\,ds,
\end{equation}
and
\[
t\int_0^\infty e^{-zs}G'(ts)\,ds=-1+(1-e^{-t/z})\frac{z}{t},\qquad z\in {\C}_{+}
\setminus \{0\}.
\]
Letting $z \to it/\pi$, 
it follows that
\[
t\int_0^\infty e^{-i t s/\pi}G'(ts)\,ds=-1+(1-e^{i\pi})\frac{i}{\pi}=-1+\frac{2i}{\pi},
\]
so
\[
\int_0^\infty |G'(s)|\,ds=t\int_0^\infty |G'(ts)|\,ds\ge 
t\left|\int_0^\infty e^{-i t s/\pi}G'(st)\,ds\right|
=\left|-1+\frac{2i}{\pi}\right|,
\]
and
\begin{equation}\label{zv}
\|G'\|_{L^1}\ge \sqrt{1+\frac{4}{\pi^2}}>1.
\end{equation}

From (\ref{AL}), the formula
\[
\frac{e^{-zs}}{1+z}= e^s \int_s^\infty e^{-z\tau} e^{-\tau}\,d\tau,
\]
and Fubini's theorem, we conclude that
\[
\varphi_t(z)=\frac{z}{z+1}-\frac{t}{z+1}-
t^2\int_0^\infty e^{-z\tau} e^{-\tau}\int_0^\tau e^{s} G'(ts)\,ds\,d\tau.
\]
Therefore,
\[
\varphi_t(z)=1-\int_0^\infty e^{-z\tau} K_t(\tau)\,d\tau,
\]
where
\[
K_t(\tau)=(1+t)e^{-\tau}+t^2e^{-\tau}\int_0^\tau e^{s} G'(ts)\,ds.
\]
Now
\begin{align*}
\|\varphi_t\|_{\text{HP}} &= 1+ \int_0^\infty |K_t(\tau)|\,d\tau  \\
&\le 2+t+t^2\int_0^\infty e^{-\tau}\int_0^\tau e^{s} |G'(ts)|\,ds\,d\tau\\
 &=2+t+t^2\int_0^\infty  |G'(ts)|\,ds\\
&=  2+(1+\|G'\|_{L^1})t,\qquad t>0.
\end{align*}

On the other hand,  
\begin{align*}
\|\varphi_t\|_{\text{HP}} = 1 +\int_0^\infty |K_t(\tau)|\,d\tau &\ge  1 + t^2\int_0^\infty e^{-\tau}\int_0^\tau e^{s} |G'(ts)|\,ds\,d\tau -(1+t)\\
 &=  c_1 t,\qquad t>0. 
\end{align*}
for  $ c_1=\|G'\|_{L^1}-1>0$, by \eqref{zv}.  Since $\|\varphi_t\|_{\text{HP}} \ge 1, \, t > 0$, it follows that $\|\varphi_t\|_{\text{HP}} \ge C_1(1+t), \, t>0$ for $C_1 = 1 - \|G'\|_{L^1}^{-1}$. 
\end{proof}

Combining Lemma \ref{HZLex} and Lemma \ref{hp3}, we get the following asymptotic relations.

\begin{cor}
One has
\[
\frac{\|\varphi_t\|_{{\rm HP}}} {\|\varphi_t\|_{\mathcal{B}}}\asymp \frac{t}{\log t},\qquad t\to\infty.
\]
\end{cor}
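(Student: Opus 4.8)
The plan is simply to combine the two-sided bounds already established in Lemmas \ref{hp3} and \ref{HZLex}; no new idea is needed. First I would read off from Lemma \ref{hp3} that there are constants $C_1,C_2>0$ with $C_1(1+t)\le\|\varphi_t\|_{{\rm HP}}\le C_2(1+t)$ for $t\ge0$. Restricting attention to $t\ge1$ (which is all that matters for a statement about $t\to\infty$), we have $1+t\asymp t$, so $\|\varphi_t\|_{{\rm HP}}\asymp t$ as $t\to\infty$.

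Next I would extract from Lemma \ref{HZLex} the bounds $e^{-1}\log(1+t/4)\le\|\varphi_t\|_{\mathcal B}\le 3+2e^{-1/2}\log\bigl(t+\sqrt{t^2+1}\bigr)$. As $t\to\infty$, both $\log(1+t/4)=\log t-\log 4+o(1)$ and $\log\bigl(t+\sqrt{t^2+1}\bigr)=\log t+\log 2+o(1)$ are asymptotically equal to $\log t$, and the additive constant $3$ is negligible against $\log t$. Hence $\|\varphi_t\|_{\mathcal B}\asymp\log t$ as $t\to\infty$.

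Dividing the two relations then yields $\|\varphi_t\|_{{\rm HP}}/\|\varphi_t\|_{\mathcal B}\asymp t/\log t$, which is exactly the claim. The only point requiring a little care — and it is not really an obstacle — is that the estimates of Lemmas \ref{hp3} and \ref{HZLex} hold for all $t>0$ while $\asymp$ refers to the limit $t\to\infty$; one simply notes that on $t\ge1$ the quantities $t$, $1+t$, $\log t$, $\log(1+t/4)$ and $\log\bigl(t+\sqrt{t^2+1}\bigr)$ are all positive and pairwise comparable, so replacing $1+t$ by $t$ and each logarithm by $\log t$ costs only bounded multiplicative factors. The substance of the corollary is entirely contained in the preceding two lemmas.
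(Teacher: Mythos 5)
Your proposal is correct and coincides with the paper's argument: the corollary is stated there as an immediate consequence of combining Lemma \ref{HZLex} and Lemma \ref{hp3}, exactly as you do. The asymptotic simplifications $1+t\asymp t$ and $\log(1+t/4)\asymp\log\bigl(t+\sqrt{t^2+1}\bigr)\asymp\log t$ for large $t$ are handled appropriately.
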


\section{Functional calculus: necessity and uniqueness}  \label{necuni}

The $\mathcal B$-calculus defined in \cite{BGT} is an efficient tool, and in this section we show that it  is optimal in two natural senses.  Firstly, the resolvent assumption in \eqref{8.1} is optimal, and secondly, the $\Bes$-calculus for a given operator $A$ is uniquely defined, so that it is necessarily given by the reproducing formula \eqref{fcdef}.
Thus the construction of the $\mathcal B$-calculus in \cite{BGT} appears to be essentially unique.

Let $A$ be a closed operator on a Banach space $X$, with dense domain.    Recall that $A$ is {\it sectorial of angle} $\theta \in [0,\pi/2]$ if $\sigma(A) \subset \overline \Sigma_\theta$ and
\begin{equation} \label{R01}
M_{A,\theta'} := \sup \left\{\|z(z+A)^{-1}\| :  z \in \Sigma_{\pi-\theta'}\right \} < \infty, \qquad \theta' \in (\theta,\pi/2].
\end{equation}
Moreover, $-A$ is the generator of a (sectorially) bounded holomorphic $C_0$-semigroup if and only if $A$ is sectorial of angle $\theta \in [0,\pi/2)$.  In that case we let
\begin{equation} \label{R02}
M_A := M_{A,\pi/2} = \sup \left\{\|z(z+A)^{-1} \right\| :  z \in \C_+\}.
\end{equation}
In this paper we adopt the common convention that a ``bounded holomorphic semigroup'' is sectorially bounded, i.e., bounded on a sector.

Following an established pattern for defining a functional calculus in operator theory, we shall say that an operator $A$ admits {\it a (bounded) $\Bes$-calculus} $\Phi$ if $A$ is densely defined, $\sigma(A) \subseteq \overline{\C}_+$ (so $A$ is closed), and there is a  bounded algebra homomorphism  $\Phi : \Bes \to L(X)$ such that  $\Phi(r_z) = (z+A)^{-1}$ for all $z \in \C_+$.  

We make the following observations about this definition:
\begin{enumerate}[(1)]
\item  It is not essential for the definition that $A$ is densely defined, but this is essential for many of the results that follow and it simplifies the presentation if it is a standing assumption.
\item  The functions $\{r_z : z \in \C_+\}$ form a resolvent family, so $\{\Phi(r_z): z \in \C_+\}$ form a pseudo-resolvent.  Hence if $\Phi(r_z) = (z+A)^{-1}$ for some $z \in \C_+$, then the same equation holds for all $z \in \C_+$.
\item  The existence of $\Phi$, the property (1) above and the fact that $\|r_z\|_\Bq = 1/\Re z$, imply that $\|(z+A)^{-1}\| \le \|\Phi\|/\Re z$.  Hence $A$ is sectorial of angle at most $\pi/2$.  Since $A$ is densely defined, $\lim_{a\in\R,a\to\infty} a(a+A)^{-1}x = x$ for all $x \in X$ \cite[Proposition 2.1.1c)]{HaaseB}.  From this it follows that the idempotent $\Phi(1)$ is the identity operator on $X$.
\end{enumerate}

\subsection{Necessity}
In \cite{BGT} the resolvent assumption \eqref{8.1} was shown to be sufficient for the construction of a $\mathcal B$-calculus.  Now we show that if $A$ admits a $\mathcal B$-calculus then \eqref{8.1} is satisfied,
so that the main result in \cite{BGT} is optimal as far as the resolvent assumptions are concerned.

\begin{thm} \label{necessary}
Let $A$ be an operator on $X$, and assume that $A$ admits a $\mathcal B$-calculus.  Then the resolvent assumption \eqref{8.1}  holds.   
\end{thm}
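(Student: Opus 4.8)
The plan is to recover the resolvent bound \eqref{8.1} from the existence of the bounded algebra homomorphism $\Phi$ by testing $\Phi$ against a carefully chosen family of functions in $\Bes$ and extracting the resolvent as a limit. The starting point is observation (2): $\Phi(r_z)=(z+A)^{-1}$ for all $z\in\C_+$, and since $\Phi$ is an algebra homomorphism, $\Phi(r_z^2)=(z+A)^{-2}$. More generally, for a fixed $\a>0$ and any $\varphi\in L^1(\R)$ (or even $L^\infty(\R)$), the function $G_{\a,\varphi}$ of \eqref{ksigma} lies in $\Bq$ by Proposition \ref{L1}, and is represented by the $\Bes$-valued Bochner integral \eqref{B09}, $G_{\a,\varphi}=\int_\R \varphi(\b) r_{\a-i\b}^2\,d\b$. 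Applying the bounded operator $\Phi$ and using that it commutes with Bochner integration, we get
\[
\Phi(G_{\a,\varphi}) = \int_\R \varphi(\b)\,(\a-i\b+A)^{-2}\,d\b,
\]
with norm at most $\|\Phi\|\,\|G_{\a,\varphi}\|_\Bes$.

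The next step is to pick $\varphi$ so that the left-hand side encodes the absolute integral $\int_\R |\langle(\a+i\b+A)^{-2}x,x^*\rangle|\,d\b$ appearing in \eqref{8.1}, up to the reflection $\b\mapsto-\b$ which is harmless. Fix $x\in X$, $x^*\in X^*$, and set $h(\b) := \langle(\a-i\b+A)^{-2}x,x^*\rangle$; this is a bounded continuous function of $\b$ (indeed $h\in L^1(\R)\cap L^\infty(\R)$ once we know \eqref{8.1}, but a priori we only know it is bounded, so one should first truncate to $|\b|\le n$ to stay in $L^1$ and let $n\to\infty$ at the end via monotone convergence). Choose $\varphi_n(\b) = \mathbf{1}_{[-n,n]}(\b)\,\overline{h(\b)}/|h(\b)|$ (set to $0$ where $h$ vanishes), so that $|\varphi_n|\le 1$ and $\langle \Phi(G_{\a,\varphi_n})x,x^*\rangle = \int_{-n}^n |h(\b)|\,d\b$. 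Then the bound $|\langle\Phi(G_{\a,\varphi_n})x,x^*\rangle|\le \|\Phi\|\,\|G_{\a,\varphi_n}\|_\Bes\,\|x\|\,\|x^*\|$, combined with the estimate $\|G_{\a,\varphi}\|_\Bes\le \frac{4+\pi}{\a}\|\varphi\|_{L^\infty}$ from \eqref{B11}, gives
\[
\int_{-n}^n |\langle(\a-i\b+A)^{-2}x,x^*\rangle|\,d\b \;\le\; \frac{(4+\pi)\|\Phi\|}{\a}\,\|x\|\,\|x^*\|.
\]
Letting $n\to\infty$ by monotone convergence, then multiplying by $\a$ and replacing $\b$ by $-\b$, yields exactly the finiteness asserted in \eqref{8.1}, with the explicit constant $\gamma_A\le \frac{2(4+\pi)}{\pi}\|\Phi\|$ in \eqref{8.2}.

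The main obstacle I anticipate is the interchange of $\Phi$ with the Bochner integral defining $G_{\a,\varphi}$, i.e. justifying that $\Phi\bigl(\int_\R \varphi(\b) r_{\a-i\b}^2\,d\b\bigr) = \int_\R \varphi(\b)\,\Phi(r_{\a-i\b}^2)\,d\b$. Since $\Phi\in L(\Bes,L(X))$ is bounded and the integrand $\b\mapsto \varphi(\b) r_{\a-i\b}^2$ is Bochner integrable in $\Bes$ (continuity and boundedness of $\b\mapsto r_{\a-i\b}^2$ in the $\Bes$-norm follow from strong continuity of the translation group in Lemma \ref{shifts01}, plus $\varphi\in L^1$ after the truncation), this is a standard property of bounded operators on Bochner integrals; the only care needed is that for $\varphi\in L^\infty$ rather than $L^1$ one argues through the $L^1$ truncations and uses the norm control \eqref{B11} to pass to the limit, or alternatively works directly with the $L^\infty$ representation and dominated convergence at the level of weak resolvents. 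A secondary point is the measurability of $\b\mapsto h(\b)/|h(\b)|$, which is immediate since $h$ is continuous. Everything else is a routine assembly of the pieces already established in the excerpt.
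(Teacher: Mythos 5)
Your argument is correct and follows essentially the same route as the paper: both proofs test the bounded homomorphism $\Phi$ on the functions $G_{\a,\varphi}$, use the Bochner-integral identity \eqref{B09} together with the bound \eqref{B11} from Proposition \ref{L1}, and deduce \eqref{8.1}. The only difference is in the routine final step: the paper takes continuous compactly supported $\varphi$ and invokes the Riesz representation theorem, whereas you choose truncated unimodular test functions $\varphi_n$ and conclude by monotone convergence --- both are standard and yield the same constant.
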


\begin{proof}
Let $\varphi$ be a continuous function on $\mathbb R$ with compact support, 
and let $G_{\alpha,\varphi}$ be given by \eqref{ksigma}. 
If $\Phi$ is a $\mathcal B$-calculus for $A$, then \eqref{B09} gives
\begin{equation*}
\Phi(G_{\alpha,\varphi})=\int_\R (\alpha -i\beta+A)^{-2}\varphi(\beta) \, d\beta,
\end{equation*}
and for all $x \in X$ and $x^*\in X^*$ such that $\|x\|=\|x^*\|=1$,
\begin{equation*}
\left|\int_\R \langle (\alpha -i\beta+A)^{-2}x, x^*\rangle \varphi(\b) \, d\beta \right| = |\langle \Phi(G_{\alpha,\varphi}) x, x^*\rangle| \le \|\Phi\|\, \|G_{\alpha,\varphi}\|_{\Bes},
\end{equation*}
since the calculus is bounded.  Now,  by Proposition \ref{L1},
\begin{align*}
\alpha\left|\int_\R \langle (\alpha-i\beta+A)^{-2}x,x^*\rangle \,\varphi(\beta)\,d\beta\right|
\le (4+\pi)\|\Phi\|\, \|\varphi\|_{L^\infty}
\end{align*}
for any $\varphi$ as above.   It then follows, from standard arguments around the Riesz representation theorem, that the estimate (\ref{8.1}) holds.
\end{proof}

\subsection{Uniqueness}
Next we consider the question of uniqueness for the $\Bes$-calculus.   The precise question is: 

 If an operator $A$ admits a $\mathcal B$-calculus $\Phi$, then  does the following hold
\begin{equation*}\label{bcalculus}
\langle \Phi (f) x,x^*\rangle=f(\infty)+ \frac{2}{\pi}\left \langle  g_{x, x^*}  , f \right \rangle_{\mathcal B}
\end{equation*}
for all  $f \in \Bes$, $x \in X$ and $x^* \in X^*$? 

\noindent  Here $g_{x.x^*}(z) = \langle (z+A)^{-1}x,x^*\rangle$, $g_{x,x^*} \in \Bov$, and $\langle \cdot,\cdot \rangle_\Bes$ is as in \eqref{dual}.

The following result shows that this question has a positive answer.

\begin{thm} \label{unique}
Let $A$ be an operator on $X$ and assume that $A$ admits a $\Bes$-calculus $\Phi$.   Then $\Phi$ is unique.
\end{thm}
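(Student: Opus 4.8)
The plan is to show that any bounded algebra homomorphism $\Phi:\Bes\to L(X)$ with $\Phi(r_z)=(z+A)^{-1}$ for $z\in\C_+$ must coincide with the homomorphism $\Phi_A$ defined by \eqref{fcdef}. The strategy is a density argument: identify a subalgebra of $\Bes$ on which $\Phi$ is visibly forced to agree with $\Phi_A$, then extend by continuity using a topology in which that subalgebra is dense and both maps are continuous. The natural candidate for the dense subalgebra is $\LT$ (the Hille--Phillips algebra) together with the constants, or alternatively the span of $\{r_z:z\in\C_+\}$ and $1$; and the natural topology is the $\Bov$-weak topology induced by the pairing $\langle\cdot,\cdot\rangle_\Bes$, i.e.\ the weak topology coming from the functionals $f\mapsto\langle g_{x,x^*},f\rangle_\Bes$ on $\Bes$.

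First I would establish the calculus on rational functions: since $\Phi$ is an algebra homomorphism with $\Phi(r_z)=(z+A)^{-1}$ and $\Phi(1)=I$ (noted already in observation (3) after the definition), $\Phi$ agrees with $\Phi_A$ on the algebra generated by $1$ and the $r_z$, hence on $\mathcal R_0$ plus constants, and then on $\LT$ by approximating $e_a$ by its Laplace-transform representation — concretely, $\Phi(\lt\mu)x=\int_0^\infty T(t)x\,d\mu(t)$ follows from the $r_z$ case since $r_z=\lt e_z$ and the HP-calculus is the unique continuous-in-$\|\mu\|$ extension; here I would use that the semigroup $T$ generated by $-A$ is recovered from the resolvents. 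Next I would observe that both $\Phi$ and $\Phi_A$ are bounded from $(\Bes,\|\cdot\|_\Bes)$ to $L(X)$, hence for fixed $x,x^*$ the functionals $f\mapsto\langle\Phi(f)x,x^*\rangle$ and $f\mapsto\langle\Phi_A(f)x,x^*\rangle=f(\infty)\langle x,x^*\rangle+\tfrac{2}{\pi}\langle g_{x,x^*},f\rangle_\Bes$ are bounded linear functionals on $\Bes$ agreeing on $\LT$. To conclude they agree everywhere I would invoke that $\LT$ is dense in $\Bes$ in the $\Bov$-weak topology (this is exactly \cite[Lemmas 2.13 and 2.19]{BGT}, referenced in the introduction), together with the fact that both functionals are continuous for that topology.

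The continuity of $f\mapsto\langle\Phi_A(f)x,x^*\rangle$ for the $\Bov$-weak topology is essentially built into its defining formula, since $g_{x,x^*}\in\Bov$ and the pairing is exactly the one generating the topology. The genuinely delicate point — and what I expect to be the main obstacle — is showing that $f\mapsto\langle\Phi(f)x,x^*\rangle$ is continuous for the $\Bov$-weak topology for an \emph{arbitrary} abstract homomorphism $\Phi$, since a priori we only know $\Phi$ is $\|\cdot\|_\Bes$-bounded. The resolution should go through the approximation operators $\Qop_n$ of Proposition \ref{Pr28}: for $f\in\Bes$ one has $\Qop_nf\in\LT$ with an explicit integral formula, $\Qop_nf\to f$ uniformly on compacta of $\overline\C_+$, and crucially (from the proof of that proposition and the $\Bes_0$-norm estimates \eqref{26A}, \eqref{rmf}, \eqref{hstrip}) $\sup_n\|\Qop_nf\|_\Bes<\infty$; combining a uniform norm bound with convergence on the resolvents $r_z$ and an approximate-identity/Convergence-Lemma argument (Theorem \ref{homomor_intro}(c), valid for $\Phi$ since the abstract hypotheses suffice to run it) yields $\langle\Phi(\Qop_nf)x,x^*\rangle\to\langle\Phi(f)x,x^*\rangle$. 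Since $\Qop_nf\in\LT$ each term equals $\langle\Phi_A(\Qop_nf)x,x^*\rangle$, and the right-hand side converges to $\langle\Phi_A(f)x,x^*\rangle$ by the $\Bov$-weak continuity already noted; hence $\langle\Phi(f)x,x^*\rangle=\langle\Phi_A(f)x,x^*\rangle$ for all $f$, $x$, $x^*$, i.e.\ $\Phi=\Phi_A$.
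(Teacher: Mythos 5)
Your overall architecture (pin $\Phi$ down on a small class via the resolvents, then reach all of $\Bes$ by approximation inside $\LT$ using the $\Qop_n$ and a Convergence-Lemma argument) is a genuinely different route from the paper's, and it correctly isolates the delicate point; but as written it has two gaps, one of which is exactly at the step you flag as delicate. First, the claim that $\Phi$ agrees with the HP-calculus on all of $\LT$ ``since $r_z=\lt e_z$ and the HP-calculus is the unique continuous-in-$\|\mu\|$ extension'' is not justified: the exponential functions $e_a$ span a dense subspace of $L^1(\R_+)$, not of $M(\R_+)$, so this argument only determines $\Phi$ on $\lt L^1$. Determining $\Phi$ on, say, $e_a=\lt\delta_a$ is essentially as hard as the whole theorem (cf.\ Remark \ref{Bsg}). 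Fortunately this is repairable in your scheme, because Proposition \ref{qlm} exhibits $Qg=\lt h$ with $h\in L^1(\R_+)$, so $\Qop_n f\in\lt L^1$ and agreement on $\lt L^1$ is all you need; but you should say this, since the stronger claim is false as argued.

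The main gap is the assertion that $\langle\Phi(\Qop_nf)x,x^*\rangle\to\langle\Phi(f)x,x^*\rangle$ follows from ``a uniform norm bound, convergence on the resolvents, and the Convergence Lemma, valid for $\Phi$''. The operator-theoretic half of the Convergence Lemma argument (Corollary \ref{sotc}) does transfer to an abstract $\Phi$: it only uses boundedness, multiplicativity, $\Phi(r_1)=(1+A)^{-1}$ and density of $D(A)$. But its function-theoretic input (Lemma \ref{conlem}, giving $\|(\Qop_nf-f)r_1\|_\Bes\to0$) requires, besides $\sup_n\|\Qop_nf\|_\Bes<\infty$ and pointwise convergence, the equi-integrability condition \eqref{gz} near $i\R$, uniformly in $n$ — and this hypothesis cannot be dropped: for $f_n(z)=(1+2^nz)^{-1}$ one has $\sup_n\|f_n\|_\Bes<\infty$ and $f_n\to0$ pointwise, yet $\|f_nr_1\|_\Bq\ge\int_0^\infty 2^n(1+2^n\a)^{-2}(1+\a)^{-1}\,d\a\to1$, precisely because \eqref{gz} fails. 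So your proof is incomplete until you verify \eqref{gz} for the family $(\Qop_nf)_n$; this is in fact true (from $|(\Qop_nf)'(x+iy)|\le \tfrac{8}{\pi}\int_{1/n}^{n}\a(x+\a)^{-2}\sup_s|f'(\a+is)|\,d\a$ one splits the $\a$-integral at a small $\eta$ and uses absolute continuity of $\a\mapsto\sup_s|f'(\a+is)|\,d\a$), but that computation is the real content of the step and is missing from your proposal. For comparison, the paper avoids this issue entirely: it also starts from uniqueness on $\lt L^1$, but then uses the multiplicative trick that $f\,\lt g\in\lt L^1$ for $f\in\ssp$ and $g\in L^2(0,\tau)$ (so $\Phi(f)$ is determined on the dense range of $(1+A)^{-1}$), and concludes by the norm-density of $\ssp$ in $\Bq$ together with $\Phi(1)=I$ — no weak topology or Convergence Lemma is needed.
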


\begin{proof}
From Theorem \ref{necessary} it follows that \eqref{8.1} holds, hence $-A$ is the generator of a
bounded $C_0$-semigroup $(T(t))_{t\ge 0}$ by either \cite[Theorem 1]{Gom} or \cite[Theorem 4.1 and its
proof]{Sf}.
Since the functions $e_a$ for $a>0$ span a dense subspace of $L^1(\R_+)$ and $\Phi(\lt e_a) = (a+A)^{-1}$, $\Phi$ is uniquely determined on $\lt L^1 = \{\lt g : g \in L^1(\R_+)\}$.    Let $f \in \ssp$, as defined in \eqref{ssp}, and $g \in L^2(0,\tau) \subseteq L^1(\R_+)$. By \cite[Lemma 2.13]{BGT}, $f \lt{g} \in \lt L^1$.   Thus $\Phi(\lt{g})$ and $\Phi(f)\Phi(\lt{g})$ are both determined uniquely.   It follows that $\Phi(f)$ is uniquely determined on the range of $\Phi(\lt{g})$.  In particular this implies that $\Phi(f)$ is uniquely determined on vectors of the form
$$
   \int_0^n e^{-t} T(t)x \, dt
$$
and hence on the range of $(1+A)^{-1}$.   Since $A$ is densely defined, it follows that $\Phi(f)$ is uniquely determined for all $f \in \ssp$.  By the density of $\ssp$ in $\Bq$, $\Phi$ is uniquely determined on $\Bq$.   Since $\Phi(1)$ is the identity operator, $\Phi$ is uniquely determined on $\Bes$.
\end{proof}

\begin{rem}\label{Bsg}
In the light of Theorems \ref{necessary} and \ref{unique} we may say that $A$ admits \emph{the} $\Bes$-calculus when a $\Bes$-calculus $\Phi$ for $A$ exists, or equivalently when \eqref{8.1} holds. 
 By [5,
Theorem 4.4],  we may then write $f(A)$ for $\Phi(f)$ and $e^{-tA}$ for $\Phi(e_t)$.   As noted in the
proof of Theorem \ref{unique}, $\left(e^{-tA}\right)_{t\ge0}$ forms a bounded $C_0$-semigroup with generator $-A$, as was established in \cite{Gom} and \cite{Sf}, in combination with \cite[Section 4.1]{BGT}.     Letting $K_A = \sup_{t\ge0} \|e^{-tA}\|$, the following standard Hille-Yosida estimate holds:
\begin{equation} \label{hy}
\|(\a+i\b+A)^{-n} \| \le K_A \a^{-n}, \qquad  \a>0,\, \b\in\R,\,n\ge 1. 
\end{equation}
\end{rem}

\subsection{Extensions} \label{s6.3}
Another important problem is whether the $\mathcal B$-calculus can be extended to a larger algebra than  $\Bes$ for all operators $A$ satisfying \eqref{8.1}.   Results of Peller \cite{Pel} in the discrete case, and White \cite{Whi} for bounded holomorphic semigroups, suggest that there may be such an extension, but it may be technically complicated.   A discussion of later advances in the discrete setting of  \cite{Pel} can be found in \cite[pp.\ 120-123]{Pisier}.

A standard approach to extending homomorphisms on commutative Banach algebras is to consider multiplier algebras.  
 For a commutative Banach algebra $\mathcal{A}$ with a bounded approximate identity $(e_n)$, let $\mathcal{M(A)}$ denote its Banach algebra of multipliers. 
If $H: \mathcal{A} \to L(X)$ is a homomorphism such that the set $\{H(a)x : a \in \mathcal{A}, x \in X\}$ is dense in $X$, then $H$ can be extended uniquely to a homomorphism $\tilde H: \mathcal{M(A)} \to L(X)$. 
The extension can be defined by the formula:
\begin{equation}\label{defmult}
\tilde H (M)x:=\lim_{n \to \infty}H(M e_n)x, \qquad M \in \mathcal{M (A)},
\end{equation}
where the limit on the right-hand side exists in the strong topology of $X$.
For details, see \cite[Theorem 2.4]{Choj} or \cite[Proposition 2.5]{Esterle}, 
both of which seem to be based on an idea from \cite{Johnson}.

The standard HP-calculus can be considered as an extension of the same calculus initially defined on $L^1(\mathbb R_+)$ and then extended to $M(\mathbb R_+)=\mathcal M(L^1(\mathbb R_+))$; see \cite[Theorem 3.2]{Choj}.   
Thus it is natural to try to identify the multiplier algebra of the Banach algebra $\mathcal B_0$
hoping to produce an extension $\tilde \Phi: \mathcal M(\Bq)\to L(X)$ of our $\mathcal B$-calculus.
Note that $\mathcal B_0$ is semisimple, and it is well-known that for a semisimple Banach algebra $\mathcal{A}$ any multiplier $M \in \mathcal{M(A)}$ can be identified with a multiplication operator
by a bounded continuous function in the Gelfand image of $\mathcal{A}$ (see  \cite[Theorem 1.2.2]{Larsen}).  Unfortunately, the maximal ideal space of $\mathcal B_0$ (and of much simpler algebras such as $H^\infty(\mathbb C_+)$) has a complicated structure, and thus we consider the following substitute for $\mathcal M(\mathcal B_0)$.
Let 
\begin{equation}\label{point}
\mathcal M_s(\mathcal B_0):=\{m \in \operatorname{Hol} (\C_+): m f \in \mathcal B_0 \,\, \text{for all}\,\, f \in \mathcal B_0\}
\end{equation} 
be the space of \emph{pointwise} multipliers of $\mathcal B_0$, a standard notion considered in the complex analysis literature (see \cite[Chapter 6]{Cima}, for example).
Using the Closed Graph Theorem, one infers that each $m \in \mathcal M_s(\Bq)$ defines
a bounded multiplication operator $M_m: f \mapsto mf$ on $\mathcal B_0$,
and $M_m \in \mathcal M(\mathcal B_0)$.
In what follows, we identify $m$ with $M_m$.
It is easy to show that $\mathcal M_s(\mathcal B_0)$ is a Banach subalgebra of $L(\mathcal B_0)$ with identity, containing $\mathcal B$.
Using \eqref{defmult} one can extend $\Phi$ from $\mathcal B_0$ to $\mathcal M_s(\mathcal B_0)$.   
If $\mathcal M_s(\mathcal B_0)$ strictly contains $\mathcal B$, then we would obtain a proper extension of the $\mathcal B$-calculus. 

The following statements of independent interest show that the above method for extension of Banach algebra homomorphisms 
cannot be applied to $\Phi :{\mathcal B}_0 \to L(X)$.  Firstly, $\mathcal B_0$ does not admit any approximate identities, let alone bounded ones.  Secondly, the algebra of pointwise multipliers of $\mathcal B_0$ is trivial in the sense that $\mathcal M_s(\mathcal B_0)=\mathcal B$.

\begin{thm}\label{App30}  {\rm1.}  
There exist no sequences $(f_n) \in \mathcal{B}_0$ such that
\[
\lim_{n\to\infty}\|f_ng - g\|_{\mathcal{B}_0}=0.
\]
for all $g \in \mathcal B_0$.  

\noindent {\rm2.} 
If $\mathcal M_s(\mathcal B_0)$ is defined by \eqref{point},
then $\mathcal M_s(\mathcal B_0)=\mathcal B$. 
\end{thm}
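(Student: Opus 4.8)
The plan is to establish both parts with a single device: the exponential $e_a(z)=e^{-az}$ ($a>0$), whose modulus $|e_a(\alpha+i\beta)|=e^{-a\alpha}$ is constant along every vertical line and equals $1$ on $i\R$. First I would record that $e_a\in\Bq$ (indeed $e_a(\infty)=0$, $\|e_a\|_\infty=1$, $\|e_a\|_\Bq=\int_0^\infty ae^{-a\alpha}\,d\alpha=1$). This vertical flatness is precisely what lets the supremum over $\beta$ in the $\Bq$-seminorm commute with multiplication by $e_a$, and it is why $e_a$ — and not the resolvent functions $r_\lambda$, whose modulus decays as $|\beta|\to\infty$ — is the right test function in both statements.

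For part 1, suppose toward a contradiction that $(f_n)\subset\Bq$ satisfies $\|f_ng-g\|_\Bq\to0$ for every $g\in\Bq$, and specialise to $g=e_1$. Since $f_ne_1-e_1\in\Bq$ and $\|h\|_\infty\le\|h\|_\Bq$ on $\Bq$ by \eqref{eqnm}, we get $\|f_ne_1-e_1\|_\infty\to0$. Restricting to the boundary, where $|e_1^b|\equiv1$, this gives $\|f_n^b-1\|_{L^\infty(\R)}\to0$, and by the Poisson formula \eqref{poisson} (or the maximum principle for $H^\infty(\C_+)$) it follows that $\|f_n-1\|_\infty\to0$. But $f_n\in\Bq$ forces $\lim_{\Re z\to\infty}f_n(z)=0$, so $\|f_n-1\|_\infty\ge1$ for every $n$ — a contradiction. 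The mechanism is that $e_1$ detects the value at infinity through its boundary values, so it obstructs any candidate sequence, bounded or not.

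For part 2, the inclusion $\Bes\subseteq\mathcal M_s(\Bq)$ holds since $\Bq$ is a closed ideal of $\Bes$. Conversely, let $m\in\mathcal M_s(\Bq)$; the multiplication operator $M_m$ on $\Bq$ is closed (pointwise values are controlled by the $\Bq$-norm), hence bounded by the Closed Graph Theorem, with $C:=\|M_m\|$. Testing on $e_a$ gives $\|me_a\|_\Bq\le C$, hence $\|me_a\|_\infty\le C$, hence $|m(z)|\le Ce^{a\Re z}$ for all $a>0$; letting $a\to0+$ gives $\|m\|_\infty\le C$, so $m\in H^\infty(\C_+)$. Then, using $(me_a)'=(m'-am)e_a$,
\[
\sup_{\beta\in\R}\,\bigl|(me_a)'(\alpha+i\beta)\bigr|
= e^{-a\alpha}\sup_{\beta\in\R}\,\bigl|m'(\alpha+i\beta)-am(\alpha+i\beta)\bigr|
\ge e^{-a\alpha}\Bigl(\sup_{\beta\in\R}\,|m'(\alpha+i\beta)|-a\|m\|_\infty\Bigr),
\]
and integrating in $\alpha$, together with $\|me_a\|_\Bq\le C$ and $a\|m\|_\infty\int_0^\infty e^{-a\alpha}\,d\alpha=\|m\|_\infty\le C$, yields
\[
\int_0^\infty e^{-a\alpha}\sup_{\beta\in\R}\,|m'(\alpha+i\beta)|\,d\alpha\le 2C\qquad\text{for all }a>0.
\]
Letting $a\to0+$ and invoking the monotone convergence theorem gives $\|m\|_\Bq\le2C<\infty$, hence $m\in\Bes$.

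The steps I regard as routine are the closedness of $M_m$, the reverse triangle inequality for suprema, and the inequality $\|m\|_\infty\le\|m^b\|_{L^\infty(\R)}$. The only conceptual obstacle is recognising that both statements fail for decaying test functions and instead need a $\Bq$-function whose modulus is bounded away from $0$ on every vertical line, so that it weighs the supremum over $\beta$ uniformly; the exponentials $e_a$ supply this in the simplest way, after which both arguments are short. One might at first expect part 1 to be the harder statement, since the putative approximate identity cannot be assumed norm-bounded, but the $e_1$-test removes that difficulty entirely.
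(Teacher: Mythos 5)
Your proof is correct, and part of it takes a genuinely different route from the paper. For part 1 you use the same test function $e_1$ and the same obstruction (every $f\in\Bq$ vanishes at infinity), but you execute it via the domination $\|h\|_\infty\le\|h\|_{\Bq}$ for $h\in\Bq$ (from \eqref{eqnm}), passage to the boundary where $|e_1^b|\equiv1$, and the maximum principle applied to $f_n-1$; the paper instead writes $\|fe_1-e_1\|_{\Bq}=\int_0^\infty\sup_\beta|1-f+f'|\,e^{-\alpha}\,d\alpha$, extracts a line $\Re z=\alpha_0$ on which $\sup_\beta|1-f+f'|\le\delta$, applies the maximum principle to $1-f+f'$ on $\Re z\ge\alpha_0$, and contradicts $f(\infty)=f'(\infty)=0$, which yields the explicit bound $\|fe_1-e_1\|_{\Bq}\ge1$ for every $f\in\Bq$ (your argument, traced through, gives the same bound). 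For part 2 the converse inclusion is where you diverge: the paper tests $M_m$ on the normalised resolvents $nr_n$, which are uniformly bounded in norm and converge pointwise to $m$, and then invokes the Fatou-type result \cite[Lemma 2.3(1)]{BGT} to conclude $m\in\Bes$; you test on $e_a$, first deduce $m\in H^\infty(\C_+)$ by letting $a\to0+$ in $|m(z)|e^{-a\Re z}\le\mathrm{const}$, and then derive the uniform estimate $\int_0^\infty e^{-a\alpha}\sup_{\beta\in\R}|m'(\alpha+i\beta)|\,d\alpha\le 2C'$, so that the monotone convergence theorem as $a\to0+$ gives $\|m\|_{\Bq}<\infty$ directly. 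Your route is self-contained (the role of the Fatou-type lemma is replaced by an elementary monotone convergence step), at the price of a slightly longer computation; the paper's argument is shorter granted that lemma. The only cosmetic point is the normalisation of $\|M_m\|$: depending on whether $\Bq$ carries the full $\Bes$-norm or the equivalent norm $\|\cdot\|_{\Bq}$, your bounds pick up a harmless factor of $2$, which does not affect the conclusion.
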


\begin{proof}
1.  
It suffices to show that $\|fe_1 - e_1\|_\Bq \ge 1$ for all $f \in \Bq$, where $e_1(z)=e^{-z}$. 

Suppose that $f \in \Bq$ and $\|fe_1 - e_1\|_\Bq = \delta \in (0,1)$, so that
\[
\int_0^\infty \sup_{\b\in \R}\,|1-f(\a+i\b)+f'(\a+i\b)|\,e^{-\a}\,d\a=\delta.
\]
Then there exists $\a_0\ge 0$ such that
\[
\sup_{\b\in \R}\,|1-f(\a_0+i\b)+f'(\a_0+i\b)|\le \delta,
\]
and by the maximum principle
\[
|1-f(z)+f'(z)|\le \delta,\qquad \Re z \ge \a_0.
\]
From this it follows that
\begin{equation*}\label{L30}
|f(z)-f'(z)|\ge 1-\delta,\qquad \Re z\ge \a_0.
\end{equation*}
However $f(\infty) = f'(\infty) = 0$, since $f \in \Bq$, so this is a contradiction, and the first statement is proved.

\noindent 2.
Clearly, if $f \in \mathcal B$ and $g \in \mathcal B_0$, then $fg \in \mathcal B_0$ 
since $\mathcal B_0$ is an ideal in $\Bes$. 
 Thus, it suffices to prove that $\mathcal M_s(\mathcal B_0)\subset \mathcal B$.
 
 If $M \in \mathcal M_s(\mathcal B_0)$, then $M$ is identified with a bounded multiplication
operator $M_m$ on $\mathcal B_0$ where $m$ is a holomorphic function on $\mathbb C_+$:
\[
M_m f(z)= m(z) f(z), \qquad f \in \mathcal B_0, \, z \in \mathbb C_+.
\]
Let $r_n(z):=(n+z)^{-1}, z \in\mathbb C_+,\, n\in\N$.   
Observe that $\|n r_n\|_{\mathcal B_0}=\|r_1\|_{\mathcal B_0} = 1,  \,n \in \N$, and 
\begin{equation}\label{fatou1}
\|n m r_n\|_{\mathcal B_0}\le \|M_m \|.
\end{equation}
Moreover, 
\begin{equation}\label{fatou2}
n m (z) r_n (z) \to m(z), \qquad n \to \infty,
\end{equation}
for every $z \in \mathbb C_+$.
Now using a result of Fatou type for $\mathcal B$ \cite[Lemma 2.3(1)]{BGT}, saying that if a pointwise limit of a bounded sequence from $\mathcal B$ exists on $\mathbb C_+$, then it belongs to $\mathcal B$, we infer from \eqref{fatou1} and \eqref{fatou2} that $m \in \mathcal B$.
\end{proof}

One might consider other subalgebras of $\Bes$.   For example, the Hardy-Sobolev algebra 
$H_1:=\{f \in \operatorname{Hol} (\C_+) : f' \in  H^1(\mathbb C_+) \}$
with the norm
\[
\|f\|_{H_1}:=\|f\|_{\infty}+\|f'\|_{H^1}, \qquad f \in H_1,
\]
 embeds continuously in $\mathcal B$ \cite[Lemma 2.4]{BGT}. 
Then $H^0_{1}:=\{f \in H_1: f(\infty)=0\}$ is an ideal in $H_1$ but not in $\Bes$, so its pointwise multiplier algebra $\mathcal{M}_s(H_1^0)$  (defined similarly to \eqref{point}) does not contain $\Bes$. 
Nevertheless, one may consider whether $\mathcal M_s(H^0_{1})$ contains any functions outside $\mathcal B$.   However, arguing in a similar way to Theorem \ref{App30}(2), one can show that $\mathcal M_s(H^0_{1})= H_{1}$.

\subsection {Generators of $C_0$-groups}
In order to understand the limitations of the $\mathcal B$-calculus, we will show that many generators of $C_0$-groups do not admit the $\mathcal B$-calculus.
 
Similarly to the abstract definition of the $\mathcal B$-calculus above, we say that a closed densely defined linear operator $A$ \emph{admits the $C_0(i\mathbb R)$-calculus} if $\sigma (A) \subset i\mathbb R$ and there exists a bounded algebra homomorphism $H: C_0(i\mathbb R)\to L(X)$ such that $H(r_z)=(z+A)^{-1}$ for every $z \in \mathbb C \setminus i\mathbb R$.   Since the closed linear span of  $\{r_z : z \in \mathbb C \setminus i\mathbb R \}$ is dense
in $C_0(i \mathbb R)$ (for example, by  the Stone-Weierstrass theorem), any such homomorphism $H$ is unique.

\begin{thm}\label{ccalc}
Let $-A$ be the generator of a bounded $C_0$-group on a Banach space $X$.
Then $A$ admits the $\mathcal B$-calculus if and only if $A$ admits the $C_0(i\mathbb R)$-calculus.
\end{thm}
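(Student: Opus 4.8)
The plan is to prove the two implications separately; the group hypothesis is needed only for one of them.

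\emph{From the $C_0(i\R)$-calculus to the $\Bes$-calculus.} Suppose $A$ admits the $C_0(i\R)$-calculus $H$. Since $\{H(g)x:g\in C_0(i\R),\,x\in X\}$ contains $(z+A)^{-1}X=D(A)$, which is dense, $H$ has non-degenerate range, so by the standard multiplier extension (as used in \cite[Theorem 2.4]{Choj} and \cite[Proposition 2.5]{Esterle}) it extends to a bounded algebra homomorphism $\widetilde H\colon C_b(i\R)=\mathcal M(C_0(i\R))\to L(X)$ with $\|\widetilde H\|=\|H\|$, using that $C_0(i\R)$ has a contractive approximate identity. Every $f\in\Bes$ is bounded and continuous on $\overline{\C}_+$, so $f\mapsto f^b$ is a contractive algebra homomorphism $\Bes\to C_b(i\R)$; hence $\Phi(f):=\widetilde H(f^b)$ is a bounded algebra homomorphism $\Phi\colon\Bes\to L(X)$. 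For $z\in\C_+$, $r_z^b\in C_0(i\R)$ coincides there with the function $r_z\in C_0(i\R)$, so $\Phi(r_z)=\widetilde H(r_z)=H(r_z)=(z+A)^{-1}$, and $A$ admits the $\Bes$-calculus. The group plays no role in this direction.

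\emph{The converse: the group calculus.} Now assume $-A$ generates the bounded $C_0$-group $(U(t))_{t\in\R}$, $U(t)=e^{-tA}$, $M:=\sup_t\|U(t)\|<\infty$, and that $A$ admits the $\Bes$-calculus $\Phi$; by Remark \ref{Bsg}, $\Phi$ extends the Hille--Phillips calculus, so $\Phi(\lt\nu)=\int_{\R_+}U(s)\,d\nu(s)$ for $\nu\in M(\R_+)$. The group gives a bounded homomorphism $\Psi_0$ on the Wiener-type subalgebra $\{z\mapsto\int_\R e^{-zt}\,d\mu(t):\mu\in M(\R)\}$ of $C_b(i\R)$, sending $z\mapsto\int_\R e^{-zt}\,d\mu(t)$ to $\int_\R U(t)\,d\mu(t)$, with $\|\Psi_0(h)\|\le M\|\mu\|_{M(\R)}$; the elementary integral formulas for the resolvent on either side of $i\R$ give $\Psi_0(r_z)=(z+A)^{-1}$ for all $z\notin i\R$. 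The plan is to show that $\Psi_0$ is bounded \emph{in the sup-norm} on a dense subalgebra of $C_0(i\R)$; then $\Psi_0$ extends to a bounded algebra homomorphism $H\colon C_0(i\R)\to L(X)$, and $H(r_z)=(z+A)^{-1}$ follows by truncating the (bilateral) Laplace representation of $r_z$: the truncations are band-limited and converge to $r_z$ uniformly on $i\R$, while $\Psi_0$ sends them to truncated integrals such as $\int_0^\infty U(t)e^{-zt}\chi_R(t)\,dt$, which converge to $(z+A)^{-1}$ by dominated convergence.

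\emph{The key estimate.} It suffices to bound $\Psi_0$ in the sup-norm on the dense subalgebra $\mathcal D\subset C_0(i\R)$ of functions $h$ that are entire of exponential type with $h|_{i\R}$ band-limited, say with Fourier spectrum in $[-R,R]$. For such $h$, Paley--Wiener gives $g:=e_{2R}h=\lt\nu$ with $\nu$ supported in $[R,3R]\subset(0,\infty)$, so $g\in\LT\subset\Bq$; moreover $\Psi_0(h)=U(-2R)\Phi(g)$, since $h(z)=\int_\R e^{-zt}\,d\widetilde\nu(t)$ with $\widetilde\nu$ the translate of $\nu$ by $-2R$. The decisive point is the estimate
\[
\|g\|_{\Bes}\le 4\,\|h\|_{L^\infty(i\R)},
\]
with a constant independent of the band $R$. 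Indeed, $\|g\|_\infty=\|g^b\|_{L^\infty}=\|h\|_{L^\infty(i\R)}$ since $|g^b|=|h^b|$ pointwise and $\|g\|_\infty=\|g^b\|_{L^\infty}$ for $g\in H^\infty(\C_+)$; and, writing $g=e_R\cdot(e_Rh)$, we have $g'=e_R\bigl((e_Rh)'-R(e_Rh)\bigr)$, where on each line $\{\Re z=\alpha\}$ the restriction of $e_Rh$ has Fourier spectrum in $[0,2R]$, so Bernstein's inequality together with the Phragm\'en--Lindel\"of bound for $H^\infty(\C_+)$ gives $\sup_{\beta\in\R}|g'(\alpha+i\beta)|\le 3R\,\|h\|_{L^\infty(i\R)}\,e^{-R\alpha}$; integrating over $\alpha>0$ yields $\|g\|_\Bq\le 3\|h\|_{L^\infty(i\R)}$. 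Hence $\|\Psi_0(h)\|\le M\|\Phi\|\,\|g\|_\Bes\le 4M\|\Phi\|\,\|h\|_\infty$, uniformly over $\mathcal D$.

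\emph{Where the difficulty lies.} The main obstacle is exactly this $R$-independent estimate: the extra room needed to pass from the $\Bes$-norm control of $\Phi$ back to a sup-norm bound comes from the cancellation between the Bernstein factor $R$ and the exponential decay $e^{-R\alpha}$ created by multiplying $h$ by $e_{2R}$ in order to push its spectrum into $(0,\infty)$, where the $\Bes$-calculus of $A$ (via the Hille--Phillips calculus) applies. Everything else — the multiplier extension, the construction and elementary properties of $\Psi_0$, the density of $\mathcal D$ in $C_0(i\R)$ by Stone--Weierstrass, and the verification on the resolvents $r_z$ — is routine.
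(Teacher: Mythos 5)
Your proposal is correct in substance, and in the harder direction it follows a genuinely different route from the paper. For the implication from the $C_0(i\R)$-calculus to the $\Bes$-calculus, you extend $H$ to $C_b(i\R)=\mathcal M(C_0(i\R))$ by the multiplier construction and compose with $f\mapsto f^b$; the paper's proof instead feeds the functions $G_{\alpha,\varphi}$ of \eqref{ksigma} (which lie in $C_0(i\R)$ for $\varphi$ continuous with compact support) into the argument of Theorem \ref{necessary} to obtain \eqref{8.1}, which is equivalent to admitting the $\Bes$-calculus; your multiplier route is exactly the alternative the paper mentions after the proof. For the converse, the paper derives from \eqref{8.1} the $L^1$-bound \eqref{groups} on the boundary jump of the weak resolvent, via \cite[Lemma 3.4]{Cojuhari} and the closed graph theorem, and then constructs $H$ on $\mathcal S(\R)$ by Parseval and a limit $\alpha\to0+$, obtaining the explicit Fourier representation \eqref{defh}--\eqref{neer} which is reused later (e.g.\ in Corollary \ref{derivative}). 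You avoid the resolvent-difference estimate entirely: you push the spectrum of a band-limited $h$ into $(0,\infty)$ by multiplying with $e_{2R}$, prove the $R$-independent bound $\|e_{2R}h\|_\Bes\le 4\|h\|_{L^\infty(i\R)}$ by Bernstein on vertical lines against the decay $e^{-R\alpha}$, apply $\Phi$, and undo the shift with $U(-2R)$. That estimate is fine (note that what you really need is that $e_Rh\in H^\infty(\C_+)$ with norm $\le\|h\|_{L^\infty(i\R)}$, a Phragm\'en--Lindel\"of fact since its spectrum lies in $[0,2R]$), and it is the genuinely new ingredient compared with the paper; the price is that your $H$ is obtained only by continuous extension rather than by an explicit formula.

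One step needs repair, though the fix is local. A band-limited function in $C_0(i\R)$ need not be a Fourier--Stieltjes transform, so Paley--Wiener does not give $e_{2R}h=\lt\nu$ with $\nu\in M(\R_+)$, and consequently $\Psi_0(h)$ is not even defined for every $h$ in your class $\mathcal D$; as written, the identity $\Psi_0(h)=U(-2R)\Phi(e_{2R}h)$ has no left-hand side. Two remedies: either shrink $\mathcal D$ to the functions $h$ with $\fti h^b\in C_c^\infty(\R)$ (still sup-norm dense in $C_0(i\R)$), for which $e_{2R}h\in\lt L^1$ with density supported in $(0,\infty)$ and your identity holds literally; or keep $\mathcal D$ and \emph{define} $H(h):=U(-2R)\Phi(e_{2R}h)$, checking independence of $R$, linearity and multiplicativity across different bands --- all immediate from $\Phi(e_a)=U(a)$, $a\ge0$ (Remark \ref{Bsg}), since for $R'\ge R$ one has $\Phi(e_{2R'}h)=U(2R'-2R)\Phi(e_{2R}h)$. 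With either repair, the $R$-independent estimate, the Stone--Weierstrass (or Fej\'er) density of $\mathcal D$, and the verification $H(r_z)=(z+A)^{-1}$ by truncating the one-sided Laplace representation of $r_z$ (for $\Re z>0$, and its mirror image for $\Re z<0$) all go through, so the argument is sound.
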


\begin{proof}
Assume that $-A$ generates a bounded $C_0$-group $(T(t))_{t \in \mathbb R}$ on $X$.   Then by the spectral inclusion theorem for $C_0$-semigroups and the boundedness of $(T(t))_{t \in \mathbb R}$
we have $\sigma (A) \subset i\mathbb R$.   Let 
\[
\Delta (\alpha,\beta, A):=(\alpha +i\beta + A)^{-1}-(-\alpha+i\beta+ A)^{-1}, \qquad \alpha >0.
\] 
 
Assume in addition that $A$ admits the $\mathcal B$-calculus.   We will prove that there is a bounded homomorphism $H : C_0(\R) \to L(X)$ such that $H(\rho_z) = (z+A)^{-1}$ where $\rho_z(\b) = (z+i\b)^{-1}, \, z \in \C \setminus i\R$. 

By Theorem \ref{necessary},  \eqref{8.1} holds.
By \cite[Lemma 3.4]{Cojuhari} and the Closed Graph Theorem it follows that
\begin{equation}\label{groups}
\sup_{\alpha >0}  \int_{\mathbb R} |\langle \Delta(\a,\b,A)x, x^* \rangle| \, d\beta \le C_A
\end{equation}
for some $C_A > 0$ and  all $x \in X$,  $x^* \in X^*$ such that $\|x\|=\|x^*\|=1$. 

For a Schwartz function $g \in \mathcal S(\mathbb R)$, let $(\mathcal{F}g)(t) := \int_\R e^{-ist}g(s) \, ds$ be the Fourier transform of $g$.   By Parseval's identity, for all $\alpha >0$, 
\begin{equation}\label{identity_c}
\int_{\mathbb R} g(\beta) \Delta (\alpha,\beta, A)x \, d\beta =\int_{\mathbb R}e^{-\alpha |t|} (\mathcal F g) (t) T(t)x\, dt, \qquad x \in X.
\end{equation}
Define linear operators $A_\alpha : \mathcal S(\mathbb R) \to L(X),\alpha >0$, as
\begin{equation*}\label{aalpha}
A_\alpha (g) x: = \frac{1}{2\pi} \int_{\mathbb R} g(\beta) \Delta (\alpha,\beta, A) x \, d\beta, \quad x \in X,
\end{equation*}
and note that $\|A_\alpha (g)\|_{L(X)} \le \frac{C_A}{2\pi} \|g\|_{L^\infty}$ for every $\alpha >0$.    
By \eqref{identity_c},  the limit 
\begin{equation} \label{defh}
(H g)x:= \lim_{\alpha \to 0+}A_\alpha( g)x= \frac{1}{2\pi} \int_{\mathbb R} (\mathcal F  g) (t) T(t)x \, dt 
\end{equation}
exists in $X$, $Hg \in L(X)$, and 
\begin{equation}\label{cont}
\|H g\|_{L (X)}\le \frac{C_A}{2\pi} \|g\|_{L^\infty}, \qquad g \in \mathcal S(\mathbb R).
\end{equation}
Moreover, for all $g,h \in \mathcal S(\mathbb R)$, and $x \in X$,
\begin{align*}\label{homo}
H( gh)x &= \frac{1}{2\pi} \int_{\mathbb R}  \mathcal F(gh) (t) T(t)x\, dt = \frac{1}{4\pi^2} \int_{\mathbb R}  (\mathcal F(g)*\mathcal F(h)) (t) T(t)x\, dt  \\
&= \frac{1}{4\pi^2} \int_{\mathbb R}(\mathcal F g) (t) T(t)x \left( 
\int_{\mathbb R}(\mathcal F h) (s) T(s)x \, ds\right) \, dt
=H(g)H(h)x.\notag
\end{align*}

Since $\mathcal S(\mathbb R)$ is dense in $C_0(\mathbb R)$, the map $H$ extends by continuity to a bounded algebra homomorphism $H$ from $C_0(\mathbb R)$ to $L(X)$.    Replacing $g$ by $\mathcal{F}g$ in \eqref{defh},  we have 
\begin{equation} \label{neer} 
H(\mathcal{F}g)x =  \int_\R g(-t) T(t)x \, dt
\end{equation}
for $g \in \mathcal{S}(\R)$, and then by continuity for $g \in L^1(\R)$.   For $z \in \C_+$, taking $g(t) = e^{-tz}, \, t\ge0$ (extended by $0$ on $(-\infty,0)$), we obtain
\[
H(\rho_z)x = \int_\R e^{-zt} T(t)x \, dt = (z+A)^{-1}x,
\]
as required.  The case when $\Re z < 0$ is similar.

Now assume that $A$ admits the $C_0(i\R)$-calculus.   When $\varphi$ is continuous on $\R$ with compact support, the functions $G_{\alpha,\varphi}$, as in \eqref{ksigma}, are in $C_0(i\R)$.   Then the proof of Theorem \ref{necessary} can be applied to show that  \eqref{8.1} holds.
\end{proof}

There is an alternative way to show that a $C_0(i\R)$-calculus $H$ for a $C_0$-group generator induces a $\Bes$-calculus.   As discussed in Section \ref{s6.3}, $H$ can be extended to the multiplier algebra of $C_0(i\R)$, which is the algebra of bounded continuous functions on $i\R$.  Since $\Bes$ is continuously and injectively embedded in that algebra by $f \mapsto f^b$, one obtains a $\Bes$-calculus. 

\begin{remark}
For (negative) generators $A$ of $C_0$-groups, Theorem \ref{ccalc} shows that there are many cases when $A$ does not admit the $\mathcal B$-calculus. On the other hand, the operator $-A^2$ admits the $\mathcal B$-calculus as it is the negative generator of a bounded holomorphic $C_0$-semigroup on $X$ \cite[Corollary 3.7.15]{ABHN}.
\end{remark}

The fact that \eqref{groups} implies the existence of a bounded $C_0(i\mathbb R)$-calculus for $A$ was proved some time ago under various other assumptions  (see \cite[Theorem 3.6, Corollary 3.5]{deLaubenfels1} and references therein).  

Both \eqref{groups} and \eqref{neer} can be found explicitly in \cite[Lemma 2.4.3]{Neerven} and \cite[p.284]{Engel}.  For versions of \eqref{groups} and \eqref{neer} in a more general setting of vector-valued functions see [3, Theorem 4.8.1] and [37, Proposition 0.5].

Theorem \ref{ccalc} allows one to provide very simple examples of operators not admitting the $\mathcal B$-calculus.
The example below has essentially been given in \cite[Section 3.3]{Kantorovitz2}, see also \cite[Example 3.27]{deLaubenfels1}.

\begin{cor}\label{derivative}
Let $\mathcal -D$ be the generator of the $C_0$-group of right shifts on $X=L^p(\mathbb R), 1 \le p <\infty.$
Then $D$ admits the $\mathcal B$-calculus on $X$ if and only if $p=2.$
\end{cor}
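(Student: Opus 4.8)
The plan is to reduce, via Theorem~\ref{ccalc}, to the question of whether $D$ admits a bounded $C_0(i\R)$-calculus, and then to identify that calculus with the Fourier multiplier calculus on $L^p(\R)$, where the dichotomy $p=2$ versus $p\ne2$ is classical. First I would record that the right shift group $(T(t)f)(x)=f(x-t)$ is isometric, hence bounded, on every $L^p(\R)$ with $1\le p<\infty$, and that its generator is $-D=-d/dx$; in particular $\sigma(D)\subseteq i\R$, so Theorem~\ref{ccalc} applies and it suffices to decide when $D$ admits the $C_0(i\R)$-calculus.

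For $p=2$ I would diagonalise $D$ by the Fourier transform $\mathcal F$: since $\mathcal F D\mathcal F^{-1}$ is multiplication by $i\xi$, for $\varphi\in C_0(i\R)$ the operator $\varphi(D):=\mathcal F^{-1}M_{\varphi(i\,\cdot)}\mathcal F$ is bounded with $\|\varphi(D)\|_{L(L^2(\R))}=\|\varphi\|_{L^\infty(i\R)}$ by Plancherel, the map $\varphi\mapsto\varphi(D)$ is an algebra homomorphism because it is implemented by multiplication operators, and taking $\varphi=r_z$ gives $r_z(D)=(z+D)^{-1}$ for $z\in\C\setminus i\R$. Thus $D$ admits the $C_0(i\R)$-calculus, and hence the $\Bes$-calculus, when $p=2$.

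For $p\ne2$ I would argue by contradiction. Suppose $D$ admits the (necessarily unique) $C_0(i\R)$-calculus $H$ on $L^p(\R)$. By formula~\eqref{neer} from the proof of Theorem~\ref{ccalc}, $H(\mathcal F g)x=\int_\R g(-t)T(t)x\,dt$ for $g\in L^1(\R)$, which on $L^p(\R)$ is a convolution operator, i.e.\ the Fourier multiplier operator with symbol (a reflection of) $\mathcal F g$. Since $\mathcal F(L^1(\R))$ is dense in $C_0(\R)$ and $H$ is bounded, it follows that $H(\psi)$ is the Fourier multiplier operator with symbol $\psi(-\,\cdot)$ for every $\psi\in C_0(\R)$, whence
\[
\|T_\psi\|_{L^p(\R)\to L^p(\R)}\le\|H\|\,\|\psi\|_{\infty},\qquad \psi\in C_0(\R),
\]
where $T_\psi$ denotes the Fourier multiplier operator with symbol $\psi$. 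This contradicts the classical fact that for $p\ne2$ the space of $L^p$-Fourier multipliers is a proper subspace of $L^\infty(\R)$ on which the multiplier norm is not dominated by the sup-norm (see \cite[Section~3.3]{Kantorovitz2} and \cite[Example~3.27]{deLaubenfels1}); concretely one may test against rescaled, truncated ``chirp'' symbols $\psi_\lambda(\xi)=e^{i\lambda\xi^2}\chi(\xi)$ with $\chi\in C_c^\infty(\R)$, whose $L^p$-multiplier norms are unbounded in $\lambda$ while $\|\psi_\lambda\|_\infty=\|\chi\|_\infty$. Hence $D$ does not admit the $\Bes$-calculus for $p\ne2$, which completes the proof.

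I expect the only real obstacle to be this last step: one must invoke a clean, citable form of the failure of a uniform sup-norm bound for $L^p$-Fourier multipliers when $p\ne2$, and one must carry out the identification of $H$ with the Fourier multiplier calculus (through \eqref{neer}) with the correct normalisations and reflections. The reduction via Theorem~\ref{ccalc} and the $p=2$ case are routine.
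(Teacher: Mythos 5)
Your reduction is the same as the paper's: both proofs invoke Theorem~\ref{ccalc} to pass to the $C_0(i\R)$-calculus and then use \eqref{cont} together with \eqref{neer} to see that $H$ acts by convolution, i.e.\ as a Fourier multiplier calculus with symbol bounded by the sup-norm. The difference lies in the final harmonic-analytic input and in how $p>2$ is handled. The paper fixes a \emph{single} non-zero $g\in L^p$, extracts from \eqref{spectral} the estimate $\|g*f\|_{L^p}\le C\|\mathcal F f\|_\infty$ for all $f\in L^1$, and contradicts this for $1\le p<2$ by Fig\'a-Talamanca's theorem (via the remark in \cite{Kantorovitz2} and \cite{deLaubenfels1}); the case $2<p<\infty$ is then settled by a duality step, passing to $D^*$ and the left-shift group on $L^{p'}$. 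You instead use the full uniform multiplier bound $\|T_\psi\|_{L^p\to L^p}\le\|H\|\,\|\psi\|_\infty$ and defeat it with the chirp family $\psi_\lambda(\xi)=e^{i\lambda\xi^2}\chi(\xi)$, whose $L^p$-multiplier norms grow like $\lambda^{|1/2-1/p|}$ for $p\ne2$ (H\"ormander/Miyachi-type estimates; a dilation-plus-Fatou argument reduces it to the classical fact that $e^{i\xi^2}$ is not an $L^p$-multiplier). This buys you a single argument covering all $p\ne2$, with no duality step; note also that since $\psi_\lambda\in C_c^\infty\subset\mathcal F(L^1(\R))$, you do not even need the density argument extending $H(\psi)=T_{\psi(-\cdot)}$ to all of $C_0(\R)$ — the bound on $\mathcal F L^1$ from \eqref{neer} already applies to the chirps. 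What it costs is a quantitative oscillatory-multiplier fact, which you must cite correctly (your references \cite{Kantorovitz2} and \cite{deLaubenfels1} do not contain it — in the paper they serve only as a pointer to Fig\'a-Talamanca — so you should cite H\"ormander's 1960 paper or a textbook treatment, and take $\chi$ not identically zero, e.g.\ $\chi(0)\ne0$), whereas the paper's route rests on the softer structural theorem of Fig\'a-Talamanca applied to one fixed $g$.
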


\begin{proof}
Clearly, it is enough to prove the `only if' part.
Let $p \in [1,2), p \neq 2$ be fixed.
If $D$ admits the $\mathcal B$-calculus, then by Theorem \ref{ccalc} it admits $C_0(i\mathbb R)$-calculus.
Fix a non-zero $g \in  X$.  Using \eqref{cont} and \eqref{neer}, we infer that there exists $C >0$ such that
\begin{equation}\label{spectral}
\|g * f \|_{L^p(\mathbb R)}\le C \|\mathcal F f\|_{L^\infty}
\end{equation}
for all $f \in L^1(\mathbb R)$.   However, as remarked in \cite[p.252]{Kantorovitz2} 
(see also \cite[Example 3.27]{deLaubenfels1}), this contradicts \cite{Figa}.

If $2<p<\infty$ and $p'$ is the conjugate exponent, and $D$ admits the $\mathcal B$-calculus, then by Theorem \ref{necessary} its adjoint $D^*$ admits the $\mathcal B$-calculus on $L^{p'}(\mathbb R)$ as well. Since $D^*$ generates the $C_0$-group of left shifts on $L^{p'}(\mathbb R)$, which are similar to the right shifts under the change of variable $t \mapsto -t$, we again get a contradiction. 
\end{proof}

A discussion of issues around \eqref{spectral} and its various generalisations can be found in  \cite[Sections 4.6 and 5.5]{Larsen}.
 
An alternative approach to Corollary \ref{derivative} could rely on the fact that $D$ is not spectral on $L^p(\mathbb R)$ when $1 \le p <\infty, p \neq 2$  (see  \cite{Farwig}). One may then use the abstract results from \cite{deLaubenfels1}.
 Moreover, if $p \neq 2$ then all differential operators with constant coefficients on $L^p(\mathbb R^n)$ are not spectral, apart from the trivial case of the constant symbol (see \cite{Albrecht}).   The question when such operators generate $C_0$-semigroups has been studied in the literature (see \cite[Chapter 8]{ABHN}).  Thus, it is possible that other, more general, examples of differential operators not admitting the $\mathcal B$-calculus can also be provided.  

Note that if $A$ admits the $\mathcal B$-calculus, then $A-i\beta$ admits the $\mathcal B$-calculus for every $\beta \in \mathbb R$.  Thus the class of operators on Banach spaces admitting the $\mathcal B$-calculus contains vertical shifts of the generators of bounded holomorphic semigroups, and those operators do not generate bounded holomorphic $C_0$-semigroups.    
A complete description of operators with the $\mathcal B$-calculus is still out of reach.

\section{Spectral mapping theorems} \label{SMT}

In this section we show how the theory of the $\mathcal B$-calculus
allows us to obtain spectral mapping theorems for
parts of the classical HP-calculus given by Rajchman measures
and by absolutely continuous measures on $\R_+$. While the latter result is known
and not surprising, we are not aware of any spectral mapping theorems
for the Rajchman measures in the context of semigroup generators (the situation for \emph{groups} is essentially different).
Our results arise as direct applications of the $\mathcal B$-calculus, and we expect that more statements of a similar nature
can be proved using the $\mathcal B$-calculus theory.

Recall from Theorem  \ref{homomor_intro} the spectral inclusion theorem  for the $\mathcal B$-calculus proved in \cite[Theorem 4.17]{BGT}.  It shows that,  if $A$ admits the $\mathcal B$-calculus, then for every $f \in \mathcal B$ one has the inclusion
\begin{equation}\label{smp_b}
 f(\sigma(A)) \subset \sigma(f(A)),
\end{equation}
and this will be used in Corollary \ref{holex}.  In general, the inclusion may be far from being an equality, even for $f(z)=e^{-z}$.    Below, we identify certain $f \in \mathcal B$ for which the inclusion becomes (essentially) equality.

The following statement was given in \cite[Lemma 6.1]{Stafney} in a slightly less general form.    The same proof works in this case.   As in Theorem \ref{CCCA}, $\mathcal R_0$ denotes the space of rational functions which vanish at infinity and have no poles in $\overline{\C}_+$.

\begin{prop}\label{St_Lem}
Let $A$ be a densely defined operator with non-empty resolvent
set on a Banach space $X$.   If $(f_n)_{n\ge1} \subset \mathcal  R_0$ satisfies $f_n(A) \to B$ in $L(X)$ as $n \to \infty$, then $(f_n)_{n\ge1}$ converges uniformly on $\sigma(A) \cup \{\infty\}$ to a 
function $f \in C_0(\sigma(A))$ and
$$\sigma(B) \cup \{0\} = f(\sigma(A)) \cup  \{0\}.$$

\end{prop}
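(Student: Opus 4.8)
The plan is to reduce the proposition to the classical spectral mapping theorem for the rational functional calculus, together with elementary resolvent perturbation theory. Each $f_n\in\mathcal R_0$ has all its poles in $\C\setminus\overline{\C}_+\subseteq\rho(A)$, so $f_n(A)$ is a bounded operator and, on $\mathcal R_0$, the map $g\mapsto g(A)$ is an algebra homomorphism (it coincides with the Riesz--Dunford calculus, and with the $\mathcal B$-calculus when the latter is available). The external input I will use is the rational spectral mapping theorem: for every $g\in\mathcal R_0$,
\[
\sigma(g(A))\cup\{0\}=g(\sigma(A))\cup\{0\},
\]
the extra point $0$ arising from $g(\infty)=0$ together with the presence of $\infty$ in the extended spectrum when $A$ is unbounded. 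First I would establish the uniform convergence. Since $f_n-f_m\in\mathcal R_0$ and $(f_n-f_m)(A)=f_n(A)-f_m(A)$, the displayed identity says that the spectral radius of $f_n(A)-f_m(A)$ equals $\sup_{z\in\sigma(A)}|f_n(z)-f_m(z)|$; as the spectral radius is dominated by the norm, $\sup_{\sigma(A)}|f_n-f_m|\le\|f_n(A)-f_m(A)\|\to0$. Since in addition $f_n(\infty)=0$ for every $n$, the sequence $(f_n)$ is uniformly Cauchy, hence uniformly convergent, on the compact set $\sigma(A)\cup\{\infty\}$; its limit $f$, being a uniform limit of continuous functions that vanish at $\infty$, lies in $C_0(\sigma(A))$.

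Next I would prove the inclusion $f(\sigma(A))\cup\{0\}\subseteq\sigma(B)\cup\{0\}$. Let $\lambda=f(z_0)\neq0$ with $z_0\in\sigma(A)$. Then $f_n(z_0)\to\lambda$, and for $n$ large $f_n(z_0)\neq0$ while $f_n(z_0)\in f_n(\sigma(A))\subseteq\sigma(f_n(A))\cup\{0\}$ by the rational spectral mapping theorem, so in fact $f_n(z_0)\in\sigma(f_n(A))$. Since the set $\{(T,\mu)\in L(X)\times\C:\mu\in\rho(T)\}$ is open (by the Neumann series expansion of the resolvent) and $(f_n(A),f_n(z_0))\to(B,\lambda)$, we cannot have $\lambda\in\rho(B)$; hence $\lambda\in\sigma(B)$. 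Taking unions with $\{0\}$ gives the desired inclusion.

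The reverse inclusion $\sigma(B)\cup\{0\}\subseteq f(\sigma(A))\cup\{0\}$ is the step I expect to be the main obstacle, since the tempting estimate ``$\operatorname{dist}(\lambda,\sigma(f_n(A)))$ bounded below $\Rightarrow\|(\lambda-f_n(A))^{-1}\|$ bounded'' is false on a general Banach space; this is exactly where one has to use that $f_n(A)\to B$ \emph{in norm}. Suppose $\lambda\neq0$ and $\lambda\notin f(\sigma(A))$. Then $f-\lambda$ is continuous and nowhere zero on the compact set $\sigma(A)\cup\{\infty\}$ (its value at $\infty$ is $-\lambda\neq0$), so $|f-\lambda|\ge\delta$ there for some $\delta>0$; consequently $|f_n-\lambda|\ge\delta/2$ on $\sigma(A)$ for $n$ large, and since also $f_n(\infty)=0\neq\lambda$ the rational spectral mapping theorem forces $\lambda\in\rho(f_n(A))$. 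Now fix such an $N$, enlarged if necessary so that $\|f_n(A)-f_N(A)\|<\tfrac12\|(\lambda-f_N(A))^{-1}\|^{-1}$ for all $n\ge N$ (possible since $(f_n(A))$ is norm-Cauchy); a Neumann-series argument then yields $\|(\lambda-f_n(A))^{-1}\|\le M:=2\|(\lambda-f_N(A))^{-1}\|$ for all $n\ge N$. The resolvent identity
\[
(\lambda-f_n(A))^{-1}-(\lambda-f_m(A))^{-1}=(\lambda-f_n(A))^{-1}\bigl(f_n(A)-f_m(A)\bigr)(\lambda-f_m(A))^{-1}
\]
now shows that $\bigl((\lambda-f_n(A))^{-1}\bigr)_n$ is norm-Cauchy, with some limit $C$; letting $n\to\infty$ in $(\lambda-f_n(A))(\lambda-f_n(A))^{-1}=I=(\lambda-f_n(A))^{-1}(\lambda-f_n(A))$ and using $f_n(A)\to B$ gives $(\lambda-B)C=C(\lambda-B)=I$, i.e.\ $\lambda\in\rho(B)$. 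Thus $\sigma(B)\setminus\{0\}\subseteq f(\sigma(A))$, and combining this with the previous inclusion gives $\sigma(B)\cup\{0\}=f(\sigma(A))\cup\{0\}$. (The degenerate case $\sigma(A)=\varnothing$ forces $A$ unbounded with $\sigma(f_n(A))=\{0\}$ for all $n$, and the same arguments then give $\sigma(B)=\{0\}=f(\sigma(A))\cup\{0\}$.)
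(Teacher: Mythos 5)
Your first two steps are sound: applying the rational spectral mapping identity to $f_n-f_m\in\mathcal R_0$ together with $r(T)\le\|T\|$ does give uniform convergence on $\sigma(A)\cup\{\infty\}$, and the joint openness of $\{(T,\mu):\mu\in\rho(T)\}$ gives $f(\sigma(A))\cup\{0\}\subseteq\sigma(B)\cup\{0\}$. The genuine gap is in the reverse inclusion, precisely at the sentence ``fix such an $N$, enlarged if necessary so that $\|f_n(A)-f_N(A)\|<\tfrac12\|(\lambda-f_N(A))^{-1}\|^{-1}$ for all $n\ge N$ (possible since $(f_n(A))$ is norm-Cauchy)''. The Cauchy property controls $\sup_{n\ge N}\|f_n(A)-f_N(A)\|$, but nothing controls $\|(\lambda-f_N(A))^{-1}\|$ as $N$ grows; these resolvent norms may a priori blow up at least as fast as the reciprocals of the Cauchy tails, in which case no admissible $N$ exists and the uniform bound $M$ is never obtained. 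The only quantitative information available at that point is $\operatorname{dist}(\lambda,\sigma(f_n(A)))\ge\min(|\lambda|,\delta/2)$, which, as you yourself note, does not bound resolvent norms on a general Banach space. Indeed no argument using only ``$f_n(A)\to B$ in norm, $\lambda\in\rho(f_n(A))$, $\operatorname{dist}(\lambda,\sigma(f_n(A)))$ bounded below'' can succeed: for the bilateral weighted shifts $W_\epsilon$ on $\ell^2(\mathbb Z)$ with all weights $1$ except one weight equal to $\epsilon$, one has $\|W_\epsilon-W_0\|=\epsilon\to0$, $\sigma(W_\epsilon)$ is the unit circle for $\epsilon>0$ while $\sigma(W_0)$ is the closed unit disc, so $\lambda=\tfrac12$ satisfies all three conditions and nevertheless lies in $\sigma(W_0)$. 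So lower semicontinuity of the spectrum must be imported from somewhere, and your proposal never does so.

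The missing ingredient is the one Stafney's proof (which the paper invokes, see the remark after Theorem 7.4) supplies: commutativity. All the operators $f_n(A)$ and $B$ lie in a closed commutative unital subalgebra $\mathcal A\subseteq L(X)$ which can be taken inverse-closed (for instance the bicommutant of $\{g(A):g\in\mathcal R_0\}$), so that spectra relative to $\mathcal A$ agree with spectra in $L(X)$. In a commutative unital Banach algebra every character is contractive, whence $d_H(\sigma(a),\sigma(b))\le\|a-b\|$ for the Hausdorff distance; therefore $\sigma(f_n(A))\cup\{0\}\to\sigma(B)\cup\{0\}$ in the Hausdorff metric, while $\sigma(f_n(A))\cup\{0\}=f_n(\sigma(A))\cup\{0\}\to f(\sigma(A))\cup\{0\}$ by your uniform convergence step; uniqueness of Hausdorff limits then yields the equality, and in particular the inclusion $\sigma(B)\setminus\{0\}\subseteq f(\sigma(A))$ that your Neumann-series argument was intended to provide. (The degenerate case $\sigma(A)=\emptyset$ also needs this, since there too you must show $\sigma(B)\subseteq\{0\}$, i.e.\ that a norm limit of the quasinilpotent operators $f_n(A)$ is quasinilpotent, which again fails without commutativity.)
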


In \cite{Stafney}, the limit operators $B$ in Proposition \ref{St_Lem} were not identified.   We shall identify cases where $B= f(A)$ in the $\Bes$-calculus.   First we recall a spectral mapping theorem given in \cite[Theorem 8.2.11]{DaviesB}.  
In fact, the theorem is a direct corollary of Proposition \ref{St_Lem} and the density of the span of exponential functions in $L^1(\mathbb R_+)$.

\begin{prop}\label{davies}
Let $(T(t)_{t \ge 0}$ be a bounded $C_0$-semigroup on a Banach space $X$, with generator $-A$.  
If $g \in L^1(\mathbb R_+)$, $f = \mathcal{L}g$ and
$$
f(A)
:=\int_{0}^{\infty} g (t) T(t) \, dt,
$$
where the integral is defined in the strong operator topology, then
$$
\sigma(f (A)) \cup \{0\}= f (\sigma(A))\cup \{0\}.
$$
\end{prop}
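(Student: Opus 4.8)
The plan is to obtain this as an immediate consequence of Proposition~\ref{St_Lem}, combined with the density of the linear span of the exponentials $\{e_a : a>0\}$ in $L^1(\R_+)$. First I would check that $A$ meets the hypotheses of Proposition~\ref{St_Lem}: since $-A$ generates a bounded $C_0$-semigroup, $A$ is densely defined, and writing $M := \sup_{t\ge 0}\|T(t)\| < \infty$ one has $(z+A)^{-1} = \int_0^\infty e^{-zt}T(t)\,dt$ for $\Re z>0$, so $\rho(A)\supseteq\C_+\neq\emptyset$.

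Next, given $g\in L^1(\R_+)$ and $f=\lt g$, I would choose finite linear combinations $g_n=\sum_j c_j^{(n)} e_{a_j^{(n)}}$ with each $a_j^{(n)}>0$ and $\|g_n-g\|_{L^1(\R_+)}\to 0$; this is legitimate because such combinations are dense in $L^1(\R_+)$ (a classical fact, also recalled in the proof of Theorem~\ref{CCCA}). Put $f_n:=\lt g_n=\sum_j c_j^{(n)} r_{a_j^{(n)}}$. Each $r_a$ with $a>0$ is rational, vanishes at $\infty$, and has its only pole at $-a\notin\overline{\C}_+$, so $f_n\in\mathcal R_0$. From the elementary bound $\bigl|\lt h(z)\bigr|\le\|h\|_{L^1(\R_+)}$ for $h\in L^1(\R_+)$ and $\Re z\ge 0$, I get two convergences simultaneously: $f_n\to f$ uniformly on $\overline{\C}_+$, hence uniformly on $\sigma(A)\cup\{\infty\}$ (here $f_n(\infty)=f(\infty)=0$); and, since $f_n(A)=\int_0^\infty g_n(t)T(t)\,dt$ in the Hille--Phillips calculus,
\[
\|f_n(A)-f(A)\|\le \int_0^\infty |g_n(t)-g(t)|\,\|T(t)\|\,dt\le M\,\|g_n-g\|_{L^1(\R_+)}\longrightarrow 0,
\]
so $f_n(A)\to f(A)$ in $L(X)$.

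Finally I would apply Proposition~\ref{St_Lem} to $(f_n)\subset\mathcal R_0$ with $B:=f(A)$. It produces a limit function in $C_0(\sigma(A))$ which, by the uniform (indeed pointwise) convergence above, must be $f|_{\sigma(A)}$, and it gives $\sigma(f(A))\cup\{0\}=f(\sigma(A))\cup\{0\}$, which is exactly the assertion.

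I do not anticipate a real obstacle: the one point requiring a word of care is the identification of the two meanings of ``$f_n(A)$'' --- the rational functional calculus appearing in Proposition~\ref{St_Lem} versus the strong integral in the statement --- but these coincide through the identity $r_a(A)=(a+A)^{-1}=\int_0^\infty e^{-at}T(t)\,dt$ extended by linearity. In essence the proof merely repackages Proposition~\ref{St_Lem}, where the genuine spectral content already lies.
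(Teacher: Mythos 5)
Your proof is correct and follows exactly the route the paper indicates: it notes that Proposition \ref{davies} is a direct corollary of Proposition \ref{St_Lem} together with the density of the span of the exponentials $e_a$ in $L^1(\R_+)$, which is precisely your approximation argument with the Hille--Phillips $L^1$-bound and the identification $r_a(A)=(a+A)^{-1}$. You have simply written out the details the paper leaves implicit.
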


In the version of Proposition \ref{davies} in \cite{DaviesB} it is also shown that if $A$ is unbounded, in addition to the assumptions above, then $0 \in \sigma(f(A))$.

In the next result we extend Proposition \ref{davies} for bounded  $C_0$-semigroups on Hilbert spaces.   A measure 
$\mu \in M(\mathbb R_+)$ is said to be a {\it Rajchman measure} if its Fourier transform belongs to $C_0(\mathbb R)$.     We denote the set of Rajchman measures on $\R_+$ by $M_0(\R_+)$.  They were studied intensively (sometimes under different names) in the 1970s and 1980s.   See  \cite{Kaniuth} and \cite{Lyons}, and the references therein.

\begin{thm}\label{rajchman}
Let $(e^{-tA})_{t \ge 0}$ be a bounded $C_0$-semigroup on a Hilbert space $X$.
If $f \in \mathcal B \cap C_0(\overline{\mathbb C}_+)$, then
\begin{equation} \label{smt}
\sigma(f(A)) \cup \{0\} = f (\sigma(A))\cup \{0\}.
\end{equation}
In particular,
if $\mu  \in M_0(\mathbb R_+)$
then
$$
\sigma(\mathcal L \mu (A)) \cup \{0\}=\mathcal L \mu (\sigma(A))\cup \{0\}.
$$
\end{thm}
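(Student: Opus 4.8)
The plan is to prove both inclusions in \eqref{smt}. The inclusion $f(\sigma(A)) \cup \{0\} \subseteq \sigma(f(A)) \cup \{0\}$ is immediate from the spectral inclusion \eqref{smp_b} for the $\Bes$-calculus, which applies since $-A$ generating a bounded $C_0$-semigroup on a Hilbert space forces \eqref{8.1}, so $A$ admits the $\Bes$-calculus. For the reverse inclusion $\sigma(f(A)) \subseteq f(\sigma(A)) \cup \{0\}$, the idea is to approximate $f$ in the $\Bes$-norm by rational functions from $\mathcal R_0$ and then let Stafney's lemma, Proposition \ref{St_Lem}, take over; the point of passing through the $\Bes$-norm is that the $\Bes$-calculus is bounded, so such an approximation yields convergence of the corresponding operators in $L(X)$.

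In detail: since $f \in \Bes \cap C_0(\overline\C_+)$, Theorem \ref{CCCA} provides $(f_n) \subset \mathcal R_0$ with $\|f_n - f\|_\Bes \to 0$. By \eqref{b_estim}, $\|f_n(A) - f(A)\| \le \gamma_A\|f_n - f\|_\Bes \to 0$, so $f_n(A) \to f(A)$ in $L(X)$; here $f_n(A)$ computed in the $\Bes$-calculus coincides with the obvious rational function of the resolvent, as $\Phi_A$ is a homomorphism with $\Phi_A(r_z) = (z+A)^{-1}$ (Theorem \ref{homomor_intro}b)). Since $\rho(A)\neq\emptyset$, Proposition \ref{St_Lem} gives that $(f_n)$ converges uniformly on $\sigma(A)\cup\{\infty\}$ to some $\tilde f \in C_0(\sigma(A))$ with $\sigma(f(A)) \cup \{0\} = \tilde f(\sigma(A)) \cup \{0\}$. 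But $\|f_n - f\|_\infty \le \|f_n - f\|_\Bes \to 0$ by \eqref{eqnm}, so $f_n \to f$ uniformly on $\overline\C_+ \supseteq \sigma(A)$, while $f_n(\infty) = f(\infty) = 0$; hence $\tilde f = f|_{\sigma(A)}$ and $\sigma(f(A)) \cup \{0\} = f(\sigma(A)) \cup \{0\}$.

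For the statement about $\mu \in M_0(\R_+)$, note that $\lt\mu \in \LT \subset \Bes$ and that its boundary function is the Fourier transform, $(\lt\mu)^b(s) = \int_0^\infty e^{-ist}\,d\mu(t) = \widehat\mu(s)$, which lies in $C_0(\R)$ by hypothesis. It remains to upgrade this to $\lt\mu \in C_0(\overline\C_+)$, and here the Poisson representation \eqref{poisson} is used: if $g \in \Bes$ and $g^b \in C_0(\R)$, then for $\varepsilon>0$ choose $R$ with $|g^b(s)| < \varepsilon$ for $|s|>R$ and split the Poisson integral to get $|g(x+iy)| \le \|g^b\|_\infty\,\frac1\pi\int_{|s|\le R} P(x,y-s)\,ds + \varepsilon$; the first term tends to $0$ as $|x+iy|\to\infty$ in $\overline\C_+$ (separately in the regimes $x\to\infty$ and $|y|\to\infty$ with $x$ bounded), so $g \in C_0(\overline\C_+)$. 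In particular $\lt\mu(\infty) = \mu(\{0\}) = 0$, recovering the classical non-atomicity of Rajchman measures. Applying the first part to $\lt\mu$ yields the conclusion; this also reproves Proposition \ref{davies} on Hilbert spaces, since $L^1(\R_+) \subset M_0(\R_+)$ by Riemann–Lebesgue.

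The conceptual heart of the argument, and the main obstacle, is having a $\Bes$-norm approximation of $f$ by elements of $\mathcal R_0$: combined with boundedness of the $\Bes$-calculus this is precisely what converts $\Bes$-convergence into operator-norm convergence and lets the classical rational spectral mapping of Proposition \ref{St_Lem} apply. The identification of the $\Bes$-closure of $\mathcal R_0$ with $\Bes \cap C_0(\overline\C_+)$ in Theorem \ref{CCCA} is what pins down the hypothesis $f \in \Bes \cap C_0(\overline\C_+)$ as the natural one; with only uniform-on-compacts density of $\LT$ one could not control $f(A)$ in norm. A secondary technical point is that for the Rajchman corollary one needs $\lt\mu$ to vanish at infinity throughout $\overline\C_+$, not just along $i\R$, which is exactly where \eqref{poisson} is invoked.
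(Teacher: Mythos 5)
Your proposal is correct and follows essentially the same route as the paper: approximate $f$ by rational functions from $\mathcal R_0$ in the $\Bes$-norm via Theorem \ref{CCCA}, use boundedness of the $\Bes$-calculus to get operator-norm convergence of $f_n(A)$ to $f(A)$, and then apply Stafney's Proposition \ref{St_Lem}, identifying the limit function with $f$ on $\sigma(A)$ by uniform convergence. The only (harmless) deviation is in the Rajchman corollary, where you justify $\lt\mu \in C_0(\overline{\C}_+)$ via the Poisson representation \eqref{poisson}, while the paper invokes the maximum principle for the same purpose.
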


\begin{proof}
If $f \in \mathcal B\cap C_0(\overline{\mathbb C}_+)$, then by Theorem \ref{CCCA} 
there exists $(f_n)_{ n \ge 1} \subset \mathcal R_0$ such that
\[
\lim_{n \to \infty} \|f_n-f\|_\Bes = 0.
\]
This implies that $f(A)$ is well-defined within the $\mathcal B$-calculus,
and
\[
\lim_{n \to \infty} \|f_n(A)-f(A)\|_{L(X)} = 0.
\]
By Stafney's Proposition \ref{St_Lem}, we have
\[
\sigma(f(A)) \cup \{0\}= f(\sigma(A))\cup \{0\}.
\]

If $f = \mathcal L\mu$ for $\mu \in M_0(\R_+)$, then $f \in \mathcal B \cap C_0(\overline{\mathbb C}_+)$, by the maximum principle, so the second claim follows.
\end{proof}

\begin{rems}
a)  Theorem \ref{rajchman} is also true if $A$ is an operator on a Banach space which admits the $\Bes$-calculus.  The proof is the same.   It would be interesting to know whether the result holds for all generators of bounded semigroups on Banach spaces even if $A$ does not admit the $\Bes$-calculus. 

\noindent b)
The proof of Theorem \ref{rajchman} relies essentially on the  use of spectral continuity in commutative Banach algebras in \cite{Stafney}.
A slightly neater but very similar argument was proposed in \cite[Theorem 2.1]{Dungey} in the framework of $l^1(\mathbb Z_+)$-calculus for power-bounded
operators on a Banach space. It was also based on the uniqueness of the limit of sets in the Hausdorff metric and it can probably be adjusted
to our purposes as well. The idea seems to originate from Kato's proof \cite[p.218]{Kato} of Wiener's theorem on invertibility of 
absolutely convergent Fourier series.   
\end{rems}

We illustrate Theorem \ref{rajchman} by a simple example. 

\begin{example}
 Let $(S_t)_{t\ge 0}$ be the $C_0$-semigroup of right shifts on $L^2(\mathbb R_+)$ with generator $-D$, and let $\mu \in M_0(\R_+)$.  Then by Plancherel's theorem,
\[
\sigma(\mathcal L \mu (D)) = \sigma (C_\mu)=\mathcal L\mu(\C_+)\cup \{0\},
\]
where $C_\mu (f):= \mu * f, f \in L^2(\mathbb R_+)$, is a convolution operator on $L^2(\mathbb R_+)$.
\end{example}

\section{Properties of the $\mathcal B$-calculus with respect to rescalings}

In this section we consider properties of the function $t \mapsto f(tA)$ for $f \in \Bes, \, t\ge0$.  In the special case when $f(z) = e^{-z}$, they become known properties of the $C_0$-semigroup generated by $-A$.   By Lemma \ref{shifts01}(7), the $\Bes$-norm is invariant under the rescalings $z \mapsto tz$, and correspondingly the rescalings are neutral for the $\Bes$-calculus in the sense that $\gamma_{tA} = \gamma_A$ where $\gamma_A$ is defined after \eqref{8.2}.   This enables us to establish some estimates which are uniform in $t$.

Note that the operator $f(tA)$ can be interpreted in two ways: either by applying the function $f$ to the operator $tA$, or by applying the function $f_t(z) = f(tz)$ to the operator $A$.   The outcomes coincide, as can be seen by a very simple change of variables in \eqref{fcdef}.

In some proofs in this section we use the validity of Plancherel's theorem for $L^2$-functions with values in a Hilbert space.   Suppose that $-A$ generates a bounded $C_0$-semigroup $(T(t))_{t\ge0}$ on a Hilbert space $X$, and let $K_A = \sup_{t\ge0} \|T(t)\|$.   For $x \in X$, Plancherel's theorem gives
\begin{equation} \label{plan}
\int_\R \left\|(\a+i\b+A)^{-1}x\right\|^2 \,d\b = 2\pi \int_0^\infty e^{-2\a t} \left\|T(t)x\right\|^2 \,dt \le \frac{\pi K_A^2}{\a}\|x\|^2.
\end{equation}
The same estimate holds with $A$ replaced $A^*$, by considering the dual semigroup.   Using the Hille--Yosida estimate \eqref{hy} and Cauchy-Schwarz, we obtain, for $n\ge2$ and $x,y \in X$,
\begin{align}  \label{gsf}
\lefteqn{\int_\R \left| \langle (\a+i\b+A)^{-n}x,y \rangle \right|\,d\b} \\
 &\le \int_\R \|(\a+i\b+A)^{-(n-1)}x\|\, \|(\a-i\b+A^*)^{-1}y\| \,d\b  \notag\\
&\le \frac{K_A}{\a^{n-2}}  \int_\R \|(\a+i\b+A)^{-1}x\|\, \|(\a-i\b+A^*)^{-1}y\| \,d\b \notag \\
 &\le \frac{\pi K_A^3}{\a^{n-1}} \|x\|\,\|y\|.  \notag
\end{align} 
When $n=2$, $K_A^3$ can be replaced by $K_A^2$, and the estimate shows that $A$ satisfies \eqref{8.1} and so admits the $\Bes$-calculus (see \cite[Example 4.1(1)]{BGT}).

Similarly Cauchy-Schwarz gives
\begin{multline} \label{cses}
\int_\R |\a-i\b|^{-1} \|(\a-i\b+A)^{-1}x\|\,d\b \\
 \le \left( \int_\R |\a-i\b|^{-2}\,d\b \int_\R \|(\a-i\b+A)^{-1}x\|^2 \,d\b\right)^{1/2} \le \frac{\pi K_A}{\a} \|x\|. 
\end{multline}

There is an estimate of the same form as \eqref{cses} when $A$ is sectorial of angle less than $\pi/2$ on a Banach space $X$.  Then 
\begin{equation} \label{stes}
\int_\R |\a-i\b|^{-1} \|(\a-i\b+A)^{-1}x\|\,d\b \le \frac{\pi M_A}{\a} \|x\|.
\end{equation}

\subsection{Convergence Lemma}
Some of our arguments will use the Convergence Lemma established in \cite[Corollary 4.14]{BGT}.    The proof in \cite{BGT} was formulated for operators, and that would suffice for our main results in this section, but first we establish a result which resembles \cite[Theorem 4.13]{BGT} but is formulated in the context of $\Bes$.

\begin{lemma}\label{conlem}
Let $(f_n)_{n\ge 1}\subset \mathcal{B}$ be such that
$
\sup_{n\ge 1}\,\|f_n\|_{\mathcal{B}}<\infty.
$
Assume that for every $z\in \C_{+}$ there exists
\begin{equation}\label{Conv21}
f_0(z):=\lim_{n\to\infty}\,f_n(z) \in \C,
\end{equation}
and for every $r>0$ one has
\begin{equation}\label{gz}
\lim_{\delta\to 0+}\,\int_0^\delta \sup_{|\beta|\le r}\,|f_n'(\alpha+i\beta)|\,d\alpha=0,
\end{equation}
uniformly in $n$. Let $g \in \Bes$ with $\lim_{|z|\to\infty} g(z)=0$,
and let
$
g_n(z):=f_n(z)g(z), n\ge 0.
$
Then $f_0\in \mathcal{B}$, $g_n\in \mathcal{B}_0, n \ge 0$, and
\[
\lim_{n\to\infty}\,\|g_n-g_0\|_{\mathcal{B}}=0.
\]
\end{lemma}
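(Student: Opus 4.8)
The plan is to put $h_n:=f_n-f_0$ and estimate
$\|h_ng\|_{\mathcal{B}}=\|h_ng\|_\infty+\int_0^\infty\sup_{\beta\in\R}|(h_ng)'(\alpha+i\beta)|\,d\alpha$, splitting the relevant integrals according to the sizes of $\Re z$ and $|\Im z|$. First I would record the preliminaries. By the Fatou-type result \cite[Lemma 2.3(1)]{BGT}, $f_0\in\mathcal{B}$ with $\|f_0\|_{\mathcal{B}}\le M:=\sup_{n\ge1}\|f_n\|_{\mathcal{B}}$; the hypothesis $\lim_{|z|\to\infty}g(z)=0$ forces $g(\infty)=0$, so $g\in\mathcal{B}_0$, and since $\mathcal{B}_0$ is an ideal of $\mathcal{B}$ we get $g_n=f_ng\in\mathcal{B}_0$ for all $n\ge0$. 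The sequence $h_n=f_n-f_0$ satisfies $\|h_n\|_{\mathcal{B}}\le2M$ and $h_n\to0$ pointwise on $\C_+$; being locally bounded it converges, by Vitali's theorem, to $0$ locally uniformly on $\C_+$, hence (by Cauchy's formula for the derivative) $h_n'\to0$ locally uniformly as well; moreover \eqref{gz} holds for $(h_n)$ uniformly in $n$, since it holds for $(f_n)$ and since $\int_0^\delta\sup_{\beta}|f_0'(\alpha+i\beta)|\,d\alpha\to0$ as $\delta\to0+$ (the tail of the convergent integral $\|f_0\|_{\mathcal{B}_0}$). Finally, writing $\omega_g(\alpha):=\sup_\beta|g(\alpha+i\beta)|$ and $\eta_g(r):=\sup_{|z|\ge r}|g(z)|$, I would note $\omega_g(\alpha)\to0$ as $\alpha\to\infty$ (from $g(\infty)=0$) and $\eta_g(r)\to0$ as $r\to\infty$; and since every $w$ with $|w-z|=\tfrac12\Re z$ has $|\Im w|\ge|\Im z|-\tfrac12\Re z$, Cauchy's estimate gives $|g'(\alpha+i\beta)|\le\frac{2}{\alpha}\,\eta_g\!\big(|\beta|-\tfrac{\alpha}{2}\big)$, so $g'(\alpha+i\cdot)\in C_0(\R)$ for every $\alpha>0$.

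Since $(h_ng)'=h_n'g+h_ng'$, I would bound $\int_0^\infty\sup_\beta|(h_ng)'|\,d\alpha\le I_n+II_n$ with $I_n:=\int_0^\infty\sup_\beta|h_n'g|\,d\alpha$ and $II_n:=\int_0^\infty\sup_\beta|h_ng'|\,d\alpha$, and dispose of $II_n$ by dominated convergence: the integrand $\sup_\beta|h_n(\alpha+i\beta)g'(\alpha+i\beta)|$ is dominated, uniformly in $n$, by the $L^1(0,\infty)$ function $2M\sup_\beta|g'(\alpha+i\beta)|$ (integral $\|g\|_{\mathcal{B}_0}$), and for each fixed $\alpha>0$ it tends to $0$ as $n\to\infty$ — split $\sup_\beta$ into $|\beta|\le r$, where $h_n\to0$ uniformly on the compact set $\{\alpha+i\beta:|\beta|\le r\}$ while $\sup_{|\beta|\le r}|g'(\alpha+i\beta)|<\infty$, and $|\beta|>r$, where $\|h_n\|_\infty\le2M$ and $\sup_{|\beta|>r}|g'(\alpha+i\beta)|$ is small once $r$ is large because $g'(\alpha+i\cdot)\in C_0(\R)$. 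Hence $II_n\to0$.

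The term $I_n$ together with $\|h_ng\|_\infty$ I would treat by splitting into $\Re z\le R$, $\Re z\ge R$ and, inside the former, $|\Im z|\le r$, $|\Im z|>r$, choosing parameters in the order $R,r,\delta$ and then letting $n\to\infty$. Given $\varepsilon>0$: choose $R$ with $2M\sup_{\alpha\ge R}\omega_g(\alpha)<\varepsilon$, which controls the part of $I_n$ over $\{\Re z\ge R\}$ via $\sup_\beta|h_n'g|\le\omega_g(\alpha)\sup_\beta|h_n'|$ and $\int_0^\infty\sup_\beta|h_n'|\le\|h_n\|_{\mathcal{B}_0}\le2M$; choose $r$ with $4M\eta_g(r)<\varepsilon$, which controls the part of $I_n$ over $\{|\Im z|>r\}$ since $|\Im z|>r$ gives $|g(\alpha+i\beta)|\le\eta_g(r)$; and choose $\delta\in(0,R)$ with $\|g\|_\infty\int_0^\delta\sup_{|\beta|\le r}|h_n'(\alpha+i\beta)|\,d\alpha<\varepsilon$ for all $n$, which is exactly \eqref{gz} for $(h_n)$ with the now-fixed $r$. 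What then remains of $I_n$ is an integral over the compact set $\{\delta\le\Re z\le R,\ |\Im z|\le r\}$ of a quantity controlled by $\sup|h_n'|$, which tends to $0$ as $n\to\infty$ by local uniform convergence. The sup-norm estimate follows the same pattern: on $\{|\Im z|>r\}$ it is at most $2M\eta_g(r)$, on $\{\delta\le\Re z\le R,\ |\Im z|\le r\}$ it tends to $0$ as $n\to\infty$, and on the boundary strip $\{0<\Re z<\delta,\ |\Im z|\le r\}$ one writes $|h_n(\alpha+i\beta)|\le|h_n(\delta+i\beta)|+\int_0^\delta\sup_{|\gamma|\le r}|h_n'(t+i\gamma)|\,dt$, the first term tending to $0$ by local uniform convergence and the second being $<\varepsilon/\|g\|_\infty$ by \eqref{gz}. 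Adding the pieces gives $\limsup_{n\to\infty}\|h_ng\|_{\mathcal{B}}\le C\varepsilon$ for every $\varepsilon>0$, i.e. $\|g_n-g_0\|_{\mathcal{B}}=\|h_ng\|_{\mathcal{B}}\to0$, as required.

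I expect the genuine obstacle to be the near-boundary term $\int_0^\delta\sup_\beta|h_n'(\alpha+i\beta)g(\alpha+i\beta)|\,d\alpha$: the functions $\alpha\mapsto\sup_\beta|h_n'(\alpha+i\beta)|$ are bounded in $L^1(0,\delta)$ but not \emph{a priori} equi-integrable there, so a plain dominated-convergence argument is unavailable. Overcoming this is what forces the simultaneous use of both hypotheses — condition \eqref{gz} handles the part with $|\Im z|\le r$ uniformly in $n$, while the decay $|g(z)|\to0$ as $|z|\to\infty$ handles the part with $|\Im z|>r$; were $g$ assumed only to satisfy $g(\infty)=0$, the argument would break down precisely here.
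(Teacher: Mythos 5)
Your argument is correct and follows essentially the same route as the paper's proof: the same Fatou-type lemma for $f_0$, the reduction to $f_0=0$ via $h_n=f_n-f_0$, the product-rule split into the $h_n'g$ and $h_ng'$ integrals, Vitali/local uniform convergence on compact rectangles, hypothesis \eqref{gz} near the boundary, and the decay of $g$ and $g'$ at infinity, with your dominated-convergence treatment of the $h_ng'$ term being only a cosmetic variant of the paper's $\delta,r$ splitting. (Your separate sup-norm estimate is in fact redundant, since $h_ng\in\mathcal{B}_0$ and \eqref{eqnm} give $\|h_ng\|_\infty\le\|h_ng\|_{\mathcal{B}_0}$; as written it also omits the region $\{\Re z>R,\ |\Im z|\le r\}$, which is anyway covered by your choice of $R$ because $\sup_{\Re z\ge R}|g(z)|<\varepsilon/(2M)$ there.)
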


\begin{proof}    By \cite[Proposition 2.3(1)]{BGT},  $f_0\in \mathcal{B}$.   Replacing $f_n$ by $f_n-f_0$, we may assume that $f_0=0$.   Moreover  $g_n\in \mathcal{B}_0, n \ge 0$, since $g \in \Bq$.    We have
\begin{align*}
\|g_n\|_{\mathcal{B}_0} &\le \int_0^\infty \sup_{\Re z = \a}\,|f_n'(z)g(z)|\,d\alpha
+\int_0^\infty \sup_{\Re z=\a}\,|f_n(z)g'(z)|\,d\alpha\\
&=: I_n+J_n.
\end{align*}

Let $C = \sup_{n\ge1} \|f_n\|_\infty$.   For fixed $\delta\in (0,1)$ and $r \ge 0$,
\begin{align*}
J_n &\le C  \left(\int_0^\delta+\int_{1/\delta}^\infty\right)\sup_{\beta\in\R}\,|g'(\alpha+i\beta)|\,d\alpha + C \int_\delta^{1/\delta} \sup_{|\beta|\ge r}\,|g'(\alpha+i\beta)|\,d\alpha\\
&\null\hskip50pt+\sup_{\Re z\ge \delta}\,|g'(z)| \int_\delta^{1/\delta} \sup_{|\beta|\le r}\,|f_n(\alpha+i\beta)|\,d\alpha.
\end{align*}
Now let $n\to\infty$.  Since $f_n(z)\to 0$ as $n\to\infty$ for any $z\in \C_{+}$, Vitali's theorem 
implies that the final integral tends to zero, so
\[
\limsup_{n\to\infty} J_n \le C  \left(\int_0^\delta+\int_{1/\delta}^\infty\right)\sup_{\beta\in\R}\,|g'(\alpha+i\beta)|\,d\alpha + C \int_\delta^{1/\delta} \sup_{|\beta|\ge r}\,|g'(\alpha+i\beta)|\,d\alpha.
\]
Next let $r \to \infty$.   Since  $g'(z)\to 0$ as $|z|\to\infty, \Re z \ge \delta$, the final integral tends to zero.   Hence
\[
\limsup_{n\to\infty} J_n \le C  \left(\int_0^\delta+\int_{1/\delta}^\infty\right)\sup_{\beta\in\R}\,|g'(\alpha+i\beta)|\,d\alpha.
\] 
Finally let $\delta\to0+$.   Since $g \in \Bes$, the two integrals tend to zero.  Hence $\lim_{n\to\infty} J_n = 0$.

Next, let $r \ge 1$ and $\delta \in (0,1)$.   We have
\begin{align*}
I_n &\le \|g\|_\infty \int_0^r  \sup_{|\beta|\le r}\,|f_n'(\alpha+i\beta)|\,d\alpha
 + \sup_{|{\rm Im}\,z|\ge r}\,|g(z)|\,\int_0^r \sup_{|\beta|\ge r}|f_n'(\alpha+i\beta)|\,d\alpha \\
&\null\hskip50pt +\sup_{{\rm Re}\,z\ge r}\,|g(z)|\,\int_r^\infty \sup_{\beta\in \R}|f_n'(\alpha+i\beta)|\,d\alpha\\
&\le\|g\|_\infty \sup_{k\ge1} \int_0^\delta \sup_{|\beta|\le r}|f_k'(\alpha+i\beta)|\,d\alpha +  \|g\|_\infty \int_\delta^r \sup_{|\beta|\le r}|f_n'(\alpha+i\beta)|\,d\alpha \\
&\null \hskip50 pt +\sup_{|z|\ge r}\,|g(z)|\,\sup_{k\ge 1}\|f_k\|_{\mathcal{B}}.
\end{align*}
Let $n\to\infty$.   By Vitali's theorem applied to $(f_n')$, the second term tends to zero, so
\[
\limsup_{n\to\infty} I_n \le \|g\|_\infty \sup_{k\ge1} \int_0^\delta \sup_{|\beta|\le r}\,|f_k'(\alpha+i\beta)| \,d\alpha + \sup_{|z|\ge r}\,|g(z)|\,\sup_{k\ge 1}\|f_k\|_{\mathcal{B}}.
\]
Now let $\delta\to0+$.   By \eqref{gz}, the first term tends to zero.   Then let $r \to \infty$, and the second term tends to zero.   Hence $\lim_{n\to \infty} I_n = 0$.  
\end{proof}

\begin{remark}
In \cite[Theorem 4.13]{BGT}, we assumed that $g' \in H^1(\C_+)$ and $g(\infty) = 0$.   By \cite[Proposition 2.4]{BGT}, this implies the assumption in Lemma \ref{conlem} that $g \in \Bes$ and $\lim_{|z|\to\infty} g(z) = 0$.  Moreover, the function $g(z) = (1+z)^{-1} e^{-z}$ satisfies the assumption in Lemma \ref{conlem}, but $g' \notin H^1(\C_+)$.
\end{remark}

\begin{cor} \label{sotc}
Let $A$ be an operator which admits the $\Bes$-calculus, and let $f_n$ and $f_0$ be as in Lemma \ref{conlem}.   Then $f_n(A) \to f_0(A)$ in the strong operator topology as $n\to\infty$.   

If $A \in L(X)$, then the convergence is in operator-norm.
\end{cor}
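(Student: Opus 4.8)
The statement to prove is Corollary \ref{sotc}: under the hypotheses of Lemma \ref{conlem}, if $A$ admits the $\Bes$-calculus then $f_n(A) \to f_0(A)$ strongly, with operator-norm convergence when $A \in L(X)$. The basic idea is to insert an auxiliary factor $g$ that vanishes at infinity, apply Lemma \ref{conlem} to deduce $\Bes$-norm convergence of $g_n = f_n g$, and then use the boundedness of the $\Bes$-calculus together with the multiplicativity $\Phi_A(g_n) = f_n(A)\,g(A)$ to transfer the convergence to a dense set of vectors.

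First I would fix a convenient $g$: take $g = r_1^k = (1+z)^{-k}$ for a suitable integer $k \ge 1$ (in fact $k=1$ should suffice, or $g(z) = (1+z)^{-1}e^{-z}$ as in the remark after Lemma \ref{conlem}, but a resolvent power is cleanest because its range is a core). This $g$ lies in $\Bq$ and $g(z) \to 0$ as $|z| \to \infty$, so Lemma \ref{conlem} applies and gives $\|f_n g - f_0 g\|_\Bes \to 0$. Since $\Phi_A$ is a bounded algebra homomorphism, $\|f_n(A)g(A) - f_0(A)g(A)\| \le \|\Phi_A\|\,\|f_ng - f_0g\|_\Bes \to 0$, i.e.\ $f_n(A)g(A) \to f_0(A)g(A)$ in operator norm. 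Now $g(A) = (1+A)^{-k}$ has range equal to $D(A^k)$, which is dense in $X$ (since $A$ is densely defined and closed, $D(A^k)$ is a core, or one can simply note that $a^k(a+A)^{-k}x \to x$ as $a \to \infty$). For $y = g(A)x$ in this dense range we get $f_n(A)y \to f_0(A)y$. Combined with the uniform bound $\sup_n \|f_n(A)\| \le \|\Phi_A\| \sup_n \|f_n\|_\Bes < \infty$ from the hypothesis $\sup_n \|f_n\|_\Bes < \infty$, a standard $\varepsilon/3$ density argument yields $f_n(A)x \to f_0(A)x$ for every $x \in X$, which is strong-operator convergence.

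For the norm-convergence assertion when $A \in L(X)$: here $A$ is bounded, so $(1+A)$ is invertible in $L(X)$, hence $g(A) = (1+A)^{-k}$ is invertible, and from $f_n(A)g(A) \to f_0(A)g(A)$ in norm one multiplies on the right by $g(A)^{-1} = (1+A)^k \in L(X)$ to conclude $f_n(A) \to f_0(A)$ in operator norm. (One should check $g(A)^{-1}$ is bounded — immediate since $A$ bounded forces $\sigma(A)$ compact and $-1 \notin \overline{\C}_+ \supseteq \sigma(A)$, so $1+A$ is boundedly invertible.)

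**Main obstacle.** The real content is entirely in Lemma \ref{conlem}, which has already been proved in the excerpt, so the corollary itself is a short deduction. The only genuine point requiring a little care is verifying that the chosen $g$ satisfies the hypotheses of Lemma \ref{conlem} — in particular that $g \in \Bes$ with $g(z) \to 0$ at infinity — and that its image $g(A)$ has dense range; both are routine for $g = r_1^k$. A secondary subtlety worth a sentence is justifying $\Phi_A(f_n g) = f_n(A) g(A)$: this is exactly the multiplicativity of the $\Bes$-calculus from Theorem \ref{homomor_intro}(a), applied to $f_n g \in \Bq \subset \Bes$. No step here is expected to be a serious obstacle; the work was done in establishing Lemma \ref{conlem}.
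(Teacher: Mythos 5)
Your proposal is correct and follows essentially the same route as the paper: the paper also takes $g(z)=(1+z)^{-1}$, applies Lemma \ref{conlem} and the boundedness/multiplicativity of the calculus to get $f_n(A)(1+A)^{-1}\to f_0(A)(1+A)^{-1}$ in norm, then uses the uniform bound on $\|f_n(A)\|$ and density of $D(A)$ for strong convergence, and multiplies by $1+A$ when $A\in L(X)$. Your extra generality ($k$-th resolvent powers) and the explicit invertibility check of $1+A$ are fine but not needed.
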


\begin{proof}
Let $g(z) = (1+z)^{-1}$.   By Lemma \ref{conlem} and continuity of the $\Bes$-calculus,
\[
g_n(A) = f_n(A) (1+A)^{-1}  \to f_0(A)(1+A)^{-1}
\]
in operator-norm as $n\to\infty$.   Since $(f_n(A))_{n\ge1}$ is uniformly bounded and $D(A)$ is dense, the first statement follows.   If $A \in L(X)$, then $f_n(A) =  g_n(A)(1+ \nolinebreak[4] A) \to f_0(A)$ in operator-norm as $n\to\infty$.
\end{proof}

\subsection{Continuity}

Now we apply the Convergence Lemma to the function $t \mapsto f(tA)$.

\begin{thm} \label{ftacont}
Let $A$ be an operator which admits the $\Bes$-calculus, and let $f \in \Bes$.   Then $t \mapsto f(tA)$ is continuous from $\R_+$ to $L(X)$ with the strong operator topology.

If $A \in L(X)$, then the map is continuous with respect to the norm topology on $L(X)$.
\end{thm}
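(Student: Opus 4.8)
The plan is to reduce the continuity of $t\mapsto f(tA)$ to the Convergence Lemma (Lemma~\ref{conlem}) by checking its hypotheses for an appropriate family of rescalings of $f$. Fix $t_0\ge0$ and a sequence $t_n\to t_0$ in $\R_+$; I would like to set $f_n(z):=f(t_nz)$ and $f_0(z):=f(t_0z)$ and conclude $f_n(A)\to f_0(A)$ strongly. There is, however, a difficulty when $t_0=0$: the function $z\mapsto f(0\cdot z)$ is the constant $f(0)$, and while that is fine, the genuine subtlety is at $t_0>0$ versus $t_0=0$ in verifying the ``uniform smallness near the boundary'' condition \eqref{gz}. So I would treat $t_0>0$ and $t_0=0$ separately, or better, first record the uniform norm bound $\sup_n\|f_n\|_\Bes=\|f\|_\Bes$, which holds by Lemma~\ref{shifts01}(\ref{012}) since each $f_n$ is a rescaling of $f$; this is the reason the rescalings are ``neutral'' as noted at the start of the section.

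The key steps, in order: (1) By Lemma~\ref{shifts01}(\ref{012}), $\|f_n\|_\Bes=\|f\|_\Bes$ for all $n$, so $\sup_n\|f_n\|_\Bes<\infty$. (2) Pointwise convergence \eqref{Conv21}: since $f$ is continuous on $\overline{\C}_+$ (recalled in the ``Properties of $\Bes$'' subsection), $f(t_nz)\to f(t_0z)$ for each $z\in\C_+$. (3) The uniform boundary condition \eqref{gz}: compute $f_n'(z)=t_nf'(t_nz)$, so for $r>0$ and small $\delta$,
\[
\int_0^\delta\sup_{|\beta|\le r}|f_n'(\alpha+i\beta)|\,d\alpha
=\int_0^\delta\sup_{|\beta|\le r}t_n|f'(t_n\alpha+it_n\beta)|\,d\alpha
\le\int_0^{t_n\delta}\sup_{\gamma\in\R}|f'(\sigma+i\gamma)|\,d\sigma
\]
after the substitution $\sigma=t_n\alpha$ (and using $|f'(\sigma+i\gamma)|\le\sup_{|\gamma'|\le t_nr}|f'(\sigma+i\gamma')|\le\sup_{\gamma'}$). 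Since $t_n$ is bounded, say $t_n\le T$, this is at most $\int_0^{T\delta}\sup_\gamma|f'(\sigma+i\gamma)|\,d\sigma$, which is independent of $n$ and tends to $0$ as $\delta\to0+$ because $f\in\Bes$ (integrability of $\alpha\mapsto\sup_\beta|f'(\alpha+i\beta)|$ near $0$). So \eqref{gz} holds uniformly in $n$. (4) Apply Lemma~\ref{conlem} with $g(z)=(1+z)^{-1}$, or more directly invoke Corollary~\ref{sotc}, to conclude $f_n(A)\to f_0(A)$ in the strong operator topology. Since the sequence $t_n\to t_0$ was arbitrary, $t\mapsto f(tA)$ is strongly continuous. (5) For the norm statement when $A\in L(X)$: by the last sentence of Corollary~\ref{sotc}, strong convergence of $f_n(A)$ upgrades to norm convergence, giving norm-continuity of $t\mapsto f(tA)$.

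The main obstacle I anticipate is purely bookkeeping in step (3): making the change of variables cleanly and getting a bound on $\int_0^\delta\sup_{|\beta|\le r}|f_n'|$ that is genuinely uniform in $n$ (not just for $t_n$ bounded away from $0$), which is why it is important to keep $t_n\le T$ for some fixed $T$ — harmless since $t_n$ converges — and to absorb the $\sup_{|\beta|\le r}$ into the full $\sup_{\beta\in\R}$ before substituting. Everything else is a direct citation of Lemma~\ref{conlem}/Corollary~\ref{sotc} and Lemma~\ref{shifts01}. One should also note at the outset that $f(tA)$ is well-defined for each $t\ge0$: for $t>0$ this is $f_t(A)$ with $f_t\in\Bes$ by Lemma~\ref{shifts01}(\ref{012}), and for $t=0$ it is $f(0)I$, consistent with the calculus since $\Phi(1)=I$.

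\begin{proof}
Fix $t_0\ge0$ and a sequence $(t_n)_{n\ge1}\subset\R_+$ with $t_n\to t_0$; choose $T$ with $t_n\le T$ for all $n$. For $n\ge0$ put $f_n(z):=f(t_nz)$, $z\in\C_+$; by Lemma~\ref{shifts01}(\ref{012}) each $f_n\in\Bes$ with $\|f_n\|_\Bes=\|f\|_\Bes$, so $\sup_n\|f_n\|_\Bes<\infty$. Since $f$ extends continuously to $\overline{\C}_+$, for each $z\in\C_+$ we have $f_n(z)=f(t_nz)\to f(t_0z)=f_0(z)$, which is \eqref{Conv21}. To verify \eqref{gz}, note $f_n'(z)=t_nf'(t_nz)$, so for any $r>0$ and $\delta\in(0,1)$, substituting $\sigma=t_n\alpha$,
\[
\int_0^\delta\sup_{|\beta|\le r}|f_n'(\alpha+i\beta)|\,d\alpha
\le\int_0^\delta t_n\sup_{\gamma\in\R}|f'(t_n\alpha+i\gamma)|\,d\alpha
=\int_0^{t_n\delta}\sup_{\gamma\in\R}|f'(\sigma+i\gamma)|\,d\sigma
\le\int_0^{T\delta}\sup_{\gamma\in\R}|f'(\sigma+i\gamma)|\,d\sigma.
\]
The right-hand side is independent of $n$ and, since $f\in\Bes$, tends to $0$ as $\delta\to0+$; hence \eqref{gz} holds uniformly in $n$. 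By Corollary~\ref{sotc}, $f_n(A)\to f_0(A)$ in the strong operator topology, i.e.\ $f(t_nA)\to f(t_0A)$ strongly. As $(t_n)$ was an arbitrary sequence converging to $t_0$, the map $t\mapsto f(tA)$ is continuous from $\R_+$ to $L(X)$ with the strong operator topology.

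If $A\in L(X)$, then by the final assertion of Corollary~\ref{sotc} the convergence $f_n(A)\to f_0(A)$ is in operator norm, so $t\mapsto f(tA)$ is norm-continuous.
\end{proof}
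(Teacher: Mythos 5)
Your proposal is correct and follows essentially the same route as the paper: rescale $f$ to $f_t(z)=f(tz)$, note the norm invariance from Lemma \ref{shifts01}(\ref{012}), verify the uniform boundary condition \eqref{gz} by the same change-of-variables estimate with $t$ kept in a bounded interval, and then invoke Corollary \ref{sotc} for both the strong and the operator-norm statements. The only cosmetic difference is that you argue along arbitrary sequences $t_n\to t_0$ while the paper states the uniform estimate for $t\in[0,\tau+1]$ directly, which is immaterial.
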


\begin{proof}
We consider the functions $f_t(z) = f(tz), \, t>0$.   Then $f_t \in \Bes$ and $\|f_t\|_\Bes = \|f\|_\Bes$, and $t \mapsto f_t(z)$ is continuous for each $z \in \C_+$.   

Let $r>0$, $\tau \in \R_+$,  $t \in [0,\tau+1]$ and $\delta>0$.   Then
\begin{align*}
\int_0^\delta \sup_{|\beta|\le r} |f_t'(\a+i\b)| \, d\a &\le \int_0^\delta t \sup_{\b\in\R} |f'(t(\a+i\b))| \,d\a \\
& \le \int_0^{\delta(\tau+1)} \sup_{\b\in\R} |f'(\a+i\b)| \,d\a \to 0,
\end{align*}
uniformly for $t \in [0,\tau+1]$, as $\delta\to0+$.  

 It follows that, for every sequence $t_n$ converging to $\tau$ in $\R_+$, the functions $(f_{t_n})_{n\ge1}$ satisfy the assumptions of Lemma \ref{conlem} with $f_\tau$  as the limit function, so Corollary \ref{sotc} implies both statements.
\end{proof}

If  $A$ is sectorial of angle $\theta \in [0,\pi/2)$, and $z \in \Sigma_{\pi/2-\theta}$, then $zA$ is sectorial of angle less than $\pi/2$, and  $f(zA)$ is defined for all $f \in \Bes$, by the $\Bes$-calculus.   We can then obtain a stronger result than in Theorem \ref{ftacont}.

\begin{prop} \label{holhol}
Let $A$ be a sectorial operator of angle $\theta \in [0,\pi/2)$, and let $f \in\Bes$.   Then $z \mapsto f(zA)$ is holomorphic on $\Sigma_{\pi/2-\theta}$.   In particular, $t \mapsto f(tA)$ is $C^\infty$ on $(0,\infty)$.   
\end{prop}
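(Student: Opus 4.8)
The plan is to prove the stronger statement that $z\mapsto f(zA)$ is holomorphic from $\Sigma_{\pi/2-\theta}$ to $L(X)$; the $C^\infty$ assertion is then immediate, since $(0,\infty)\subset\Sigma_{\pi/2-\theta}$ and the restriction of a holomorphic $L(X)$-valued function to a real interval is real-analytic, in particular $C^\infty$. The starting observation is that for $z\in\Sigma_{\pi/2-\theta}$ the operator $zA$ is sectorial of angle $\theta+|\arg z|<\pi/2$, so $-zA$ generates a bounded holomorphic semigroup, $zA$ admits the $\Bes$-calculus by Theorem \ref{homomor_intro}, and $f(zA)$ is given by \eqref{fcdef} with $A$ replaced by $zA$. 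Accordingly I would fix $x\in X$ and $x^*\in X^*$ and study
\[
\langle f(zA)x,x^*\rangle=f(\infty)\langle x,x^*\rangle-\frac{2}{\pi}\int_0^\infty\alpha\int_\R\langle(\alpha-i\beta+zA)^{-2}x,x^*\rangle\,f'(\alpha+i\beta)\,d\beta\,d\alpha,
\]
noting that for fixed $\alpha>0$ and $\beta\in\R$ the integrand is holomorphic in $z$ on $\Sigma_{\pi/2-\theta}$: indeed $(\alpha-i\beta)/z\in\Sigma_{\pi-\theta}$, which is disjoint from $-\sigma(A)\subset-\overline{\Sigma_\theta}$, so $\alpha-i\beta+zA=z\bigl((\alpha-i\beta)/z+A\bigr)$ is invertible with resolvent depending holomorphically on $z$ in the usual way.

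The key technical point is a $z$-uniform integrable dominant on compacta. I would fix a compact $Q\subset\Sigma_{\pi/2-\theta}$, pick $\psi<\pi/2-\theta$ with $Q\subset\overline{\Sigma_\psi}$ and $\theta'\in(\theta,\pi/2-\psi]$, and put $M:=M_{A,\theta'}$ as in \eqref{R01}. For $z\in Q$ and $\lambda\in\C_+$ one has $\lambda/z\in z^{-1}\C_+\subset\Sigma_{\pi/2+\psi}\subset\Sigma_{\pi-\theta'}$, hence $\|(\lambda+zA)^{-1}\|=|z|^{-1}\|(\lambda/z+A)^{-1}\|\le M/|\lambda|$, and therefore $\|(\alpha-i\beta+zA)^{-2}\|\le M^2/(\alpha^2+\beta^2)$ for all $\alpha>0$, $\beta\in\R$, $z\in Q$. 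Writing $\phi(\alpha):=\sup_{\beta\in\R}|f'(\alpha+i\beta)|$, so that $\int_0^\infty\phi(\alpha)\,d\alpha=\|f\|_\Bq$ by \eqref{bdef0}, the integrand above is dominated for $z\in Q$ by $M^2\phi(\alpha)\,\|x\|\,\|x^*\|/(\alpha^2+\beta^2)$, which is integrable against $\alpha\,d\beta\,d\alpha$ with total mass $\pi M^2\|f\|_\Bq\|x\|\,\|x^*\|<\infty$. By Fubini's theorem combined with Morera's theorem applied to triangles in the interior of $Q$ (equivalently, differentiation under the integral sign), it follows that $z\mapsto\langle f(zA)x,x^*\rangle$ is holomorphic on $\Sigma_{\pi/2-\theta}$.

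It remains to upgrade weak holomorphy to norm holomorphy. Since $\|f(zA)\|\le\gamma_{zA}\|f\|_\Bes$ and the same resolvent estimate shows $\gamma_{zA}\le 2M_{zA}^2\le 2M^2$ for $z\in Q$ (insert $\|(\alpha+i\beta+zA)^{-2}\|\le M_{zA}^2/(\alpha^2+\beta^2)$ into \eqref{8.2}), the map $z\mapsto f(zA)$ is locally bounded on $\Sigma_{\pi/2-\theta}$; a locally bounded, weakly holomorphic $L(X)$-valued function is holomorphic, which completes the argument, and restricting to the real axis yields the final assertion (strengthening the strong continuity of Theorem \ref{ftacont} on $(0,\infty)$). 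I expect the only genuinely delicate step to be the sector-angle bookkeeping that produces the bound $\|(\alpha-i\beta+zA)^{-2}\|\le M^2/(\alpha^2+\beta^2)$ with $M$ uniform over $Q$, since this is precisely what renders the dominating function independent of $z$; once it is in hand, everything else is routine.
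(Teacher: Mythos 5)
Your argument is correct, and it shares the paper's basic strategy (the reproducing formula for $f(zA)$ plus a sectorial resolvent bound that is uniform on subsectors), but the execution is genuinely different. The paper invokes the operator-norm version of the formula from \cite[Corollary 4.11]{BGT} and differentiates under the integral sign in operator norm: the $z$-derivative of the integrand, $2\alpha A(\a-i\b+zA)^{-3}f'(\a+i\b)$, is rewritten as $\frac{2\a}{z}\bigl((\a-i\b+zA)^{-2}-(\a-i\b)(\a-i\b+zA)^{-3}\bigr)f'(\a+i\b)$ to eliminate the unbounded factor $A$, and is then dominated by $\frac{4\a}{\delta}M_{A,\theta'}^3(\a^2+\b^2)^{-1}\sup_s|f'(\a+is)|$ on $\{z\in\Sigma_{\theta'-\theta}:|z|>\delta\}$; the $1/z$ factor is why the paper restricts to $|z|>\delta$. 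You instead work with the weak formula \eqref{fcdef}, dominate the integrand itself (your bound $\|(\a-i\b+zA)^{-2}\|\le M_{A,\theta'}^2/(\a^2+\b^2)$, uniform on the whole closed subsector $\overline{\Sigma_\psi}$, is correct and the angle bookkeeping checks out), obtain scalar holomorphy by Fubini--Morera, and then upgrade to norm holomorphy using local boundedness via $\gamma_{zA}\le 2M_{A,\theta'}^2$ and \eqref{b_estim} together with the standard fact that weak operator holomorphy implies norm holomorphy. What your route buys is that you never need operator-norm convergence of the reproducing integral nor the algebraic manipulation removing $A$, and your dominant is uniform up to the vertex of the subsector; what it costs is the extra (classical, but external) weak-to-norm holomorphy principle, which the paper's direct operator-norm differentiation avoids. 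Both proofs are complete; yours is a legitimate alternative.
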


\begin{proof}
For $\theta' \in (\theta,\pi/2)$ and  $z \in \Sigma_{\theta'-\theta}$, we have, from \cite[Corollary 4.11]{BGT},
\[
f(zA) = f(\infty) - \frac{2}{\pi}  \int_0^\infty   \int_{\R} \alpha (\alpha -i\beta +zA)^{-2}  {f'(\alpha +i\beta)} \, d\beta\,d\alpha.
\]
The $z$-derivative of the integrand  is 
\begin{multline*}
2\alpha A (\a-i\b+zA)^{-3} f'(\a+i\b) \\
= \frac{2\a}{z} \left( (\a-i\b+zA)^{-2} - (\a-i\b) (\a-i\b+zA)^{-3} \right) f'(\a+i\b).
\end{multline*}
For $z \in\Sigma_{\theta'-\theta}$ with $|z|>\delta>0$, the norm of this is dominated by 
\[
\frac{4\a}{\delta} M^3_{A,\theta'} (\a^2+\b^2)^{-1} \sup_{s\in\R} |f'(\a+is)|,
\]
which  is integrable over $\C_+$.   This proves the claim.
\end{proof}

If a $C_0$-semigroup with generator $-A$ is norm-continuous on $(0,\infty)$, then the Riemann--Lebesgue lemma implies that 
\begin{equation*}
\lim_{|\beta|\to\infty}\,\|(\alpha+i\beta+A)^{-1}\|=0,
\end{equation*}
for some $\a\in\R$.   A longstanding open question was whether the converse holds.   On Hilbert spaces, a positive answer was initially given in \cite{You}, followed by simpler proofs and related results in \cite{Elm1} and \cite{Goer}.   Negative answers on general Banach spaces were given in \cite{Chill} and \cite{Matrai}.    See also \cite[Section 3.13]{ABHN}.  

In Theorem \ref{Norm1} we will place the norm-continuity theorem for semigroups on Hilbert spaces in the framework of our $\mathcal B$-calculus.   The result is very much in the spirit of Section \ref{SMT}, where spectral mapping theorems for semigroups were extended to spectral mapping theorems for functions from $\mathcal B$.  For the proof we need the following result establishing compatibility between the approximating operators in Section \ref{approx} and the $\Bes$-calculus.

\begin{lemma} \label{KB}
Suppose that $A$ admits the $\Bes$-calculus on a Banach space $X$, let $f \in \Bes$, and $\Kop_m f$ be as defined in \eqref{Def11}.  Then, for $x \in X$ and $x^* \in X^*$,
\[
\langle (\Kop_mf)(A)x,x^* \rangle = - \frac{2}{\pi} \int_{1/m}^m \a \int_\R \langle (\a-i\b+A)^{-2}x,x^* \rangle f'(\a+i\b)\,d\b\,d\a.
\]
\end{lemma}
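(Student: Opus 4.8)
The plan is to realise $\Kop_m f$ as a $\mathcal{B}$-valued Bochner integral of squared resolvent functions $r_{\alpha-i\beta}^2$, push the bounded multiplicative calculus $\Phi_A$ through the integral, and then pair with $x$ and $x^*$. The one delicate point is that the vertical slices $\beta\mapsto f'(\alpha+i\beta)$ of $f'$ need not be integrable on $\mathbb{R}$ (already $f=e_1$ fails this), so the Bochner-integral identity \eqref{B09} cannot be applied to them directly; I get around this by truncating in the imaginary direction, where everything has compact support, and then letting the truncation grow, controlling the limit by Proposition \ref{Pr27} together with the Convergence Lemma (Lemma \ref{conlem} and Corollary \ref{sotc}).

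First I would fix $N\ge1$ and set $g_N:=Q(V_NK_mf')\in\mathcal{B}_0$. For each $\alpha\in[1/m,m]$ the function $\varphi_\alpha^{(N)}$, equal to $f'(\alpha+i\beta)$ for $|\beta|\le N$ and $0$ otherwise, lies in $L^1(\mathbb{R})\cap L^\infty(\mathbb{R})$, so Proposition \ref{L1}(1) gives $G_{\alpha,\varphi_\alpha^{(N)}}=\int_{-N}^N f'(\alpha+i\beta)\,r_{\alpha-i\beta}^2\,d\beta$ as a $\mathcal{B}$-valued Bochner integral. The map $(\alpha,\beta)\mapsto \alpha f'(\alpha+i\beta)r_{\alpha-i\beta}^2$ is continuous from the compact rectangle $[1/m,m]\times[-N,N]$ into $\mathcal{B}$ (using $r_{\alpha-i\beta}^2=T_\mathcal{B}(\alpha-1/m-i\beta)r_{1/m}^2$, strong continuity of $T_\mathcal{B}$ from Lemma \ref{shifts01}, and $\|r_{\alpha-i\beta}^2\|_\mathcal{B}=2\alpha^{-2}$), hence Bochner integrable there, and Fubini for Bochner integrals yields
\[
g_N \;=\; -\frac{2}{\pi}\int_{1/m}^m\int_{-N}^N \alpha\, f'(\alpha+i\beta)\,r_{\alpha-i\beta}^2\,d\beta\,d\alpha .
\]

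Next I would apply $\Phi_A$. Being bounded and linear it commutes with this Bochner integral, and being multiplicative with $\Phi_A(r_w)=(w+A)^{-1}$ it satisfies $\Phi_A(r_w^2)=(w+A)^{-2}$; hence $g_N(A)=-\frac{2}{\pi}\int_{1/m}^m\int_{-N}^N\alpha f'(\alpha+i\beta)(\alpha-i\beta+A)^{-2}\,d\beta\,d\alpha$ as a Bochner integral in $L(X)$, and applying $B\mapsto\langle Bx,x^*\rangle$ gives $\langle g_N(A)x,x^*\rangle=-\frac{2}{\pi}\int_{1/m}^m\int_{-N}^N\alpha f'(\alpha+i\beta)\langle(\alpha-i\beta+A)^{-2}x,x^*\rangle\,d\beta\,d\alpha$. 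Now I would let $N\to\infty$. On the right, \eqref{f'est} bounds $|f'(\alpha+i\beta)|$ by $\|f\|_\infty/(2\alpha)$ and \eqref{8.2} bounds $\int_\mathbb{R}|\langle(\alpha-i\beta+A)^{-2}x,x^*\rangle|\,d\beta$ by a constant times $\alpha^{-1}\|x\|\,\|x^*\|$, so the integrand is dominated by an $L^1$-function on $[1/m,m]\times\mathbb{R}$; dominated convergence together with Fubini sends the right-hand side to the expression claimed in the lemma. On the left, I claim $g_N(A)\to(\Kop_m f)(A)$ strongly: since $g_N=QV_N(K_mf')$ and $\Kop_m f=Q(K_mf')$, Proposition \ref{Pr27} gives $g_N(z)\to(\Kop_m f)(z)$ for every $z\in\C_+$; by \eqref{BC} and the contractivity of $K_m$ and $V_N$ on $\mathcal{W}$, $\sup_N\|g_N\|_\mathcal{B}<\infty$; and differentiating under the integral sign yields $|g_N'(z)|\le\frac{8m}{\pi}\int_{1/m}^m\sup_{s\in\R}|f'(\sigma+is)|\,d\sigma$ for all $z\in\C_+$ and all $N$, which verifies the uniform smallness condition \eqref{gz}. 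Thus $(g_N)$ satisfies the hypotheses of Lemma \ref{conlem} with limit $\Kop_m f$, and Corollary \ref{sotc} gives $g_N(A)\to(\Kop_m f)(A)$ in the strong operator topology, so $\langle g_N(A)x,x^*\rangle\to\langle(\Kop_m f)(A)x,x^*\rangle$. Equating the two limits completes the proof.

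The main obstacle is exactly this last passage to the limit: because $\|r_{\alpha-i\beta}^2\|_\mathcal{B}$ does not decay in $\beta$ while $f'(\alpha+i\cdot)$ need not be integrable, one cannot write $\Phi_A(Q(K_mf'))$ as a single Bochner integral over $[1/m,m]\times\mathbb{R}$, and the Convergence Lemma is precisely the tool that bridges this gap. Everything else — Bochner integrability, Fubini, and dominated convergence — is routine.
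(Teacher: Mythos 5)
Your proof is correct, but it follows a genuinely different route from the paper. The paper's argument is a direct computation: it writes $\langle (\Kop_m f)(A)x,x^*\rangle$ via the defining formula \eqref{fcdef} (legitimate by uniqueness, Theorem \ref{unique}), differentiates \eqref{Def11} under the integral sign to express $(\Kop_m f)'$, applies Fubini to the resulting absolutely convergent four-fold integral, and then recognises the inner double integral as $\tfrac{\pi}{2}\langle r_{\a-i\b}^2(A)x,x^*\rangle$, i.e.\ \eqref{fcdef} applied to $r_{\a-i\b}^2$, so that multiplicativity gives $(\a-i\b+A)^{-2}$. You instead represent the vertically truncated function $Q(V_NK_mf')$ as a $\Bes$-valued Bochner integral of the functions $r_{\a-i\b}^2$ over a compact rectangle, push the bounded homomorphism $\Phi_A$ through the integral, and then remove the truncation: Proposition \ref{Pr27} plus your uniform bound on $g_N'$ lets you invoke Lemma \ref{conlem} and Corollary \ref{sotc} on the operator side, while \eqref{f'est} together with \eqref{8.2} (which you should justify by Theorem \ref{necessary}, since the hypothesis is only that $A$ admits the calculus) gives the domination needed on the scalar side; your verification of \eqref{gz} and of the uniform $\Bes$-bound via \eqref{BC} is accurate, and you correctly observe that norm convergence $g_N\to\Kop_m f$ in $\Bq$ fails in general (e.g.\ $f=e_1$), so some weak-type convergence device is genuinely needed in your setup. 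The trade-off: the paper's proof is shorter and uses nothing beyond Fubini and the explicit formula \eqref{fcdef}, whereas yours avoids manipulating \eqref{fcdef} directly, working only with the abstract homomorphism property, the Bochner-integral identity \eqref{B09} and the Convergence Lemma — conceptually cleaner in that it exhibits $\Kop_m f$ as a limit of operator-valued integrals of squared resolvents, but at the cost of heavier machinery.
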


\begin{proof}
Using \eqref{fcdef} and differentiating \eqref{Def11}, we have
\begin{align*}
\lefteqn{\langle (\Kop_m f)(A)x,x^* \rangle} \\
 &= - \frac{2}{\pi} \int_0^\infty t \int_\R   \langle (t-is+A)^{-2}x,x^* \rangle  (\Kop_m f)'(t+is) \,ds\,dt \\
&=  - \frac{4}{\pi^2} \int_0^\infty  \int_\R t \langle  (t-is+A)^{-2}x,x^* \rangle \\
& \null\hskip50pt \int_{-1/m}^m \int_\R \a (\a-i\b+t+is)^{-3} f'(\a+i\b) \,d\b\,d\a \,ds\,dt \\
&=-\frac{4}{\pi^2} \int_{1/m}^m \int_\R \a  f'(\a+i\b) \\
& \null \hskip50pt \int_0^\infty\int_\R t \langle  (t-is+A)^{-2}x,x^* \rangle (\a-i\b+t+is)^{-3} \,ds\,dt  \,d\b\,d\a  \\
&= -\frac{2}{\pi} \int_{1/m}^m \a \int_\R \langle r_{\a-i\b}^2(A) x,x^* \rangle   f'(\a+i\b) \,d\b\,d\a \\
&= - \frac{2}{\pi} \int_{1/m}^m \a \int_\R \langle (\a-i\b+A)^{-2}x,x^* \rangle f'(\a+i\b)\,d\b\,d\a. \qedhere
\end{align*}
\end{proof}

\begin{rem}
In Lemma \ref{KB} the cut-off operator $\Kop_m$ on $\mathcal{W}$ corresponding to the vertical strip $\{z : m^{-1} \le \Re z \le m\}$ can be replaced by any $S \in L(\mathcal W)$.   Let $S^\vartriangle \in L(\Bes)$ be defined by $S^\vartriangle f = Q (Sf')$.   Then
\[
\langle (S^\vartriangle f)(A)x,x^* \rangle = - \frac{2}{\pi} \int_0^\infty \a \int_\R \langle (\a-i\b+A)^{-2}x,x^* \rangle (Sf')(\a+i\b)\,d\b\,d\a.
\]
This can be applied to the cut-off operators considered in Propositions \ref{Pr26} and \ref{Pr28}.
\end{rem}

\begin{thm}\label{Norm1}
Let $-A$ be the generator of a bounded $C_0$-semigroup on a Hilbert space $X$, and
let $f \in \mathcal B$. 
 Suppose that for some $\alpha>0$,
\begin{equation}\label{someaa}
\lim_{|\beta|\to\infty}\,\|f(\alpha+i\beta)(\alpha-i\beta+A)^{-1}\|=0.
\end{equation}
Then $f(\cdot A) \in C((0,\infty), L(X))$.

In particular,  $f(\cdot A) \in C((0,\infty), L(X))$ for all $f \in \Bes$ if
\begin{equation}\label{somea}
\lim_{|\beta|\to\infty}\,\|(\alpha-i\beta+A)^{-1}\|=0.
\end{equation} 
\end{thm}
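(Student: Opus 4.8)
The plan is to upgrade the strong continuity of $t\mapsto f(tA)$ furnished by Theorem~\ref{ftacont} to norm continuity, by showing that on each compact interval $[a,b]\subset(0,\infty)$ the map is uniformly norm continuous. Write $f_t(z):=f(tz)$, so that $f(tA)=f_t(A)$, $\|f_t\|_\Bes=\|f\|_\Bes$ and $f_t'(z)=tf'(tz)$ (Lemma~\ref{shifts01}). First I would split $f_t=\Kop_m f_t+(f_t-\Kop_m f_t)$ in $\Bes$, using the cut-off operator of \eqref{Def11}. Since $\Kop_m f_t\in\Bq$, one has $(f_t-\Kop_m f_t)(\infty)=f(\infty)$ for every $t$, so the constant cancels in differences: $(f_t-\Kop_m f_t)-(f_s-\Kop_m f_s)\in\Bq$, and by \eqref{eqnm}, \eqref{26A} and the $\Bes$-calculus bound \eqref{b_estim},
\[
\bigl\|\bigl[(f_t-\Kop_m f_t)-(f_s-\Kop_m f_s)\bigr](A)\bigr\|\le 2\gamma_A\bigl(\tfrac8\pi R_m(f_t)+\tfrac8\pi R_m(f_s)\bigr).
\]
A change of variables in \eqref{rmf} gives $R_m(f_u)=\{\int_0^{u/m}+\int_{um}^\infty\}\sup_{\b\in\R}|f'(\a+i\b)|\,d\a$, which, since $\int_0^\infty\sup_\b|f'(\a+i\b)|\,d\a=\|f\|_\Bq<\infty$, tends to $0$ as $m\to\infty$ uniformly for $u\in[a,b]$. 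So, given $\ep>0$, I would fix $m$ so large that this term is $<\ep$ for all $s,t\in[a,b]$; it then remains to prove, for this fixed $m$, that $t\mapsto(\Kop_m f_t)(A)$ is norm continuous.

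For that $m$, Lemma~\ref{KB} applied to $f_t$ (legitimate because $A$ admits the $\Bes$-calculus, by \eqref{gsf}) gives
\[
\bigl\langle\bigl[(\Kop_m f_t)(A)-(\Kop_m f_s)(A)\bigr]x,y\bigr\rangle=-\frac2\pi\int_{1/m}^m\a\int_\R\langle(\a-i\b+A)^{-2}x,y\rangle\,w_{s,t}(\a,\b)\,d\b\,d\a,
\]
with $w_{s,t}(\a,\b):=tf'(t(\a+i\b))-sf'(s(\a+i\b))$. I would split $\int_\R=\int_{|\b|\le R}+\int_{|\b|>R}$. On the compact region $\{1/m\le\a\le m,\ |\b|\le R\}$ one has $\|(\a-i\b+A)^{-2}\|\le(K_A/\a)^2$ and, by joint uniform continuity of $(s,\a,\b)\mapsto sf'(s(\a+i\b))$ on the relevant compact subset of $(0,\infty)^2\times\R$, $\sup|w_{s,t}|\to0$ as $s\to t$; so the $|\b|\le R$ contribution is at most $C(m,R,A)\,\|x\|\,\|y\|\sup|w_{s,t}|\to0$ as $s\to t$, uniformly for $\|x\|,\|y\|\le1$.

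The tail $|\b|>R$ is the heart of the matter, and the only place where the hypothesis \eqref{someaa} (respectively \eqref{somea}) is used. Here I would integrate by parts in $\b$, using $tf'(t(\a+i\b))=-i\,\partial_\b f(t(\a+i\b))$, $\partial_\b(\a-i\b+A)^{-2}=2i(\a-i\b+A)^{-3}$, and the fact that $\langle(\a-i\b+A)^{-2}x,y\rangle\to0$ as $|\b|\to\infty$ (e.g.\ by Riemann--Lebesgue). This produces boundary terms at $\b=\pm R$ of modulus $\le2\|f\|_\infty\|(\a\mp iR+A)^{-1}\|^{2}$ and an integral $-2\int_{|\b|>R}\langle(\a-i\b+A)^{-3}x,y\rangle W(\a,\b)\,d\b$ with $W(\a,\b):=f(t(\a+i\b))-f(s(\a+i\b))$, $|W|\le2\|f\|_\infty$. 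Writing $(\a-i\b+A)^{-3}=(\a-i\b+A)^{-1}\bigl((\a-i\b+A)^{-1}\bigr)^{2}$, pairing one factor through $A^*$, and applying Cauchy--Schwarz together with the Plancherel estimate \eqref{plan} for $A$ and $A^*$, this integral is bounded by $2\|f\|_\infty\bigl(\sup_{|\b|>R}\|(\a-i\b+A)^{-1}\|\bigr)\tfrac{\pi K_A^2}{\a}\|x\|\,\|y\|$. Hence the whole tail contributes at most $C(\|f\|_\infty,K_A)\,\|x\|\,\|y\|\int_{1/m}^m\sup_{|\b|>R}\|(\a-i\b+A)^{-1}\|\,d\a$. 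Under \eqref{somea} the resolvent decay condition is independent of the abscissa (resolvent identity), so $\sup_{|\b|>R}\|(\a-i\b+A)^{-1}\|\to0$ as $R\to\infty$ uniformly for $\a\in[1/m,m]$, and the tail tends to $0$ as $R\to\infty$, uniformly in $s,t$ and in $\|x\|,\|y\|\le1$; this completes the proof of the second assertion. For the general hypothesis \eqref{someaa} one runs the same scheme but must retain $|f|$: if the resolvent does not decay, then \eqref{someaa} forces $|f(\a_0+i\b)|\to0$ as $|\b|\to\infty$, and a Cauchy estimate for $f'$ together with the Poisson formula should propagate this vertical decay to the quantity controlling the tail. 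The hard part will be precisely this propagation: \eqref{someaa} is posited at a single abscissa $\a_0$ for $f$ itself, while the integral above involves $f'$ over the band of abscissae $[1/m,m]$ and the rescaled functions $f_t$; reconciling these scales — via the resolvent identity for the resolvent part, and the Poisson integral/maximum principle for the $|f|$ part — is the technical core of the argument.

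Finally, assembling the three estimates (choose $m$, then $R$, then $|s-t|$ small) yields $\|f(tA)-f(sA)\|<C\ep$ for $s$ near $t$ in $[a,b]$, so $f(\cdot A)\in C((0,\infty),L(X))$. Since $|f(\a+i\b)|\le\|f\|_\infty$, condition \eqref{somea} implies \eqref{someaa} for every $f\in\Bes$, which gives the last statement.
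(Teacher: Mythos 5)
Your overall architecture coincides with the paper's: reduce to a fixed cut-off level $m$ via the operators \eqref{Def11} and the uniform smallness of $R_m(f_t)$ from \eqref{rmf} on compact $t$-sets, pass to the weak form through Lemma \ref{KB}, integrate by parts in $\beta$ so that $f_t'$ becomes $f_t$ and the resolvent square becomes a cube, and split the $\beta$-integral into a compact part (handled by local uniform continuity and the Hille--Yosida bound) and a tail handled by Cauchy--Schwarz together with the Plancherel estimates \eqref{plan}--\eqref{gsf}. Your treatment of the special case \eqref{somea} is correct: the resolvent identity does propagate resolvent decay across abscissae, and the tail bound $\|f\|_\infty\sup_{|\beta|>R}\|(\alpha-i\beta+A)^{-1}\|$ then suffices, so the ``in particular'' statement is established.

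However, the main assertion is the one under \eqref{someaa}, and there your proposal has a genuine gap, which you yourself label the ``technical core''. The fallback you sketch cannot work: \eqref{someaa} constrains only the \emph{product} $f(\alpha+i\beta)(\alpha-i\beta+A)^{-1}$, so neither factor need decay separately; the dichotomy ``either the resolvent decays or $|f(\alpha_0+i\beta)|\to 0$'' is false (the resolvent norm may, for instance, oscillate in $\beta$), and even granted vertical decay of $f$ at the abscissa $\alpha_0$, Poisson/maximum-principle propagation yields decay only on the half-plane $\operatorname{Re}z\ge\alpha_0$, whereas your band $[1/m,m]$ and its rescalings contain abscissae below $\alpha_0$. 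The paper's resolution is to never separate the two factors: in the tail one estimates $|f_t(\alpha+i\beta)|\,|\langle(\alpha-i\beta+A)^{-3}x,y\rangle|\le\|f_t(\alpha+i\beta)(\alpha-i\beta+A)^{-1}\|\,\|(\alpha-i\beta+A)^{-1}x\|\,\|(\alpha+i\beta+A^*)^{-1}y\|$, and the needed uniformity --- that $\|f(\alpha+i\beta)(\alpha-i\beta+A)^{-1}\|\to0$ as $|\beta|\to\infty$ uniformly for $\alpha$ in any compact interval $[a,b]\subset(0,\infty)$ --- is obtained by applying Vitali's theorem to the uniformly bounded operator-valued holomorphic family $z\mapsto\overline{f(\bar z)}\,(z\pm in+A)^{-1}$, $n\ge1$, on a rectangle, the pointwise convergence to $0$ on the segment $\operatorname{Re}z=\alpha$ being exactly the hypothesis \eqref{someaa}. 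This Vitali-type propagation of the decay of the product (applied after rescaling, so that it covers the functions $f_t$ for $t$ in a compact interval) is the missing step; your integration by parts already puts the tail in the right form to exploit it, but without it the general case is not proved.
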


\begin{proof}
We may assume that $f(\infty) = 0$.  Let
\[
f_t(z):= f(tz), \qquad t >0, z \in \mathbb C_+.
\]
Let $\tau>0$ be fixed.  We need to prove that
\begin{equation}\label{LimitN}
\lim_{t \to \tau}\, \|f_t(A)-f_\tau(A)\|=0.
\end{equation}
We consider the approximating operators $\Kop_m$
for $m>1$, as in \eqref{Def11}.
From Lemma \ref{KB} and Proposition \ref{Pr26} we obtain
\[
\big\|f_t(A)-(\Kop_m f_t)(A)\big\|\le  \frac{8}{\pi} \left\{\int_0^{t/m}+\int_{tm}^\infty\right\}\, \sup_{\beta\in \R}\,|f'(\alpha+i\beta)|\,d\alpha \to 0
\]
as $m \to \infty$, uniformly for $t$ in compact subsets of $(0,\infty)$.   
Thus, it suffices to prove that for fixed $m>1$ and $\tau>0$,
\begin{equation}\label{ABN}
\lim_{t \to \tau}\,\big\|\big(\Kop_m(f_t-f_\tau)\big)(A)\big\|=0.
\end{equation}

Take $x,y  \in X$ with $\|x\|\le1$ and $\|y\|\le1$, and take $n\ge1$ and $t \in [\tau/2,2\tau]$.  Using integration by parts, which is justified by finiteness of the integral in \eqref{gsf} for $n=2$, we obtain
\begin{align*}
\lefteqn{\langle (\Kop_m(f_t-f_\tau))(A)x,y\rangle} \\
&=-\frac{2}{\pi}\int_{1/m}^m \alpha \int_\R \langle (\alpha-i\beta+A)^{-2}x,y\rangle
(f_t'(\alpha+i\beta)-f_\tau'(\alpha+i\beta))\,d\beta\,d\alpha\\
&=\frac{4}{\pi}\int_{1/m}^m \alpha \int_{-n}^n \langle (\alpha-i\beta+A)^{-3}x,y\rangle
(f_t(\alpha+i\beta)- f_\tau(\alpha+i\beta))\,d\beta\,d\alpha\\
&\null\hskip15pt +\frac{4}{\pi}\int_{1/m}^m \alpha \int_{|\beta|\ge n}\,\langle (\alpha-i\beta+A)^{-3}x,y\rangle
(f_t(\alpha+i\beta)- f_\tau(\alpha+i\beta))\,d\beta\,d\alpha \\
&=: I'_{mn} + J'_{mn}. 
\end{align*}
 We will treat $I'_{mn}$ and $J'_{mn}$ separately.

We start with $J'_{mn}$.  Note that both the resolvent $(z +A)^{-1}$ and $f(z)$ are bounded in each half-plane $\RR_\ep, \, \ep >0$.  Using \eqref{someaa} and applying Vitali's theorem to the family of holomorphic functions $\{ \overline{f(\overline{z}}) (z \pm in + A)^{-1}: n \ge 1\}$ 
on the rectangle $\{(\alpha +i\beta: \alpha \in (a,b), \beta \in (-1,1)\}$ we conclude  
that  
\[
\|f(\alpha +i\beta) (\alpha -i\beta +A)^{-1}\|\to 0, \qquad |\b| \to \infty,
\]
uniformly for $\alpha \in [a,b]$ for any $b>a>0$.  

Let $t_0:=\min(\tau/2,1)$ and $t_1:=\max(2\tau,1)$, and 
\[
c_{mn} :=  \sup \Big\{\|f(\alpha+i\beta)(\alpha-i\beta+A)^{-1}\|  : \alpha\in [t_0/m,t_1 m],\,|\beta|\ge t_0 n \Big\}.
\]
Applying the paragraph above with $a=t_0/m, \, b=t_1 m$, we conclude that $\lim_{n\to\infty} c_{mn} = 0$.
Using \eqref{gsf} for $A$ and $A^*$, we obtain
\begin{align*}
 |J'_{mn}| &\le \frac{8c_{mn}}{\pi}\int_{1/m}^m
\alpha\int_{R}\|(\alpha-i\beta+A)^{-1}x\|
\|(\alpha+i\beta+A^{*})^{-1}y\|\,d\beta \, d\alpha\\
&\le
\frac{4c_{mn}}{\pi}\int_{1/m}^m
\alpha \int_{\mathbb R} \left( \|(\alpha-i\beta+A)^{-1}x\|^2+
\|(\alpha+i\beta+A^{*})^{-1}y\|^2 \right) \,d\beta\, d\alpha
\\
&\le 8K_A^2 c_{mn} m
\end{align*}
%\begin{align*}
%|J'_{mn}| &\le \frac{8 c_{mn}}{\pi} \int_{1/m}^m \alpha \int_\R\,\left|\langle \alpha-i\beta+A)^{-3}x,y  %\rangle\right|\,d\beta\,d\alpha\\
%&\le \frac{8 c_{mn}}{\pi} \int_{-1/m}^m \frac{\pi K_A^3}{\a} \,d\a \\
%&= 16 K_A^3 c_{mn}  \log m \to 0, \qquad n\to\infty,
%\end{align*}
uniformly for $t \in [\tau/2,2\tau]$, $\|x\|\le1$ and $\|y\|\le1$.

Now it suffices to show that, for fixed $m,n$, $I'_{mn} \to 0$, uniformly for $\|x\|\le1$ and $\|y\| \le 1$,  as $t \to \tau$.  Using \eqref{hy} for $n=3$ and the bounded convergence theorem,
\begin{multline*}
|I'_{mn}| \le K_A \int_{1/m}^m \alpha^{-2} \int_{-n}^n 
|f_t(\alpha+i\beta)- f_\tau(\alpha+i\beta)|\,d\beta\,d\alpha\to 0, \qquad t \to \tau,
\end{multline*}
uniformly for $\|x\|\le1$ and $\|y\|\le1$.   So, we have (\ref{ABN}), and then (\ref{LimitN})
follows.
\end{proof}

\noindent
{\bf Open question.}
Let us assume that $A$ and $f$ are as in Theorem \ref{Norm1}, so that $f(\cdot A) \in C((0,\infty),L(X))$.    If \eqref{somea} holds and $f(z) = e^{-z}$, then $(f(tA))_{t\ge0}$ is an immediately norm-continuous $C_0$-semigroup, and it is well-known that the spectral mapping theorem then holds in the  form of \eqref{smt}.   We raise the question:  

If $f \in \Bes$ and $f(\cdot A) \in C((0,\infty),L(X))$, does \eqref{smt} hold for $f(tA), \, t>0$?

\subsection{Short-time behaviour}
It is a starting point of semigroup theory that one defines the generator $-A$ of a $C_0$-semigroup as the right-hand derivative (at $t=0$) of the semigroup on its natural domain. Below we show that, when $A$ admits the $\Bes$-calculus, one can recover $A$ as the right-hand $t$-derivative of $f(tA)x$ for any function $f$ from the domain of the generator of the shift semigroup on $\mathcal B$, with $f'(0)\ne0$.   Thus, in the defining relation for $A$ the exponential function can be replaced by a rich supply of functions from $\mathcal B $.

\begin{thm}\label{Gen2}
Let $-A$ be the generator of either a bounded $C_0$-semigroup on a Hilbert space $X$, or a (sectorially) bounded holomorphic $C_0$-semigroup on a Banach space $X$.
Let $f\in D(A_{\mathcal B})$, so 
$f\in \mathcal B$ and $f'\in \mathcal{B}$.    Then, for all $x \in D(A)$, 
\begin{equation} \label{ftd}
\lim_{t\to 0+} t^{-1} (f(tA)x-f(0)x) =f'(0)Ax.
\end{equation}
Conversely, if $f'(0)\ne0$ and 
\[
\lim_{t\to 0+} t^{-1} (f(tA)x-f(0)x)
\]
exists for some $x \in X$, then $x \in D(A)$ and \eqref{ftd} holds.    Thus, if $f'(0)=-1$, then $-A$ coincides with the right-hand derivative of $f(\cdot A)$ at $0$, in the strong operator topology, on its natural domain.
\end{thm}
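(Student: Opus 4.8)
The plan is to reduce everything to the known short-time behaviour of the semigroup $(e^{-tA})_{t\ge0}$ by writing $f$ in terms of the resolvent via the reproducing formula \eqref{bhp}, and then exploiting that $f'\in\Bes$ so that the "remainder" $g(z):=f(z)-f(0)-f'(0)z r_{?}(z)$-type correction actually lives in $\Bes$ with good decay at $0$. More concretely, since $f\in D(A_\Bes)$, we may normalise $f(0)=0$ (subtract a constant; this changes neither side of \eqref{ftd}) and consider the function
\begin{equation*}
h(z):=\frac{f(z)}{z}+f'(0)r_1(z),\qquad z\in\C_+,
\end{equation*}
or a similar algebraic rearrangement. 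The point is that $f(z)/z$ extends holomorphically across $0$ (because $f(0)=0$ and $f$ is holomorphic on a neighbourhood of $\overline{\C}_+$ in the relevant sense, with $f'\in\Bes$ controlling the difference quotient), and one checks using Lemma \ref{shifts01}(3) and the characterisation $D(A_\Bes)=\{f\in\Bes:f'\in\Bes\}$ that $z\mapsto f(z)/z$ lies in $\Bes$ and vanishes at $0$ in the strong sense needed. Then $t^{-1}f(tz) = t^{-1}\cdot tz\cdot\frac{f(tz)}{tz}$ rearranges, within the $\Bes$-calculus applied to $A$ on $D(A)$, to $f'(0)Ax$ plus a term that tends to $0$.

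The cleanest route for the forward direction: write, for $x\in D(A)$,
\begin{equation*}
t^{-1}f(tA)x = f'(0)\,A\,k(tA)x + r(tA)x,
\end{equation*}
where $k(z)=f(z)/(f'(0)z)$ is arranged so that $k\in\Bes$, $k(0)=1$, and $r\in\Bes_0$ with $\lim_{t\to0+}\|r(tz)\|$-type control; here I would use that $k(z)-1\in\Bes_0$ together with the strong continuity of rescalings (Theorem \ref{ftacont}) to get $k(tA)x\to x$ and $Ak(tA)x\to Ax$ for $x\in D(A)$ (the latter because $A k(tA) = k(tA) A$ on $D(A)$ by the homomorphism property applied to $k$ and $r_\lambda$, or directly via \eqref{fcdef}). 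The term $r(tA)x$ is handled by showing $\|r(tz)\|_\Bes\to0$ or, more carefully, that $r(tA)x\to0$; this uses that $r\in\Bes_0$ is in the range of $A_\Bes$ or can be written as $zs(z)$ with $s\in\Bes$, so that $r(tA)x = t\,A\,s(tA)x\to 0$ since $\|As(tA)\|$ stays bounded (by $\gamma_A\|s\|$-type estimates, uniform in $t$ by Lemma \ref{shifts01}(7)) — wait, that gives $r(tA)x\to0$ only with an extra factor; I would instead arrange $r$ so that $r(0)=0$ and $r\in D(A_\Bes)$, giving $t^{-1}r(tA)x\to r'(0)\cdot(\text{something})$ that cancels, or simply choose the decomposition so that the remainder is $o(1)$ by strong continuity at $t=0$.

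For the converse, suppose $f'(0)\ne0$ and $L:=\lim_{t\to0+}t^{-1}(f(tA)x-f(0)x)$ exists. The idea is to "invert" $f$ near $0$: since $f'(0)\ne0$, apply a resolvent $(\lambda+\cdot)^{-1}$ or compose with an auxiliary $\Bes$-function $\psi$ with $\psi(f'(0)z)\sim z$ near $0$. Concretely, test against $(1+A)^{-1}$: the operator $(1+A)^{-1}$ maps $X$ into $D(A)$, the forward direction applies to $(1+A)^{-1}x\in D(A)$, and $f(tA)$ commutes with $(1+A)^{-1}$, so
\begin{equation*}
(1+A)^{-1}L = \lim_{t\to0+}t^{-1}\bigl(f(tA)(1+A)^{-1}x-f(0)(1+A)^{-1}x\bigr) = f'(0)A(1+A)^{-1}x.
\end{equation*}
Hence $A(1+A)^{-1}x = f'(0)^{-1}(1+A)^{-1}L\in\operatorname{ran}(1+A)^{-1}=D(A)$, which forces $x=(1+A)^{-1}((1+A)x)\in D(A)$ after noting $A(1+A)^{-1}x\in D(A)$ implies $(1+A)^{-1}x\in D(A^2)$ hence $x\in D(A)$; then $(1+A)^{-1}(L-f'(0)Ax)=0$, so $L=f'(0)Ax$ by injectivity of $(1+A)^{-1}$. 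The last sentence of the theorem (with $f'(0)=-1$) is then immediate.

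I expect the \textbf{main obstacle} to be the forward direction's remainder estimate: showing rigorously that the decomposition $f(z)=f(0)+f'(0)z\cdot(\text{unit-at-}0)+(\text{error in }\Bes_0\text{ with }o(t)\text{ behaviour})$ can be made with the error genuinely negligible after applying the $\Bes$-calculus to $A$ and evaluating at $x\in D(A)$. The delicate points are (i) verifying $f(z)/z\in\Bes$ and its $\Bes_0$-correction lies in $D(A_\Bes)$, which needs the precise description of $D(A_\Bes)$ from Lemma \ref{shifts01}(3) and a careful argument that dividing by $z$ does not destroy the Besov norm near $0$ (using $f(0)=0$, $f'\in\Bes$, and an integration of $f'$ from $0$); and (ii) converting the uniform boundedness of $\{As(tA):t>0\}$ (via $\gamma_{tA}=\gamma_A$) plus strong continuity from Theorem \ref{ftacont} into the statement $t^{-1}(\text{error})(tA)x\to0$. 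These are all "soft" once set up correctly, but choosing the decomposition so that every term is manifestly controlled is where the real care lies; the converse, by contrast, is a short commutation-and-injectivity argument.
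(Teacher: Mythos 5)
Your plan is viable and genuinely different from the paper's proof, but as written the forward direction hinges on a claim you flag as the main obstacle and never establish: that $f(0)=0$ and $f'\in\Bes$ force $z\mapsto f(z)/z\in\Bes$. This is in fact true, and the proof is exactly the ``integration of $f'$ from $0$'' you gesture at: for $z\in\C_+$ the segment $(0,z]$ lies in $\C_+$, so $f(z)=z\int_0^1 f'(sz)\,ds$, hence $g(z):=f(z)/z=\int_0^1 f'(sz)\,ds$ satisfies $\|g\|_\infty\le\|f'\|_\infty$ and $g(0)=f'(0)$ (uniform continuity of $f'$ on $\overline\C_+$); moreover, with $\phi(\alpha):=\sup_{\beta}|f''(\alpha+i\beta)|$ one has $\sup_{\beta}|g'(\alpha+i\beta)|\le\int_0^1 s\,\phi(s\alpha)\,ds$, so Tonelli and the substitution $u=s\alpha$ give $\|g\|_{\Bq}\le\int_0^1 s\cdot s^{-1}\|f'\|_{\Bq}\,ds=\|f'\|_{\Bq}$. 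Once this lemma is in place, your $k$/$r$ decomposition and the muddled remainder discussion are unnecessary: after subtracting $f(0)$, use the functional identity $f(t\cdot)\,r_\lambda=t\,(1-\lambda r_\lambda)\,g(t\cdot)$ and the homomorphism property to get $f(tA)x=t\,g(tA)Ax$ for $x\in D(A)$, whence $t^{-1}(f(tA)x-f(0)x)=g(tA)Ax\to g(0)Ax=f'(0)Ax$ by the strong continuity at $t=0$ in Theorem \ref{ftacont}. Your converse (commute with $(1+A)^{-1}$, apply the forward direction to $(1+A)^{-1}x$, then use $\operatorname{ran}(1+A)^{-1}=D(A)$ and injectivity of $(1+A)^{-1}$) is complete and correct, and simpler than what the paper does.

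For comparison, the paper never divides by $z$: it applies the reproducing formula to $f'$, integrates the calculus formula \eqref{fcdef} by parts so that the difference quotient is expressed through $f''$ and $(\alpha-i\beta+tA)^{-1}Ax$, uses the resolvent estimates \eqref{cses} (Hilbert case) and \eqref{stes} (sectorial case) to obtain the uniform bound $\|t^{-1}(f(tA)x-f(0)x)\|\le 2C_A\|f'\|_\Bes\|Ax\|$ on $D(A)$, proves the limit first for $x\in D(A^2)$, and passes to $D(A)$ by a core argument; for the converse it uses the averages $U_n=n\int_0^{1/n}e^{-tA}\,dt$ and the closedness of $A$ rather than resolvent injectivity. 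Your route, once the division-by-$z$ lemma is supplied, is shorter and delegates the hard estimates to Theorem \ref{ftacont}; the paper's route avoids needing $f/z\in\Bes$ altogether and yields the quantitative bound on the difference quotients as a by-product. So: right ideas and a sound converse, but the decisive lemma in the forward direction is left unproved and the proposed decomposition there is not yet in a state one could check line by line.
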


\begin{proof}
In both cases, $A$ admits the $\Bes$-calculus.  We will use notation corresponding to the duality between $X$ and $X^*$, but in the Hilbert space case, $X^*$ can be identified with $X$ and the duality is the inner product.

Since $f'\in \mathcal{B}$ and $f'(\infty)=0$, the reproducing formula \eqref{bhp} (see also Theorem \ref{rep}) gives
\begin{equation} \label{fp0}
f'(0)=-\frac{2}{\pi}\int_0^\infty \alpha\int_\R \frac{f''(\alpha+i\beta)}{(\alpha-i\beta)^2}\,d\b\,d\alpha.
\end{equation}

Define a family $(F_t)_{t >0}\subset L(X)$ as
\[
F_tx:=t^{-1}(f(tA)x-f(0)x),\qquad t>0,\quad x\in X.
\]
If $x\in D(A)$ and $x^*\in X^*$, then
\begin{align} \label{ftxy}
&\langle F_t x,x^*\rangle  \\
&=-\frac{2}{\pi t}\int_0^\infty \alpha\int_\R
\left( \langle(\alpha-i\beta+tA)^{-2}x,x^*\rangle- \frac{\langle x,x^*\rangle}{(\alpha-i\beta)^2}\right)
f'(\alpha+i\beta)\,d\beta\,d\alpha \nonumber\\
&=\frac{2}{\pi  t}\int_0^\infty \alpha\int_\R
\left( \langle(\alpha-i\beta+tA)^{-1}x,x^*\rangle- \frac{\langle x,x^*\rangle}{\alpha-i\beta} \right)
f''(\alpha+i\beta)\,d\beta\,d\alpha  \nonumber\\
&=-\frac{2}{\pi}\int_0^\infty \alpha\int_\R
\frac{\langle(\alpha-i\beta+tA)^{-1}Ax,x^*\rangle}{\alpha-i\beta} \,
f''(\alpha+i\beta)\,d\beta\,d\alpha. \nonumber
\end{align}
Here the integration by parts is justified, because
\[
\lim_{|\b|\to\infty} \left|(\alpha-i\beta)^{-1}\langle(\alpha-i\beta+tA)^{-1}Ax,x^*\rangle
f'(\alpha+i\beta)\right| = 0,
\]
and the final double integral is absolutely convergent.  Indeed, using \eqref{stes} (or \eqref{cses} in the Hilbert space case),  we  obtain
\begin{align*}
|\langle F_tx,x^*\rangle|
&\le
\frac{2}{\pi}\|x^*\|\int_0^\infty \alpha
\int_\R
\frac{\|(\alpha-i\beta+tA)^{-1}Ax\|}{|\alpha-i\beta|} |f''(\alpha+i\beta)|\,d\beta\,d\alpha\\
&\le \frac{2}{\pi}\|x^*\|\int_0^\infty \alpha \sup_{s\in \R}\, |f''(\alpha+is)|\int_\R
\frac{\|(\alpha-i\beta+tA)^{-1}Ax\|}{|\alpha-i\beta|} \,d\beta\,d\alpha \\
&\le \frac{2}{\pi}\|x^*\|\int_0^\infty \alpha \sup_{s\in \R}\, |f''(\alpha+is)| \, \frac{\pi C_A}{\a} \|Ax\| \,d\a  \\
&\le  2 C_A \|f'\|_\Bes  \|Ax\| \,\|x^*\|,
\end{align*}
where $C_A = M_A$ (or  $K_A$ in the Hilbert space case).   Hence 
\begin{equation}\label{M121}
\|F_tx\|\le 2 C_A \|f'\|_\Bes  \|Ax\|,   \qquad t>0,\quad x\in D(A).
\end{equation}

For $x\in D(A^2)$ and $x^* \in X^*$, \eqref{fp0} and \eqref{ftxy} give
\[
\langle F_tx-f'(0)Ax,x^*\rangle
=-\frac{2}{\pi}\int_0^\infty \alpha \int_\R
K_t(\alpha,\beta;x,x^*)f''(\alpha+i\beta)\,d\beta\,d\alpha,
\]
where
\begin{align*}
K_t(\alpha,\beta;x,x^*)
&= (\alpha-i\beta)^{-1}\langle (\alpha-i\beta+tA)^{-1}Ax,x^*\rangle
-(\alpha-i\beta)^{-2}\langle Ax,x^*\rangle\\
&=-t(\alpha-i\beta)^{-2}\langle(\alpha-i\beta+tA)^{-1}A^2x,x^*\rangle.
\end{align*}
Using both these representations for $K_t$, and either \eqref{stes} or \eqref{cses} for $Ax$ and $A^2x$, we have, for every $\delta>0$, 
\begin{align*}
&|\langle F_tx-f'(0)Ax,x^*\rangle|\\
&\le \frac{2}{\pi}\|x^*\|\int_0^\delta \alpha \int_\R \left(\frac{\|(\alpha-i\beta+tA)^{-1}Ax\|}{ |\alpha-i\beta|} 
+ \frac{\|Ax\|}{|\alpha-i\beta|^{2}}\right) \!|f''(\alpha+i\beta)|\,d\beta\,d\alpha\\
& \null\hskip20pt+\frac{2t}{\pi}\|x^*\|\int_\delta^\infty \alpha \int_\R
 \frac{\|(\alpha-i\beta+tA)^{-1}A^2x\|}{|\alpha-i\beta|^{2}}\, |f''(\alpha+i\beta)|\,d\beta\,d\alpha\\
&\le \frac{2}{\pi}\|x^*\|\int_0^\delta \alpha \sup_{s\in \R}\,|f''(\alpha+is)|
\int_\R \frac{\|(\a - i\b+tA)^{-1}Ax\|}{|\alpha-i\beta|}  \,d\beta \, d\a\\
& \null\hskip20pt+\frac{2}{\pi}\|Ax\|\|x^*\|\int_0^\delta \alpha \sup_{s\in \R}\,|f''(\alpha+is)|\int_\R
|\alpha-i\beta|^{-2}
\,d\beta\,d\alpha\\
& \null\hskip20pt +\frac{2t}{\pi}\|x^*\|\int_\delta^\infty  \sup_{s\in \R}\,|f''(\alpha+is)|
\int_\R \frac{\|(\alpha-i\beta+tA)^{-1}A^2x\|}{|\alpha-i\beta|} \,d\beta\,d\alpha\\
&\le \left(2 C_A + 2\right) \|Ax\|\|x^*\| \int_0^\delta \sup_{s\in \R}\,|f''(\alpha+is)|\,d\alpha\\
& \null\hskip20pt + 2 C_At\|A^2x\|\|x^*\| \int_\delta^\infty \alpha^{-1} \sup_{s\in \R}\,|f''(\alpha+is)|
\,d\alpha.
\end{align*}
Letting first $t\to 0+$ and then $\delta\to 0+$, we obtain that
\begin{equation}\label{F111}
\lim_{t\to 0+}\,\|F_tx-f'(0)Ax\|=0,\qquad x\in D(A^2).
\end{equation}
Then, by (\ref{M121}) and (\ref{F111}), since $D(A^2)$ is a core for  $A$, 
\begin{equation}\label{F1110}
\lim_{t\to 0+}\,\|F_tx-f'(0)Ax\|=0,\qquad x\in D(A).
\end{equation}

For the converse result, we may assume that $f'(0)=1$.  Define the operator $B$ by 
\[
D(B):=\big\{x\in X:\,\lim_{t\to 0+}\,F_tx \, \, \text{exists}\big\},\qquad Bx:=\lim_{t\to 0+}\,F_tx.
\]
Let 
\[
U_n x = n \int_0^{1/n} e^{-tA}x \, dt,  \qquad  n \in \N, \, x\in X,
\]
and note that $U_n(X) \subset D(A)$.   From the result already proved,  $U_n x \in D(B)$ and $AU_nx = BU_nx$.  If $x \in D(B)$, then 
$F_t U_n x = U_n F_t x \to U_n Bx$ as $t\to0+$,
so $B U_n x = U_n Bx$.   As $n \to \infty$, $U_nx \to x$ and $AU_n x \to Bx$.  Since $A$ is closed, $x \in D(A)$ and $Ax = Bx$ as required.
\end{proof}

\subsection{Long-time behaviour}
In this section we bring  the characterisation of exponential stability for operator semigroups
on Hilbert spaces into the framework of the $\Bes$-calculus, and we derive the famous Gearhart--Pr\"uss theorem
as an easy corollary of our more general result.

\begin{thm}\label{ExSt11}
Let $A$ be a linear operator on a Banach space $X$ such that $\sigma(A) \subset \overline{\C}_+$ and
\begin{equation}\label{banach}
\sup_{\alpha >0}\, (\alpha+1)\int_\R
|\langle (\alpha +i\beta+A)^{-2}x,x^{*}\rangle |\, d\beta <\infty 
\end{equation}
for all  $x\in X$ and $x^{*}\in X^{*}$.  
Then, for every $f\in \mathcal{B}_0$, 
\begin{equation} \label{fra}
\lim_{t\to\infty}\,\|f(tA)\|=0.
\end{equation}
Consequently, there exist constants $M$ and $\omega>0$ such that
\begin{equation} \label{exr}
\|e^{-tA}\| \le Me^{-\omega t}, \qquad t\ge0.
\end{equation}
\end{thm}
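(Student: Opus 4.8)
The plan is as follows. First I would observe that \eqref{banach} is a strengthening of \eqref{8.1}: since $\alpha\le\alpha+1$ for all $\alpha>0$, finiteness of the supremum in \eqref{banach} forces finiteness of the one in \eqref{8.1}. Combined with $\sigma(A)\subset\overline{\C}_+$, this means $A$ admits the $\Bes$-calculus, so by Remark \ref{Bsg} $-A$ generates a bounded $C_0$-semigroup $(e^{-tA})_{t\ge0}$ with $K_A:=\sup_{t\ge0}\|e^{-tA}\|<\infty$. By the Closed Graph Theorem, \eqref{banach} yields a constant $C$ with
\[
(\alpha+1)\int_\R |\langle(\alpha+i\beta+A)^{-2}x,x^*\rangle|\,d\beta \le C\,\|x\|\,\|x^*\|, \qquad \alpha>0,\ x\in X,\ x^*\in X^*.
\]

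Next, fix $f\in\mathcal{B}_0$ and $t>0$, and apply the defining formula \eqref{fcdef} to the rescaled function $f_t(z):=f(tz)$, which again lies in $\mathcal{B}_0$ by Lemma \ref{shifts01}(7), with $f_t(\infty)=f(\infty)=0$ and $f_t'(z)=tf'(tz)$. This gives
\[
\langle f(tA)x,x^*\rangle = -\frac{2}{\pi}\int_0^\infty \alpha \int_\R \langle(\alpha-i\beta+A)^{-2}x,x^*\rangle\,tf'(t\alpha+it\beta)\,d\beta\,d\alpha.
\]
Bounding the inner integral by $C\|x\|\,\|x^*\|/(\alpha+1)$ and performing the change of variables $\alpha\mapsto\alpha/t$, $\beta\mapsto\beta/t$ --- the step in which the ``$+1$'' in the weight of \eqref{banach} turns into the decay-producing ``$+t$'' --- I obtain
\[
\|f(tA)\| \le \frac{2C}{\pi}\int_0^\infty \frac{\alpha}{\alpha+t}\,\sup_{\beta\in\R}|f'(\alpha+i\beta)|\,d\alpha.
\]
Since $\alpha\mapsto\sup_{\beta\in\R}|f'(\alpha+i\beta)|$ belongs to $L^1(0,\infty)$ (with integral $\|f\|_{\mathcal{B}_0}$) and $\alpha/(\alpha+t)\le 1$ tends to $0$ pointwise as $t\to\infty$, the dominated convergence theorem gives $\lim_{t\to\infty}\|f(tA)\|=0$, which is \eqref{fra}.

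For the stability estimate \eqref{exr}, I would apply \eqref{fra} with $f=e_1$: indeed $e_1(\infty)=0$ and $\int_0^\infty\sup_{\beta\in\R}|e_1'(\alpha+i\beta)|\,d\alpha=\int_0^\infty e^{-\alpha}\,d\alpha=1$, so $e_1\in\mathcal{B}_0$. Since $e_1(tz)=e_t(z)$ and the $\Bes$-calculus extends the Hille--Phillips calculus (Theorem \ref{homomor_intro} and Remark \ref{Bsg}), $e_1(tA)=e^{-tA}$, hence $\|e^{-tA}\|\to 0$ as $t\to\infty$. Choosing $t_0>0$ with $\|e^{-t_0A}\|\le\tfrac12$ and writing $t=nt_0+s$ with $n$ a non-negative integer and $0\le s<t_0$, the semigroup law gives $\|e^{-tA}\|\le K_A\,2^{-n}\le 2K_A e^{-\omega t}$ with $\omega:=t_0^{-1}\log 2>0$, which is \eqref{exr} with $M:=2K_A$.

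The only substantive point is the scaling identity behind the third display: the dilation $z\mapsto z/t$ built into the $\Bes$-calculus for $tA$ turns the inhomogeneous weight $(\alpha+1)^{-1}$ coming from \eqref{banach} into $(\alpha+t)^{-1}$, and this extra ``$t$'' is precisely what forces $\|f(tA)\|\to0$; everything else --- the passage from \eqref{8.1} to the calculus, the dominated convergence argument, and the elementary bootstrap from $\|e^{-t_0A}\|<1$ to exponential decay --- is routine. I anticipate no real obstacle beyond keeping the change-of-variables bookkeeping straight and recording carefully that $e_1(tA)$ coincides with the semigroup operator $e^{-tA}$.
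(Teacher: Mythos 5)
Your proposal is correct and follows essentially the same route as the paper: the Closed Graph Theorem bound, the formula \eqref{fcdef} applied to $f(tz)$, the estimate $\|f(tA)\|\le \frac{2C}{\pi}\int_0^\infty \frac{s}{s+t}\sup_{\beta}|f'(s+i\beta)|\,ds$ after rescaling, a convergence argument (the paper uses monotone rather than dominated convergence, an immaterial difference), and finally $f(z)=e^{-z}$ together with standard semigroup theory for \eqref{exr}. The extra details you supply (that \eqref{banach} implies \eqref{8.1} and hence the calculus and a bounded semigroup, and the explicit bootstrap to exponential decay) are just fillings-in of steps the paper leaves implicit.
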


\begin{proof}
By the closed graph theorem, there exists $C>0$ such that
\[
(\alpha+1)\int_\R |\langle (\alpha +i\beta+A)^{-2}x,x^{*}\rangle |\, d\beta \le C\|x\|\|x^{*}\|,\qquad \alpha >0,
\]
for all  $x\in X$ and $x^{*}\in X^{*}$.
By \eqref{fcdef} for $f(tz)$, we have
\[
\langle f(tA)x,x^{*}\rangle
=-\frac{2}{\pi}\int_0^\infty \alpha \int_\R
\langle (\a-i\b+A)^{-2}x,x^{*}\rangle t
f'(t(\alpha+i\beta))\,d\beta\,d\alpha,
\]
and then
\begin{align*}
|\langle f(tA)x,x^{*}\rangle|&\le
\frac{2C}{\pi}\|x\| \|x^{*}\|\int_0^\infty \frac{\alpha}{\alpha+1}t\sup_{\beta\in \R}\,|f'(t(\alpha+i\beta))|\,d\alpha\\
&=\frac{2C}{\pi}\|x\|\|x^{*}\|\int_0^\infty \frac{s}{s+t}\sup_{\beta\in \R}\,|f'(s+i\beta)|\,ds.
\end{align*}
Since
\[
\lim_{t\to \infty}\,\int_0^\infty \frac{s}{s+t}\sup_{\beta\in \R}\,|f'(s+i\beta)|\,ds=0,
\]
by the monotone convergence theorem, the first statement follows.

For the second statement, take $f(z) = e^{-z}$.  Then $\lim_{t\to\infty} \|e^{-tA}\| = 0$, and \eqref{exr} follows by basic semigroup theory.
\end{proof}

We now give two corollaries of the result above.  The first one is a $\mathcal B$-calculus version of the  Gearhart--Pr\"uss theorem which characterises exponential stability of operator semigroups on Hilbert spaces in resolvent terms, see the final statement of the following.

\begin{cor}
Let $-A$ be the generator of a bounded $C_0$-semigroup on a Hilbert space $X$
such that $(\cdot+A)^{-1}\in H^\infty(\mathbb C_+, L(X))$.
Then for every $f\in \mathcal{B}_0$,
\[
\lim_{t\to\infty}\,\|f(tA)\|=0.
\]
In particular, there exist constants $M$ and $\omega>0$ such that
\begin{equation*}
\|e^{-tA}\| \le Me^{-\omega t}, \qquad t\ge0.
\end{equation*}
\end{cor}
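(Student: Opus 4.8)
The strategy is to verify that the hypothesis \eqref{banach} of Theorem \ref{ExSt11} holds, after which both conclusions of the corollary follow at once from that theorem. Write $N:=\sup_{z\in\C_+}\|(z+A)^{-1}\|<\infty$ for the $H^\infty$-bound on the resolvent and $K_A:=\sup_{t\ge0}\|e^{-tA}\|$. Since $X$ is reflexive, $-A^*$ generates the (bounded) adjoint semigroup with $\sup_{t\ge0}\|e^{-tA^*}\|=K_A$ and $\sup_{z\in\C_+}\|(z+A^*)^{-1}\|=N$, so \eqref{plan} is available for both $A$ and $A^*$.

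For $\a\ge1$ only boundedness of the semigroup is used: by \eqref{gsf} with $n=2$ (where $K_A^3$ may be replaced by $K_A^2$),
\[
(\a+1)\int_\R\bigl|\langle(\a+i\b+A)^{-2}x,x^*\rangle\bigr|\,d\b\le\frac{(\a+1)\pi K_A^2}{\a}\,\|x\|\,\|x^*\|\le2\pi K_A^2\,\|x\|\,\|x^*\|.
\]
For $0<\a<1$ the idea is to transfer the estimate to the single vertical line $\Re z=1$, on which \eqref{plan} gives a constant independent of the line. From the resolvent identity,
\[
(\a+i\b+A)^{-1}=\bigl(I+(1-\a)(\a+i\b+A)^{-1}\bigr)(1+i\b+A)^{-1},
\]
and since $\|(\a+i\b+A)^{-1}\|\le N$ and $0<1-\a<1$, one obtains $\|(\a+i\b+A)^{-1}x\|\le(1+N)\|(1+i\b+A)^{-1}x\|$, and likewise $\|(\a-i\b+A^*)^{-1}x^*\|\le(1+N)\|(1-i\b+A^*)^{-1}x^*\|$. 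On a Hilbert space $\langle(\a+i\b+A)^{-2}x,x^*\rangle=\langle(\a+i\b+A)^{-1}x,(\a-i\b+A^*)^{-1}x^*\rangle$, so the Cauchy--Schwarz inequality, first in the pairing and then in the $\b$-integral, combined with \eqref{plan} on the line $\Re z=1$ for $A$ and for $A^*$, gives
\begin{align*}
\int_\R\bigl|\langle(\a+i\b+A)^{-2}x,x^*\rangle\bigr|\,d\b
&\le(1+N)^2\int_\R\|(1+i\b+A)^{-1}x\|\,\|(1-i\b+A^*)^{-1}x^*\|\,d\b\\
&\le(1+N)^2\Bigl(\int_\R\|(1+i\b+A)^{-1}x\|^2\,d\b\Bigr)^{1/2}\Bigl(\int_\R\|(1-i\b+A^*)^{-1}x^*\|^2\,d\b\Bigr)^{1/2}\\
&\le(1+N)^2\pi K_A^2\,\|x\|\,\|x^*\|,
\end{align*}
so $(\a+1)\int_\R\bigl|\langle(\a+i\b+A)^{-2}x,x^*\rangle\bigr|\,d\b\le2(1+N)^2\pi K_A^2\|x\|\|x^*\|$ for $0<\a<1$. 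Together with the case $\a\ge1$ this establishes \eqref{banach}, and Theorem \ref{ExSt11} then delivers \eqref{fra} for every $f\in\Bq$ and, in particular, \eqref{exr}.

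The one real obstacle is the range $0<\a<1$: the naive Plancherel bound on the line $\Re z=\a$ has constant of order $\a^{-1}$, which the weight $\a+1$ absorbs only for $\a$ bounded away from $0$. The device above -- pushing the resolvent back to the fixed line $\Re z=1$ via the resolvent identity, which is precisely where the $H^\infty$-hypothesis enters -- makes the constant uniform in $\a$; everything else only uses \eqref{plan}, \eqref{gsf} and Theorem \ref{ExSt11}, all already available.
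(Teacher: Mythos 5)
Your proposal is correct and follows essentially the same route as the paper: both verify \eqref{banach} by invoking \eqref{gsf} for $\a$ bounded away from $0$ and, for $0<\a<1$, by using the resolvent identity together with the $H^\infty$-bound to dominate $\|(\a+i\b+A)^{-1}x\|$ by a constant times $\|(1+i\b+A)^{-1}x\|$ (and likewise for $A^*$), after which Plancherel and Cauchy--Schwarz give a constant uniform in $\a$ and Theorem \ref{ExSt11} yields both conclusions. The only difference is presentational: you make the constants and the split at $\a=1$ explicit, where the paper cites the standard Gearhart--Pr\"uss argument more briefly.
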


\begin{proof}  
For the first statement, it suffices to establish that  \eqref{banach} holds, and this is a standard argument in proofs of the Gearhart-Pr\"uss theorem (see \cite[Theorem 5.2.1]{ABHN}, for example).  Note first that \eqref{plan} and \eqref{gsf} hold for $\a>0$.  By the resolvent identity,
\[
(\a+is+A)^{-1}x = (1+is+A)^{-1}x + (1-\a)(\a+is+A)^{-1}(1+is+A)^{-1}x
\]
for all $x \in X$.  The assumption on the resolvent of $A$ implies that there are constants $C_1$ and $C_2$ such that
\[
\|(\a+is+A)^{-1}x\| \le C_1\|(1+is+A)^{-1}x\|,
\]
and hence
\[
\int_\R \left\|(\a+i\b+A)^{-1}x\right\|^2 \,d\b \le C_2\|x\|^2, \qquad 0<\a<1, \, x \in X.
\]
Using also the same estimate for $A^*$, it follows that \eqref{gsf} holds for $0<\a<1$ and $n=2$, with $\pi K_A^2 \a^{-1}$ replaced by $C_2$.  Now \eqref{banach} follows.
\end{proof}

\begin{cor} \label{holex}
Let $-A$ be the generator of a bounded holomorphic $C_0$-semigroup on a Banach space, and let $f \in \mathcal B_0$ with $f(0)\neq 0$.   Then \eqref{fra} holds if and only if $0\not \in \sigma(A)$.
\end{cor}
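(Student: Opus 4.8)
The plan is to prove the two implications of Corollary~\ref{holex} separately. The reverse implication (\eqref{fra} $\Rightarrow$ $0\notin\sigma(A)$) is immediate from spectral inclusion, while the forward one ($0\notin\sigma(A)$ $\Rightarrow$ \eqref{fra}) is a reduction to Theorem~\ref{ExSt11}.

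For the reverse direction I would argue by contraposition: suppose $0\in\sigma(A)$. Since $-A$ generates a bounded holomorphic semigroup, $A$ admits the $\Bes$-calculus, and for each $t>0$ the rescaled function $f_t(z):=f(tz)$ again lies in $\Bes$, with $\|f_t\|_\Bes=\|f\|_\Bes$ by Lemma~\ref{shifts01}(7); moreover $f(tA)=f_t(A)$. Applying the spectral inclusion \eqref{smp_b} to $f_t$ gives $f_t(\sigma(A))\subset\sigma(f_t(A))=\sigma(f(tA))$. As every function in $\Bes$ extends continuously to $\overline{\C}_+$ and $0\in\sigma(A)$, this yields $f(0)=f_t(0)\in\sigma(f(tA))$, whence $\|f(tA)\|\ge|f(0)|>0$ for all $t>0$. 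Thus \eqref{fra} fails; this is precisely where the hypothesis $f(0)\ne0$ is used, and no further work is needed here.

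For the forward direction, assume $0\notin\sigma(A)$. Since $A$ is sectorial of some angle $\theta\in[0,\pi/2)$ (see \eqref{R01}, \eqref{R02}), its spectrum is a closed subset of $\overline{\Sigma_\theta}$ and, being bounded away from $0$, is contained in $\{\Re z\ge c\}$ for some $c>0$. A standard perturbation argument---the Neumann series for $(\lambda+A)^{-1}$ about $\lambda=0$, which converges for $|\lambda|<\|A^{-1}\|^{-1}$, together with the sectorial estimate $\|(\lambda+A)^{-1}\|\le M_{A,\theta'}/|\lambda|$ on $\Sigma_{\pi-\theta'}$ for $\theta'\in(\theta,\pi/2)$---then shows that $(\lambda+A)^{-1}$ extends to a function uniformly bounded, say by $M_\omega$, on a half-plane $\{\Re\lambda>-\omega\}$ with $\omega>0$. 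In particular $\|(\alpha+i\beta+A)^{-1}\|\le M_\omega$ for all $\alpha>0$, $\beta\in\R$, while $\|(\alpha+i\beta+A)^{-1}\|\le M_A/|\alpha+i\beta|$ on $\C_+$ by \eqref{R02}. Splitting the integral according to $|\beta|\le1$ and $|\beta|>1$ (using the first bound on the former, the second on the latter) gives, for $0<\alpha\le1$,
\[
\int_\R\|(\alpha+i\beta+A)^{-2}\|\,d\beta\le 2M_\omega^2+\frac{\pi M_A^2}{\alpha},
\]
while for $\alpha\ge1$ the second bound alone gives $\int_\R\|(\alpha+i\beta+A)^{-2}\|\,d\beta\le\pi M_A^2/\alpha$; in either case $(\alpha+1)\int_\R\|(\alpha+i\beta+A)^{-2}\|\,d\beta$ is bounded uniformly in $\alpha>0$. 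Hence \eqref{banach} holds (even in operator norm), and Theorem~\ref{ExSt11} gives $\lim_{t\to\infty}\|f(tA)\|=0$ for every $f\in\mathcal B_0$, in particular for the one at hand.

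The only substantive point is the resolvent estimate feeding into \eqref{banach}: one must bound $(\alpha+1)\int_\R\|(\alpha+i\beta+A)^{-2}\|\,d\beta$ uniformly both as $\alpha\to0+$, where the half-plane resolvent bound---and hence the assumption $0\notin\sigma(A)$---is indispensable, and as $\alpha\to\infty$, where the decay $\|(\lambda+A)^{-1}\|\le M_A/|\lambda|$ from \eqref{R02} supplies the extra factor $\alpha^{-1}$ that absorbs $\alpha+1$. The remaining ingredients---the spectral inclusion argument in the reverse direction and the passage from the operator-norm bound to \eqref{banach}---are routine.
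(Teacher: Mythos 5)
Your overall route is the same as the paper's: the reverse implication via the spectral inclusion \eqref{smp_b} applied to the rescaled functions (giving $\|f(tA)\|\ge |f(0)|>0$ for all $t>0$ when $0\in\sigma(A)$), and the forward implication by verifying \eqref{banach} in operator norm and invoking Theorem \ref{ExSt11}. The paper handles $\alpha\ge1$ exactly as you do, via the sectorial bound \eqref{R02} giving $\alpha\int_\R\|(\alpha+i\beta+A)^{-2}\|\,d\beta\le\pi M_A^2$, and for $0<\alpha\le1$ it also splits at $|\beta|=1$, bounding the central part by boundedness of the resolvent on the compact set $\{\alpha+i\beta:\alpha\in[0,1],\,|\beta|\le1\}$ (your uniform bound $M_\omega$ on a half-plane $\{\operatorname{Re}\lambda>-\omega\}$ is a slightly stronger substitute, obtained correctly, but more than is needed).

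There is, however, a slip in your small-$\alpha$ estimate which, as written, does not deliver \eqref{banach}: having split at $|\beta|=1$, you bound the tail $\int_{|\beta|>1}M_A^2/(\alpha^2+\beta^2)\,d\beta$ by $\pi M_A^2/\alpha$, i.e.\ by the full-line integral, and then assert that $(\alpha+1)\int_\R\|(\alpha+i\beta+A)^{-2}\|\,d\beta$ is uniformly bounded; but $(\alpha+1)\bigl(2M_\omega^2+\pi M_A^2/\alpha\bigr)\to\infty$ as $\alpha\to0+$, so the displayed inequality is too weak. The whole point of the split is that on $|\beta|>1$ one has $|\alpha+i\beta|\ge|\beta|\ge1$, so the tail is at most $\int_{|\beta|>1}M_A^2\beta^{-2}\,d\beta=2M_A^2$, independent of $\alpha$ (this is exactly the paper's estimate). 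With that one-line correction your bound for $0<\alpha\le1$ becomes $2M_\omega^2+2M_A^2$, uniform in $\alpha$, and \eqref{banach} follows, completing the proof.
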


\begin{proof}
Recall that if $-A$ generates a bounded holomorphic semigroup, then $\sigma (A) \subset \mathbb C_+\cup \{0\}$.

Assume that $0\not \in \sigma (A)$.  Then $i\mathbb R \cap \sigma(A)=\emptyset$ and
$(z+A)^{-1}$ is holomorphic near zero. 
From \eqref{R02} we have
\[
\sup_{\alpha >0} \alpha \int_\R \|(\alpha +i\beta+A)^{-2}\|\,d\beta \le \pi M_A^2,
\]
and
\begin{multline*}
\sup_{0 < \alpha \le 1} \int_\R \|(\alpha +i\beta+A)^{-2}\|\,d\beta \\ 
\le \int_{|\beta| \ge 1} \frac{M_A^2}{\b^2} \,d\beta 
+2 \sup_{\alpha \in [0,1], |\b|\le1} \|(\alpha +i\beta +A)^{-2}\|<\infty.
\end{multline*}
So, \eqref{banach} holds and hence  \eqref{fra} holds.

Now assume that $0 \in \sigma(A)$.   By the spectral inclusion given in \eqref{smp_b},
\[
\|f(tA)\|\ge \sup_{z \in \sigma (A)} |f(tz)|\ge |f(0)| > 0
\]
for every $t \ge 0$.   Hence \eqref{fra} does not hold.
\end{proof}

\subsection{Complex inversion  formula}

The complex inversion formula
\begin{equation}\label{inversion}
e^{-tA}x=\lim_{N\to\infty}\frac{1}{2\pi i}\int_{-N}^N e^{t(\sigma+i\b)}(\sigma + i\b +A)^{-1}x \,d\b, \qquad \sigma >0,
\end{equation}
 for (bounded) $C_0$-semigroups $(e^{-tA})_{t \ge 0}$ on Banach spaces is one of the basic tools in the theory of operator semgroups, in particular in the study of their asymptotics.   In general, the limit exists in the classical sense for $x \in D(A)$ and in the Ces\`aro sense for $x\in X$.   Further improvements require additional assumptions either on $X$ or on $A$.
It was discovered in \cite{Yao} that the formula converges for all $x \in X$ if $X$ is a Hilbert space, and this was later  extended to UMD spaces \cite{Dri} and to the more general framework of solution families of integral equations \cite{Haase_I}.   See also \cite[Section 3.12]{ABHN}.

Here we show that \eqref{inversion} has a counterpart for functions $f$ from the domain of the generator $A_\Bes$ of the shift semigroup on $\mathcal B$ (compare Theorem \ref{Gen2}) in terms of the boundary function $f^b$ of $f$.    We formulate the result for $f'(A)$ but the formula for $f'(tA)$ can be obtained by replacing $f(z)$ by $f(tz)$, or $A$ by $tA$.

\begin{prop}\label{F35}
Let $f\in \mathcal{B}$ be such that $f'\in \mathcal{B}$, 
and let  $A$ be an operator admitting the $\mathcal{B}$-calculus on a Banach space $X$.
Then for any $\sigma>0$, $x \in X$ and $x^* \in X^*$,
\begin{equation}\label{a}
\langle f'(A+\sigma)x,x^{*}\rangle=-\frac{1}{2\pi}\int_{\R}\,\langle (\sigma-i\beta+A)^{-2}x,x^{*}\rangle f^b(\beta)\,d\beta.
\end{equation}
Moreover,
\begin{equation}\label{b}
f'(A+\sigma)x=\lim_{N\to\infty}\frac{1}{2\pi}\int_{-N}^N\, (\sigma-i\beta+A)^{-1}x f^b(\b)\,d\beta.
\end{equation}
\end{prop}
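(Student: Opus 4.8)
The plan is to establish \eqref{a} first and then obtain \eqref{b} from it by an integration by parts in $\beta$.

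\emph{Proof of \eqref{a}.} First I would note that $A+\sigma$ again admits the $\Bes$-calculus: it is densely defined with $\sigma(A+\sigma)\subseteq\overline\C_+$, and since $(\alpha-i\beta+A+\sigma)^{-2}=((\alpha+\sigma)-i\beta+A)^{-2}$, condition \eqref{8.1} for $A$ yields \eqref{8.1} for $A+\sigma$. By Remark~\ref{Bsg}, $-A$ generates a bounded $C_0$-semigroup $(T(t))_{t\ge0}$, so $\langle(w+A)^{-n}x,x^*\rangle=\frac{1}{(n-1)!}\int_0^\infty t^{n-1}e^{-wt}\langle T(t)x,x^*\rangle\,dt$ for $\Re w>0$. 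Applying \eqref{fcdef} to $f'(A+\sigma)$ — legitimate since $f,f'\in\Bes$ and $(f')(\infty)=0$ — and then integrating by parts in $\beta$ (the boundary terms vanish because $\beta\mapsto\langle((\alpha+\sigma)-i\beta+A)^{-2}x,x^*\rangle$ is the Fourier transform of the $L^1(\R_+)$-function $t\mapsto te^{-(\alpha+\sigma)t}\langle T(t)x,x^*\rangle$, hence lies in $C_0(\R)$, while $f'\in H^\infty(\C_+)$), I would get
\begin{equation}\label{F35star}
\langle f'(A+\sigma)x,x^*\rangle=\frac{4}{\pi}\int_0^\infty\alpha\int_\R\langle((\alpha+\sigma)-i\beta+A)^{-3}x,x^*\rangle\,f'(\alpha+i\beta)\,d\beta\,d\alpha,
\end{equation}
the double integral being absolutely convergent by \eqref{8.2} (which bounds $(\alpha+\sigma)\int_\R|\langle((\alpha+\sigma)-i\beta+A)^{-2}x,x^*\rangle|\,d\beta$) together with $\int_0^\infty\sup_\beta|f'(\alpha+i\beta)|\,d\alpha<\infty$. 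Next I would start from the right-hand side of \eqref{a} and insert the reproducing formula \eqref{bhp} for $f$ evaluated at $z=i\beta$, i.e.\ $f^b(\beta)=f(\infty)-\frac2\pi\int_0^\infty\alpha'\int_\R f'(\alpha'+i\beta')(\alpha'+i(\beta-\beta'))^{-2}\,d\beta'\,d\alpha'$. The $f(\infty)$-term contributes a multiple of $\int_\R\langle(\sigma-i\beta+A)^{-2}x,x^*\rangle\,d\beta$, which vanishes: this integrand is the Fourier transform of $t\mapsto te^{-\sigma t}\langle T(t)x,x^*\rangle\in L^1(\R_+)$, with transform again in $L^1(\R)$ by \eqref{8.2}, so by Fourier inversion the integral equals $2\pi$ times the value of that function at $t=0$, namely $0$. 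Fubini (justified exactly as above) then leaves the inner integral $\int_\R\langle(\sigma-i\beta+A)^{-2}x,x^*\rangle(\alpha'+i(\beta-\beta'))^{-2}\,d\beta$, both factors of whose integrand are Fourier transforms of explicit $L^1$-functions — $t\mapsto te^{-\sigma t}\langle T(t)x,x^*\rangle$ and $s\mapsto se^{-(\alpha'-i\beta')s}$ — with transforms again in $L^1$; the multiplication formula and Fourier inversion evaluate it as $4\pi\langle((\sigma+\alpha')-i\beta'+A)^{-3}x,x^*\rangle$ (using $2(\lambda+A)^{-3}=\int_0^\infty t^2e^{-\lambda t}T(t)\,dt$). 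Substituting this back reproduces exactly the right-hand side of \eqref{F35star}, proving \eqref{a}.

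\emph{Proof of \eqref{b}.} Since $f'\in\Bes\subseteq H^\infty(\C_+)$, the boundary function $(f')^b$ exists and $(f^b)'=i(f')^b$; also $(\sigma-i\beta+A)^{-2}=-i\frac{d}{d\beta}(\sigma-i\beta+A)^{-1}$. Writing the absolutely convergent integral in \eqref{a} as $\lim_{N\to\infty}\int_{-N}^N$ and integrating by parts — the boundary terms $\langle(\sigma\mp iN+A)^{-1}x,x^*\rangle f^b(\pm N)$ tend to $0$ since $\beta\mapsto\langle(\sigma-i\beta+A)^{-1}x,x^*\rangle$ is the Fourier transform of $t\mapsto e^{-\sigma t}\langle T(t)x,x^*\rangle\in L^1(\R_+)$, hence in $C_0(\R)$, and $f^b$ is bounded — I obtain, for every $x^*\in X^*$,
\[
\langle f'(A+\sigma)x,x^*\rangle=\lim_{N\to\infty}\frac{1}{2\pi}\int_{-N}^N\langle(\sigma-i\beta+A)^{-1}x,x^*\rangle\,(f')^b(\beta)\,d\beta;
\]
that is, $y_N:=\frac1{2\pi}\int_{-N}^N(\sigma-i\beta+A)^{-1}x\,(f')^b(\beta)\,d\beta$ converges weakly to $f'(A+\sigma)x$. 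To promote this to convergence in the norm of $X$ I would first take $x\in D(A^2)$: the resolvent identity then gives $\|(\sigma-i\beta+A)^{-2}x\|=O(|\beta|^{-2})$, so the same integration by parts shows $y_N=-\frac1{2\pi}\int_{-N}^N(\sigma-i\beta+A)^{-2}x\,f^b(\beta)\,d\beta+o(1)$ with the $X$-valued integral now absolutely Bochner convergent over $\R$; by \eqref{a} its value is $f'(A+\sigma)x$, so $y_N\to f'(A+\sigma)x$ in $X$. The general case $x\in X$ would then be handled by approximation on the core $D(A^2)$.

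I expect the genuine difficulty to be this last step of \eqref{b}: the truncations $y_N$ are not uniformly bounded in $N$, and the boundary data $f^b$, $(f')^b$ need not decay, so passing from the weak convergence furnished by \eqref{a} to convergence in $X$ for arbitrary $x$ is delicate — it mirrors the classical complex inversion formula, where for general $x$ on a Banach space one has only Ces\`aro convergence while the ordinary limit holds on $D(A)$; in the Hilbert-space case it is cleaner, via Plancherel for $\beta\mapsto(\sigma-i\beta+A)^{-1}x\in L^2(\R;X)$. The only other place needing care is the Fourier bookkeeping in \eqref{a}: it is essential to route every resolvent power through its Laplace representation against $(T(t))$, rather than Fourier-transforming $f^b$ directly, so as to stay within $L^1$ at each step.
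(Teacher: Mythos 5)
Your proof of \eqref{a} is correct and takes a genuinely different route from the paper's. After the common first step (applying \eqref{fcdef} to $f'(A+\sigma)$ and integrating by parts in $\beta$ to reach the double integral of $\langle(\sigma+\alpha-i\beta+A)^{-3}x,x^*\rangle f'(\alpha+i\beta)$), the paper concludes in one stroke by Proposition \ref{Dd}, applied to $g(z)=\langle(\sigma+z+A)^{-2}x,x^*\rangle$, which lies in $H^1(\C_+)$ precisely because of \eqref{8.2}; you instead insert the reproducing formula \eqref{bhp} at $z=i\beta$ into the right-hand side and evaluate by Laplace--Fourier bookkeeping, in effect re-proving the special case of Proposition \ref{Dd} that is needed (your constants check out, and your disposal of the $f(\infty)$-term is fine). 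The one soft spot is the absolute convergence of the integral involving the third resolvent power: \eqref{8.2} controls the second power only, and one more line is needed, e.g.\ $\int_\R|\langle(\sigma+\alpha-i\beta+A)^{-3}x,x^*\rangle|\,d\beta\le \|g\|_{H^1}/(4\alpha)$ from the Cauchy formula for $g'$, which together with $\|f\|_{\Bq}<\infty$ gives the claim; alternatively one can simply quote Proposition \ref{Dd}, whose hypotheses encode exactly this point.

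The genuine gap is in \eqref{b}, and it is exactly the step you flag at the end: you obtain norm convergence of the truncated integrals only for $x\in D(A^2)$, and the approximation argument for general $x$ needs a bound on the truncation operators uniform in $N$, which you then declare unavailable, suggesting that on a Banach space one might only get Ces\`aro convergence. That diagnosis misses the point of the hypothesis: the uniform bound is available, and it comes straight from \eqref{8.2}, since $\int_\R|\langle(\sigma-i\beta+A)^{-2}x,x^*\rangle|\,d\beta\le \tfrac{\pi}{2}\gamma_A\sigma^{-1}\|x\|\,\|x^*\|$ and $f^b$ is bounded, so the operators $\int_{-N}^N(\sigma-i\beta+A)^{-2}f^b(\beta)\,d\beta$ are bounded uniformly in $N$ (the paper deduces this from \eqref{a} and the uniform boundedness principle). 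This is why the paper orders the argument the other way round: it first proves norm convergence of the \emph{second-power} truncations for all $x\in X$ --- uniform boundedness plus absolute convergence on the dense subspace $\operatorname{ran}\bigl((\sigma+A)^{-2}\bigr)=D(A^2)$ via the resolvent identity, the limit being identified through \eqref{a} --- and only afterwards converts to the first-power form by a vector-valued integration by parts, the boundary terms vanishing by the Riemann--Lebesgue lemma applied to the Fourier transform of $e^{-\sigma t}e^{-tA}x\in L^1(\R_+;X)$ (Remark \ref{Bsg}); no further density argument is then needed. Your order of operations can be repaired by the same observation (integrating your truncations by parts back into the second-power form yields $\sup_N\|y_N\|\le C\|x\|$), but as written the case of general $x\in X$ is not proved. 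Finally, the kernel you arrive at, $(f')^b$, is the one that the concluding integration by parts genuinely produces and the one consistent with Corollary \ref{41Cor} and \eqref{inversion} (test $f=e_t$); the $f^b$ in the printed formula \eqref{b} appears to be a misprint, so that discrepancy is not held against you.
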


\begin{proof}
Using the $\mathcal{B}$-calculus, integration by parts and Proposition \ref{Dd}, we have
\begin{align*}
\langle f'(A+\sigma)x,x^{*}\rangle &=-\frac{2}{\pi}\int_0^\infty \alpha
\int_{\R}\,\langle (\sigma+\alpha-i\beta+A^{-2})x,x^{*}\rangle f''(\alpha+i\beta)\,d\beta\,d\alpha \\
&= \frac{4}{\pi}\int_0^\infty \alpha
\int_{\R}\,\langle (\sigma+\alpha-i\beta+A)^{-3}x,x^{*}\rangle f'(\alpha+i\beta)\,d\beta\,d\alpha\\
&= -\frac{1}{2\pi}\int_{\R}\,\langle (\sigma-i\beta+A)^{-2}x,x^{*}\rangle f^b(\b)\,d\beta.
\end{align*}
Thus \eqref{a} is proved  and the uniform boundedness principle implies that
\[
\sup_{N\in \mathbb N} \Big\|\int_{-N}^{N} (\sigma-i\beta+A)^{-2} f^b(\beta)\,d\beta \Big\|<\infty.
\]
By the resolvent identity,  
\[
\lim_{N \to \infty} \int_{-N}^{N}  (\sigma-i\beta+A)^{-2} (\sigma+A)^{-2}x f^b(\beta)\,d\beta
\]
exists for every $x \in X$.   Since $D(A^2)$ is dense  in $X$, it follows that 
\[
\lim_{N \to \infty} \int_{-N}^{N} \, (\sigma-i\beta+A)^{-2}x f^b(\beta)\,d\beta
\]
exists for all $x \in X$,  and
in view of \eqref{a},  
\[
f'(A+\sigma)x=-\lim_{N\to\infty}\frac{1}{2\pi}\int_{-N}^N\, (\sigma-i\beta+A)^{-2}x \,f^b(\beta)\,d\beta
\]
for all $x \in X$.   By Remark \ref{Bsg}, $-A$ generates a bounded $C_0$-semigroup on $X$.  Thus we obtain \eqref{b} by a further integration by parts, noting that $(\sigma+i\b+A)^{-1}x$ is the Fourier transform of $e^{-\sigma t} e^{-tA}x$ and applying the Riemann-Lebesgue lemma. 
\end{proof}

Setting $f(z)=e^{-tz}$ in Proposition \ref{F35}, we obtain the following statement which is an inversion formula covering both Hilbert space semigroups and bounded holomorphic semigroups.

\begin{cor}\label{41Cor}
Let  $A$ be an operator admitting the $\mathcal{B}$-calculus on a Banach space $X$.
Then, for all $\sigma>0$, $t>0$, $x\in X$ and $x^* \in X^*$,
\begin{align*}
\langle e^{-tA}x,x^{*}\rangle &= \frac{e^{\sigma t}}{2\pi  t}\int_\R e^{it \beta} \langle (\sigma+i\beta+A)^{-2}x,x^{*}\rangle  \,d\beta\\
&= \frac{1}{2\pi i t}\int_{\sigma-i\infty}^{\sigma+i\infty} e^{tz} \langle (z+A)^{-2}x,x^{*}\rangle \,dz.
\end{align*}
Moreover, \eqref{inversion} holds for all $x \in X$.
\end{cor}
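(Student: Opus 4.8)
The plan is to obtain the statement as the special case $f=e_t$, $t>0$, of Proposition~\ref{F35}. First I would check that $e_t$ satisfies the hypotheses there. Since $e_t(\infty)=0$ and $\int_0^\infty\sup_{\beta\in\R}\bigl|{-t}e^{-t(\alpha+i\beta)}\bigr|\,d\alpha=\int_0^\infty te^{-t\alpha}\,d\alpha=1$, we have $e_t\in\mathcal B_0$, and likewise $e_t'=-te_t\in\mathcal B_0\subset\mathcal B$; thus $e_t\in D(A_{\mathcal B})$, as required. Next I would record the two pieces of data needed to feed into \eqref{a} and \eqref{b}: the boundary function is $e_t^b(\beta)=e^{-it\beta}$, and, since $A+\sigma$ also admits the $\mathcal B$-calculus whenever $A$ does (the translation $g\mapsto g(\cdot+\sigma)$ is a bounded endomorphism of $\mathcal B$ by Lemma~\ref{shifts01}(1) taking $r_z$ to $r_{z+\sigma}$, so $f\mapsto\Phi_A\bigl(T_{\mathcal B}(\sigma)f\bigr)$ is a $\mathcal B$-calculus for $A+\sigma$), the homomorphism property together with Remark~\ref{Bsg} gives
\[
e_t'(A+\sigma)=-t\,e_t(A+\sigma)=-t\,e^{-t(A+\sigma)}=-t\,e^{-\sigma t}e^{-tA},
\]
the last equality being the elementary semigroup law.

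Substituting these into \eqref{a} yields, for $x\in X$ and $x^*\in X^*$,
\[
-t\,e^{-\sigma t}\langle e^{-tA}x,x^*\rangle=-\frac{1}{2\pi}\int_\R\langle(\sigma-i\beta+A)^{-2}x,x^*\rangle\,e^{-it\beta}\,d\beta,
\]
and after dividing by $-t\,e^{-\sigma t}$ and making the change of variable $\beta\mapsto-\beta$ this is exactly the first displayed formula of the corollary; the second displayed formula is merely the same expression rewritten as a contour integral along $\Re z=\sigma$ through $z=\sigma+i\beta$. For the last assertion I would substitute $f=e_t$ into \eqref{b}: dividing the resulting identity by $-t\,e^{-\sigma t}$ and reparametrising the truncated integrals as integrals over $[\sigma-iN,\sigma+iN]$ produces precisely the complex inversion formula \eqref{inversion}, now valid for every $x\in X$; that $(e^{-tA})_{t\ge0}$ is a bounded $C_0$-semigroup with generator $-A$---so that \eqref{inversion} is genuinely the classical Bromwich formula---is part of Remark~\ref{Bsg}.

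No serious difficulty arises, since all the substance is contained in Proposition~\ref{F35}; the remaining work is purely bookkeeping of signs and constants, principally the scalar factor $-t\,e^{-\sigma t}$ produced by $e_t'(A+\sigma)$, the reflection $\beta\mapsto-\beta$ turning $(\sigma-i\beta+A)$ into $(\sigma+i\beta+A)$, and the convention under which $\int_{-N}^{N}(\cdots)\,d\beta$ in \eqref{inversion} abbreviates the oriented integral down the vertical line $\Re z=\sigma$. The one step meriting an explicit sentence is the identity $e_t'(A+\sigma)=-t\,e^{-\sigma t}e^{-tA}$, which follows from multiplicativity of the $\mathcal B$-calculus and the semigroup law as indicated above.
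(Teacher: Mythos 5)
Your route coincides with the paper's: the corollary is exactly Proposition \ref{F35} specialised to $f=e_t$, and your preparatory checks ($e_t\in D(A_{\mathcal B})$, $e_t^b(\beta)=e^{-it\beta}$, $e_t'(A+\sigma)=-t\,e^{-\sigma t}e^{-tA}$ via multiplicativity of the calculus and Remark \ref{Bsg}), as well as the passage from \eqref{a} to the two displayed formulas (divide by $-t\,e^{-\sigma t}$, reflect $\beta\mapsto-\beta$, rewrite as a contour integral along $\Re z=\sigma$), are all correct.

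The one step that is not mere bookkeeping is the deduction of \eqref{inversion}. If you insert $f=e_t$ into \eqref{b} exactly as printed, the integrand carries $e_t^b(\beta)=e^{-it\beta}$ while the left-hand side is $-t\,e^{-\sigma t}e^{-tA}x$, so after dividing by $-t\,e^{-\sigma t}$ an extraneous factor $-1/t$ survives and you do not recover \eqref{inversion}; the constants do not match ``precisely'' as you claim. The explanation is that \eqref{b} is misprinted: the integrand should contain $(f')^b(\beta)$ rather than $f^b(\beta)$. This is what the paper's own derivation of \eqref{b} produces (integration by parts in $\beta$ of the strong form of \eqref{a}, using $\tfrac{d}{d\beta}f^b(\beta)=i\,(f')^b(\beta)$, the Riemann--Lebesgue lemma killing the boundary terms), and a scalar sanity check confirms it, since $\tfrac{1}{2\pi}\,\mathrm{p.v.}\!\int_\R(\sigma+a-i\beta)^{-1}f^b(\beta)\,d\beta=f(\sigma+a)-\tfrac12 f(\infty)$, not $f'(\sigma+a)$. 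With $(e_t')^b(\beta)=-t\,e^{-it\beta}$ in place of $e_t^b$ the factor $-t$ cancels and \eqref{inversion} follows (modulo the paper's $1/(2\pi i)$ versus $d\beta$ convention in \eqref{inversion}). So either quote \eqref{b} in its corrected form or redo the integration by parts directly for $e_t$; as written, your final step would stall on this mismatch.
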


\noindent{\bf Concluding remarks.}
For a sectorial operator $A$ and appropriate holomorphic $f$, the families $(f(tA))_{t>0}$ appear frequently in the theory
of functional calculi as an auxiliary tool providing an approximation technique for operator holomorphic functions, the study of interplay between functional calculi and interpolation spaces, and square function estimates (for examples, see \cite{HaaseB}, Section 5.2, Section 6, and Theorem 7.3.1 and Comments for Chapter 7, respectively). The emphasis in our studies of such families is quite different. Given a generator $-A$, we concentrate on characterising the properties of $(f(tA))_{t>0}$ as a function of $t$ and thus revisiting some of the main statements in semigroup theory, i.e., the results for $f(z)=e^{-z}$.   As we have shown above, somewhat surprisingly, those statements have valid  $\mathcal B$-calculus versions, although the arguments become much more involved.

\end{document}